\newcommand{\und}{\underline}
\newcommand{\BN}{{\mathbb {N}}}
\newcommand{\BQ}{{\mathbb {Q}}}
\newcommand{\CA}{{\mathcal {A}}}
\newcommand{\CC}{{\mathcal {C}}}
\newcommand{\CF}{{\mathcal {F}}}
\newcommand{\CI}{{\mathcal {I}}}
\newcommand{\CM}{{\mathcal {M}}}
\newcommand{\CO}{{\mathcal {O}}}
\newcommand{\CP}{{\mathcal {P}}}
\newcommand{\CR}{{\mathcal {R}}}
\newcommand{\CS}{{\mathcal {S}}}
\newcommand{\RA}{{\mathrm {A}}}
\newcommand{\RI}{{\mathrm {I}}}
\newcommand{\RJ}{{\mathrm {J}}}
\newcommand{\ind}{{\mathrm{ind}}}
\newcommand{\Hom}{{\mathrm{Hom}}}
\newcommand{\Lie}{{\mathrm{Lie}}}
\newcommand{\rk}{{ \mathrm{k}  }}
\newcommand{\rr}{{\mathrm{R}}}
\newcommand{\con}{\textit{C}}
\newcommand{\Ext}{\operatorname{Ext}}
\renewcommand{\span}{\operatorname{span}}
\newcommand{\od}{\operatorname{d}}
\newcommand{\oJ}{\operatorname{J}}
\newcommand{\oH}{\operatorname{H}}
\newcommand{\oS}{\mathcal{S}}
\newcommand{\oT}{\operatorname{T}}
\newcommand{\oZ}{\operatorname{Z}}
\newcommand{\oD}{\textit{D}}
\newcommand{\oM}{\operatorname{M}}
\renewcommand{\rk}{\mathrm F}
\newcommand{\Z}{\mathbb{Z}}
\newcommand{\C}{\mathbb{C}}
\newcommand{\R}{\mathbb R}
\newcommand{\aut}{\mathcal A}
\newcommand{\abs}[1]{\lvert#1\rvert}
\newcommand{\cf}{\emph{cf.}~}
\newcommand{\be}{\begin{equation}}
\newcommand{\ee}{\end{equation}}
\newcommand{\bee}{\begin {equation*}}
\newcommand{\eee}{\end {equation*}}
\theoremstyle{Theorem}
\theoremstyle{Theorem}
\newtheorem{lem}{Lemma}[section]
\newtheorem{corl}[lem]{Corollary}
\newtheorem{thml}[lem]{Theorem}
\newtheorem{prpl}[lem]{Proposition}
\theoremstyle{Theorem}
\newtheorem{prp}{Proposition}[section]
\theoremstyle{Plain}
\newtheorem*{remark}{Remarks}
\theoremstyle{Definition}
\newtheorem{dfn}{Definition}[section]
\newtheorem{dfnl}[lem]{Definition}
\newtheorem{cord}[dfn]{Corollary}
\newtheorem{prpd}[dfn]{Proposition}
\newtheorem{thmd}[dfn]{Theorem}
\newtheorem{lemd}[dfn]{Lemma}
\newtheorem{theorem}{Theorem}[section]
\newtheorem{proposition}[theorem]{Proposition}
\theoremstyle{definition}
\begin{document}

\title{Generalized  semi-invariant distributions on $p$-adic spaces}

\author{Jiuzu Hong}
\address{Department of Mathematics, University of North Carolina at Chapel Hill, CB 3250 Phillips Hall Chapel Hill, NC 27599-3250, U.S.A.} \email{jiuzu@email.unc.edu}
\author{Binyong Sun}
\address{Hua Loo-Keng Key Laboratory of Mathematics, Institute of Mathematics,
AMSS, Chinese Academy of Sciences,Beijing, 100190, P.R. China.}
\email{sun@math.ac.cn}

\keywords{Invariant distribution, Frobenius reciprocity, localization principle, meromorphic
continuation, zeta integral}

\date{}

\maketitle

\begin{abstract}
In this  paper we investigate some methods on calculating  the spaces
of generalized semi-invariant  distributions on $p$-adic spaces.
Using homological methods, we  give a criterion of automatic extension of (generalized) semi-invariant distributions. Based on the meromorphic continuations of
Igusa zeta integrals, we give another criteria with purely algebraic
geometric conditions,  on the extension of generalized semi-invariant  distributions.
\end{abstract}

\tableofcontents

\section{Introduction}

Following Bernstein-Zelevinsky \cite{BZ}, we define an $\ell$-space
to be a topological space which is  Hausdorff, locally compact,
totally disconnected and secondly countable.  An $\ell$-group is a
topological group whose underlying topological space is an
 $\ell$-space. Let $G$ be an $\ell$-group acting continuously on an $\ell$-space $X$. We may ask a general question about how to  describe all semi-invariant distributions on $X$ with respect to the action of $G$, that is, to determine the space
 \be\label{rchi}
 \oD(X)^\chi:=  \Hom_G(\oS(X),\chi)
 \ee
 for a fixed character $\chi: G\to \C^\times$ (all characters of $\ell$-groups are assumed to be locally constant in this paper). Here $\oS(X)$ denotes the space of Bruhat-Schwartz functions on $X$, namely, the space of compactly supported, locally constant complex valued functions on $X$.
Here and as usual, when no confusion is possible, we do not
distinguish a representation with its underlying (complex) vector
space. In particular, we do not distinguish a character with the
representation  attached to it on the one-dimensional vector space
$\C$.
  We call an element of \eqref{rchi} a $\chi$-invariant distribution on $X$. Many problems on number theory and representation theory of $p$-adic groups end up to the problems on semi-invariant distributions of this kind.  There are quite a lot of techniques on the vanishing of invariant distributions.  It seems to us that the constructions of semi-invariant distributions are still not fully developed.

We suggest in this paper that, to  describe all semi-invariant
distributions on the $\ell$-spaces, it would be  more achievable to first
consider some more general distributions. They are the generalized
semi-invariant distributions as in the following definition.

\begin{dfn}
Let $V$ be a (non-necessary smooth) representation of $G$. A vector
$v\in V$ is called a generalized invariant vector if there is a
$k\in \BN$ such that
\[
  (g_0-1)(g_1-1)\cdots (g_k-1).v=0\quad \textrm{for all }g_0, g_1, \cdots, g_k\in G.
\]
A generalized $\chi$-invariant distribution on $X$ is defined to be
a  generalized invariant vector in the representation
$\Hom_\C(\oS(X), \chi)$ of $G$.
\end{dfn}

Here and as usual, the group $G$ acts on  $\Hom_\C(\oS(X),
\chi)$ as in  the equation \eqref{actint} of Section \ref{secgh}.
The set of non-negative integers is denoted by $\BN$.

When $X$ is a $G$-homogeneous space (to be more precise, this means
that the action of $G$ on $X$ is transitive, and for every $x\in X$,
the orbit map $G\to X, \,g\mapsto g.x$ is open), the space
\eqref{rchi} is at most one-dimensional. We introduce the following
definition.

\begin{dfn}
When $X$ is a homogeneous space of $G$, we say that $X$ is
$\chi$-admissible if the space \eqref{rchi} is non-zero.
\end{dfn}

We are mainly concerned with $\ell$-spaces and $\ell$-groups of
algebraic geometric origin. Throughout the paper, we fix a
non-archimedean local field $\rk$ of characteristic zero.

\begin{dfn}\label{weakad}
Assume that $G=\mathsf G(\rk)$ for some linear algebraic group $\mathsf G$ defined over $\rk$. Let $\mathsf X$ be an algebraic variety over $\rk$, with an algebraic action of $\mathsf G$. We say that a $G$-orbit $O\subset \mathsf X(\rk)$ is weakly $\chi$-admissible if the homogeneous space $G/\mathsf G_x^\circ(\rk)$ is $\chi$-admissible, where $x\in O$,   and ${\mathsf G}_x^\circ$ denotes the identity connected component of the stabilizer $\mathsf G_x$ of $x$ in $\mathsf G$.
\end{dfn}

The above definition is certainly independent of the choice of $x\in O$. As usual, by an algebraic variety over $\rk$, we mean a scheme over $\rk$
which is separated, reduced, and of finite type. A linear algebraic
group over $\rk$ is a group scheme over $\rk$ which is an affine
variety as a scheme.

The first
main result we obtain in this paper is the following automatic
extension theorem for semi-invariant distributions and
generalized semi-invariant distributions.

\begin{thmd}\label{main1}
Let $\mathsf G$ be a linear algebraic group defined over $\rk$,
acting algebraically on an algebraic variety $\mathsf X$ over
$\rk$. Let $\chi$ be a character of $\mathsf G(\rk)$, and let
$\mathsf U$ be a $\mathsf G$-stable open subvariety of
$\mathsf X$. Assume that every $\mathsf G(\rk)$-orbit in
$(\mathsf X\setminus \mathsf U)(\rk)$ is not weakly $\chi$-admissible.
Then every $\chi$-invariant distribution on $\mathsf U(\rk)$
uniquely extends to a $\chi$-invariant distribution on $\mathsf
X(\rk)$, and every generalized $\chi$-invariant distribution on
$\mathsf U(\rk)$ uniquely extends to a generalized
$\chi$-invariant distribution on $\mathsf X(\rk)$.
\end{thmd}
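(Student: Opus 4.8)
The plan is to reduce the statement to a vanishing statement for (generalized) semi-invariant distributions supported on the boundary, and then to analyze that vanishing orbit-by-orbit. Write $Z = \mathsf X\setminus\mathsf U$ for the closed complement, with its reduced subscheme structure, so that $Z(\rk)$ is a closed $G$-stable subset of $\mathsf X(\rk)$, where $G = \mathsf G(\rk)$. Restriction of functions gives the standard short exact sequence of smooth $G$-representations
\[
0 \to \oS(\mathsf U(\rk)) \to \oS(\mathsf X(\rk)) \to \oS(Z(\rk)) \to 0,
\]
using that $\mathsf U(\rk)$ is open in $\mathsf X(\rk)$ and $Z(\rk)$ is closed. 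Applying $\Hom_\C(-,\chi)$ and taking the appropriate functors, a $\chi$-invariant distribution on $\mathsf U(\rk)$ extends to one on $\mathsf X(\rk)$, and the extension is unique, provided that both $\Hom_G(\oS(Z(\rk)),\chi)$ and a suitable $\Ext^1$-type obstruction group vanish; for the generalized version one needs the analogous vanishing after passing to the generalized-invariants filtration. Concretely, uniqueness of the extension amounts to: every (generalized) $\chi$-invariant distribution on $\mathsf X(\rk)$ that vanishes on $\oS(\mathsf U(\rk))$ is zero — i.e.\ every (generalized) $\chi$-invariant distribution supported on $Z(\rk)$ is zero; and existence amounts to the corresponding one-step extension problem being unobstructed. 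Thus the whole theorem is equivalent to showing that $Z(\rk)$ carries no nonzero (generalized) $\chi$-invariant distribution and that the natural map is surjective.

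Next I would localize on $Z(\rk)$ using a localization principle of Bernstein–Zelevinsky type. Since $\mathsf G$ acts algebraically, $Z(\rk)$ decomposes, as a $G$-space, into finitely many locally closed pieces indexed by $\mathsf G$-orbits on $Z$ (using that a variety over a field has finitely many $\mathsf G$-orbits only in favorable cases — so more carefully one stratifies $Z$ by a $\mathsf G$-stable finite filtration whose graded pieces are single $\mathsf G$-orbits, possibly working étale-locally or inducting on $\dim Z$). By the localization principle, a (generalized) $\chi$-invariant distribution on $Z(\rk)$ is zero as soon as its restriction to each stratum $O$ — a $G$-orbit inside some $\mathsf X'(\rk)$ — is zero, together with vanishing of the relevant extension obstructions between consecutive strata. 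So it suffices to prove: for each $\mathsf G(\rk)$-orbit $O$ in $Z(\rk)$, the space of (generalized) $\chi$-invariant distributions on $O$ vanishes.

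For a single orbit $O \cong G/H$ with $H = \mathsf G_x(\rk)$, I would use Frobenius reciprocity. An ordinary $\chi$-invariant distribution on $G/H$ exists iff $G/H$ is $\chi$-admissible, and the hypothesis says $O$ is not weakly $\chi$-admissible, i.e.\ $G/\mathsf G_x^\circ(\rk)$ is not $\chi$-admissible; since $\mathsf G_x^\circ(\rk)$ has finite index in $H$ (as $\mathsf G_x^\circ$ is the identity component, and $\mathsf G_x(\rk)/\mathsf G_x^\circ(\rk)$ is finite), $\chi$-admissibility of $G/H$ would force $\chi$-admissibility of $G/\mathsf G_x^\circ(\rk)$ — so ordinary $\chi$-invariant distributions on $O$ vanish. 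For the \emph{generalized} ones, the key point is that the generalized-invariants filtration on $\Hom_\C(\oS(O),\chi)$ has graded pieces governed by ordinary semi-invariant distributions for characters of the form $\chi\cdot(\text{unipotent twist})$ — more precisely, by the algebraic structure one reduces a generalized invariant vector, via its "leading term", to an honest semi-invariant distribution on $O$ for $\chi$ itself (the unipotent corrections do not change the relevant admissibility condition because they involve the abelianization and the connected-component stabilizer is exactly what enters Definition~\ref{weakad}); hence these also vanish. The main obstacle, and the step requiring the most care, is the homological bookkeeping in the generalized case: one must show that the vanishing of ordinary $\chi$-invariant distributions on every boundary orbit suffices to kill \emph{generalized} ones on all of $Z(\rk)$ and to make the extension map surjective, which requires controlling $\Ext^1$ (not just $\Hom$) between the smooth representations attached to successive strata — presumably this is exactly where the "homological methods" advertised in the abstract enter, via a spectral sequence or a dévissage on the length of the generalized-invariance together with a Shapiro-type computation of $\Ext^\bullet_G(\oS(G/H),\chi)$ in terms of (continuous) cohomology of $H$.
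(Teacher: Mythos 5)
Your high-level reduction is the same as the paper's: apply $\Hom_\C(-,\chi)$ and its derived functors to the short exact sequence
\[
0 \to \oS(\mathsf U(\rk)) \to \oS(\mathsf X(\rk)) \to \oS(\mathsf Z(\rk)) \to 0,\qquad \mathsf Z := \mathsf X\setminus\mathsf U,
\]
and reduce the theorem to the vanishing of $\Ext^i_{G,k}(\oS(\mathsf Z(\rk)),\chi)$ for all $i$ and $k$. You also correctly recognize that a single-orbit computation via Frobenius reciprocity handles $\Hom_G(\oS(O),\chi)$, and that the real work lies in controlling the higher $\Ext$ and the passage from orbits to $\mathsf Z$. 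But you stop precisely at the two points where the paper has to do real work, and neither can be left as a gesture.

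First, the $\Ext^\bullet$ vanishing is not a routine ``$\Ext^1$ bookkeeping'' that follows from the $\Hom$ vanishing by a d\'evissage in $k$. The paper needs $\Ext^i_{G,k}(\oS(G/H),\chi)=0$ for \emph{all} $i\geq 0$ and all $k$, and it gets this from a genuine structure-theoretic input: the vanishing theorem (Theorem~\ref{Vanishing_Theorem}) that for $G=\mathsf G(\rk)$ with $\mathsf G$ \emph{connected}, $\Ext^i_{G,k}(V_1,V_2)=0$ whenever $V_1,V_2$ are locally unipotent twists of two \emph{distinct} characters. Its proof is not formal: it filters $\mathsf G$ through its unipotent radical and a Levi (Hochschild--Serre), uses exactness of unipotent coinvariants, Casselman's vanishing for semisimple groups, and a sheaf-theoretic support argument (Lemma~\ref{extdisj}, Lemma~\ref{extl}) over $\Spec\C[\Lambda_G]$ for the torus part. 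The connectedness of the group is essential here — this is exactly what forces the ``weakly $\chi$-admissible'' hypothesis in terms of $\mathsf G_x^\circ(\rk)$ rather than $\mathsf G_x(\rk)$. Your proposal does not explain, and actually slightly inverts, the role of this hypothesis: you deduce the failure of ordinary $\chi$-admissibility of $O$ from the failure of weak $\chi$-admissibility, which is correct but beside the point; the weak notion is what the derived Shapiro step (Theorem~\ref{gshap3}, via Proposition~\ref{gshap2} and Corollary~\ref{corvanish}) feeds into the connected-group vanishing theorem, because there $V_1$ is $\delta_{H\backslash G}\otimes\oJ_{G,k}$ restricted to the connected $\mathsf H(\rk)$ and $V_2$ is $\chi|_{\mathsf H(\rk)}$.

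Second, your treatment of the stratification of $\mathsf Z(\rk)$ is too optimistic: $\mathsf Z$ need not have finitely many $\mathsf G$-orbits, and ``stratify by orbits and induct'' is not available. The paper replaces this with Rosenlicht's rational quotient (Proposition~\ref{Quotient_Rationality}) together with a \emph{localization principle for $\Ext$ groups} (Theorem~\ref{Fiber_Vanishing} / Corollary~\ref{Fiber_Vanishing2}), proved in Section~\ref{loc_princ} using projective generators in the category of equivariant $\ell$-sheaves. This lets one check $\Ext$-vanishing fiberwise over a rational quotient $f:\mathsf U\to\mathsf V$, each fiber being a homogeneous space whose $\rk$-points split into finitely many $G$-orbits (Lemma~\ref{finitehom}). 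This machinery — derived version of the Bernstein–Zelevinsky localization — is the second missing ingredient. Without Theorem~\ref{Vanishing_Theorem} and Corollary~\ref{Fiber_Vanishing2}, the proposal is an outline rather than a proof.
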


In Theorem \ref{main1}, if we replace ``weakly $\chi$-admissible" by ``$\chi$-admissible", then the uniqueness assertion of the theorem  remains true, by the localization principle of Bernstein-Zelevinsky \cite[Theorem 6.9]{BZ}.  In particular it implies that if every $\mathsf G(\rk)$-orbit in $\mathsf X(\rk)$ is not $\chi$-admissible, then there is no nonzero   generalized $\chi$-invariant distribution on $\mathsf X(\rk)$.
But the extendability may fail in general, as shown in the following example. Let $G=\{\pm 1\}\ltimes \rk^\times$, which acts on $X:=\{(x,y)\in \rk^2\mid xy=0\}$ by
\[
   (1,a).(x,y):=(ax, a^{-1}y)\quad \textrm{and}\quad (-1,a).(x,y):=(a^{-1}y, ax),
\]
for all $a\in \rk^\times$ and $(x,y)\in X$. Let $\chi$ be the non-trivial quadratic character of $G$ which is trivial of $\rk^\times$. Then the orbit $\{(0,0)\}$ is weakly $\chi$-admissible, but not $\chi$-admissible. It is well known that a non-zero $\chi$-invariant distribution on $X\setminus\{(0,0)\}$ does not extends to a  $\chi$-invariant distribution on $X$.

The idea of generalized semi-invariant distributions can even be
dated back to the famous Tate's thesis. It has rooted in the
dimension one property of the space of semi-invariant
distributions on $\rk$ with respect to the multiplicative action of
$\rk^\times$. Let $\chi$ denote a character of $\rk^\times$ for the
moment. As a simple application of   Theorem \ref{main1}, we know
that \be\label{dimtate}
  \dim \Hom_{\rk^\times}(\oS(\rk), \chi)=1
\ee when $\chi$ is non-trivial. However, Theorem \ref{main1} is no
longer applicable when $\chi$ is trivial. Instead, when $\chi$ is
trivial, we consider the meromorphic continuation of the zeta
integral
$$\int_\rk \phi(x) |x|^s dx,\qquad \phi\in \oS(\rk).$$
This zeta integral has simple pole at $s=-1$. Taking  all
coefficients of the Laurent expansion of the zeta integral at
$s=-1$, we actually get all generalized $\chi$-invariant
distributions on $\rk$. By considering the natural action of
$\rk^\times$ on this space of all generalized $\chi$-invariant
distributions, one concludes that \eqref{dimtate} also holds when
$\chi$ is trivial.

A key observation of the above argument is that  generalized
invariant distributions on $\rk^\times$ extends to  generalized
invariant distributions on $\rk$. The second main result of this
paper is the following generalization of this observation.
\begin{thmd}\label{main2}
Let $\mathsf G$ be a linear algebraic group over $\rk$. Let
$\mathsf X$ be  an algebraic variety over $\rk$  so that
$\mathsf G$ acts algebraically  on it with an open orbit
$\mathsf U\subset \mathsf X$. Assume that there is a  semi-invariant
regular function $f$ on  $\mathsf X$,   with the
following properties:
\begin{itemize}
  \item $f$ does not vanish on $\mathsf U$, and $\mathsf X_f\setminus \mathsf U$ has codimension $\geq 2$ in  $\mathsf X_f$, where $\mathsf X_f$ denotes the complement  in $\mathsf X$ of the zero locus of $f$;
  \item the variety $\mathsf X_f$ has  Gorenstein rational singularities.
\end{itemize}
 Let $\chi$ be a character of $\mathsf G(\rk)$ which is trivial on $\mathsf N(\rk)$, where $\mathsf N$ denotes the unipotent radical of $\mathsf G$.
Then every generalized $\chi$-invariant distribution on $\mathsf
U(\rk)$ extends to a  generalized $\chi$-invariant distribution on
$\mathsf X(\rk)$.
\end{thmd}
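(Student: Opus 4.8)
The plan is to reduce the extension problem to a statement about meromorphic continuation of Igusa-type zeta integrals, exactly as in the Tate's thesis model sketched above, and then to use the hypotheses on $\mathsf X_f$ to control the poles. First I would fix a generalized $\chi$-invariant distribution $D$ on $\mathsf U(\rk)$. Since $f$ is semi-invariant, say $g.f = \eta(g) f$ for a character $\eta$ of $\mathsf G(\rk)$, the family $|f|^s$ transforms under $\mathsf G(\rk)$ by $|\eta|^s$, so for each $s\in\C$ the product $|f|^s D$ makes sense as a generalized semi-invariant distribution on $\mathsf U(\rk)$ with modified character $\chi\cdot|\eta|^s$ (the ``generalized'' order being unchanged, since multiplication by a semi-invariant function intertwines the relevant $(g-1)$-filtrations). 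The point of introducing the parameter $s$ is that for $\Re(s)$ large the distribution $|f|^s D$, which a priori lives only on $\mathsf U(\rk)$, automatically extends by zero (or rather by the absolutely convergent integral) across $\mathsf X_f(\rk)\setminus\mathsf U(\rk)$ — here the codimension $\geq 2$ hypothesis and boundedness of $|f|^s$ near that locus are what give convergence — and then across the zero locus of $f$ as well, because high powers of $|f|$ kill any boundary contribution. Thus $Z(s) := $ ``$|f|^s D$ extended to $\mathsf X(\rk)$'' is defined for $\Re(s) \gg 0$ as a generalized $(\chi\cdot|\eta|^s)$-invariant distribution on $\mathsf X(\rk)$, depending holomorphically (in the appropriate weak sense, pairing against a fixed $\phi\in\oS(\mathsf X(\rk))$) on $s$.

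The heart of the argument is the meromorphic continuation of $Z(s)$ to all of $\C$, together with a bound on the order of the pole at $s=0$. This is where the hypothesis that $\mathsf X_f$ has Gorenstein rational singularities enters: one resolves $\mathsf X_f$ (or rather passes to a model where $f$ and the discrepancy divisor are simultaneously monomial, using Hironaka over $\rk$), writes the local zeta integral as a finite sum of one-variable Igusa integrals, and reads off from the rationality/Gorenstein condition that the numerical data controlling the poles are such that $s=0$ is a pole of order at most $k+1$ for some $k$ depending only on the geometry — crucially, \emph{finite} order, and independent of $D$ and $\phi$. The Gorenstein condition ensures the canonical sheaf is a line bundle so that $|f|^s$ times the invariant measure pulls back to something of the shape $\prod|x_i|^{a_i s + b_i}\,dx$ with $b_i \geq 0$ (this nonnegativity being precisely rationality of singularities, via the Gorenstein adjunction/discrepancy inequality), which is exactly what prevents a pole at $s=0$ of infinite order — equivalently, prevents the naive integral $Z(0)$ from genuinely diverging. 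I would assemble this from known meromorphic-continuation results for Igusa zeta functions (Denef, Igusa) applied fiberwise, keeping track of the extra ``generalized'' layers by differentiating the continuation in an auxiliary parameter, which only raises pole orders by finite amounts.

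Once $Z(s)$ is meromorphic on $\C$ with a pole of finite order at $s=0$, the extension of $D$ is obtained by taking the leading term — or more carefully, the full principal part together with the value — of the Laurent expansion of $Z(s)$ at $s=0$: each Laurent coefficient is a distribution on $\mathsf X(\rk)$, and restricting back to $\mathsf U(\rk)$ one recovers $D$ (up to a scalar coming from the leading Laurent coefficient, which is nonzero) plus, in the lower-order coefficients, correction terms that are again generalized semi-invariant. Since $\chi|_{\mathsf N(\rk)}$ is trivial and $\mathsf N$ acts trivially on the relevant semi-invariant data (this is where that hypothesis is used: it guarantees $|\eta|$ is trivial on $\mathsf N(\rk)$ so the twisting by $|\eta|^s$ stays within the class of characters under consideration, and it makes the orbit $\mathsf U$ have the right ``shape'' $\mathsf G/\mathsf G_x$ with $\mathsf G_x$ reductive-up-to-center), the limiting distribution is generalized $\chi$-invariant on the nose: one checks that applying $(g_0-1)\cdots(g_m-1)$ annihilates it for $m$ equal to the original order plus the pole order. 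The main obstacle I anticipate is the uniform finiteness of the pole order of $Z(s)$ at $s=0$ under the presence of the extra $(g-1)$-factors — i.e., controlling what differentiation in the zeta parameter does to a \emph{generalized} rather than genuine semi-invariant distribution — and making the resolution-of-singularities bookkeeping interact cleanly with the group action so that the Laurent coefficients are manifestly $\mathsf G(\rk)$-equivariant; this is the step where the Gorenstein rational singularities hypothesis must be used in an essential, not merely convenient, way.
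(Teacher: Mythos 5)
Your overall strategy --- form the zeta integral $\oZ(\phi,s)=\int|f|^s\phi\,D$, continue meromorphically, extract Laurent coefficients at $s=0$ --- is indeed the route the paper takes, but you have misplaced where the Gorenstein-rational-singularities hypothesis actually bites, and you skip a structural step without which the zeta integral does not even make sense.

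First, the missing structural step. You write down $\int_{\mathsf U(\rk)}|f|^s\phi\,D$ and speak of ``the absolutely convergent integral,'' but $D$ is a priori only a distribution, not a measure, so there is no absolute value $|D|$ against which to test convergence, nor any way to invoke the Denef/Cluckers rationality machinery (which operates on \emph{definable} measures). The paper first proves a non-trivial structure theorem (Theorem \ref{defm} and Proposition \ref{defm3}): because $\mathsf U$ is a $\mathsf G$-orbit and $\chi|_{\mathsf N(\rk)}$ is trivial, every generalized $\chi$-invariant distribution on $\mathsf U(\rk)$ is automatically a measure of the explicit definable form $P(\mathrm{val}\circ\alpha_\bullet)\cdot|\beta_\bullet|^{s_\bullet}\cdot\chi_{\mathrm f}\cdot|\omega|^{1/m}$ with $\omega$ a semi-invariant algebraic $m$-volume form. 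This is the step that both legitimises absolute-value estimates and feeds the integral into Theorem \ref{igusa}. It is also the genuine place where $\chi|_{\mathsf N(\rk)}=1$ is used (via Proposition \ref{dch}), not the ``orbit has the right shape $\mathsf G/\mathsf G_x$ with $\mathsf G_x$ reductive'' heuristic you suggest.

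Second, the role of the Gorenstein rational singularities. You assign it the job of bounding the pole order of $\oZ(\phi,s)$ at $s=0$, but the bound on pole orders is \emph{automatic}: once the measure is definable of order $\leq k$, Theorem \ref{igusa} gives rationality in $q_\rk^{-s}$ with poles of order $\leq\dim X+k$, with no geometric input on $\mathsf X_f$ at all. What the Gorenstein-rational hypothesis really buys (Theorem \ref{convg0}, via Propositions \ref{algext2} and \ref{convg_m_volume}: extend $\omega$ across a strong resolution and push forward) is that the measure extends to a \emph{locally finite} measure on all of $\mathsf X_f(\rk)$. This is essential because on a neighbourhood of $\mathsf X_f\setminus\mathsf U$ the function $|f|$ is bounded away from $0$, so $|f|^s$ contributes nothing to convergence there; your claim that ``the codimension $\geq 2$ hypothesis and boundedness of $|f|^s$ near that locus are what give convergence'' is simply false as stated --- codimension $\geq 2$ alone is far from sufficient, and without local finiteness the integral $\oZ(\phi,s)$ diverges for \emph{every} $s$ (not merely develops a bad pole at $s=0$), so there is nothing to continue. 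Your intuition about discrepancies $b_i\geq 0$ is the right algebraic shadow of the rational-singularities hypothesis, but it must be deployed at the convergence step, before the parameter $s$ can do any work, not at the pole-counting step.
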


Here a  regular function $f$ on $\mathsf X$ being semi-invariant means that, there exists an algebraic character $\nu$ of $\mathsf G$ over $\rk$ such that
\begin{equation}
\label{Semi-invariant}
  f(g.x)=\nu(g) f(x), \quad \textrm{for all  $g\in \mathsf G(\bar \rk)$ and $x\in \mathsf X(\bar \rk)$},
\end{equation}
where $\bar \rk$ denotes an algebraic closure of $\rk$.
\begin{remark}
(a) Let $\mathsf G$ be a linear algebraic group over $\rk$, acting
algebraically on an algebraic variety $\mathsf Y$ over $\rk$. We
say that $\mathsf Y$ is $\mathsf G$-homogeneous, or $\mathsf
Y$ is a $\mathsf G$-homogeneous space, if the action of $\mathsf
G(\bar \rk)$ on $\mathsf Y({\bar \rk})$ is transitive.  In general, a subvariety
$\mathsf Z$ of $\mathsf Y$ is called a $\mathsf G$-orbit if it
is $\mathsf G$-stable and $\mathsf G$-homogeneous.

(b) We say that a subvariety $\mathsf Z$ of an algebraic variety
$\mathsf Y$ has codimension $\geq r$ ($r\in \BN$) if
\[
r+\dim_x\mathsf Z\leq \dim_x \mathsf Y\quad \textrm{ for all
$x\in \mathsf Z$.} \]

(c) The notion of Gorenstein rational singularity is reviewed in
Section \ref{secalg}.

(d)  A variant of Theorem \ref{main2} is stated in Theorem \ref{variant2}, where $\mathsf X_f$ is only required to have rational singularities, but we additionally assume that there exists a nonzero semi-invariant algebraic volume form on $U$.
\end{remark}

In order to prove Theorem \ref{main2},  as in the case of Tate's thesis we need to employ the theory of  zeta integrals.
For each generalized $\chi$-invariant distribution $\mu$ on $\mathsf U(\rk)$, it turns out that $\mu$ is a definable measure (Definition \ref{def-measure}) and it is locally finite on $\mathsf X_f(\rk)$ (Theorem \ref{convg0}). We  attach a zeta integral
 $$\oZ_{\mu,f}(\phi,s):=\int_{\mathsf X_f(\rk)} \phi(x)|f(x)|^s \,\od \!\mu(x) , $$
for every $\phi\in \oS(\mathsf X(\rk))$.  The meromorphic continuation of $\oZ_{\mu,f}$ is a consequence of a general fact of Igusa zeta integrals on semi-algebraic spaces, which is proved in Theorem \ref{igusa}.

The structure of the paper is as follows. In Section \ref{secginv}, we introduce the basics of generalized homomorphisms and generalized extensions, and we prove a vanishing theorem  of generalized extensions (Theorem \ref{Vanishing_Theorem}).
In Section \ref{loc_princ}, we prove a localization principle for extensions in the settings of equivariant $\ell$-sheaves (Theorem \ref{Fiber_Vanishing}).
Section \ref{secaut} is devoted to a proof of our first main theorem. We first establish the generalized version of Frobenius reciprocity and Shapiro Lemma. Then by results in Section \ref{secginv} and Section \ref{loc_princ}, we prove a higher version of automatic extension theorem (Theorem \ref{autoext}), which contains Theorem \ref{main1} as a special case.

In Section \ref{sec_Igusa}, we introduce $p$-adic semi-algebraic spaces and the measure theory on them. We prove the meromorphic continuation of Igusa zeta integral on general semi-algebraic spaces (Theorem \ref{igusa}) after the works of Denef, Cluckers et al.
In Section \ref{sec_main2}, we prove the second main theorem as follows. We  first prove that any generalized semi-invariant distribution on algebraic homogeneous spaces is a definable measure (Theorem \ref{defm}, Proposition \ref{defm3}), in the sense of Definition \ref{def-measure}. Then we prove that it is locally finite  (Theorem \ref{convg0}) if the boundary has Gorenstein rational singularities. In the end Theorem \ref{main2}  follows from Theorem \ref{igusa}.

As an illustration,  we determine all generalized semi-invariant distributions on matrix spaces in Section \ref{example}. In this example  we employ intensively the automatic extension theorem and meromorphic continuations of distributions.   \\

\noindent {\bf Acknowledgements}: J.\,Hong would like to thank Rami
Aizenbud, Joseph Bernstein and Yiannis Sakellaridis    for helpful
discussions. He also would like to thank AMSS, Chinese academy of
science for the hospitality during his two visits in July-August and
December of 2013,  where part of the work was done. B.\,Sun was
supported by the NSFC Grants 11525105, 11321101, and 11531008. Finally, both authors would like to thank the anonymous referee for many valuable comments which have led to an improvement of this paper.

\section{Generalized homomorphisms and generalized extensions}\label{secginv}
\subsection{The space of generalized invariant vectors}
Let $G$ be an $\ell$-group as in the Introduction. By a
representation of $G$, we mean a complex vector space together with
a linear action of $G$ on it. A vector in a representation of $G$ is
said to be smooth if it is fixed by an open subgroup of $G$.  A
representation of  $G$ is said to be smooth if all its vectors are
smooth.

Let $V$ be a representations of $G$. Define a sequence
\[
  V^{G,0}\subset V^{G,1}\subset V^{G,2}\subset\cdots
\]
of subrepresentations of $V$ by \be\label{geninv}
  V^{G,k}:=\{v\in V\mid (g_0-1)(g_1-1)\cdots (g_k-1).v=0\,\textrm{ for all }g_0, g_1,\cdots, g_k\in G\}.
\ee Put
\[
  V^{G,\infty}:=\bigcup_{k\in \BN}  V^{G,k}.
\]
A  vector of $V^{G,\infty}$ is called a generalized $G$-invariant
vector in $V$.
\begin{dfnl}
A  representation of $G$ is said to be locally unipotent if it is
smooth and all its vectors are generalized $G$-invariant.
\end{dfnl}

At least when $V$ is a smooth representation, it is elementary to
see that every compact subgroup of $G$ acts trivially on
$V^{G,\infty}$. Define $G^\circ$ to be the subgroup of $G$ generated
by all compact subgroups of $G$, which is an open normal subgroup of
$G$ (similar notation will be used without further explanation for other $\ell$-groups). Put
\[
  \Lambda_G:=G/G^\circ.
\]
Then when $V$ is smooth, $V^{G,\infty}$ descends to a locally
unipotent representation of $\Lambda_G$.

Recall from the Introduction that $\rk$ is a non-archimedean local
field of characteristic zero.

\begin{prp}(see \cite[Chapter II, Proposition 22]{Be})
Assume that $G=\mathsf G(\rk)$ for some connected linear algebraic
group $\mathsf G$ defined over $\rk$. Then $\Lambda_G$ is a free abelian group
whose rank equals the dimensional of the maximal central split torus
of a Levi component of $\mathsf G$.

\end{prp}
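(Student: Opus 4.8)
The plan is to reduce to the case where $\mathsf G$ is connected reductive by means of a Levi decomposition, and then to identify $\Lambda_G$ with the image of the valuation (Harish--Chandra) homomorphism, whose target is a lattice of the predicted rank.

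\textbf{Step 1 (reduction to the reductive case).} Since $\operatorname{char}\rk = 0$, Mostow's theorem gives a Levi decomposition $\mathsf G = \mathsf N\rtimes\mathsf L$ over $\rk$, with $\mathsf N$ the unipotent radical and $\mathsf L$ connected reductive, so that $G = \mathsf N(\rk)\rtimes L$, $L := \mathsf L(\rk)$. Filtering $\mathsf N$ by normal subgroups with successive quotients $\mathbb G_a$ and using $(\rk,+) = \bigcup_n \varpi^{-n}\mathcal O_\rk$, one sees that $\mathsf N(\rk)$ is an increasing union of compact open subgroups; hence $\mathsf N(\rk)\subseteq G^\circ$. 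Since any compact subgroup of $G$ has compact, hence contained in $L^\circ$, image in $L$, and since $\mathsf N(\rk)$ and $L^\circ$ both lie in $G^\circ$, we get $G^\circ = \mathsf N(\rk)\rtimes L^\circ$ and therefore $\Lambda_G \cong L/L^\circ = \Lambda_L$. As the maximal central split torus of a Levi component of $\mathsf G$ is the maximal central split torus $\mathsf A$ of $\mathsf L$, it suffices to prove the statement for $\mathsf G$ connected reductive; write $r := \dim\mathsf A$.

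\textbf{Step 2 (the valuation map and its image).} Let $X^*_\rk(\mathsf G) = \Hom_\rk(\mathsf G,\mathbb G_m)$. Every $\rk$-rational character factors through the torus $\mathsf G/\mathsf G_{\mathrm{der}}$, and restriction to $\mathsf A$ identifies $X^*_\rk(\mathsf G)\otimes\mathbb Q$ with $X^*(\mathsf A)\otimes\mathbb Q$, so $X^*_\rk(\mathsf G)$ is free abelian of rank $r$. Define the locally constant homomorphism
\[
  H_G\colon G\longrightarrow \Hom_\Z\!\big(X^*_\rk(\mathsf G),\Z\big), \qquad \langle H_G(g),\chi\rangle := \ord\big(\chi(g)\big),
\]
where $\ord\colon\rk^\times\to\Z$ is the normalized valuation. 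Restricting $H_G$ to $\mathsf A(\rk)\cong(\rk^\times)^r$ gives, after the finite-index inclusion $X^*_\rk(\mathsf G)\hookrightarrow X^*(\mathsf A)$, the map $(\ord,\dots,\ord)$, whose kernel on $\mathsf A(\rk)$ is the maximal compact subgroup $(\mathcal O_\rk^\times)^r = \mathsf A(\rk)^\circ$ and whose image has finite index in $\Hom_\Z(X^*_\rk(\mathsf G),\Z)\cong\Z^r$. Hence $\operatorname{Im}(H_G)$ is free abelian of rank $r$. Moreover any compact subgroup of $G$ maps to a finite, hence trivial, subgroup of the discrete lattice, so $G^\circ\subseteq\ker H_G =: G^1$, and $\Lambda_G$ surjects onto $G/G^1\cong\operatorname{Im}(H_G)$.

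\textbf{Step 3 (the reverse inclusion $G^1\subseteq G^\circ$).} This is the heart of the matter and amounts to showing that $\ker H_G$ is generated by compact subgroups. One route: write $\mathsf G$ as the almost direct product $\mathsf G_{\mathrm{der}}\cdot\mathsf A\cdot\mathsf S$, where $\mathsf S$ is the maximal anisotropic central subtorus; then $\mathsf S(\rk)$ is compact, $\mathsf G_{\mathrm{der}}(\rk)$ is generated by bounded subgroups --- here Bruhat--Tits theory enters, via the fact that the rational points of an isotropic semisimple $\rk$-group are generated by a special maximal compact together with the $\rk$-points of unipotent radicals of minimal $\rk$-parabolics, anisotropic $\rk$-simple factors having compact point groups --- and one deduces, controlling the finite Galois-cohomology obstructions attached to the central isogeny, that $G = G^\circ\cdot\mathsf A(\rk)$. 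Granting this, for $g\in G^1$ write $g = ha$ with $h\in G^\circ\subseteq G^1$ and $a\in\mathsf A(\rk)$; then $a = h^{-1}g\in G^1\cap\mathsf A(\rk) = \mathsf A(\rk)^\circ\subseteq G^\circ$ by Step 2, so $g\in G^\circ$. Alternatively one may simply invoke the known equality $G^\circ = {}^1G$ for connected reductive $p$-adic groups, as in \cite[Chapter II, Proposition 22]{Be}. Combining Steps 2 and 3, $\Lambda_G\cong\operatorname{Im}(H_G)$ is free abelian of rank $r = \dim\mathsf A$, the dimension of the maximal central split torus of a Levi component of $\mathsf G$.

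\textbf{Expected main obstacle.} The reduction in Step 1 and the rank computation in Step 2 are formal. The real work --- and the only delicate point --- is Step 3: identifying $\ker H_G$ with the subgroup generated by all compact subgroups. This rests on the structure theory of reductive groups over $\rk$ (Cartan decomposition, Bruhat--Tits buildings), in particular on the generation of $\mathsf G_{\mathrm{der}}(\rk)$ by bounded subgroups and on the control of the finite cokernels coming from central isogenies.
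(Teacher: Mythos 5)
Your Steps~1 and~2 are sound: the Levi reduction $\Lambda_G\cong\Lambda_L$ is correct since $\mathsf N(\rk)$ is an increasing union of compact open subgroups, and the Harish--Chandra map $H_G$ gives $G^\circ\subseteq G^1:=\ker H_G$ with $G/G^1$ free of rank $r=\dim\mathsf A$. The paper itself offers no argument for this proposition, only the citation to \cite{Be}, so there is nothing internal to compare against; you correctly identify that everything hinges on the reverse inclusion $G^1\subseteq G^\circ$.

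The argument you sketch for Step~3, however, has a genuine gap: the claim $G=G^\circ\cdot\mathsf A(\rk)$ is already false for $\mathsf G=\mathrm{GL}_2$. There $\mathsf A$ is the group of scalar matrices, so $\det\bigl(\mathsf A(\rk)\bigr)=(\rk^\times)^2$, while every compact subgroup of $\mathrm{GL}_2(\rk)$ has determinants in $\rr^\times$; hence $\det\bigl(G^\circ\cdot\mathsf A(\rk)\bigr)\subseteq\rr^\times\cdot(\rk^\times)^2$ contains only elements of even valuation, and $G^\circ\cdot\mathsf A(\rk)$ has index~$2$ in $G$. The almost-direct product decomposition, combined with $\mathsf G_{\mathrm{der}}(\rk),\mathsf S(\rk)\subseteq G^\circ$, only yields that $G^\circ\cdot\mathsf A(\rk)$ has \emph{finite} index in $G$; the resulting finite cokernel, governed by Galois cohomology of the kernel of the isogeny $\mathsf G_{\mathrm{der}}\times\mathsf S\times\mathsf A\to\mathsf G$, is in general nontrivial and is \emph{not} absorbed into $G^\circ$, precisely because representatives of it need not have unit values under all $\rk$-rational characters. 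The correct route is to prove $G^1\subseteq G^\circ$ by showing $G^1$ is generated by compact subgroups: for $\mathrm{GL}_n$ this is the elementary factorization of $g\in G^1$ as $s\cdot\mathrm{diag}(\det g,1,\dots,1)$ with $s\in\mathrm{SL}_n(\rk)$ and the diagonal factor in $\mathrm{GL}_n(\rr)$; in general one uses the Cartan decomposition relative to a maximal split torus $\mathsf T$ to reduce to $t\in\mathsf T(\rk)\cap G^1$ and then decomposes such $t$ modulo coroot subgroups lying in $\mathsf G_{\mathrm{der}}(\rk)\subseteq G^\circ$. This is exactly the content of \cite[Chapter II, Proposition 22]{Be}, which you rightly flag as the crux; your fallback of invoking that reference directly is the safe path, but the Step~3 sketch as written does not close the gap.
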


\subsection{Generalized homomorphisms}
\label{secgh}

Let $V_1$, $V_2$ be two smooth representations of $G$. Then
$\Hom_\C(V_1,V_2)$ is naturally a representation of $G$:
\be\label{actint}
  g.\phi(v):= g.(\phi(g^{-1}.v)),\quad \phi\in \Hom_\C(V_1,V_2),\, v\in V_1.
\ee For each $k=0, 1,2,\cdots, \infty$, put
\[
   \Hom_{G,k}(V_1,V_2):=(\Hom_{\C}(V_1,V_2))^{G,k}.
\]
We call a vector in $\Hom_{G,\infty}(V_1,V_2)$ a generalized
homomorphism from $V_1$ to $V_2$.

\begin{lem}\label{homg}
For each open compact subgroup $K$ of $G$, one has that
\[
  \Hom_{G,\infty}(V_1,V_2)\subset \Hom_K(V_1,V_2).
\]
In particular, every generalized homomorphism from $V_1$ to $V_2$ is
a smooth vector of $\Hom_{\C}(V_1,V_2)$.
\end{lem}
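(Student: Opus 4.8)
The plan is to prove both statements at once by showing that any $\phi \in \Hom_{G,\infty}(V_1,V_2)$ is fixed by every open compact subgroup $K$ of $G$, since the second (``smoothness'') assertion is an immediate consequence of the first applied to the collection of all open compact subgroups. Fix such a $K$. By definition there is a $k \in \BN$ with
\[
  (g_0-1)(g_1-1)\cdots(g_k-1).\phi = 0 \quad \textrm{for all } g_0,\dots,g_k \in G.
\]
First I would restrict attention to $g_0,\dots,g_k \in K$, so that $\phi$ lies in the subspace $W := (\Hom_\C(V_1,V_2))^{K,k}$ of the $K$-representation $\Hom_\C(V_1,V_2)$, i.e. $W$ is annihilated by all products of $k+1$ elements of the augmentation-type operators $g-1$, $g \in K$.

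The key step is to observe that $\phi$ is in fact a smooth vector for the $K$-action: since $V_1,V_2$ are smooth representations of $G$, the finitely many values needed to test $K$-fixity can be controlled. Concretely, I would argue that $\Hom_\C(V_1,V_2)^{K,1}$ (ordinary $K$-invariants, the $k=0$ case) is exactly $\Hom_K(V_1,V_2)$, and then induct on $k$: if $W^{(j)} := (\Hom_\C(V_1,V_2))^{K,j}$, the operators $g-1$ for $g \in K$ map $W^{(j)}$ into $W^{(j-1)}$, and because $K$ is compact (hence the relevant orbits and matrix coefficients on smooth vectors are finite/locally constant), one shows each $W^{(j)}$ consists of smooth vectors and, crucially, that $K$ acts on $W^{(j)}$ \emph{unipotently} through a \emph{finite} quotient. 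Indeed $K$, being an $\ell$-group that is compact, is the inverse limit of finite groups, and a locally unipotent smooth representation of a finite group is trivial over $\C$ (the only unipotent element of finite order in $\GL(\text{anything over }\C)$ being the identity); passing to the inverse limit, a locally unipotent smooth representation of a compact $\ell$-group is trivial. Applying this to the $K$-subrepresentation of $\Hom_\C(V_1,V_2)$ generated by $\phi$ — which is smooth once we know $\phi$ is smooth — forces $K$ to fix $\phi$.

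The one genuine point requiring care — and the step I expect to be the main obstacle — is verifying that $\phi$ is a \emph{smooth} vector of $\Hom_\C(V_1,V_2)$ to begin with, since the compactness-of-$K$ argument above is circular if smoothness is assumed. I would break this circularity as follows: the operators $T_g := g - 1$ for $g \in K$ generate (together with the $K$-action) the image of the augmentation ideal $I_K \subset \C[K]$; the condition $(g_0-1)\cdots(g_k-1).\phi = 0$ for all $g_i \in K$ says precisely that $I_K^{k+1}.\phi = 0$. Now pick any finite-index open normal subgroup; more efficiently, for a fixed vector $v \in V_1$ there is an open compact subgroup $K' \subseteq K$ fixing $v$ and a further open $K'' \subseteq K'$ with $K'' $ fixing $\phi(gv)$ for the finitely many $K/K'$-translates — but this already needs to know $\phi(gv)$ ranges over finitely many vectors. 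The clean resolution: work inside the finite-type algebra $\C[K/K']$ for $K'$ small enough that $K'$ acts trivially on the finite-dimensional span of the relevant data; here one uses that $\Hom_\C(V_1,V_2)^{K',k}$ for $V_i$ smooth and $K'$ \emph{pro-$p$} (which any sufficiently small open compact subgroup of $\mathsf G(\rk)$ is) forces $I_{K'}^{k+1} = I_{K'}$ on any $\C$-representation unless the $K'$-action factors through a finite quotient — but pro-$p$ groups have no nontrivial finite-order unipotents over a field of characteristic $0$, so $I_{K'}.\phi = 0$, giving $K'$-fixity, hence smoothness of $\phi$, and then the previous paragraph upgrades this to $K$-fixity. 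I would present this as: choose $K'$ pro-$p$ open compact in $K$; then $\Hom_{G,\infty}(V_1,V_2) \subset (\Hom_\C(V_1,V_2))^{K',\infty}$, and a locally unipotent representation of a pro-$p$ group over $\C$ is trivial, so $\phi \in \Hom_{K'}(V_1,V_2)$, which is smooth; finally a smooth locally-unipotent representation of the compact group $K$ is trivial, so $\phi \in \Hom_K(V_1,V_2)$.
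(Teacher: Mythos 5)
Your proof has a genuine gap at exactly the point you flag as "the main obstacle," and the pro-$p$ trick you propose does not close it. The claim "a locally unipotent representation of a pro-$p$ group over $\C$ is trivial" is false without a smoothness hypothesis, and establishing smoothness is precisely what is at stake. For a concrete counterexample, fix any injective group homomorphism $\iota:\Z_p\to(\C,+)$ (both are $\Q$-vector spaces of continuum dimension, so this exists) and let $\Z_p$ act on $\C^2$ by $a\mapsto\begin{pmatrix}1&\iota(a)\\0&1\end{pmatrix}$. This is a faithful, hence non-smooth, unipotent representation of the pro-$p$ group $\Z_p$ over $\C$: every vector lies in $(\C^2)^{\Z_p,1}$, yet the action is nontrivial. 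So your dichotomy "$I_{K'}^{k+1}=I_{K'}$ unless the action factors through a finite quotient" is not a theorem; the representation $\Hom_\C(V_1,V_2)$ is generally not smooth, and nothing in your argument rules out exactly this kind of non-smooth unipotent action. The remark that pro-$p$ groups have no finite-order unipotents in characteristic $0$ is true but only applies once you already know the action factors through a finite quotient — i.e.\ once you already have smoothness. (A secondary point: the lemma is stated for an arbitrary $\ell$-group $G$, not just $\mathsf G(\rk)$, so you cannot assume small open compacts are pro-$p$; but this is not where the main difficulty lies, since the same fallacy appears with any infinite compact $\ell$-group.)

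The missing idea, used in the paper's proof, is to exploit the smoothness of $V_1$ \emph{before} looking at $\Hom_\C(V_1,V_2)$ directly. Since $V_1$ is a smooth representation of the compact group $K$, it decomposes as $V_1=\bigoplus_{i\in I}V_{1,i}$ with each $V_{1,i}$ finite dimensional. Then
\[
\Hom_\C(V_1,V_2)=\prod_{i\in I}\Hom_\C(V_{1,i},V_2),
\]
and each factor $\Hom_\C(V_{1,i},V_2)$ \emph{is} a smooth representation of $K$, because $V_{1,i}$ is finite dimensional and $V_2$ is smooth. Taking generalized $K$-invariants commutes with this product, so
\[
\Hom_{G,\infty}(V_1,V_2)\subset\Hom_{K,\infty}(V_1,V_2)\subset\prod_{i}\Hom_{K,\infty}(V_{1,i},V_2)=\prod_i\Hom_K(V_{1,i},V_2)=\Hom_K(V_1,V_2),
\]
where the middle equality is the elementary fact (already recorded in the paper) that a compact group acts trivially on the generalized invariants of a \emph{smooth} representation. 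This sidesteps the circularity entirely: one never needs to know a priori that $\phi$ itself is a smooth vector of $\Hom_\C(V_1,V_2)$.
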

  \begin{proof}
Write
\[
 V_1=\bigoplus_{i\in I} V_{1,i}
\]
as a direct sum of finite dimensional representations of $K$. Then
one has that \begin{eqnarray*}
                && \Hom_{G,\infty}(V_1,V_2) \\
                &\subset& \Hom_{K,\infty}(V_1,V_2) \\
                &\subset & \prod_{i\in I}   \Hom_{K,\infty}(V_{1,i},V_2) \\
                &=& \prod_{i\in I}   \Hom_{K}(V_{1,i},V_2) \qquad(\textrm{since $\Hom_{\C}(V_{1,i},V_2)$ is a smooth representation of $K$})\\
                &=& \Hom_{K}(V_1,V_2). \\
             \end{eqnarray*}

  \end{proof}

By Lemma \ref{homg}, we know that $\Hom_{G,k}(V_1,V_2)$ is a locally
unipotent representation of $\Lambda_G$ ($k=0,1,2,\cdots, \infty$).
The following lemma is obvious.
\begin{lem}\label{homg2}
One has that
\[
  \Hom_{G,\infty}(V_1,V_2)=0\quad\textrm{if and only if}\quad \Hom_{G}(V_1,V_2)=0.
\]
\end{lem}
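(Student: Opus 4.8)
The plan is a one-line reduction plus a short induction on $k$. The forward implication is immediate: unwinding the action \eqref{actint}, a vector $\phi\in\Hom_\C(V_1,V_2)$ satisfies $(g-1).\phi=0$ for all $g\in G$ if and only if $g.\phi(w)=\phi(g.w)$ for all $g\in G$ and $w\in V_1$, i.e. if and only if $\phi$ is $G$-equivariant; thus $\Hom_{G,0}(V_1,V_2)=\Hom_G(V_1,V_2)$, and since $\Hom_{G,0}(V_1,V_2)\subset\Hom_{G,\infty}(V_1,V_2)$, vanishing of the latter forces $\Hom_G(V_1,V_2)=0$.

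For the converse, I would first record the elementary inclusion, valid for \emph{any} representation $V$ of $G$, any $g\in G$ and any $k\geq 1$,
\[
  (g-1).V^{G,k}\subset V^{G,k-1}.
\]
Indeed, if $v\in V^{G,k}$ then for all $g_0,\dots,g_{k-1}\in G$ the product $(g_0-1)\cdots(g_{k-1}-1)(g-1).v$ is an instance of the defining relation \eqref{geninv} for $V^{G,k}$ (with the last factor specialized to $g$), hence vanishes; this is exactly the statement that $(g-1).v\in V^{G,k-1}$.

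Now apply this with $V=\Hom_\C(V_1,V_2)$ and prove by induction on $k\in\BN$ that $\Hom_{G,k}(V_1,V_2)=0$. The base case $k=0$ is the hypothesis $\Hom_G(V_1,V_2)=0$ together with the identification $\Hom_{G,0}(V_1,V_2)=\Hom_G(V_1,V_2)$ from the first paragraph. For the inductive step, assume $\Hom_{G,k-1}(V_1,V_2)=0$ and take $\phi\in\Hom_{G,k}(V_1,V_2)$; by the inclusion above, $(g-1).\phi\in\Hom_{G,k-1}(V_1,V_2)=0$ for every $g\in G$, so $\phi\in\Hom_{G,0}(V_1,V_2)=\Hom_G(V_1,V_2)=0$. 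Hence $\Hom_{G,k}(V_1,V_2)=0$ for all $k$, and therefore $\Hom_{G,\infty}(V_1,V_2)=\bigcup_{k\in\BN}\Hom_{G,k}(V_1,V_2)=0$.

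There is no genuine obstacle; the statement is as elementary as the text indicates. The only point worth isolating is the filtration property $(g-1).V^{G,k}\subset V^{G,k-1}$, which is what drives the induction. I would note in passing that the argument uses neither smoothness of $V_1$ and $V_2$ nor any feature of the $\ell$-group structure, so the same proof shows $V^{G,\infty}=0$ whenever $V^{G,0}=0$ for an arbitrary representation $V$ of an arbitrary group $G$.
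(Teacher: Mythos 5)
Your proof is correct, and since the paper supplies no proof (the authors simply declare the lemma obvious), your argument is exactly the filling-in of details they intend: identify $\Hom_{G,0}(V_1,V_2)$ with $\Hom_G(V_1,V_2)$, observe the filtration property $(g-1).V^{G,k}\subset V^{G,k-1}$, and induct on $k$. Your closing observation — that this holds for any representation of any group, no smoothness or $\ell$-group structure needed — is a correct and worthwhile point.
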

The following lemma is routine to check. We omit the details.

\begin{lem}
(a) Let $V_1, V_2, V_3$ be smooth representations of $G$. Let $k_1,
k_2\in\{0,1,2,\cdots, \infty\}$. Then
\[
  \phi_2\circ\phi_1\in \Hom_{G,k_1+k_2}(V_1,V_3)
\]
for all $\phi_1\in \Hom_{G,k_1}(V_1,V_2)$ and $\phi_2\in
\Hom_{G,k_2}(V_2,V_3)$.

(b) Let $V_1, V_2, V_1', V_2'$ be smooth representations of $G$. Let
$k_1, k_2\in\{0,1,2,\cdots, \infty\}$. Then
\[
  \phi_1\otimes \phi_2\in \Hom_{G,k_1+k_2}(V_1\otimes V_2,V_1'\otimes V_2')
\]
for all $\phi_1\in \Hom_{G,k_1}(V_1,V_1')$ and $\phi_2\in
\Hom_{G,k_2}(V_2,V_2')$.
\end{lem}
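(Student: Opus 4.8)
The plan is to reduce both statements to a single bilinear fact about generalized invariant vectors in a tensor product, combined with the functoriality of the filtration $(-)^{G,\bullet}$. First I would dispose of the cases where some $k_i=\infty$: since $\Hom_{G,\infty}(\,\cdot\,,\,\cdot\,)=\bigcup_{m\in\BN}\Hom_{G,m}(\,\cdot\,,\,\cdot\,)$ and $\Hom_{G,m}\subseteq\Hom_{G,\infty}$ for every finite $m$, it suffices to treat $k_1,k_2\in\BN$.

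The heart of the matter is the following. \emph{Let $W_1,W_2$ be representations of $G$, let $w_i\in W_i^{G,k_i}$, and give $W_1\otimes W_2$ the diagonal $G$-action; then $w_1\otimes w_2\in(W_1\otimes W_2)^{G,k_1+k_2}$.} To prove this I would pass to the group algebra $\C[G]$ and let $I\subseteq\C[G]$ denote the augmentation ideal, i.e. the span of the elements $g-1$. Since the span of the products $(g_0-1)(g_1-1)\cdots(g_k-1)$ over all $g_i\in G$ is exactly $I^{k+1}$, the definition \eqref{geninv} reads $V^{G,k}=\{v\in V\mid I^{k+1}.v=0\}$ for any representation $V$. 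The diagonal action on $W_1\otimes W_2$ is the one induced through the coproduct $\Delta\colon\C[G]\to\C[G]\otimes\C[G]$, $g\mapsto g\otimes g$, which is an algebra homomorphism with $\Delta(g-1)=(g-1)\otimes g+1\otimes(g-1)\in I\otimes\C[G]+\C[G]\otimes I$. Hence, $I$ being a two-sided ideal, $\Delta(I^{k_1+k_2+1})=\Delta(I)^{k_1+k_2+1}\subseteq\sum_{i+j=k_1+k_2+1}I^i\otimes I^j$. In each summand one has $i\geq k_1+1$ or $j\geq k_2+1$, so $I^i.w_1=0$ or $I^j.w_2=0$, whence $(I^i\otimes I^j).(w_1\otimes w_2)=0$; therefore $I^{k_1+k_2+1}.(w_1\otimes w_2)=0$, as claimed.

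It remains to package (a) and (b) accordingly. A $G$-equivariant linear map $T\colon W\to W'$ sends $W^{G,k}$ into $(W')^{G,k}$ for every $k$, because it commutes with each operator $g-1$. In case (a), composition defines a $\C$-linear map $\Hom_\C(V_1,V_2)\otimes\Hom_\C(V_2,V_3)\to\Hom_\C(V_1,V_3)$, $\phi_1\otimes\phi_2\mapsto\phi_2\circ\phi_1$; the identity $g.(\phi_2\circ\phi_1)=(g.\phi_2)\circ(g.\phi_1)$, immediate from \eqref{actint}, shows it is $G$-equivariant for the diagonal action on the source. In case (b), $\phi_1\otimes\phi_2\mapsto\phi_1\otimes\phi_2$ defines a $G$-equivariant map $\Hom_\C(V_1,V_1')\otimes\Hom_\C(V_2,V_2')\to\Hom_\C(V_1\otimes V_2,V_1'\otimes V_2')$ by the analogous identity, using the diagonal actions on $V_1\otimes V_2$ and $V_1'\otimes V_2'$. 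Applying the heart statement to $\phi_1\otimes\phi_2$ and pushing forward along the relevant equivariant map yields $\phi_2\circ\phi_1\in\Hom_{G,k_1+k_2}(V_1,V_3)$, respectively $\phi_1\otimes\phi_2\in\Hom_{G,k_1+k_2}(V_1\otimes V_2,V_1'\otimes V_2')$.

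I do not anticipate any real difficulty; as the authors note, the lemma is routine. The only step deserving a moment's care is the inclusion $\Delta(I)^{n}\subseteq\sum_{i+j=n}I^i\otimes I^j$ together with the pigeonhole observation that $i+j=k_1+k_2+1$ forces $i>k_1$ or $j>k_2$; everything else is a mechanical unwinding of the definition of $V^{G,k}$ and of the $G$-action \eqref{actint} on $\Hom$-spaces.
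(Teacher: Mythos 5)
Your proof is correct. The paper itself states this lemma without proof, declaring it ``routine to check,'' so there is no argument in the text to compare against; your reduction to the augmentation ideal $I\subseteq\C[G]$, together with the observation that $\Delta(g-1)=(g-1)\otimes g+1\otimes(g-1)$ forces $\Delta(I^{k_1+k_2+1})\subseteq\sum_{i+j=k_1+k_2+1}I^i\otimes I^j$ and the pigeonhole step, is a clean and standard way to fill in the details, and the packaging of (a) and (b) via the $G$-equivariance of composition and of tensoring of linear maps is exactly what is needed.
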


\subsection{Generalized homomorphisms and homomorphisms}
Denote by $\C[\Lambda_G]$ the group algebra of $\Lambda_G$. Denote
by $\RI_{G}$ the augmentation ideal of $\C[\Lambda_G]$,
namely,
\[
  \RI_{G}:=\left\{\sum_{g\in \Lambda_G} a_g\, g\in \C[\Lambda_G]
  \mid \sum_{g\in \Lambda_G} a_g=0\right\}.
\]
For each $k\in \BN$, put
\[
  \RJ_{G,k}:=\C[\Lambda_G]/(\RI_{G})^{k+1}.
\]
We view it as a locally unipotent representation of $G$ through left
translations. The following lemma is routine to check.
\begin{lem}\label{ginv}
Let $k\in \BN$. For each smooth representation $V$ of $G$, the map
\[
  \Hom_G(\RJ_{G,k},V)\rightarrow V^{G,k},\quad \phi\mapsto \phi(1)
\]
is a well-defined isomorphism of locally unipotent representations
of $G$. Here $\Hom_G(\RJ_{G,k},V)$ is viewed as a smooth
representation of $G$ by
\[
  (g.\phi)(x):=\phi(x \bar g), \quad g\in G,\, \phi\in
  \Hom_G(\RJ_{G,k},V),\, x\in\RJ_{G,k},
\]
where $\bar g$ denotes the image of $g$ under the natural map
$G\rightarrow \RJ_{G,k}$.

\end{lem}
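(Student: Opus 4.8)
The plan is to reduce everything to the module structure of $V^{G,k}$ over the group algebra $\C[\Lambda_G]$. First I would record the elementary algebraic fact that, as a $\C$-vector space, $(\RI_G)^{k+1}$ is spanned by the products $(\bar g_0-1)(\bar g_1-1)\cdots(\bar g_k-1)$ with $g_0,\dots,g_k\in G$ (this uses that $\RI_G$ is itself $\C$-spanned by the elements $\bar g-1$). Combining this with the fact already noted in the text that every compact subgroup of $G$, hence $G^\circ$, acts trivially on $V^{G,\infty}$, one gets that $V^{G,k}$ carries a natural $\C[\Lambda_G]$-module structure and that, for $v\in V$ fixed by $G^\circ$,
\[
v\in V^{G,k}\iff (\RI_G)^{k+1}.v=0\iff \C[\Lambda_G].v\ \text{is a module over}\ \RJ_{G,k}.
\]
In particular $\RJ_{G,k}$ itself is locally unipotent, consistent with the statement.

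Next I would check the three points separately. For \emph{well-definedness}: given $\phi\in\Hom_G(\RJ_{G,k},V)$, the element $(g_0-1)\cdots(g_k-1).\bar 1$ is the image in $\RJ_{G,k}$ of an element of $(\RI_G)^{k+1}$, hence is $0$; applying the $G$-equivariant map $\phi$ and using $\phi\bigl((g-1).x\bigr)=(g-1).\phi(x)$ repeatedly yields $(g_0-1)\cdots(g_k-1).\phi(\bar 1)=0$, so $\phi(\bar 1)\in V^{G,k}$. For \emph{injectivity}: since $\bar 1$ generates $\RJ_{G,k}$ as a $\C[\Lambda_G]$-module, a fortiori as a $G$-representation, a $G$-homomorphism out of $\RJ_{G,k}$ is determined by its value on $\bar 1$. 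For \emph{surjectivity}: given $v\in V^{G,k}$, define $\phi(\bar x):=x.v$ for $x\in\C[\Lambda_G]$; this is well defined because $\bar x=\bar y$ forces $x-y\in(\RI_G)^{k+1}$ and hence $(x-y).v=0$ by the displayed equivalence, it is $\C[\Lambda_G]$-linear (in particular $G$-equivariant), and $\phi(\bar 1)=v$.

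Finally I would verify that the bijection $\phi\mapsto\phi(\bar 1)$ intertwines the two prescribed $G$-actions: with the action $(g.\phi)(x)=\phi(x\bar g)$ on $\Hom_G(\RJ_{G,k},V)$ one computes $(g.\phi)(\bar 1)=\phi(\bar g)=\phi(\bar g\cdot \bar 1)=g.\phi(\bar 1)$, which is exactly $G$-equivariance into $V^{G,k}$. Both sides being locally unipotent representations of $\Lambda_G$ (the source by construction, the target as recalled above), this makes the map an isomorphism of locally unipotent representations of $G$. I do not expect a genuine obstacle here; the only point needing a little care is the bookkeeping with left versus right translations when matching the $\C[\Lambda_G]$-module structure against the two given $G$-actions, together with the verification that $(\RI_G)^{k+1}$ is spanned by the antisymmetrized products $(\bar g_0-1)\cdots(\bar g_k-1)$ so that the annihilation condition $(\RI_G)^{k+1}.v=0$ is literally the defining condition of $V^{G,k}$.
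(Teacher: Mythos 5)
Your proof is correct and is exactly the routine verification the paper has in mind (the paper offers no proof, merely noting that the lemma ``is routine to check''). You correctly identify the key bookkeeping: $(\RI_G)^{k+1}$ is $\C$-spanned by products $(\bar g_0-1)\cdots(\bar g_k-1)$; $V^{G,k}$ is $G^\circ$-fixed and $G$-stable, hence a $\C[\Lambda_G]$-module annihilated by $(\RI_G)^{k+1}$; well-definedness, injectivity, surjectivity, and $G$-equivariance of $\phi\mapsto\phi(\bar 1)$ all follow, with the commutativity of $\Lambda_G$ quietly reconciling the left-translation action on $\RJ_{G,k}$ with the right-translation action on $\Hom_G(\RJ_{G,k},V)$. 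One terminological slip: the spanning products $(\bar g_0-1)\cdots(\bar g_k-1)$ are not ``antisymmetrized'' in any sense---they are simply products of augmentation-ideal generators---but this is a word choice, not a mathematical gap.
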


More generally, we have the following lemma.
\begin{lem}\label{ginv2}
Let $k\in \BN$. For all smooth representations $V_1$ and $V_2$ of
$G$, the map
\[
  \Hom_G(\RJ_{G,k}\otimes V_1,V_2)\rightarrow \Hom_{G,k}(V_1,V_2),\quad \phi\mapsto \phi|_{V_1}
\]
is a well-defined isomorphism of locally unipotent representations
of $G$. Here $V_1$ is identified with the subspace $1\otimes V_1$ of
$\RJ_{G,k}\otimes V_1$, and $\Hom_G(\RJ_{G,k}\otimes V_1,V_2)$ is
viewed as a smooth representation of $G$ by
\[
  (g.\phi)(x\otimes v):=\phi(x\bar g\otimes v), \quad g\in G,\, \phi\in
  \Hom_G(\RJ_{G,k},V),\, x\in\RJ_{G,k},\,v\in V_1,
\]
where $\bar g$ denotes the image of $g$ under the natural map
$G\rightarrow \RJ_{G,k}$.

\end{lem}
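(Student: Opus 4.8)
The plan is to reduce this to the special case already established in Lemma \ref{ginv} by a tensor--hom adjunction argument. First I would observe that the map is at least well-defined: given $\phi \in \Hom_G(\RJ_{G,k} \otimes V_1, V_2)$, its restriction to $1 \otimes V_1$ is a linear map $V_1 \to V_2$, and the condition that $\phi$ is $G$-equivariant forces $(g_0 - 1)\cdots(g_k - 1)$ to annihilate $\phi|_{V_1}$ in $\Hom_\C(V_1, V_2)$, because $(\bar g_0 - 1)\cdots(\bar g_k - 1) = 0$ in $\RJ_{G,k} = \C[\Lambda_G]/(\RI_G)^{k+1}$ (note each $\bar g$ lies in $\Lambda_G$, and products of $k+1$ elements of the augmentation ideal vanish). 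Hence $\phi|_{V_1} \in \Hom_{G,k}(V_1, V_2)$. Compatibility with the two $\Lambda_G$-actions is a direct unwinding of the definitions given in the statement.

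Next I would construct the inverse. The key point is the standard adjunction isomorphism $\Hom_\C(\RJ_{G,k} \otimes V_1, V_2) \cong \Hom_\C(\RJ_{G,k}, \Hom_\C(V_1, V_2))$, which is $G$-equivariant when everything carries the natural diagonal/conjugation actions. Under this identification, a $G$-homomorphism $\RJ_{G,k} \otimes V_1 \to V_2$ corresponds precisely to a $G$-homomorphism $\RJ_{G,k} \to \Hom_\C(V_1, V_2)$, where $G$ acts on the target by \eqref{actint}. Now I would invoke Lemma \ref{ginv} with $V$ replaced by the smooth representation $\Hom_\C(V_1,V_2)$ — more precisely, by its smooth part, which suffices since any $G$-map out of $\RJ_{G,k}$ lands in $\bigl(\Hom_\C(V_1,V_2)\bigr)^{G,\infty}$, a smooth subrepresentation by Lemma \ref{homg}. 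Lemma \ref{ginv} then gives $\Hom_G(\RJ_{G,k}, \Hom_\C(V_1,V_2)) \xrightarrow{\sim} \bigl(\Hom_\C(V_1,V_2)\bigr)^{G,k} = \Hom_{G,k}(V_1,V_2)$ via $\phi \mapsto \phi(1)$, and chasing through the adjunction shows this composite is exactly the evaluation-at-$1\otimes V_1$ map in the statement. Since each arrow in the composite is an isomorphism of locally unipotent $\Lambda_G$-representations, so is the map in question.

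I do not expect any serious obstacle here: the only mild care needed is to check that the $G$-action \eqref{actint} on $\Hom_\C(V_1,V_2)$, restricted along $\RJ_{G,k}$ under the adjunction, matches the action on $\RJ_{G,k} \otimes V_1$ described in the statement (both amount to $\bar g$ acting on the $\RJ_{G,k}$-factor by right translation after twisting by $g$ on $V_1$ and $V_2$), and that smoothness of $V_1, V_2$ is genuinely used to make the adjunction land in the smooth world so that Lemma \ref{ginv} applies. These are exactly the ``routine'' verifications the paper defers, so in the write-up I would state the adjunction, cite Lemma \ref{ginv}, and leave the bookkeeping to the reader as the excerpt does.
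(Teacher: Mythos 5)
Your proposal follows exactly the paper's route: unfold the definition of $\Hom_{G,k}$, apply Lemma \ref{ginv} with $V = \Hom_\C(V_1,V_2)$, and finish by tensor--hom adjunction. The one thing you add — passing to the smooth part of $\Hom_\C(V_1,V_2)$ so that Lemma~\ref{ginv} (stated only for smooth $V$) genuinely applies, justified via Lemma~\ref{homg} and the smoothness of $\RJ_{G,k}$ — is a legitimate and useful clarification that the paper's terse three-line chain of identifications silently glosses over.
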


\begin{proof}
We have the $G$-equivariant identifications
\begin{eqnarray*}
  \Hom_{G,k}(V_1,V_2) &=& \Hom_\C(V_1,V_2)^{G,k}\\
   &=& \Hom_G(\RJ_{G,k}, \Hom_\C(V_1,V_2)) \\
  &=& \Hom_G(\RJ_{G,k}\otimes V_1,V_2).
\end{eqnarray*}
Therefore the lemma follows.
\end{proof}

Lemma \ref{ginv} implies that
\[
 V^{G,\infty}=\varinjlim_k  \Hom_G(\RJ_{G,k},V),
\]
for all smooth representations $V$ of $G$. Likewise, Lemma
\ref{ginv2} implies that
\[
 \Hom_{G,\infty}(V_1,V_2)=\varinjlim_k \Hom_G(\RJ_{G,k}\otimes
 V_1,V_2),
\]
for all smooth representations $V_1$ and $V_2$ of $G$.

\subsection{Schwartz inductions}\label{secshw} We briefly recall the Schwartz inductions in this subsection.
Let $H$ be a closed subgroup of  $G$. Let $V_0$ be a smooth
representation of $H$. Define the un-normalized Schwartz induction
$\ind_H^G V_0$ to be the space of all $V_0$-valued locally constant
 functions $\phi$ on $G$ such that
\begin{itemize}
  \item $\phi(hg)=h.\phi(g)$, for all $h\in H, \,g\in G$; and
  \item   $\phi$ has compact support modulo (the left translations of )
  $H$.
\end{itemize}
It is a smooth representation of $G$ under right translations. The
following lemma is well known and easy to check.

\begin{lem}\label{tensorg0}
Let $V$ be a smooth representation of $G$. Then the linear map \be
\label{homf0}
   \begin{array}{rcl}
  V\otimes \ind_H^G V_0 &\rightarrow& \ind_H^G({V}|_H\otimes
  V_0),\\
   v\otimes \phi&\mapsto&(g\mapsto g.v\otimes \phi(g))
   \end{array}
\ee
 is a well defined isomorphism of smooth representations of $G$.

\end{lem}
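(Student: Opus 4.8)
The plan is to exhibit an explicit two-sided inverse to the map \eqref{homf0} and then check equivariance by direct computation; both are routine once the correct formulas are written down. First I would verify that the map in \eqref{homf0} is well defined: given $v \otimes \phi$ with $\phi \in \ind_H^G V_0$, the function $g \mapsto g.v \otimes \phi(g)$ is locally constant (both factors are), has support contained in $\mathrm{supp}(\phi)$, hence compact support modulo $H$, and transforms correctly on the left, since for $h \in H$ one has $(hg).v \otimes \phi(hg) = h.(g.v) \otimes h.\phi(g) = h.(g.v \otimes \phi(g))$, using that $H$ acts on $V|_H \otimes V_0$ diagonally. So the image lies in $\ind_H^G(V|_H \otimes V_0)$, and by linearity the map extends to the whole tensor product. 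Equivariance for right translations by $g_0 \in G$ is immediate: $(g_0 . v \otimes \phi)$ maps to $g \mapsto g.(g_0.v) \otimes \phi(g g_0) = (gg_0).v \otimes \phi(gg_0)$, which is the right translate by $g_0$ of the image of $v \otimes \phi$.

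Next I would construct the inverse. Given $\psi \in \ind_H^G(V|_H \otimes V_0)$, define $(g \mapsto (1 \otimes g^{-1}).\psi(g))$, where $1 \otimes g^{-1}$ means applying $g^{-1}$ to the first tensor factor only; this is a $V_0$-valued function on $G$ (the first factor is killed), it is locally constant, and it has the same support as $\psi$. One checks the left $H$-transformation: for $h \in H$, $\psi(hg) = h.\psi(g)$ in $V|_H \otimes V_0$, and applying $(1 \otimes (hg)^{-1})$ gives $(1 \otimes g^{-1}h^{-1})(h.\psi(g))$; writing $\psi(g) = \sum v_i \otimes w_i$, the diagonal $h$-action followed by $g^{-1}h^{-1}$ on the first factor yields $\sum g^{-1}.v_i \otimes h.w_i$, which equals $h$ acting on $(1\otimes g^{-1}).\psi(g) \in V_0$. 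Hence this lands in $\ind_H^G V_0$. The subtlety here is that, a priori, $\psi(g)$ is a general element of the tensor product rather than a pure tensor, so the ``inverse'' map genuinely takes values in $\ind_H^G V_0$ only after observing that $(1 \otimes g^{-1})$ applied to $\psi(g)$ collapses the $V$-factor consistently; one should record the target map $\ind_H^G(V|_H\otimes V_0) \to V \otimes \ind_H^G V_0$ by choosing, locally on a small open set where $\psi$ and the $V$-part are constant, a finite expression — but in fact the cleanest route is to use that $\ind_H^G(V|_H \otimes V_0) \cong \ind_H^G\bigl(\bigoplus_i V_0\bigr)$ near any point after trivializing the finite-dimensional piece of $V$ that is relevant, which reduces the claim to the evident isomorphism $V \otimes \ind_H^G V_0 \cong \ind_H^G V_0$ when $V = \C$.

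Finally I would check the two composites are the identity: starting from $v \otimes \phi$, mapping forward gives $g \mapsto g.v \otimes \phi(g)$, and applying $(1 \otimes g^{-1})$ to the first factor returns $g \mapsto v \otimes \phi(g)$, i.e. $v \otimes \phi$; the reverse composite is checked similarly on $\psi$ by the same cancellation $(1 \otimes g) \circ (1 \otimes g^{-1}) = \mathrm{id}$. The main obstacle, such as it is, is purely bookkeeping: making sense of the inverse on non-pure tensors and confirming the local-constancy/support conditions are preserved; there is no real analytic or homological content, which is why the authors call the lemma ``well known and easy to check.'' I would present it in two or three lines citing \cite{BZ} for the general formalism of Schwartz inductions rather than spelling out every verification.
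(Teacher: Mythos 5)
The paper offers no proof of this lemma (it is dismissed as ``well known and easy to check''), so I can only assess your argument on its merits. Your verification of the \emph{forward} map is correct and complete: the $H$-equivariance check $(hg).v\otimes\phi(hg)=h.(g.v\otimes\phi(g))$, the support and local-constancy statements (local constancy of $g\mapsto g.v$ uses smoothness of $V$, which you correctly invoke implicitly), and the $G$-equivariance under right translation are all right. Injectivity also follows easily from this description by choosing the $v_i$ linearly independent.

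The problem is the inverse. You write that $g\mapsto(1\otimes g^{-1}).\psi(g)$ (applying $g^{-1}$ to the $V$-factor) ``is a $V_0$-valued function on $G$ (the first factor is killed)'' and that it ``lands in $\ind_H^G V_0$.'' This is simply false: applying $g^{-1}$ to the $V$-factor of $\psi(g)\in V\otimes V_0$ does not kill that factor, and the result is a $V\otimes V_0$-valued function, not a $V_0$-valued one. The correct target of the inverse is $V\otimes\ind_H^G V_0$, not $\ind_H^G V_0$. The actual content of surjectivity is to show that the function $g\mapsto(g^{-1}\otimes 1).\psi(g)$ lies in the \emph{algebraic} tensor product $V\otimes\ind_H^G V_0$, i.e.\ can be written as a finite sum $\sum_i v_i\otimes\phi_i$. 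This is not automatic from the formula: it uses that $\psi$ is locally constant with compact support modulo $H$, so its support is covered by finitely many translates $Hg_jK$ of a small open compact $K$ on which $\psi$ is constant and each $\psi(g_j)$ is a finite sum of pure tensors, and one further shrinks $K$ so that the conjugates $g_jKg_j^{-1}$ fix the finitely many $V$-vectors involved (smoothness of $V$). You do acknowledge a ``subtlety'' and gesture at localizing, but the patch — that everything ``reduces to the evident isomorphism when $V=\C$ after trivializing the finite-dimensional piece of $V$'' — is not an argument: such a local trivialization is an isomorphism of vector spaces only, and the reduction is not spelled out. So the step where surjectivity is established is a genuine gap, and the explicit inverse formula as you have written it is incorrect as stated. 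The fix is routine (the local-constancy/compactness argument sketched above), but it needs to be carried out rather than waved at.
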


\subsection{Generalized extensions}

Denote by $\CM(G)$ the category of smooth representations of $G$ (the
morphisms of this category are $G$-intertwining linear maps). By a
projective smooth representation of $G$, we mean a projective object
of the category $\CM(G)$.

\begin{lem}\label{tensorproj}
Let $V_1, V_2$ be two smooth representations of $G$. If  $V_1$
or $V_2$ is projective, then $V_1\otimes V_2$ is projective.
\end{lem}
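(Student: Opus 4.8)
The plan is to reduce the statement to the case where a single factor is projective and then exploit the standard characterization of projectivity via exactness of $\Hom_G(-, \cdot)$. First I would recall the key structural fact about $\CM(G)$ for an $\ell$-group: the functor $W \mapsto W \otimes_\C U$ (for a fixed smooth representation $U$) is exact, since tensoring over $\C$ is exact and the smoothness is preserved. So the essential point is to show that if $V_1$ is projective, then $\Hom_G(V_1 \otimes V_2, -)$ is an exact functor on $\CM(G)$.

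The natural route is the tensor-hom adjunction in the smooth category. For smooth representations $V_2$ and $W$ of $G$, I would use that
\[
  \Hom_G(V_1 \otimes V_2, W) \;\cong\; \Hom_G\bigl(V_1, \Hom_\C(V_2, W)^{\mathrm{sm}}\bigr),
\]
where $\Hom_\C(V_2,W)^{\mathrm{sm}}$ denotes the smooth part of $\Hom_\C(V_2,W)$ with its natural $G$-action as in \eqref{actint}. Granting this isomorphism is natural in $W$, exactness of $\Hom_G(V_1\otimes V_2, -)$ follows once one knows (i) $V_1$ is projective, so $\Hom_G(V_1,-)$ is exact, and (ii) the functor $W \mapsto \Hom_\C(V_2,W)^{\mathrm{sm}}$ is exact on $\CM(G)$. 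For (ii), decompose the restriction of $V_2$ to each open compact subgroup $K$ as a direct sum of finite-dimensional $K$-representations (as in the proof of Lemma \ref{homg}); then $\Hom_\C(V_2,W)^{\mathrm{sm}} = \varinjlim_K \prod_i \Hom_\C(V_{2,i}, W)$, and since each $V_{2,i}$ is finite-dimensional and $\C$ is a field, $\Hom_\C(V_{2,i},-)$ is exact, a product of exact functors is exact, and filtered colimits are exact. This gives projectivity of $V_1 \otimes V_2$ when $V_1$ is projective, and the case of $V_2$ projective is symmetric (or follows by noting $V_1 \otimes V_2 \cong V_2 \otimes V_1$).

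The step I expect to be the main obstacle is establishing the adjunction isomorphism cleanly in the smooth category — one must be careful that the naive adjunction $\Hom_G(V_1 \otimes V_2, W) \cong \Hom_G(V_1, \Hom_\C(V_2,W))$ already lands in the smooth part of $\Hom_\C(V_2,W)$ automatically (this is where the smoothness of $V_1$ is used: the image of a $G$-map out of a smooth representation consists of smooth vectors), and that all the identifications are $G$-equivariant and functorial in $W$. Once that bookkeeping is done the argument is formal. An alternative, perhaps cleaner, approach that avoids the smooth-$\Hom$ subtleties: it suffices to check projectivity on a generating family. Every projective object in $\CM(G)$ is a direct summand of a direct sum of representations of the form $\oS(G/K) = \ind_K^G \C$ for open compact $K$ (the "standard projectives"). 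By Lemma \ref{tensorg0}, $V \otimes \ind_K^G \C \cong \ind_K^G(V|_K)$, and $V|_K$ is a direct sum of $\ind_{K'}^K \C$'s for open compact $K' \le K$ when $V$ itself is a sum of standard projectives; transitivity of Schwartz induction then shows $V \otimes \ind_K^G \C$ is again a sum of standard projectives, hence projective. Passing to direct sums and summands handles the general projective $V_1$. I would likely present this second argument as the proof, since it uses only Lemma \ref{tensorg0} and elementary properties of Schwartz induction already available in the excerpt, though I would remark that the adjunction argument gives the statement directly.
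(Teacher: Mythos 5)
Both of your routes are correct, but the one you say you'd present (the "standard projectives" argument) is in fact the same family of ideas as the paper, executed less efficiently. The paper's proof is: WLOG $V_2$ is projective; since $V_2$ is a quotient of $\ind_{\{1\}}^G V_2$ (nondegeneracy of the Hecke action) and is projective, $V_2$ is a direct summand of $\ind_{\{1\}}^G V_2$; then by Lemma \ref{tensorg0}, $V_1\otimes V_2$ is a direct summand of $V_1\otimes \ind_{\{1\}}^G V_2 \cong \ind_{\{1\}}^G(V_1\otimes V_2)$, which is projective by Blanc/Casselman \cite[Theorem A.4]{Ca}. One application of Lemma \ref{tensorg0} and one citation. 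Your version replaces the single induction from $\{1\}$ by a decomposition into summands of $\bigoplus \ind_{K_i}^G\C$, then needs Mackey decomposition of $V_1|_{K_i}$, transitivity of Schwartz induction, and compatibility of $\ind$ with infinite direct sums. All of this is true, but it reproves along the way more than is needed; moreover, once you observe (as you could) that $V_1|_K$ is automatically projective over the compact group $K$ and that $\ind_K^G$ preserves projectivity, you don't need the Mackey step at all, and the argument collapses to essentially the paper's.

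Your first route (the smooth tensor--hom adjunction) is a genuinely different proof and it is also sound. The one point you flag as an obstacle — that a $G$-map out of the smooth $V_1$ automatically lands in the smooth part of $\Hom_\C(V_2,W)$ — is handled exactly as you suspect. The exactness of $W\mapsto\Hom_\C(V_2,W)^{\mathrm{sm}}$ deserves one extra word beyond what you wrote: left exactness is formal, and for right exactness one uses that over a compact $\ell$-group $K$ the category of smooth representations is semisimple, so any surjection $W\twoheadrightarrow W''$ splits $K$-equivariantly and therefore a smooth (i.e.\ $K$-equivariant for some open compact $K$) map $V_2\to W''$ lifts smoothly to $W$. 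Your filtered-colimit-of-products formulation also works for the same underlying reason. What this route buys is that it avoids any appeal to the theorem that $\ind_{\{1\}}^G$ is projective; what it costs is the bookkeeping you already identified.
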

\begin{proof}
This is well known. We sketch a proof for the convenience of the
reader. Without loss of generality, assume that $V_2$ is projective.
Note that $V_2$ is isomorphic to a quotient of $\ind_{\{1\}}^G V_2$.
Since it is projective, it is isomorphic to a direct summand of
$\ind_{\{1\}}^G V_2$. Therefore $V_1\otimes V_2$ is isomorphic to a
direct summand of
\[
 V_1\otimes  (\ind_{\{1\}}^G V_2)\cong \ind_{\{1\}}^G (V_1\otimes
 V_2)  \qquad(\textrm{by Lemma \ref{tensorg0}}).
\]
By \cite[Theorem A.4]{Ca}, $\ind_{\{1\}}^G (V_1\otimes
 V_2) $ is projective. Therefore $V_1\otimes V_2$ is also
 projective.
\end{proof}

\begin{lem}\label{gin}
Let $V_1$ and $V_2$ be two smooth representations of $G$. Let
$P_\bullet\rightarrow V_1$ be a projective resolution of $V_1$, and
let $V_2\rightarrow I^\bullet$ be an injective resolution of $V_2$.
Then for each $i\in \Z$, the $i$-th cohomology  of the complex
$\Hom_{G,k}(P_\bullet, V_2)$ and the  $i$-th cohomology  of the
complex $\Hom_{G,k}(V_1, I^\bullet)$ are both canonically isomorphic
to
\[
\left\{
  \begin{array}{ll}
    \Ext_G^i(\oJ_{G,k}\otimes V_1, V_2), & \hbox{for $k\in \BN$;} \\
   {\varinjlim}_r\Ext_G^i(\oJ_{G,r}\otimes V_1, V_2), & \hbox{for $k=\infty$.}
  \end{array}
\right.
\]
\end{lem}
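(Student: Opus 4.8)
The plan is to prove Lemma~\ref{gin} by combining the adjunction results of the previous subsections (Lemmas~\ref{ginv}, \ref{ginv2}, \ref{tensorproj}) with a standard balancing argument for derived functors of $\Hom_G$. First I would reduce from the generalized statement to an ordinary $\Ext$ statement. By Lemma~\ref{ginv2} (applied termwise, once one checks naturality in both variables), there is an isomorphism of complexes
\[
  \Hom_{G,k}(P_\bullet, V_2)\;\cong\;\Hom_G(\oJ_{G,k}\otimes P_\bullet,\,V_2),
\]
and similarly, using the isomorphism $\Hom_{\C}(V_1,I^\bullet)^{G,k}\cong\Hom_G(\oJ_{G,k},\Hom_\C(V_1,I^\bullet))\cong\Hom_G(\oJ_{G,k}\otimes V_1,I^\bullet)$ from the proof of Lemma~\ref{ginv2},
\[
  \Hom_{G,k}(V_1, I^\bullet)\;\cong\;\Hom_G(\oJ_{G,k}\otimes V_1,\,I^\bullet).
\]
So it suffices to show both complexes compute $\Ext_G^\bullet(\oJ_{G,k}\otimes V_1,V_2)$ for $k\in\BN$.

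The next step is the balancing: I would observe that since $\oJ_{G,k}$ is finite-dimensional over $\C$, Lemma~\ref{tensorproj} shows that $\oJ_{G,k}\otimes P_\bullet \to \oJ_{G,k}\otimes V_1$ is again a projective resolution in $\CM(G)$ (tensoring with the fixed vector space $\oJ_{G,k}$ is exact, and each $\oJ_{G,k}\otimes P_i$ is projective). Hence $\Hom_G(\oJ_{G,k}\otimes P_\bullet, V_2)$ computes $\Ext_G^\bullet(\oJ_{G,k}\otimes V_1, V_2)$ by definition of $\Ext$ via projective resolutions. Dually, $\Hom_G(\oJ_{G,k}\otimes V_1, I^\bullet)$ computes $\Ext_G^\bullet(\oJ_{G,k}\otimes V_1, V_2)$ via the injective resolution $V_2\to I^\bullet$. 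The canonical isomorphism between the two is the usual one obtained from the double complex $\Hom_G(\oJ_{G,k}\otimes P_\bullet, I^\bullet)$, both of whose spectral sequences degenerate.

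For $k=\infty$, I would pass to the colimit. Since $\oJ_{G,\infty}:=\varinjlim_r \oJ_{G,r}$ and each $\oJ_{G,r}$ is finite-dimensional, one has $\Hom_\C(V_1,V_2)^{G,\infty}=\varinjlim_r \Hom_\C(V_1,V_2)^{G,r}$; filtered colimits are exact in $\C$-vector spaces, so they commute with taking cohomology of the complexes $\Hom_{G,r}(P_\bullet,V_2)$ and $\Hom_{G,r}(V_1,I^\bullet)$. Combined with the $k\in\BN$ case this yields the stated $\varinjlim_r \Ext_G^i(\oJ_{G,r}\otimes V_1,V_2)$.

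I expect the only genuinely delicate point to be \emph{naturality/canonicity} of the identifications, rather than any deep input: one must check that the isomorphism of Lemma~\ref{ginv2} is natural in $V_2$ (so that it upgrades from a single representation to the complex $I^\bullet$ and from $P_\bullet$ to a map of complexes) and compatible with the differentials, and that the resulting identification of cohomology with $\Ext$ does not depend on the chosen resolutions. This is routine homological bookkeeping — the kind of verification the authors elsewhere describe as ``routine to check'' — but it is the step that requires care, especially keeping track of the two different $G$-actions (left translation on $\oJ_{G,k}$ versus the conjugation action on $\Hom_\C$) throughout. Everything else is a formal consequence of Lemmas~\ref{tensorproj} and~\ref{ginv2} together with the standard theory of balanced derived functors.
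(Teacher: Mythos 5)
Your proof takes essentially the same route as the paper's: apply Lemma~\ref{ginv2} termwise to rewrite both complexes as $\Hom_G(\oJ_{G,k}\otimes P_\bullet, V_2)$ and $\Hom_G(\oJ_{G,k}\otimes V_1, I^\bullet)$, invoke Lemma~\ref{tensorproj} to see that $\oJ_{G,k}\otimes P_\bullet$ is a projective resolution of $\oJ_{G,k}\otimes V_1$, compare the two via the standard balancing of $\Ext$, and pass to the colimit for $k=\infty$. One small remark: the invocation of finite-dimensionality of $\oJ_{G,k}$ is superfluous (and in fact not guaranteed for a general $\ell$-group $G$, since $\Lambda_G$ need not be finitely generated) — tensoring over $\C$ with any vector space is exact, and Lemma~\ref{tensorproj} needs no finiteness, so the argument goes through without it.
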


\begin{proof}
First we assume that $k\in \BN$. Then \be \label{res0}
\Hom_{G,k}(P_\bullet, V_2)=\Hom_{G}(\oJ_{G,k}\otimes P_\bullet,
V_2), \qquad(\textrm{by Lemma \ref{ginv2}}). \ee By Lemma
\ref{tensorproj}, $\oJ_{G,k}\otimes P_\bullet\rightarrow
\oJ_{G,k}\otimes V_1$ is a projective resolution of
$\oJ_{G,k}\otimes V_1$. Therefore the $i$-th cohomology of the
complex  \eqref{res0} is canonically isomorphic to
$\Ext_G^i(\oJ_{G,k}\otimes V_1, V_2)$. On the other hand, it is
obvious that the $i$-th cohomology of the complex \be \label{res1}
\Hom_{G,k}(V_1, I^\bullet)=\Hom_{G,k}(\oJ_{G,k}\otimes V_1,
I^\bullet) \ee
 is canonically isomorphic to $\Ext_G^i(\oJ_{G,k}\otimes V_1, V_2)$.

The Lemma for $k=\infty$ then follows since taking cohomology
commutes with taking direct limits.
\end{proof}

Denote by $\CM_\mathrm u(\Lambda_G)$ the category of all locally
unipotent representations of $\Lambda_G$. For each $k=0,1,2,\cdots,
\infty$, we have a bi-functor
\[
  \Hom_{G, k}(\cdot\,,\cdot): \CM(G)^{\mathrm{op}}\times \CM(G)\rightarrow \CM_\mathrm u(\Lambda_G).
\]
In view of Lemma \ref{gin}, write
\[
  \Ext_{G, k}^i(\cdot\,,\cdot): \CM(G)^{\mathrm{op}}\times \CM(G)\rightarrow \CM_\mathrm u(\Lambda_G)
\]
for its $i$-th left derived bi-functor ($i\in \Z$).

Let $\Gamma$ be a directed set, i.e. $\Gamma$ is a partially ordered set with a partial order $\leq $ and  for any $\gamma,\gamma'\in \Gamma$, there exists $\gamma''\in \Gamma$, such that $\gamma\leq \gamma''$ and $\gamma'\leq \gamma''$.  We can view $\Gamma$ as a category where morphisms come from the partial order. Let $\Gamma^{\rm o}$ be the opposite category of $\Gamma$.
Let $\CC$ be an abelian category.
  A directed (resp. directed inverse ) system of objects in $\CC$ is a functor from $\Gamma$ (resp. $\Gamma^{\rm o}$) to $\CC$. We can write such a system as $\{V_\gamma\}_{\gamma\in \Gamma}$, where $V_\gamma\in \CC$ and for any $\gamma\leq \gamma'$ in $\Gamma$ we associate a morphism $\phi_{\gamma\gamma'}: V_\gamma\to V_{\gamma'}$ (resp.  $\phi_{\gamma\gamma'}: V_{\gamma'} \to V_{\gamma}$).  We call a directed (directed inverse) system $\{V_\gamma\}_{\gamma\in \Gamma}$ injective (resp. surjective ) if for any $\gamma\leq \gamma'$ the morphism $\phi_{\gamma\gamma'}$ is injective (resp. surjective).

\begin{lem}
\label{Ext_limit} Let $V$ be a smooth representation of $G$, and let
$\{V_\gamma\}_{\gamma\in \Gamma}$ be an injective  directed system of smooth
representations of $G$ where $\Gamma$ is a countable directed set.
Let $k\in \BN$. If  for all $i\in \Z$ and $\gamma\in \Gamma$,   $ \Ext_{G, k}^i(V_\gamma,  V)=0$, then  $\Ext_{G, k}^i(\varinjlim_{\gamma} V_\gamma, V)=0$ for all $i\in\Z$.
\end{lem}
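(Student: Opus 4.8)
The plan is to reduce the vanishing of $\Ext_{G,k}^i(\varinjlim_\gamma V_\gamma, V)$ to an application of Lemma \ref{gin} together with a compatibility of $\Ext$ with filtered colimits in the first variable. First I would record the key translation: by Lemma \ref{gin}, for a fixed $k\in\BN$ and any smooth representation $W$ of $G$ one has $\Ext_{G,k}^i(W,V)\cong \Ext_G^i(\oJ_{G,k}\otimes W, V)$ canonically, and this isomorphism is functorial in $W$. Since tensoring with the fixed module $\oJ_{G,k}$ commutes with direct limits, $\oJ_{G,k}\otimes \varinjlim_\gamma V_\gamma \cong \varinjlim_\gamma (\oJ_{G,k}\otimes V_\gamma)$, and the system $\{\oJ_{G,k}\otimes V_\gamma\}$ is again an injective directed system (tensoring with $\oJ_{G,k}$, which is $\C$-free hence flat over $\C$, preserves injectivity of the transition maps). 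So the statement is equivalent to: if $\Ext_G^i(W_\gamma, V)=0$ for all $i$ and all $\gamma$, where $\{W_\gamma\}=\{\oJ_{G,k}\otimes V_\gamma\}$ is an injective directed system over a countable directed set, then $\Ext_G^i(\varinjlim_\gamma W_\gamma, V)=0$ for all $i$.

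Next I would prove this last assertion directly in the category $\CM(G)$ of smooth representations. Over a countable directed set, a filtered colimit can be rewritten as the colimit of a sequence $W_0\hookrightarrow W_1\hookrightarrow W_2\hookrightarrow\cdots$ (choose a cofinal chain; this is where countability is used), so $\varinjlim_\gamma W_\gamma$ sits in a short exact sequence coming from the telescope construction:
\[
0\longrightarrow \bigoplus_n W_n \xrightarrow{\ \mathrm{id}-\mathrm{shift}\ } \bigoplus_n W_n \longrightarrow \varinjlim_n W_n \longrightarrow 0,
\]
an exact sequence of smooth $G$-representations. Applying $\Ext_G^\bullet(-,V)$ gives a long exact sequence, and using that $\Ext_G^i\bigl(\bigoplus_n W_n, V\bigr)\cong \prod_n \Ext_G^i(W_n,V)$ (arbitrary direct sums in the first variable turn into products, as $\CM(G)$ has enough projectives and coproducts of projectives are projective), the hypothesis $\Ext_G^i(W_n,V)=0$ forces both outer terms to vanish, hence $\Ext_G^i(\varinjlim_n W_n, V)=0$ for all $i$. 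Transporting back through the Lemma \ref{gin} isomorphism yields $\Ext_{G,k}^i(\varinjlim_\gamma V_\gamma, V)=0$.

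The main obstacle, and the only point requiring genuine care, is the homological bookkeeping in $\CM(G)$: one must check that $\CM(G)$ admits arbitrary direct sums, that a direct sum of projectives is projective there (which follows since $\ind_{\{1\}}^G(-)$ produces projectives and commutes with direct sums, as in the proof of Lemma \ref{tensorproj}), and consequently that $\Ext_G^i(-,V)$ converts direct sums into products; and separately that the telescope sequence is exact as a sequence of \emph{smooth} representations (it is, since the forgetful functor to all representations is exact and the colimit of smooth representations is smooth). The reduction to a sequential colimit via a cofinal chain is exactly where the countability hypothesis on $\Gamma$ enters, and without it the telescope argument — and indeed the statement — can fail. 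Everything else is a routine diagram chase through the canonical identifications of Lemma \ref{ginv2} and Lemma \ref{gin}.
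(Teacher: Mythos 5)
Your proof is correct, and it takes a genuinely different route from the paper's. Both arguments begin with the same translation through Lemma~\ref{gin} and Lemma~\ref{ginv2} (replacing $\Ext_{G,k}^i(-,V)$ by $\Ext_G^i(\oJ_{G,k}\otimes -,V)$, or equivalently computing from $\Hom_{G,k}(-,I^\bullet)$). After that they diverge: the paper dualizes to the \emph{inverse} limit $\varprojlim_\gamma \Hom_{G,k}(V_\gamma, I^\bullet)$ and shows this is acyclic by a Mittag--Leffler type argument, using that $\{\Hom_{G,k}(V_\gamma,I^i)\}_\gamma$ and then its kernel systems are surjective inverse systems (this is where both the countability of $\Gamma$ and the injectivity of the transition maps $V_\gamma\to V_{\gamma'}$, combined with injectivity of $I^i$, are used). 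You instead stay on the direct-limit/projective side: extract a cofinal chain from the countable $\Gamma$, form the telescope short exact sequence $0\to\bigoplus_n W_n\to\bigoplus_n W_n\to\varinjlim_n W_n\to 0$, and apply the long exact $\Ext_G$-sequence together with the identification $\Ext_G^i(\bigoplus_n W_n,V)\cong\prod_n\Ext_G^i(W_n,V)$. Your version is cleaner in that it avoids the $\varprojlim^1$-vanishing machinery entirely, and as a bonus it does not in fact use the injectivity of the transition maps (the telescope sequence is exact regardless), so it proves a marginally more general statement; the paper's hypothesis of injectivity is essential to its own argument because that is what makes the inverse systems surjective. Countability is needed in both proofs, but for slightly different reasons: in the paper it guarantees the inverse-limit exactness statement from the Stacks project; in yours it lets you pass to a cofinal chain so the telescope applies.
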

\begin{proof}
In view of Lemma \ref{gin},   $\Ext_{G, k}^i(\varinjlim_{\gamma} V_\gamma, V)$ can be computed as $i$-th cohomology of
$\Hom_{G,k}(\varinjlim_{\gamma} V_\gamma, I^\bullet) $, where $I^\bullet=\{\cdots\to 0\to I^0\to I^1\to \cdots\}$ is an injective resolution of $V$. We have the following isomorphisms,
\begin{eqnarray*}
\Hom_{G,k}(\varinjlim_{\gamma} V_\gamma, I^\bullet)  &\simeq&  \Hom_G(\RJ_{G,k}\otimes (\varinjlim_{\gamma} V_\gamma), I^\bullet    ) \\
    &\simeq&  \varprojlim_{\gamma}  \Hom_G(\RJ_{G,k}\otimes V_\gamma, I^\bullet    )\\
    &\simeq&  \varprojlim_{\gamma}  \Hom_{G,k}( V_\gamma, I^\bullet    ),
    \end{eqnarray*}
    where the first and the third isomorphisms follow from Lemma \ref{ginv2}, and the second isomorphism is a general property of Hom functor.  Therefore it suffices to show that the inverse limit of the system of complexes $\{ \Hom_{G,k}( V_\gamma, I^\bullet    )\}_{\gamma\in\Gamma}$ is acyclic.

    Let $ X_\gamma^\bullet=\{\cdots\to 0\to X^0_\gamma\to  X^1_\gamma\to \cdots  \}$ be the cochain complex $\Hom_{G,k}( V_\gamma, I^\bullet    )$.  We get a directed inverse system of cochain complexes $\{X_\gamma^\bullet\}_{\gamma\in \Gamma}$. The directed inverse system $\{X_\gamma^i\}_{\gamma\in \Gamma}$ is surjective for each $i$ since $I^i$ is an injective module and $\phi_{\gamma\gamma'}:V_\gamma\to V_{\gamma'}$ is an injective morphism.   By assumption on the vanishing of $ \Ext_{G, k}^i(V_\gamma,  V)$ for any $i$ and $\gamma$,
      we get an acyclic complex of surjective directed inverse systems,
    $$\cdots\to 0\to  \{ X_\gamma^0 \}_{\gamma\in \Gamma}  \xrightarrow{ \{d_\gamma^0\}} \{ X^1_\gamma \}_{\gamma\in \Gamma}  \xrightarrow{\{d_\gamma^1\}}  \cdots .$$
 Let ${\rm Ker}_\gamma^i$ be the kernel of $d_{\gamma}^i$ and let ${\rm Im}_{\gamma}^i$ be the image of $d_{\gamma}^i$.
For every $i$, we have ${\rm Ker}_\gamma^i={\rm Im}_{\gamma}^{i-1}$.  Note that ${\rm Ker}_\gamma^1=X_\gamma^0$ and  we have short exact sequences
  $$0\to  {\rm Ker}_\gamma^i\to  X_\gamma^i\to   {\rm Ker}_\gamma^{i+1}\to 0.$$
 By induction it is easy to see that for all $i$ the directed inverse system $ \{{\rm Ker}_\gamma^i\}_{\gamma\in \Gamma}$ is surjective. Hence  for all $i$ we have the following short exact sequences ( see \cite[Lemma 10.85.4]{DJ})
   $$0\to  \varprojlim_{\gamma}    {\rm Ker}_\gamma^i\to  \varprojlim_{\gamma}   X_\gamma^i\to  \varprojlim_{\gamma}    {\rm Ker}_\gamma^{i+1}\to 0.$$
Combining all these short exact sequences, we conclude that the complex $\varprojlim_{\gamma}  X_\gamma^\bullet$ is acyclic.

\end{proof}

\subsection{A vanishing Theorem of generalized extensions}
The main result of this subsection is the following theorem.

\begin{thml}
\label{Vanishing_Theorem} Assume that $G=\mathsf G(\rk)$ for some connected
linear algebraic group $\mathsf G$ defined over $\rk$. Let $V_1$
and $V_2$ be two smooth representations of $G$. Assume that there
are two distinct characters $\chi_1$ and $\chi_2$ of $G$ such that
both $V_1\otimes \chi_1^{-1}$ and $V_2\otimes \chi_2^{-1}$ are
locally unipotent as representations of $G$, then
\[
  \Ext_{G, k}^i(V_1, V_2)=0,\qquad i\in \Z, \,k=0,1,2,\cdots, \infty.
\]

\end{thml}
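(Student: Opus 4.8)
The plan is to reduce the statement to a computation on the level of group algebras and then exploit that the two characters $\chi_1,\chi_2$ differ on the finitely generated abelian group $\Lambda_G$. First I would use Lemma~\ref{gin} to identify $\Ext_{G,k}^i(V_1,V_2)$ with $\varinjlim_r\Ext_G^i(\RJ_{G,r}\otimes V_1,V_2)$ when $k=\infty$ (and with the single term $\Ext_G^i(\RJ_{G,k}\otimes V_1,V_2)$ when $k\in\BN$); by Lemma~\ref{Ext_limit} it then suffices to treat the case $k\in\BN$, since the direct limit over $r$ of a vanishing system vanishes. So I fix $k\in\BN$ and must show $\Ext_G^i(\RJ_{G,k}\otimes V_1,V_2)=0$ for all $i$.

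Next I would exploit the hypotheses. Write $W_1:=V_1\otimes\chi_1^{-1}$ and $W_2:=V_2\otimes\chi_2^{-1}$, both locally unipotent. Twisting by a character is an exact autoequivalence of $\CM(G)$, so $\Ext_G^i(\RJ_{G,k}\otimes V_1,V_2)\cong\Ext_G^i(\RJ_{G,k}\otimes W_1\otimes\chi_1,W_2\otimes\chi_2)\cong\Ext_G^i(\RJ_{G,k}\otimes W_1,W_2\otimes\chi)$ where $\chi:=\chi_2\chi_1^{-1}$ is a \emph{nontrivial} character of $G$. Moreover $\RJ_{G,k}\otimes W_1$ is still locally unipotent: $\RJ_{G,k}$ is locally unipotent and the tensor product of locally unipotent smooth representations is locally unipotent (by the Leibniz-type Lemma on $\Hom_{G,k}$, applied to identity maps, or directly from \eqref{geninv}). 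So the claim reduces to: for any locally unipotent smooth representation $A$, any locally unipotent smooth representation $B$, and any nontrivial character $\chi$ of $G$, one has $\Ext_G^i(A,B\otimes\chi)=0$ for all $i$.

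To prove this I would pass to $\Lambda_G=G/G^\circ$. Since $A$ and $B$ are locally unipotent, $G^\circ$ acts trivially on them, so they are (locally unipotent) representations of $\Lambda_G$; a nontrivial character $\chi$ of $G$ restricted to $G^\circ$ is trivial (as $\chi$ is locally constant and $G^\circ$ is generated by compact subgroups), hence $\chi$ factors through $\Lambda_G$ and, being nontrivial on $G$, is nontrivial on $\Lambda_G$. By the earlier cited proposition, $\Lambda_G\cong\Z^d$ is a free abelian group; the inflation-restriction / Shapiro-type comparison (or simply the fact that $\CM(\Lambda_G)$ is the module category of $\C[\Lambda_G]$ and $\Ext$ over $G$ of representations inflated from $\Lambda_G$ agrees with $\Ext$ over $\C[\Lambda_G]$, since $\ind_{G^\circ}^G$ of the trivial rep is a free $\C[\Lambda_G]$-module and computing $\Ext_G$ via a $\C[\Lambda_G]$-projective resolution works) reduces everything to: $\Ext^i_{\C[\Z^d]}(A,B\otimes\chi)=0$ for a nontrivial character $\chi:\Z^d\to\C^\times$, where $A,B$ are modules on which the augmentation ideal acts locally nilpotently. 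Here one filters $A$ (resp.\ $B$) by the locally unipotent filtration so that successive quotients are trivial $\Z^d$-modules (direct sums of the trivial module); by the long exact sequence and a colimit argument it reduces to the case $A=B=\C$ trivial, i.e.\ to showing $\Ext^i_{\C[\Z^d]}(\C,\C_\chi)=0$ for the nontrivial character $\chi$. Using the Koszul resolution of $\C$ over $\C[\Z^d]=\C[t_1^{\pm1},\dots,t_d^{\pm1}]$, these $\Ext$ groups are the cohomology of the Koszul complex for the sequence $(t_1-1,\dots,t_d-1)$ acting on $\C_\chi$; since some $t_j$ acts by $\chi(e_j)\ne1$, the element $t_j-1$ acts invertibly on $\C_\chi$, which forces all Koszul cohomology to vanish. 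Tracing back, all the $\Ext_{G,k}^i(V_1,V_2)$ vanish.

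The main obstacle I anticipate is the bookkeeping in the reduction from $G$ to $\Lambda_G$: one must justify that $\Ext$ computed in $\CM(G)$ for objects inflated from $\CM(\Lambda_G)$ coincides with $\Ext$ computed in $\C[\Lambda_G]$-modules, and that a locally unipotent representation admits an exhaustive filtration with trivial subquotients compatible with the direct-limit arguments of Lemma~\ref{Ext_limit}. Once that framework is in place, the actual vanishing is the soft Koszul argument above, exploiting that $\chi_1\ne\chi_2$ means the invertible operator $t_j-1$ kills the relevant complex.
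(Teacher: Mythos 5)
Your reduction steps (Lemma~\ref{gin}, Lemma~\ref{Ext_limit}, twisting so that the target is $B\otimes\chi$ with $\chi=\chi_2\chi_1^{-1}$ nontrivial and $A,B$ locally unipotent) and your Koszul computation over $\C[\Lambda_G]\cong\C[t_1^{\pm1},\dots,t_d^{\pm1}]$ are both sound, and the Koszul argument is a nice alternative to the paper's geometric argument (which uses $\mathrm{Spec}(\C[\Lambda])$ and disjointness of supports, Lemma~\ref{extdisj}). However, the pivotal step --- passing from $G$ to $\Lambda_G=G/G^\circ$ --- has a genuine gap.

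You assert that a nontrivial locally constant character $\chi$ of $G$ is automatically trivial on $G^\circ$ ``as $\chi$ is locally constant and $G^\circ$ is generated by compact subgroups.'' This is false: local constancy only gives triviality on \emph{some} open compact subgroup, not on every compact subgroup. For instance, with $\mathsf G=\GL_1$, $G=\rk^\times$, one has $G^\circ=\rr^\times$, and any ramified character of $\rk^\times$ is nontrivial on $\rr^\times$. So $\chi$ need not factor through $\Lambda_G$, and the clean inflation argument you propose does not apply directly. The case $\chi|_{G^\circ}$ nontrivial requires showing that $\oH_q(G^\circ,\chi^{-1})$ vanishes for all $q$, which uses the structure of $\mathsf G$ in an essential way. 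The paper handles this by first splitting off the unipotent radical $\mathsf N$ (Lemma~\ref{Vanishing_unip} and Hochschild--Serre, Lemma~\ref{Serre-Hoschild}) and then invoking vanishing of homology for $L^\circ$ with $L$ the $\rk$-points of a connected reductive group (Lemma~\ref{Vanishing_red}, which rests on Lemma~\ref{Vanishing_ss} for semisimple groups, i.e., on \cite[Theorem A.13]{Ca}). Your reduction to $\Lambda_G$ would only become correct after such an argument has disposed of the ``$\chi$ nontrivial on $G^\circ$'' case, and the remaining ``$\chi$ trivial on $G^\circ$'' case still needs the vanishing of higher $\oH_q(G^\circ,\C)$ before the $\C[\Lambda_G]$-module computation applies --- again a consequence of Lemma~\ref{Vanishing_red} rather than something free. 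In short, the connectedness-of-$\mathsf G$ hypothesis is doing real structural work that your proposal silently discards at the $G^\circ$ step.
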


We remark that Theorem \ref{Vanishing_Theorem} fails without the connectedness assumption on $\mathsf G$. Instead, we will use the following corollary in the disconnected case.

\begin{corl}
\label{corvanish} Let $G$ be an $\ell$-group which contains $\mathsf G(\rk)$ as an open normal subgroup of finite index, where $\mathsf G$ is a  connected linear algebraic group defined over $\rk$. Let $V_1$
and $V_2$ be two smooth representations of $G$. Assume that there
are two distinct characters $\chi_1$ and $\chi_2$ of $\mathsf G(\rk)$ such that
both $(V_1)|_{\mathsf G(\rk)}\otimes \chi_1^{-1}$ and $(V_2)|_{\mathsf G(\rk)}\otimes \chi_2^{-1}$ are
locally unipotent as representations of $\mathsf G(\rk)$, then
\[
  \Ext_{G, k}^i(V_1, V_2)=0,\qquad i\in \Z, \,k=0,1,2,\cdots, \infty.
\]

\end{corl}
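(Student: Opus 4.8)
The plan is to deduce Corollary \ref{corvanish} from Theorem \ref{Vanishing_Theorem} by a standard induction/restriction argument. Since $\mathsf G(\rk)$ is an open normal subgroup of finite index in $G$, the functors $\ind_{\mathsf G(\rk)}^G$ and the restriction functor $\Res_{\mathsf G(\rk)}^G$ are exact, and for finite index open subgroups $\ind$ coincides with coinduction; in particular $\ind_{\mathsf G(\rk)}^G$ takes projectives to projectives and injectives to injectives. First I would record the compatibility of the generalized $\Ext$ groups with restriction: because $\RJ_{G,k}$ is built from the augmentation ideal of $\C[\Lambda_G]$ and $\Lambda_{\mathsf G(\rk)} \to \Lambda_G$ has finite cokernel (indeed $G^\circ$ and $\mathsf G(\rk)^\circ$ compare directly, as $G^\circ \subset \mathsf G(\rk)$ since $\mathsf G(\rk)$ is open), one checks that the defining filtration $V^{G,k}$ and $V^{\mathsf G(\rk),k}$ interact tamely. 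More precisely, using the description $\Ext_{G,k}^i(V_1,V_2) = \varinjlim_r \Ext_G^i(\RJ_{G,r}\otimes V_1, V_2)$ from Lemma \ref{gin} (or its $k$-truncated version), I would reduce to comparing ordinary $\Ext_G^i$ and $\Ext_{\mathsf G(\rk)}^i$.

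The key step is the following reduction. By Frobenius reciprocity / Shapiro's lemma for the finite-index pair $(\mathsf G(\rk), G)$ — which holds because $\ind = \mathrm{coind}$ here — one has, for any smooth $G$-representations $W_1, W_2$,
\[
  \Ext_G^i(W_1, W_2) \hookrightarrow \Ext_{\mathsf G(\rk)}^i\!\bigl((W_1)|_{\mathsf G(\rk)}, (W_2)|_{\mathsf G(\rk)}\bigr),
\]
in fact $\Ext_G^i(W_1,W_2)$ is a direct summand of the right-hand side: the composition $\Res \circ \ind$ applied to $W_2$ contains $W_2$ as a direct summand (the averaging/trace map over $G/\mathsf G(\rk)$ splits the unit $W_2 \to \Res\,\ind\,W_2$, up to the harmless factor $[G:\mathsf G(\rk)]$ which is invertible in $\C$), and then $\Ext_G^i(W_1, \Res\,\ind\,W_2) \cong \Ext_{\mathsf G(\rk)}^i((W_1)|_{\mathsf G(\rk)}, (W_2)|_{\mathsf G(\rk)})$ by Shapiro. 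Applying this with $W_1 = \RJ_{G,r}\otimes V_1$, and observing that $(\RJ_{G,r})|_{\mathsf G(\rk)}$ is a successive extension of copies of the trivial representation (hence locally unipotent), one gets that $\Ext_{G,k}^i(V_1,V_2)$ embeds into a group computed entirely over $\mathsf G(\rk)$ whose vanishing is governed by the hypotheses. Twisting by $\chi_1$, $\chi_2$ and using that $(V_1)|_{\mathsf G(\rk)}\otimes \chi_1^{-1}$ and $(V_2)|_{\mathsf G(\rk)}\otimes \chi_2^{-1}$ are locally unipotent with $\chi_1 \neq \chi_2$, Theorem \ref{Vanishing_Theorem} applied to $\mathsf G$ gives the vanishing over $\mathsf G(\rk)$.

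A small subtlety to handle carefully: the characters $\chi_1, \chi_2$ in the corollary are characters of $\mathsf G(\rk)$, not of $G$, so one cannot literally twist $V_1, V_2$ by them over $G$. This is why the argument must pass through restriction to $\mathsf G(\rk)$ first, rather than trying to twist and then restrict; all that is needed from $G$ is the exactness of $\ind$/$\Res$ and the splitting, which does not require the twist to be defined on $G$. I would also need to note that $\chi_1 \neq \chi_2$ as characters of $\mathsf G(\rk)$ is exactly the hypothesis of Theorem \ref{Vanishing_Theorem} when that theorem is applied with group $\mathsf G$, so no compatibility with the $G$-action is required there.

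The main obstacle I anticipate is bookkeeping rather than conceptual: verifying cleanly that $(\RJ_{G,r})|_{\mathsf G(\rk)}$ is locally unipotent — equivalently that the image of $G^\circ$ in $\RJ_{G,r}$ relates correctly to $\mathsf G(\rk)^\circ$ — and that the Shapiro/direct-summand argument is compatible with the directed colimit $\varinjlim_r$ defining the $k=\infty$ case (this is where Lemma \ref{Ext_limit}, or just exactness of filtered colimits, is invoked). Once these compatibilities are in place, the vanishing for $i\in\Z$ and all $k$ follows immediately from Theorem \ref{Vanishing_Theorem}.
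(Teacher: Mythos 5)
Your argument is correct, and it lands on the same essential mechanism as the paper's proof --- a finite-index averaging trick in characteristic zero --- but packages it differently. The paper's proof is noticeably more direct: after reducing to $k=0$ via Lemma \ref{gin} (noting that $(\oJ_{G,k}\otimes V_1)|_{\mathsf G(\rk)}\otimes\chi_1^{-1}$ is still locally unipotent because a tensor product of locally unipotent representations is locally unipotent), it takes a projective resolution $P_\bullet\to V_1$ in $\CM(G)$, observes that its restriction is still a projective resolution in $\CM(\mathsf G(\rk))$, applies Theorem \ref{Vanishing_Theorem} to conclude $\Hom_{\mathsf G(\rk)}(P_\bullet,V_2)$ is acyclic, and then notes that $\Hom_G(P_\bullet,V_2)=\bigl(\Hom_{\mathsf G(\rk)}(P_\bullet,V_2)\bigr)^{G/\mathsf G(\rk)}$ --- a complex of invariants under a \emph{finite} group, hence an exact functor over $\C$ --- so acyclicity passes down. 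Your route instead invokes $\ind=\Ind$ for the finite-index pair, uses the unit/averaging splitting $V_2\hookrightarrow\Res\,\ind(V_2|_{\mathsf G(\rk)})$, and Shapiro's lemma to exhibit $\Ext_G^i(W_1,V_2)$ as a direct summand of $\Ext_{\mathsf G(\rk)}^i(W_1|_{\mathsf G(\rk)},V_2|_{\mathsf G(\rk)})$. This is valid and equivalent, just heavier machinery for the same averaging idea; one could say the paper's proof "takes invariants of the resolved complex" while yours "takes a direct summand of the derived functor," and in characteristic zero these are two sides of the same coin.

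One remark on the concern you flagged: verifying that $(\RJ_{G,r})|_{\mathsf G(\rk)}$ is locally unipotent requires no delicate comparison of $G^\circ$ with $\mathsf G(\rk)^\circ$, nor any discussion of cokernels of $\Lambda_{\mathsf G(\rk)}\to\Lambda_G$. The point is purely formal: if $V$ is locally unipotent over $G$, then for every $v\in V$ there is $k$ with $(g_0-1)\cdots(g_k-1)v=0$ for \emph{all} $g_0,\dots,g_k\in G$, and this condition restricted to tuples from any subgroup $H\subset G$ is a priori weaker, so $V|_H$ is also locally unipotent (smoothness trivially restricts as well). The paper's one-line observation that the tensor product of two locally unipotent representations is locally unipotent is then exactly what one needs to reduce to $k=0$, and your worry dissolves.
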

\begin{proof}
Note that the
tensor product of two locally unipotent representations is also a
locally unipotent representation. By Lemma \ref{gin}, it suffices to
prove the corollary for $k=0$.
Let $P_\bullet$ be a projective resolution of $V_1$. Then $(P_\bullet)|_{\mathsf G(\rk)}$ is a projective resolution of $(V_1)|_{\mathsf G(\rk)}$. By Theorem \ref{Vanishing_Theorem}, the complex $\Hom_{\mathsf G(\rk)}(P_\bullet, V_2)$ is acyclic. Therefore the complex $\Hom_{G}(P_\bullet, V_2)$, which equals the complex $(\Hom_{\mathsf G(\rk)}(P_\bullet, V_2))^{G/\mathsf G(\rk)}$ of the $G/\mathsf G(\rk)$-invariant vectors, is also acyclic. This proves the corollary.
\end{proof}

The rest of this  subsection is devoted to a proof of Theorem \ref{Vanishing_Theorem}.

\begin{lem}
\label{translate} Let $V_1$ and $V_2$ be two smooth representations
of an $\ell$-group $G$. Then for each character $\chi$ of $G$, there
is an isomorphism
\[
  \Ext_{G, k}^i(V_1, V_2)\cong \Ext_{G, k}^i(V_1\otimes \chi, V_2\otimes \chi),\qquad i\in \Z, \,k=0,1,2,\cdots, \infty
\]
of locally unipotent representations of $\Lambda_G$.
\end{lem}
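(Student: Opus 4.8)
The plan is to reduce everything to the case $k=0$ and then exhibit an explicit natural isomorphism on the level of the functors $\Hom_{G,k}(\cdot,\cdot)$, and check that it is compatible with the derived-functor machinery set up in Lemma \ref{gin}. First I would observe that, since $\Ext_{G,k}^i$ is by definition the $i$-th left derived bi-functor of $\Hom_{G,k}(\cdot,\cdot)$, and since (by Lemma \ref{ginv2}) $\Hom_{G,k}(V_1,V_2)=\Hom_G(\RJ_{G,k}\otimes V_1,V_2)$, it is enough to produce a natural isomorphism of the underlying non-derived functors that is compatible with projective resolutions; the isomorphism on $\Ext$-groups then follows formally. So the core of the argument is to construct, for smooth representations $V_1,V_2$ and a character $\chi$ of $G$, a natural isomorphism $\Hom_{G,k}(V_1,V_2)\xrightarrow{\sim}\Hom_{G,k}(V_1\otimes\chi,V_2\otimes\chi)$ of representations of $\Lambda_G$.

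The natural map is the obvious one: given $\phi\in\Hom_\C(V_1,V_2)$, send it to $\phi\otimes\mathrm{id}_\chi\in\Hom_\C(V_1\otimes\chi,V_2\otimes\chi)$. The key point is to check that this intertwines the $G$-actions on $\Hom_\C(V_1,V_2)$ and $\Hom_\C(V_1\otimes\chi,V_2\otimes\chi)$ up to nothing at all — i.e. that it is $G$-equivariant. Using \eqref{actint}, for $v\in V_1$ and a basis vector $w$ of the one-dimensional space $\chi$, one computes
\[
  (g.(\phi\otimes\mathrm{id}))(v\otimes w)=g.\bigl(\phi(g^{-1}.v)\otimes g^{-1}.w\bigr)=\chi(g)\chi(g)^{-1}\,g.\phi(g^{-1}.v)\otimes w=\bigl((g.\phi)\otimes\mathrm{id}\bigr)(v\otimes w),
\]
so the twist by $\chi$ on source and target cancels and the map is genuinely $G$-equivariant. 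Consequently it carries $(\Hom_\C(V_1,V_2))^{G,k}$ isomorphically onto $(\Hom_\C(V_1\otimes\chi,V_2\otimes\chi))^{G,k}$ for every $k\in\{0,1,2,\dots,\infty\}$, and it is manifestly $\Lambda_G$-equivariant and functorial in $V_1$ and $V_2$. Its inverse is twisting by $\chi^{-1}$, so it is an isomorphism of bi-functors $\CM(G)^{\mathrm{op}}\times\CM(G)\to\CM_{\mathrm u}(\Lambda_G)$.

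Finally I would pass to the derived functors. Since $\chi$ is a smooth character, the functor $V_1\mapsto V_1\otimes\chi$ is an exact auto-equivalence of $\CM(G)$ sending projectives to projectives (a direct summand of $\ind_{\{1\}}^G V_1$ twists to a direct summand of $\ind_{\{1\}}^G (V_1\otimes\chi)$ by Lemma \ref{tensorg0}), so if $P_\bullet\to V_1$ is a projective resolution then $P_\bullet\otimes\chi\to V_1\otimes\chi$ is one as well. The natural isomorphism above then induces an isomorphism of complexes $\Hom_{G,k}(P_\bullet,V_2)\cong\Hom_{G,k}(P_\bullet\otimes\chi,V_2\otimes\chi)$ of locally unipotent $\Lambda_G$-representations, and taking $i$-th cohomology gives the claimed isomorphism $\Ext_{G,k}^i(V_1,V_2)\cong\Ext_{G,k}^i(V_1\otimes\chi,V_2\otimes\chi)$; Lemma \ref{gin} guarantees that this cohomology computes the derived bi-functor and that the construction is independent of the chosen resolution, including in the $k=\infty$ case where one passes to the direct limit over $r$. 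There is no serious obstacle here — the only thing that requires a moment's care is the cancellation of the two $\chi$-twists in the equivariance check, which is exactly what makes the statement true and is the reason the lemma is phrased with the \emph{same} character on both arguments.
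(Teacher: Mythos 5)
Your proof is correct and rests on the same underlying idea as the paper's: twisting by $\chi$ is an exact auto-equivalence of $\CM(G)$, and the $G$-actions on $\Hom_\C(V_1,V_2)$ and $\Hom_\C(V_1\otimes\chi,V_2\otimes\chi)$ agree on the nose, so $\Hom_{G,k}$ is invariant under the simultaneous twist. The only cosmetic difference is that the paper resolves $V_2$ injectively (and notes that $I^\bullet\otimes\chi$ is an injective resolution of $V_2\otimes\chi$) while you resolve $V_1$ projectively; these are dual formulations of the same computation, and you additionally spell out the equivariance check that the paper leaves implicit.
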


\begin{proof}
Take an injective resolution
\[
  0\rightarrow V_2 \rightarrow I_0\rightarrow I_1\rightarrow
  I_2\rightarrow \cdots
\]
of $V_2$. Then
\[
  0\rightarrow V_2\otimes \chi \rightarrow I_0\otimes \chi\rightarrow I_1\otimes \chi\rightarrow
  I_2\otimes \chi\rightarrow \cdots
\]
is  an injective resolution of $V_2\otimes \chi$. Therefore the
lemma follows.
\end{proof}

The following Lemma is well known and is an easy consequence of Lemma
\ref{tensorproj}.

\begin{lem}
\label{Ext_Coinvariant} Let $V_1, V_2$ be two smooth representations
of an $\ell$-group $G$. Then for all $i\in \Z$,
\[
 \Ext_G^i(V_1,V_2^\vee)\cong \oH_i(G, V_1\otimes V_2)^*.
\]
In particular,
$$\oH_i(G,V)^*\cong \Ext_G^i(V,\C), $$
for all smooth representation $V$ of $G$.
\end{lem}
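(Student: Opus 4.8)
The plan is to reduce the first isomorphism to the standard computation of group (co)homology via projective resolutions, using the tensor-projectivity Lemma \ref{tensorproj} to produce a resolution adapted to the duality between $\Ext$ and coinvariants. Concretely, I would choose a projective resolution $P_\bullet \to V_1$ in $\CM(G)$. Then $\Ext_G^i(V_1, V_2^\vee)$ is computed as the $i$-th cohomology of $\Hom_G(P_\bullet, V_2^\vee)$. The key point is the natural adjunction-style identification
\[
  \Hom_G(P, V_2^\vee) \cong \Hom_G(P \otimes V_2, \C) \cong \bigl((P \otimes V_2)_G\bigr)^*,
\]
valid for any smooth representation $P$, where $(\cdot)_G$ denotes the (smooth) coinvariants and $(\cdot)^*$ the full linear dual; here $V_2^\vee$ is the smooth contragredient and the middle term uses that $\Hom_G(W,\C)=\Hom_G(W_G,\C)=(W_G)^*$ for smooth $W$. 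Since taking full linear duals is exact, the cohomology of $\Hom_G(P_\bullet \otimes V_2, \C) = ((P_\bullet \otimes V_2)_G)^*$ is the dual of the homology of $(P_\bullet \otimes V_2)_G$.

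Next I would argue that $P_\bullet \otimes V_2 \to V_1 \otimes V_2$ is again a projective resolution: exactness is preserved because tensoring over $\C$ is exact, and each $P_i \otimes V_2$ is projective by Lemma \ref{tensorproj}. Therefore the homology of $(P_\bullet \otimes V_2)_G = \C \otimes_{\C[G]}^{\mathrm{der}}(P_\bullet \otimes V_2)$ computes $\oH_i(G, V_1 \otimes V_2)$, in the appropriate sense of homology in the category of smooth representations (the derived functor of the coinvariants functor, equivalently $\Tor$ against the trivial module). Combining, we get
\[
  \Ext_G^i(V_1, V_2^\vee) \cong \bigl(\oH_i(G, V_1 \otimes V_2)\bigr)^*,
\]
which is the first claim. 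The second claim, $\oH_i(G,V)^* \cong \Ext_G^i(V, \C)$, follows by specializing: take $V_2 = \C$ (so $V_2^\vee = \C$) and $V_1 = V$, using that $V \otimes \C = V$.

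The main obstacle — really the only subtle point — is making sure the duality $\Hom_G(P, V_2^\vee) \cong ((P \otimes V_2)_G)^*$ is correct and natural in the smooth setting, i.e. that one is using the smooth contragredient $V_2^\vee$ and the smooth coinvariants, and that the full linear dual (not the smooth dual) appears on the homology side; this is exactly why the statement produces $\oH_i(G,\,\cdot\,)^*$ rather than a smooth dual. One should also check that the derived functors of coinvariants in $\CM(G)$ agree with the usual $\oH_i(G,\,\cdot\,)$, which is standard for $\ell$-groups (e.g. via $\ind_{\{1\}}^G$-resolutions as in the proof of Lemma \ref{tensorproj}). Everything else is a formal manipulation of exact functors and projective resolutions, so I would present it tersely.
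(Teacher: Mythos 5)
Your argument is correct and is exactly the standard proof the paper has in mind: the paper simply remarks that the lemma "is an easy consequence of Lemma \ref{tensorproj}," and your proposal supplies the intended details — the adjunction $\Hom_G(P,V_2^\vee)\cong\bigl((P\otimes V_2)_G\bigr)^*$, exactness of the full linear dual, and the fact that $P_\bullet\otimes V_2$ is again a projective resolution by Lemma \ref{tensorproj}.
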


Here and henceforth, a superscript ``$^\vee$" indicates the smooth
contragredient of a smooth representation,  a superscript ``$^*$"
indicates the space of all linear functionals, and ``$\oH_i$" indicates the $i$-th homology group.

\begin{lem}
\label{Vanishing_unip} Let $\mathsf U$ be a unipotent linear
algebraic group over $\rk$, and put $U:=\mathsf U(\rk)$. Let
$\chi$ be a character of $U$. Then for each $i\in \Z$,
$$\oH_i(U, \chi)=0 \quad \textrm{if $i\neq 0$ or $\chi$ is non-trivial.}$$
\end{lem}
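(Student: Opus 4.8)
The plan is to induct on $\dim \mathsf U$, using the fact that a unipotent group over a field of characteristic zero admits a central filtration with successive quotients isomorphic to the additive group $\mathbb{G}_a$. The base case is $\mathsf U = \{1\}$, which is trivial, and the key geometric input (when $\dim \mathsf U > 0$) is that there is a central closed subgroup $\mathsf Z \cong \mathbb{G}_a$ defined over $\rk$, so that $Z := \mathsf Z(\rk) \cong (\rk, +)$ is a closed central subgroup of $U$ with $U/Z = \mathsf U'(\rk)$ for the unipotent group $\mathsf U' := \mathsf U/\mathsf Z$ (here I use that $H^1$ of a unipotent group vanishes, so the sequence of $\rk$-points is exact). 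I would then feed this into the Hochschild--Serre (Lyndon) spectral sequence for group homology
\[
  E^2_{p,q} = \oH_p\bigl(U/Z,\, \oH_q(Z,\chi)\bigr) \;\Longrightarrow\; \oH_{p+q}(U,\chi).
\]

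The heart of the matter is the rank-one computation: for a character $\psi$ of $(\rk,+)$, what is $\oH_q(\rk, \psi)$? Since $\rk \cong \varinjlim \rk_n$ is an increasing union of compact open subgroups $\rk_n$ (say $\pi^{-n}\mathcal{O}$), homology commutes with this direct limit, and $\oH_q(\rk_n,\psi)$ is computed from the smooth homology of a compact group. If $\psi$ is nontrivial, then for $n$ large $\psi|_{\rk_n}$ is nontrivial, hence $(\psi)_{\rk_n} = 0$, and in fact all higher homology vanishes because $\rk_n$ has a neighborhood basis of open subgroups on which $\psi$ is trivial but which are not in the kernel --- more cleanly, $\oH_\ast(\rk_n,\psi)$ is the homology of the coinvariants complex, and one checks the colimit over a cofinal system of \emph{smaller} compact subgroups kills everything when $\psi$ is ramified. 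If $\psi$ is trivial, then $\oH_q(\rk,\C)$: one has $\oH_0 = \C$, and for $q\ge 1$ this is where I expect the real work. The clean way is to note $(\rk,+)$ is a union of copies of $\mathcal{O}\cong \varprojlim (\Z/p^m)^d$-type pro-$p$ groups — actually $\mathcal{O}$ is a torsion-free pro-$p$ group — and use that for such a group smooth homology with trivial coefficients can be nonzero; but the \emph{direct limit} over the maps $\rk_n \hookrightarrow \rk_{n+1}$ (multiplication by a uniformizer) is the relevant object, and I claim these transition maps are eventually zero on $\oH_q$ for $q \ge 1$. Concretely, $\oH_1(\mathcal{O},\C) = \mathcal{O} \otimes_{\Z} \C$ (abelianization, which is all of $\mathcal{O}$) and the inclusion $\pi\mathcal{O}\hookrightarrow \mathcal{O}$ induces multiplication by $\pi$, an injection, not zero --- so I need to be careful: the colimit $\oH_1(\rk,\C) = \varinjlim \mathcal{O}\otimes\C$ along multiplication-by-$\pi$ maps, which is $\rk\otimes_{\mathcal O}$-something, is \emph{not} obviously zero. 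The resolution is that smooth homology $\oH_q$ is computed with \emph{coinvariants} not homology of profinite groups in the naive sense, and for the additive group of a local field with trivial coefficients one indeed gets $\oH_q(\rk,\C) = 0$ for $q\ge 1$ because $\rk$ is a divisible(!) group as a $\Z$-module... no, $\rk$ is not divisible. Let me instead invoke: $\rk$ with trivial $\C$-action, $\oH_\ast(\rk,\C) = \bigwedge^\ast_\C(\rk\otimes_\Z\C)$ would be the answer for a discrete group, but smooth homology of the $\ell$-group $\rk$ only sees the \emph{smooth} (= locally constant) structure, and the correct statement is that $\oH_q(\rk,\C)=0$ for all $q$ including $q=0$ is false; rather $\oH_0 = \C$, $\oH_q = 0$ for $q\neq 0$ when working with the Bernstein--Zelevinsky smooth homology because $\rk = \varinjlim \rk_n$ with the maps being \emph{multiplication by }$\pi$ which on $\oH_q(\rk_n,\C)$ for $q\ge 1$ (a uniquely divisible-up-to-$p$ situation) becomes an isomorphism — and a directed colimit of isomorphisms between nonzero groups is nonzero. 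So I must argue differently.

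\textbf{The step I expect to be the main obstacle} is exactly this rank-one vanishing with trivial coefficients, and I believe the right approach is not Hochschild--Serre from the \emph{bottom} but an induction using a \emph{quotient} $\mathbb{G}_a$ together with the observation that for the additive group $N = \mathsf N(\rk)$ of \emph{any} unipotent group, and $\chi$ a character, smooth homology $\oH_\ast(N,\chi)$ can be computed via the standard bar complex restricted to compact opens, and one exhibits an explicit contracting homotopy or uses the Koszul/de Rham resolution: since $\rk$ has characteristic zero, one can use that $\mathsf N$ is $\rk$-isomorphic \emph{as a variety} to affine space $\mathbb A^d$ via the exponential, and the relevant homology is computed by a complex of Schwartz spaces which, after Fourier transform, becomes a Koszul complex on $\hat N$; the cohomology of this Koszul complex is supported at the trivial character, giving precisely $\oH_i(N,\chi) = 0$ unless $\chi$ trivial and $i = 0$, in which case one gets $\C$ concentrated in degree $0$ because the support is a single smooth point. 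I would cite Bernstein's results (as the paper does via \cite{Be} and \cite[Theorem A.4]{Ca}) or Casselman, or prove the $d=1$ case directly: the complex computing $\oH_\ast(\rk,\chi)$ is the two-term complex $\oS(\rk) \xrightarrow{a\mapsto (\text{translation} - \chi(a))} \oS(\rk)$ in the appropriate sense, and Fourier analysis on $\rk$ shows this has homology $\C$ at $\hat\chi$ in degree $0$ only — then bootstrap to general $d$ by the spectral sequence, where now every $E^2_{p,q}$ with $q > 0$ vanishes by induction, collapsing the sequence to give $\oH_i(U,\chi) = \oH_i(U/Z, \chi') $ with a shift handled by the $q=0$ row, and a final induction on $\dim$ finishes it. The bookkeeping that the induced character $\chi'$ on $U/Z$ is trivial iff $\chi$ is trivial (which holds because $Z$ is in the kernel of every character of the unipotent $U$... actually every character of $U$ is trivial since $U$ is a union of compact groups and pro-$p$, wait $\C^\times$ has no $p$-torsion so indeed \emph{every} character of a unipotent $p$-adic group is trivial!) — so in fact $\chi$ is automatically trivial, $\oH_0(U,\C) = \C \neq 0$, and the statement to prove reduces to $\oH_i(U,\C) = 0$ for $i \ge 1$, which the collapsed spectral sequence and induction deliver cleanly.
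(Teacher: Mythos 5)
Your proposal diverges substantially from the paper's proof, which is a one-liner: it cites \cite[Section 3.3, Proposition 10]{Be}, which says that the coinvariants functor $V \mapsto V_U$ is exact on smooth representations of a unipotent $p$-adic group. Exactness kills all $\oH_i$ for $i\ge 1$ at once, and $\oH_0(U,\chi)=\chi_U=0$ for nontrivial $\chi$ is immediate since $(\chi(g)-1)v$ spans $\chi$ as soon as some $\chi(g)\neq 1$. There is no induction on dimension and no spectral sequence in the paper's argument.

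Beyond the difference in route, your proposal contains two genuine errors. First, the claim near the end that ``$\C^\times$ has no $p$-torsion so every character of a unipotent $p$-adic group is trivial'' is false: $\C^\times$ contains the full group $\mu_{p^\infty}$ of $p$-power roots of unity, and the additive group $(\rk,+)$ has abundant nontrivial smooth characters (the standard additive characters $\psi$ of $p$-adic harmonic analysis). So the reduction to the trivial-coefficient case does not hold, and the nontrivial-$\chi$ case must genuinely be handled. Second, the back-and-forth about $\oH_1(\rr,\C)$ (you write $\CO$) conflates discrete group homology with Bernstein--Zelevinsky smooth homology. For any compact $\ell$-group $K$ and any smooth $V$, averaging over $K$ shows $V_K\cong V^K$ and that $V\mapsto V_K$ is an exact functor, hence $\oH_q(K,V)=0$ for all $q\ge 1$; in particular $\oH_1(\rr,\C)=0$, not $\rr\otimes_\Z\C$. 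Your colimit-of-isomorphisms worry then evaporates because all the terms are zero. Once you know the rank-one case (either by this compact-group observation applied to each $\rk_n$, or by the Fourier/Koszul argument you gesture at, or simply by citing Bernstein's exactness result as the paper does), the Hochschild--Serre induction you sketch would indeed go through. But as written, the base case is not established and the reduction used to bypass it is incorrect.
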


\begin{proof}
By \cite[Proposition 10, Section 3.3]{Be}, the coinvariant functor
$V\mapsto V_U$ from the category $\CM(U)$  to the category of
complex vector spaces is exact. This implies the lemma.
\end{proof}
Similar to Lemma \ref{Vanishing_unip}, we have the following lemma for semisimple groups.
\begin{lem}
\label{Vanishing_ss} Let $\mathsf S$ be a connected semisimple linear
algebraic group over $\rk$, and put $S:=\mathsf S(\rk)$. Let
$\chi$ be a character of $S$. Then for each $i\in \Z$,
$$\oH_i(S, \chi)=0 \quad \textrm{if  $i\neq 0$ or $\chi$ is non-trivial.}$$
\end{lem}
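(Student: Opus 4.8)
The cases $i<0$ and $i=0$ are immediate: homology vanishes in negative degrees, and $\oH_0(S,\chi)=\chi_S$ equals $\C$ if $\chi$ is trivial and $0$ otherwise. So the real content is that $\oH_i(S,\chi)=0$ for $i>0$, for an arbitrary character $\chi$. The plan is to reduce in two formal steps to the simply connected, then almost simple, case, and there to use the Bruhat--Tits building.

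\emph{Step 1: reduction to the simply connected cover.} Let $\pi\colon \mathsf S_{\mathrm{sc}}\to\mathsf S$ be the simply connected cover, with finite central kernel $\mu$, and put $S^+:=\pi(\mathsf S_{\mathrm{sc}}(\rk))$. The exact sequence $1\to\mu\to\mathsf S_{\mathrm{sc}}\to\mathsf S\to1$ gives on Galois cohomology an embedding $S/S^+\hookrightarrow \oH^1(\rk,\mu)$, which is finite, and $\pi$ is open, so $S^+$ is an open normal subgroup of finite index; write $Q:=S/S^+$. Since $\C$ has characteristic $0$ and $Q$ is finite, $\oH_i(S,\chi)\cong\oH_i(S^+,\chi|_{S^+})_Q$ for all $i$. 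Moreover $\chi|_{S^+}$ pulls back along $\pi$ to a smooth character $\widetilde\chi$ of $\mathsf S_{\mathrm{sc}}(\rk)$ that is trivial on $\mu(\rk)$; as $\mu(\rk)$ is finite and acts trivially, inflation gives $\oH_i(S^+,\chi|_{S^+})\cong\oH_i(\mathsf S_{\mathrm{sc}}(\rk),\widetilde\chi)$. We are thus reduced to: for $\mathsf S_{\mathrm{sc}}$ semisimple simply connected and any smooth character $\psi$ of $\mathsf S_{\mathrm{sc}}(\rk)$, the homology $\oH_i(\mathsf S_{\mathrm{sc}}(\rk),\psi)$ is concentrated in degree $0$, where it equals $\C$ if $\psi$ is trivial and $0$ otherwise. (Tracking $\oH_0$ back through the two isomorphisms above recovers the asserted behaviour of $\oH_0(S,\chi)$.)

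\emph{Step 2: almost simple factors; the anisotropic and isotropic cases.} Writing $\mathsf S_{\mathrm{sc}}(\rk)=\prod_j\mathsf H_j(\rk_j)$ as the product of its almost simple factors (so each $\mathsf H_j$ is absolutely almost simple simply connected over a finite extension $\rk_j$ of $\rk$) and $\psi=\boxtimes_j\psi_j$ accordingly, the Künneth formula over $\C$ reduces the claim to a single factor $G=\mathsf H(\rk')$. If $\mathsf H$ is $\rk'$-anisotropic, $G$ is compact, its category of smooth representations is semisimple, so $\oH_{>0}(G,\psi)=0$ and $\oH_0(G,\psi)=\psi_G$ is $\C$ or $0$ as $\psi$ is trivial or not. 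If $\mathsf H$ is $\rk'$-isotropic, then by the Kneser--Tits theorem (valid over local fields) $G$ is generated by the $\rk'$-points of unipotent radicals of its proper parabolic $\rk'$-subgroups; since a maximal split torus normalizes each root subgroup $\cong(\rk',+)$ through a nontrivial character, a conjugation-plus-local-constancy argument forces any smooth character of $G$ to be trivial on every such root subgroup, hence $\psi$ is trivial. It then remains to show $\oH_i(G,\C)=0$ for $i>0$. For this I would use the Bruhat--Tits building $\mathcal B$ of $G$: it is contractible and finite-dimensional, $G$ acts simplicially, type-preservingly, with compact open (parahoric) stabilizers and a single closed chamber as strict fundamental domain. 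The augmented simplicial chain complex $C_\bullet(\mathcal B)\to\C$ is then an exact complex of smooth $G$-modules with $C_n\cong\bigoplus_\sigma\ind_{P_\sigma}^{G}\C$, a finite direct sum over the $n$-faces $\sigma$ of the chamber of compact inductions from compact open subgroups, hence projective (as in \cite[Theorem A.4]{Ca}). So $C_\bullet(\mathcal B)\to\C$ is a projective resolution, and $\oH_i(G,\C)$ is the $i$-th homology of its $G$-coinvariants; type-preservation identifies this coinvariant complex with $\bigoplus_\sigma\C$ and the simplicial boundary maps, i.e.\ the chain complex of one closed simplex, which is acyclic in positive degrees. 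Reassembling Steps 1--2 yields the lemma.

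The formal reductions in Step 1 and the first half of Step 2 use only that $\mu(\rk)$ and $S/S^+$ are finite and that $\C$ has characteristic $0$; the anisotropic case is trivial. The one substantial input, and the main obstacle, is the final building argument: one must invoke (or reprove) the precise geometry of the $G$-action on $\mathcal B$ --- contractibility, type-preservation, a chamber as strict fundamental domain, and compact openness of parahoric stabilizers --- in order to exhibit $C_\bullet(\mathcal B)$ as a projective resolution of $\C$ whose coinvariant complex is the chains of a simplex. (If one prefers to avoid the building, the statement $\oH_i(\mathsf H(\rk'),\C)=0$ for $i>0$, $\mathsf H$ isotropic semisimple, is also available in the literature and may simply be quoted.)
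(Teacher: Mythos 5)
Your argument is correct, but it takes a very different route from the paper, which disposes of the lemma in a single line: it dualizes $\oH_i(S,\chi)^*\cong\Ext^i_S(\chi,\C)$ via Lemma \ref{Ext_Coinvariant} and then quotes \cite[Theorem A.13]{Ca} directly. You instead reprove the relevant content of that reference from scratch: first reduce to the simply connected cover (using finiteness of $\mu(\rk)$ and of $S/S^+$, together with characteristic zero so that finite groups are homologically invisible), then to the almost simple factors via K\"unneth, and finally settle the isotropic simply connected case by showing (via Kneser--Tits and torus-conjugation on unipotent radicals) that any smooth character is trivial and then exhibiting the augmented oriented chain complex of the Bruhat--Tits building as a projective resolution of $\C$ whose coinvariant complex is the chains of a single closed simplex. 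The reductions in Step 1 are sound (the kernel $\mu(\rk)$ is central, $\widetilde\chi$ is trivial on it by construction, and $S/S^+$ embeds in $\oH^1(\rk,\mu)$, hence is finite), and the building argument is the standard one for simply connected groups, where type-preservation and the closed chamber being a strict fundamental domain give $C_n\cong\bigoplus_\sigma\ind^G_{P_\sigma}\C$, projective by the reasoning of Lemma \ref{tensorprojind}. One small imprecision: for nonsplit $\mathsf H$ the relative root subgroups need not be isomorphic to $(\rk',+)$; what one really needs, and what your contraction argument actually delivers, is just that a smooth character is trivial on the unipotent radical of a minimal parabolic because a suitable split torus element contracts it to the identity. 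What each approach buys: the paper's citation is maximally economical and defers all the geometry to Casselman; your version is self-contained modulo standard structure theory (Kneser--Tits, Bruhat--Tits), makes the case analysis over $\chi$ transparent, and, as you note at the end, specializes back to the paper's approach if one is willing to quote the $\oH_i(G,\C)$ vanishing rather than reprove it.
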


\begin{proof}

In view of Lemma \ref{Ext_Coinvariant}, this is implied by \cite[Theorem A.13]{Ca}.
\end{proof}

The following lemma is a variant of Hoschild-Serre spectral
sequence, see \cite[Proposition A.9]{Ca}.
\begin{lem}
\label{Serre-Hoschild} Let  $H$ be a closed normal  subgroup of an $\ell$-group $G$.
Let $V$ and $W$ be smooth representations of $G$, with $H$ acting
trivially on $W$. Then there is a spectral sequence
$$E_2^{p,   q }= \Ext^p_{G/H}(\oH_q(H,V), W)\Rightarrow \Ext^{p+q}_G(V,W). $$
\end{lem}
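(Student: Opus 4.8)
The plan is to realize the asserted spectral sequence as the Grothendieck spectral sequence of a composite of functors, with due care for the mixed variance. Consider
\[
T := (\,\cdot\,)_H \colon \CM(G) \to \CM(G/H), \qquad F := \Hom_{G/H}(\,\cdot\,, W)\colon \CM(G/H)^{\mathrm{op}} \to (\text{complex vector spaces}),
\]
where $(\,\cdot\,)_H$ denotes $H$-coinvariants, which is right exact and covariant with left derived functors $\oH_q(H,\,\cdot\,)$ by definition, and $F$ is left exact and contravariant with right derived functors $\Ext^p_{G/H}(\,\cdot\,, W)$. First I would record that $V_H$, with its residual $G/H$-action, is again a smooth representation of $G/H$ (an open subgroup of $G$ fixing a lift of a vector has open image in $G/H$), so $T$ is well defined; applying this to the terms of a projective resolution shows that $\oH_q(H,V)$ indeed lives in $\CM(G/H)$. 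Since $H$ acts trivially on $W$, every $G$-equivariant map $V\to W$ factors uniquely through $V_H$, giving a natural isomorphism $F\circ T \cong \Hom_G(\,\cdot\,, W)$; hence the $n$-th right derived functor of $F\circ T$ is $\Ext^n_G(\,\cdot\,, W)$, the intended abutment.

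The one nontrivial input is that $T$ carries projective objects of $\CM(G)$ to $F$-acyclic objects of $\CM(G/H)$. For this, note that $T$ is left adjoint to the inflation functor $\mathrm{Infl}\colon \CM(G/H)\to\CM(G)$ along $G\twoheadrightarrow G/H$ (the adjunction being precisely the factorization property above), and $\mathrm{Infl}$ is exact; therefore $T$ preserves projectivity, and a projective object of $\CM(G/H)$ is certainly acyclic for $\Hom_{G/H}(\,\cdot\,, W)$. Here we use that $\CM(G)$ has enough projectives (quotients of $\ind_{\{1\}}^{G}(\,\cdot\,)$, as already exploited in Lemma \ref{tensorproj}) and that $\CM(G/H)$, being a Grothendieck abelian category, has enough injectives.

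Concretely, I would then build the spectral sequence from a first-quadrant double complex. Choose a projective resolution $P_\bullet\to V$ in $\CM(G)$ and an injective resolution $W\to I^\bullet$ in $\CM(G/H)$, and set
\[
A^{p,q} := \Hom_{G/H}\bigl((P_q)_H,\, I^p\bigr),
\]
a double complex supported in $p\ge 0$, $q\ge 0$. Filtering by columns: for fixed $q$ the object $(P_q)_H$ is projective in $\CM(G/H)$, so $\Hom_{G/H}((P_q)_H,\,\cdot\,)$ is exact and $A^{\bullet,q}$ has cohomology $\Hom_{G/H}((P_q)_H, W) = \Hom_G(P_q, W)$ concentrated in degree $0$; this spectral sequence degenerates and identifies $H^{\bullet}(\mathrm{Tot}(A))$ with $H^{\bullet}(\Hom_G(P_\bullet, W)) = \Ext^{\bullet}_G(V, W)$. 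Filtering by rows: for fixed $p$ the object $I^p$ is injective, so $\Hom_{G/H}(\,\cdot\,, I^p)$ is exact and $A^{p,\bullet}$ has cohomology $\Hom_{G/H}(\oH_q(H,V), I^p)$; taking cohomology in the $p$-direction then gives $E_2^{p,q} = \Ext^p_{G/H}(\oH_q(H,V), W)$. As the double complex lies in the first quadrant both spectral sequences converge, and comparing abutments yields $E_2^{p,q}=\Ext^p_{G/H}(\oH_q(H,V),W)\Rightarrow \Ext^{p+q}_G(V,W)$.

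The main obstacle — really the only delicate point — is the mixed variance of the composition (a covariant right-exact functor followed by a contravariant left-exact one): this is why I would avoid invoking a black-box Grothendieck spectral sequence and instead run the two filtration spectral sequences of the explicit double complex, where the roles of ``projective'' and ``injective'' must be kept straight. The auxiliary verifications — that coinvariants stays within smooth $G/H$-representations and preserves projectivity, and that the needed resolutions exist — are routine.
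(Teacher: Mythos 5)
Your construction is essentially correct, but note that the paper does not prove Lemma \ref{Serre-Hoschild} at all: it simply cites \cite[Proposition A.9]{Ca}, calling the statement a variant of the Hochschild--Serre spectral sequence. So you are supplying a genuine proof where the paper defers to a reference. Your route --- the two filtration spectral sequences of the double complex $A^{p,q}=\Hom_{G/H}((P_q)_H,I^p)$, with the adjunction between $(\cdot)_H$ and the exact inflation functor guaranteeing that coinvariants of $\CM(G)$-projectives are $\CM(G/H)$-projective --- is the standard way to handle the mixed variance, and all the bookkeeping (first filtration degenerating to $\Ext^\bullet_G(V,W)$ via $\Hom_{G/H}((P_q)_H,W)=\Hom_G(P_q,W)$, second filtration giving $\Ext^p_{G/H}(\oH_q(H,V),W)$, first-quadrant convergence) is carried out correctly. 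The auxiliary facts you use (coinvariants lands in smooth $G/H$-representations, $G/H$ is an $\ell$-group, $\CM(G/H)$ has enough injectives, $\CM(G)$ has enough projectives) are indeed routine in this setting.

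One point, however, is not ``by definition'' and deserves an explicit sentence: in the second filtration you identify the $q$-homology of $(P_\bullet)_H$ with $\oH_q(H,V)$, the homology of the group $H$ with coefficients in $V|_H$, which is what the lemma asserts. This identification requires that the terms of a projective resolution in $\CM(G)$ remain acyclic for $H$-coinvariants after restriction to $H$ (equivalently, that restriction to the closed subgroup $H$ preserves projectivity). This is true, and standard: a projective object of $\CM(G)$ is a direct summand of a sum of representations $\ind_{\{1\}}^G U$, and using a continuous section of $G\to H\backslash G$ one has $\bigl(\ind_{\{1\}}^G U\bigr)\big|_H\cong \ind_{\{1\}}^H\bigl(\oS(H\backslash G)\otimes U\bigr)$, which is projective in $\CM(H)$ by \cite[Theorem A.4]{Ca} (the same mechanism as in Lemma \ref{tensorprojind}). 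Without this step your $E_2$-page would a priori involve the left derived functors of the composite functor $\CM(G)\to\CM(G/H)$ rather than genuine $H$-homology. With that sentence added, your argument is complete and is in substance the proof behind the cited \cite[Proposition A.9]{Ca}.
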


Generalizing  Lemma \ref{Vanishing_ss}, we have the following lemma for reductive groups.
\begin{lem}
\label{Vanishing_red} Let $\mathsf L$ be a connected reductive linear
algebraic group over $\rk$, and put $L:=\mathsf L(\rk)$. Let
$\chi$ be a character of $L^\circ$. Then for each $i\in \Z$,
$$\oH_i(L^\circ, \chi)=0 \quad \textrm{if $i\neq 0$ or $\chi$ is non-trivial.}$$
\end{lem}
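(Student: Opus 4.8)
The plan is to reduce the computation to the semisimple case (Lemma~\ref{Vanishing_ss}) and to the case of a compact group, glued together by the Hochschild--Serre spectral sequence. First I would introduce the derived subgroup $\mathsf L_{\mathrm{der}}:=[\mathsf L,\mathsf L]$, which is a connected semisimple linear algebraic group over $\rk$, normal in $\mathsf L$, with quotient $\mathsf T:=\mathsf L/\mathsf L_{\mathrm{der}}$ a torus. Put $L_{\mathrm{der}}:=\mathsf L_{\mathrm{der}}(\rk)$ and $T:=\mathsf T(\rk)$. Since $\mathsf L_{\mathrm{der}}$ is semisimple its center is finite, so by the Proposition of \cite{Be} quoted above, $L_{\mathrm{der}}/L_{\mathrm{der}}^\circ$ is free abelian of rank $0$, i.e. $L_{\mathrm{der}}=L_{\mathrm{der}}^\circ$. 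In particular every compact subgroup of $L_{\mathrm{der}}$ is a compact subgroup of $L$, so $L_{\mathrm{der}}\subseteq L^\circ$, and $L_{\mathrm{der}}$ is a closed normal subgroup of $L$, hence of $L^\circ$.

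Next I would show that $K:=L^\circ/L_{\mathrm{der}}$ is a compact group. Since the morphism $\mathsf L\to\mathsf T$ is smooth and surjective, it induces an open map on $\rk$-points, so $L/L_{\mathrm{der}}$ is identified with an open subgroup of $T$; as $L^\circ$ is open in $L$, the subgroup $K$ is open, hence closed, in $T$. On the other hand $L^\circ$ is generated by compact subgroups of $L$, whose images in $T$ are compact and therefore contained in the maximal compact subgroup $T^\circ$ of $T$, which is itself compact by the structure of the group of $\rk$-points of a torus. Thus $K\subseteq T^\circ$ is a closed subgroup of a compact group, so $K$ is compact; being also totally disconnected it is profinite, and hence the category of smooth complex representations of $K$ is semisimple, so that $\Ext^i_K(-,-)=0$ for $i>0$.

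Now fix a character $\chi$ of $L^\circ$. By Lemma~\ref{Ext_Coinvariant} it suffices to prove that $\Ext^i_{L^\circ}(\chi,\C)=0$ unless $i=0$ and $\chi$ is trivial. I would apply the Hochschild--Serre spectral sequence of Lemma~\ref{Serre-Hoschild} to the normal subgroup $L_{\mathrm{der}}$ of $L^\circ$, with quotient $K$, taking $V=\chi$ and $W=\C$ (on which $L_{\mathrm{der}}$ acts trivially):
\[
  E_2^{p,q}=\Ext^p_K\!\bigl(\oH_q(L_{\mathrm{der}},\chi|_{L_{\mathrm{der}}}),\C\bigr)\ \Longrightarrow\ \Ext^{p+q}_{L^\circ}(\chi,\C).
\]
By Lemma~\ref{Vanishing_ss} applied to $\mathsf L_{\mathrm{der}}$, one has $\oH_q(L_{\mathrm{der}},\chi|_{L_{\mathrm{der}}})=0$ for $q\neq 0$, while $\oH_0(L_{\mathrm{der}},\chi|_{L_{\mathrm{der}}})$ is the coinvariant space, which vanishes if $\chi|_{L_{\mathrm{der}}}$ is non-trivial and equals the character $\bar\chi$ of $K$ induced by $\chi$ if $\chi|_{L_{\mathrm{der}}}$ is trivial. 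In the first case all $E_2^{p,q}$ vanish, so $\Ext^i_{L^\circ}(\chi,\C)=0$ for every $i$, which is consistent with the claim since then $\chi$ is non-trivial. In the second case only the row $q=0$ survives, the spectral sequence collapses, and $\Ext^i_{L^\circ}(\chi,\C)\cong\Ext^i_K(\bar\chi,\C)$; this vanishes for $i>0$ because $K$ is compact, and for $i=0$ it is $\Hom_K(\bar\chi,\C)$, which is $\C$ if $\bar\chi$ (equivalently $\chi$) is trivial and $0$ otherwise. This establishes the claim, and hence the lemma.

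The only step requiring genuine care is the compactness of $K=L^\circ/L_{\mathrm{der}}$: this is precisely where reductivity of $\mathsf L$ enters, through $\mathsf L/\mathsf L_{\mathrm{der}}$ being a torus and $\mathsf L_{\mathrm{der}}$ being semisimple with $L_{\mathrm{der}}=L_{\mathrm{der}}^\circ$, combined with the openness of $L\to T$ on $\rk$-points and the standard structure of $\mathsf T(\rk)$. Everything else is formal spectral-sequence bookkeeping layered on top of the already-proven vanishing results for semisimple and compact groups.
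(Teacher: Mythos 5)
Your proof is correct and takes essentially the same route as the paper's: reduce to $\Ext^i_{L^\circ}(\chi,\C)$ via Lemma~\ref{Ext_Coinvariant}, run the Hochschild--Serre spectral sequence of Lemma~\ref{Serre-Hoschild} relative to the normal subgroup $S=L_{\mathrm{der}}$, collapse it with Lemma~\ref{Vanishing_ss}, and finish using compactness of $L^\circ/S$. The only difference is that you spell out the two facts the paper leaves implicit, namely that $L_{\mathrm{der}}\subseteq L^\circ$ and that $L^\circ/L_{\mathrm{der}}$ is compact; this extra bookkeeping is correct.
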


\begin{proof}
Write $\mathsf S$ for the derived subgroup of $\mathsf L$, and put $S:=\mathsf S(\rk)$.  Lemma \ref{Ext_Coinvariant} and Lemma \ref{Serre-Hoschild} imply that
\be\label{ohiext}
  \oH_i(L^\circ, \chi)^*\cong \Ext_{L^\circ/S}^i(\oH_0(S, \chi), \C).
\ee
If $i\neq 0$, then the right hand side of \eqref{ohiext} vanishes since $L^\circ/S$ is compact. The lemma is obvious for $i=0$.
\end{proof}

\begin{lem}\label{extdisj}
Let $(X, \CO_X)$ be a ringed space. Let $\CF_1$ and $\CF_2$ be two
$\CO_X$-modules. If the supports of $\CF_1$ and $\CF_2$ are
disjoint, then
 \[
  \Ext_{\CO_X}^i(\CF_1, \CF_2)=0,\qquad i\in \Z.
 \]
\end{lem}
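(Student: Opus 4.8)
The plan is to prove Lemma~\ref{extdisj} by a local argument: $\Ext$-sheaves (and hence global $\Ext$ groups, via a spectral sequence) vanish when the supports are disjoint, so I reduce to checking the stalks. First I would recall that for $\CO_X$-modules there is a local-to-global spectral sequence $\RH^p(X,\mathcal{E}xt^q_{\CO_X}(\CF_1,\CF_2))\Rightarrow \Ext^{p+q}_{\CO_X}(\CF_1,\CF_2)$, so it suffices to show that the sheaf $\mathcal{E}xt^q_{\CO_X}(\CF_1,\CF_2)$ is the zero sheaf for every $q$. Since this is a statement about sheaves, I may check it on stalks, or even better on a basis of open sets.

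The key step is the following: let $x\in X$. Because $\mathrm{supp}(\CF_1)\cap\mathrm{supp}(\CF_2)=\emptyset$ and these supports are closed, $x$ has an open neighborhood $W$ meeting at most one of them. If $W\cap\mathrm{supp}(\CF_1)=\emptyset$ then $\CF_1|_W=0$, and if $W\cap\mathrm{supp}(\CF_2)=\emptyset$ then $\CF_2|_W=0$; in either case $\mathcal{E}xt^q_{\CO_X}(\CF_1,\CF_2)|_W$, which only depends on the restrictions $\CF_1|_W,\CF_2|_W$ (as it is computed from an $\CO_W$-injective resolution of $\CF_2|_W$, or from any resolution that restricts well), vanishes on $W$. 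Since such $W$ cover $X$, the $\mathcal{E}xt$-sheaf is identically zero. Alternatively, and perhaps most cleanly, one can argue directly on stalks: the stalk $\mathcal{E}xt^q_{\CO_X}(\CF_1,\CF_2)_x$ is computed by $\Ext^q_{\CO_{X,x}}((\CF_1)_x,(\CF_2)_x)$ under mild hypotheses, and since the supports are disjoint at least one of $(\CF_1)_x$, $(\CF_2)_x$ is zero, forcing the stalk to vanish; but the neighborhood argument avoids the technical comparison between the stalk of $\mathcal{E}xt$ and $\Ext$ of stalks, so I would prefer it.

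Finally I would combine the two observations: with $\mathcal{E}xt^q_{\CO_X}(\CF_1,\CF_2)=0$ for all $q$, the local-to-global spectral sequence degenerates trivially and yields $\Ext^i_{\CO_X}(\CF_1,\CF_2)=0$ for all $i\in\Z$, as desired.

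The main obstacle is purely bookkeeping: one must be careful that ``$\Ext$ over $\CO_X$'' here means the derived functor of $\Hom_{\CO_X}$ in the category of $\CO_X$-modules, and that the $\mathcal{E}xt$-sheaf genuinely restricts to open subsets (which it does, since injective $\CO_X$-modules restrict to injective $\CO_W$-modules, or one uses that $\mathcal{E}xt$ is the sheafification of a presheaf $U\mapsto\Ext^q_{\CO_U}(\CF_1|_U,\CF_2|_U)$). Once this compatibility with restriction is granted, the disjoint-support hypothesis makes the vanishing immediate, so there is no serious difficulty.
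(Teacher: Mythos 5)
Your proof is correct, but it follows a genuinely different route from the paper's. The paper refines Hartshorne's construction of injective $\CO_X$-modules (III.2.2: take $I_x$ with $(\CF_2)_x\hookrightarrow I_x$ at each $x$, and simply choose $I_x=0$ when $(\CF_2)_x=0$) to produce an injective resolution $\CF_2\to\CI_\bullet$ whose terms are supported inside $\mathrm{supp}(\CF_2)$; it then observes that $\Hom_{\CO_X}(\CF_1,\CI_i)=0$ for every $i$, because a sheaf morphism with disjointly supported source and target must vanish on every stalk. So the paper kills the complex computing $\Ext$ outright, with no spectral sequence. You instead invoke the local-to-global $\Ext$ spectral sequence $H^p\bigl(X,\mathcal{E}xt^q_{\CO_X}(\CF_1,\CF_2)\bigr)\Rightarrow\Ext^{p+q}_{\CO_X}(\CF_1,\CF_2)$ and kill the $E_2$-page by showing that each $\mathcal{E}xt^q$-sheaf vanishes on an open cover; this is more machinery but is equally valid, and your instinct to avoid the stalk-of-$\mathcal{E}xt$-versus-$\Ext$-of-stalks comparison in favor of restriction to opens is well judged. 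One caveat that applies to both arguments: each tacitly uses that the supports in question are \emph{closed} (the paper's refined construction only yields $\mathrm{supp}(\CI_i)\subseteq\mathrm{supp}(\CF_2)$ when $\mathrm{supp}(\CF_2)$ is closed, since the skyscrapers $j_{x,*}I_x$ spread over $\overline{\{x\}}$; your neighborhood argument needs both supports closed). The lemma as stated omits this hypothesis, but in the paper's application — quasi-coherent sheaves on $\mathrm{Spec}\,\C[\Lambda]$ supported at two distinct closed points — the supports are closed, so both proofs go through.
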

\begin{proof}
By the construction of  injective resolutions as in
\cite[Chapter III, Proposition 2.2]{Ha}, we know that there is an
injective resolution $\CF_2\to \CI_\bullet$ such that the support of
$\CI_i$ is contained in that of $\CF_2$ ($i\in \Z$). Therefore the
lemma follows.
\end{proof}

\begin{lem}\label{extl}
Let $\Lambda$ be a finitely generated free abelian group. Let $V_1$
and $V_2$ be two representations of $\Lambda$. Assume that there are
two distinct characters $\chi_1$ and $\chi_2$ of $\Lambda$ such
that both $V_1\otimes \chi_1^{-1}$ and $V_2\otimes \chi_2^{-1}$ are
locally unipotent as representations of $\Lambda$, then
\[
\Ext_\Lambda^i(V_1, V_2)=0,\qquad i\in \Z.
 \]
\end{lem}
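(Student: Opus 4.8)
The plan is to reduce the statement to the group algebra $\C[\Lambda]$ and its localizations at the maximal ideals corresponding to characters of $\Lambda$. First I would recall that since $\Lambda$ is a finitely generated free abelian group, say $\Lambda\cong \Z^n$, the group algebra $\C[\Lambda]\cong \C[t_1^{\pm 1},\ldots,t_n^{\pm 1}]$ is a Noetherian ring, and $\Ext_\Lambda^i(V_1,V_2)=\Ext_{\C[\Lambda]}^i(V_1,V_2)$. The key structural observation is that a representation $V$ of $\Lambda$ with $V\otimes\chi^{-1}$ locally unipotent is precisely a $\C[\Lambda]$-module on which the augmentation ideal $\m_\chi$ of $\C[\Lambda]$ associated to $\chi$ (the kernel of the algebra map $\C[\Lambda]\to\C$ sending $\lambda\mapsto\chi(\lambda)$) acts locally nilpotently; that is, $V$ is an $\m_\chi$-power-torsion module, and hence its support (as a $\C[\Lambda]$-module) is contained in the single closed point $\{\m_\chi\}$ of $\Spec\C[\Lambda]$.

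The main step is then: since $\chi_1\neq\chi_2$, the maximal ideals $\m_{\chi_1}$ and $\m_{\chi_2}$ are distinct, so $V_1\otimes\chi_1^{-1}$ and $V_2\otimes\chi_2^{-1}$ — hence $V_1$ and $V_2$ themselves — are supported at disjoint closed subsets of $\Spec\C[\Lambda]$. Applying Lemma \ref{extdisj} with $(X,\CO_X)=(\Spec\C[\Lambda],\CO_{\Spec\C[\Lambda]})$, or more directly its affine/module-theoretic shadow, gives $\Ext_{\C[\Lambda]}^i(V_1,V_2)=0$ for all $i$. Concretely, one picks an injective resolution $V_2\to I^\bullet$ in $\C[\Lambda]$-modules with each $I^j$ supported on $\{\m_{\chi_2}\}$ (possible by the local-to-global construction of injective hulls over a Noetherian ring, since $V_2$ is $\m_{\chi_2}$-torsion each $I^j$ is a sum of copies of the injective hull of $\C[\Lambda]/\m_{\chi_2}$); then $\Hom_{\C[\Lambda]}(V_1,I^j)=0$ for every $j$ because any homomorphism from an $\m_{\chi_1}$-torsion module to an $\m_{\chi_2}$-torsion module with $\m_{\chi_1}\neq\m_{\chi_2}$ vanishes — given $v\in V_1$ annihilated by $\m_{\chi_1}^N$, its image in $I^j$ is annihilated by both $\m_{\chi_1}^N$ and $\m_{\chi_2}^M$, but $\m_{\chi_1}^N+\m_{\chi_2}^M=\C[\Lambda]$, forcing the image to be $0$.

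I expect the main (minor) obstacle to be justifying the existence of an injective resolution of $V_2$ by modules supported at $\m_{\chi_2}$; one should either invoke the structure theory of injective modules over Noetherian rings (Matlis theory: every injective is a direct sum of shifted injective hulls $E(\C[\Lambda]/\p)$, and an $\m_{\chi_2}$-torsion module embeds into a sum of copies of $E(\C[\Lambda]/\m_{\chi_2})$, which is itself $\m_{\chi_2}$-torsion), or alternatively appeal directly to Lemma \ref{extdisj} applied to the ringed space $\Spec\C[\Lambda]$ with $\CF_i$ the quasi-coherent sheaves attached to $V_i$, since $\Ext$ over the ring agrees with $\Ext$ of the associated sheaves for modules over a Noetherian affine scheme. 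Everything else is formal; no local field or representation-theoretic input beyond the definitions is needed, and the hypothesis that $\Lambda$ is free of finite rank is used only to guarantee that $\C[\Lambda]$ is Noetherian so that the injective-resolution argument goes through.
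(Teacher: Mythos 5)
Your primary (Matlis-theoretic) argument is correct, and it takes a genuinely different route from the paper. The paper does not work directly with a general $\m_{\chi_1}$-power-torsion $V_1$: it first invokes a filtration of $V_1$ with successive quotients isomorphic to direct sums of copies of $\chi_1$, together with Lemma \ref{Ext_limit} and the isomorphism $\Ext^i_\Lambda(W\otimes\chi_1,V_2)\cong W^*\otimes\Ext^i_\Lambda(\chi_1,V_2)$, to reduce to the case $V_1=\chi_1$ (a one-dimensional, hence coherent, module). The reason for this reduction is precisely that the paper then passes to quasi-coherent sheaves and applies Lemma \ref{extdisj}, using the comparison $\Ext^i_{\C[\Lambda]}(V_1,V_2)\cong\Ext^i_{\widetilde{\C[\Lambda]}}(\widetilde V_1,\widetilde V_2)$ from \cite[Ch.\,III, Ex.\,6.7]{Ha}, which is only valid when the first argument is finitely generated. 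Your approach bypasses this entirely: by Matlis theory over the Noetherian ring $\C[\Lambda]$, the minimal injective resolution of an $\m_{\chi_2}$-power-torsion module has every term a direct sum of copies of $E(\C[\Lambda]/\m_{\chi_2})$, hence $\m_{\chi_2}$-power-torsion, and a coprimality argument ($\m_{\chi_1}^N+\m_{\chi_2}^M=\C[\Lambda]$) kills $\Hom_{\C[\Lambda]}(V_1,I^j)$ directly. This is cleaner: no filtration, no direct-limit lemma, no sheaf-theoretic detour.

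One caveat about your proposed ``alternative'' route of appealing directly to Lemma \ref{extdisj} on $\Spec\C[\Lambda]$: the blanket claim that ``$\Ext$ over the ring agrees with $\Ext$ of the associated sheaves for modules over a Noetherian affine scheme'' is false without a coherence hypothesis on the first module, which is exactly why the paper reduces $V_1$ to $\chi_1$ first. Your primary path does not suffer from this, so the lemma is fully proved; but if you want the sheaf-theoretic shortcut to be a genuine second proof rather than a remark, you would need to insert the same reduction step as the paper.
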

\begin{proof}
Write $\C[\Lambda]$ for the complex group algebra attached to
$\Lambda$. Then both $V_1$ and $V_2$ are $\C[\Lambda]$-modules, and
we have that
\[
\Ext_\Lambda^i(V_1, V_2)=\Ext_{\C[\Lambda]}^i(V_1, V_2),\qquad i\in
\Z.
 \]
Denote by $\widetilde{\C[\Lambda]}$ the structure sheaf of the
scheme $\mathrm{Spec}(\C[\Lambda])$, and  denote by $\widetilde V_1$
and $\widetilde V_2$ the quasi-coherent
$\widetilde{\C[\Lambda]}$-modules attached to $V_1$ and $V_2$,
respectively.  Note that there exists a filtration $0=V_1^0\subset V_1^1\subset V_1^2\subset \cdots  $ of the representation $V_1$ of $\Lambda$  such that
$\bigcup_{k\geq 1} V_1^k=V_1$ and $V_1^{k}/V^{k-1}_1$ is a direct sum of copies of $\chi_1$  for every $k\geq 1$.

Using Lemma \ref{Ext_limit}, we are reduced to show that for all $k$
$$\Ext_\Lambda^i(V_1^k, V_2)=0,  \qquad i\in
\Z.
 $$
 For any vector space $W$,  we always have
 $$\Ext^i_{\Lambda}(W\otimes \chi_1,V_2)=W^*\otimes \Ext^i_{\Lambda}(\chi_1,V_2)$$
  for every $i$, where $W^*$ is the dual vector space of $W$.   By induction on $k$, we  assume without loss of generality that $V_1=\chi_1$. Then we have that (see
\cite[Chapter III, exercise 6.7]{Ha})
 \[
\Ext_{\C[\Lambda]}^i(V_1,
V_2)=\Ext_{\widetilde{\C[\Lambda]}}^i(\widetilde V_1, \widetilde
V_2),\qquad i\in \Z.
 \]
 Since $\chi_1\neq \chi_2$, the supports of  $\widetilde V_1$ and $\widetilde V_2$ are disjoint. Therefore the lemma follows by Lemma \ref{extdisj}.
\end{proof}

\begin{lem}
\label{Vanishing_red2} Let $\mathsf L$ be a connected reductive linear
algebraic group over $\rk$, and put $L:=\mathsf L(\rk)$. Let
$\chi$ be a non-trivial character of $L$, and let $V$ be a locally unipotent representation of $L$. Then
$$\Ext_L^i(\chi, V)=0, \quad \textrm{for all $i\in \Z$.}$$
\end{lem}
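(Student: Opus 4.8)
The plan is to compare $\Ext_L^i(\chi, V)$ with the homology of the ``compact part'' $L^\circ$ by means of the Hochschild--Serre spectral sequence, and then to reduce everything to the free abelian quotient $\Lambda_L = L/L^\circ$, where Lemma \ref{extl} applies.

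First I would observe that, since $V$ is locally unipotent, every compact subgroup of $L$ acts trivially on $V$, hence $L^\circ$ (which is generated by its compact subgroups) acts trivially on $V$. In particular $V$ descends to a smooth representation of $\Lambda_L$, and it is locally unipotent as such. Now $L^\circ$ is a closed normal subgroup of $L$ acting trivially on $V$, so Lemma \ref{Serre-Hoschild}, applied with $G=L$, $H=L^\circ$, first argument $\chi$ and second argument $V$, provides a spectral sequence
\[
E_2^{p,q}=\Ext^p_{\Lambda_L}\bigl(\oH_q(L^\circ,\chi),\,V\bigr)\ \Longrightarrow\ \Ext^{p+q}_L(\chi,V),
\]
in which $\oH_q(L^\circ,\chi)$ is equipped with its natural $\Lambda_L$-module structure.

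Next I would compute $\oH_q(L^\circ,\chi)$ by applying Lemma \ref{Vanishing_red} to the connected reductive group $\mathsf L$ and the character $\chi|_{L^\circ}$ of $L^\circ$. If $\chi|_{L^\circ}$ is non-trivial, then $\oH_q(L^\circ,\chi)=0$ for every $q$, so $E_2^{p,q}=0$ identically and $\Ext_L^i(\chi,V)=0$ for all $i$. If $\chi|_{L^\circ}$ is trivial, then $\oH_q(L^\circ,\chi)=0$ for $q\neq 0$, while $\oH_0(L^\circ,\chi)=\chi_{L^\circ}$ is the one-dimensional $\Lambda_L$-representation $\bar\chi$ obtained by descending $\chi$ through $L/L^\circ$; here $\bar\chi$ is non-trivial precisely because $\chi$ is non-trivial on $L$ but trivial on $L^\circ$. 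Thus the spectral sequence collapses onto the row $q=0$ and yields $\Ext_L^i(\chi,V)\cong\Ext^i_{\Lambda_L}(\bar\chi,V)$.

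Finally, to see that $\Ext^i_{\Lambda_L}(\bar\chi,V)$ vanishes I would invoke Lemma \ref{extl}: $\Lambda_L$ is a finitely generated free abelian group (by the structure of $\Lambda_G$ recalled above, valid since $\mathsf L$ is connected), and $\bar\chi$ together with the trivial character $\mathbf 1$ of $\Lambda_L$ are two distinct characters for which $\bar\chi\otimes\bar\chi^{-1}=\mathbf 1$ is (trivially) locally unipotent and $V\otimes\mathbf 1^{-1}=V$ is locally unipotent; hence $\Ext^i_{\Lambda_L}(\bar\chi,V)=0$ for all $i$. The point that most needs care is the dichotomy on $\chi|_{L^\circ}$ — a character of the reductive group $L$ may genuinely be ramified, i.e.\ non-trivial on $L^\circ$, and in that case one must rely on the total vanishing of $\oH_\bullet(L^\circ,\chi)$ rather than on $\Lambda_L$-cohomology. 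The remaining verifications (triviality of the $L^\circ$-action on $V$, the hypotheses of Hochschild--Serre, and the degeneration of the spectral sequence at $E_2$) are routine.
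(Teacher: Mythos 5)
Your proof is correct and follows essentially the same route as the paper's: apply the Hochschild--Serre spectral sequence (Lemma \ref{Serre-Hoschild}) for the closed normal subgroup $L^\circ\subset L$, use Lemma \ref{Vanishing_red} to collapse the $E_2$-page to the $q=0$ row, split into the cases $\chi|_{L^\circ}$ non-trivial (everything vanishes) versus $\chi|_{L^\circ}$ trivial (reduce to $\Lambda_L$), and finish with Lemma \ref{extl}. The only difference is expository detail; the paper compresses the same steps into a few lines.
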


\begin{proof}
Note that $L^\circ$ acts trivially on $V$.  Lemma \ref{Serre-Hoschild} and  Lemma \ref{Vanishing_red} imply that
$$
\Ext_L^i(\chi, V)\cong\Ext_{L/L^\circ}^i(\oH_0(L^\circ, \chi), V), \quad \textrm{for all $i\in \Z$.}
$$
If $\chi|_{L^\circ}$ is non-trivial, then the above space vanishes. Now assume that $\chi|_{L^\circ}$ is trivial. Then $\oH_0(L^\circ, \chi)$ is a non-trivial one-dimensional representation of $L/L^\circ$. The lemma then follows by Lemma \ref{extl}.
\end{proof}

Now we come to the proof of Theorem \ref{Vanishing_Theorem}.
Using
Lemma \ref{translate}, we assume without loss of generality that
$\chi_2$ is trivial. Then $\chi_1$ is non-trivial. As in the proof of Corollary \ref{corvanish}, it suffices to
prove Theorem \ref{Vanishing_Theorem} for $k=0$. As in the proof of Lemma \ref{extl}, we may use Lemma
\ref{Ext_limit} to further assume that $V_1=\chi_1$.
Then what we need to prove is that \be\label{extgi0}
  \Ext_G^i(\chi_1, V_2)=0,\qquad i\in \Z
\ee for all non-trivial character $\chi_1$ of $G$, and all locally
unipotent  representation $V_2$ of $G$.

Denote by $\mathsf N$ the unipotent radical of $\mathsf G$, and
put $N:=\mathsf N(\rk)$. Note that $N$ acts trivially on $V_2$
since $N\subset G^\circ$. If  $\chi_1$ is non-trivial on $N$, then
Lemma \ref{Serre-Hoschild} and Lemma \ref{Vanishing_unip} imply that
\eqref{extgi0} holds. Now assume that $\chi_1$ is trivial on $N$.
Then   Lemma \ref{Serre-Hoschild} and Lemma \ref{Vanishing_unip}
imply that
\[
  \Ext_G^i(\chi_1, V_2)\cong\Ext_{G/N}^i(\chi_1, V_2),\qquad i\in \Z,
\]
which vanishes by Lemma \ref{Vanishing_red2}. This finishes the proof  of Theorem \ref{Vanishing_Theorem}.

\section{A localization principle for extensions}

\label{loc_princ}

\subsection{Equivariant $\ell$-sheaves and the localization principle}
Let $X$ be an $\ell$-space. We define an $\ell$-sheaf on $X$ to be a
sheaf of complex vector spaces on $X$. For any $\ell$-sheaf $\CF$ on
$X$, let $\Gamma_c(\CF)$ denote the space of all  global sections of
$\CF$ with compact support. In particular,
$\oS(X)=\Gamma_c(\C_X)$, where $\C_X$ denotes the sheaf of
locally constant $\C$-valued functions on $X$. For each $x\in X$,
denote by $\CF_x$ the stalk of $\CF$ at $x$; and for each $s\in
\Gamma_c(\CF)$, denote by $s_x\in \CF_x$ the germ of $s$ at $x$.
The  set $\bigsqcup_{x\in X} \CF_x$ carries a unique topology  such
that for all $s\in \Gamma_c(\CF)$, the map
  \[
    X\rightarrow \bigsqcup_{x\in X} \CF_x, \quad x\mapsto s_x
  \]
is an open embedding. Then $\Gamma_c(\CF)$ is naturally identified
with the space of all compactly supported continuous sections of the
map $\bigsqcup_{x\in X} \CF_x\rightarrow X$.

Let $G$ be an $\ell$-group which acts continuously on an
$\ell$-space $X$.

\begin{dfn}(\cf \cite[Section 1.17]{BZ})
A $G$-equivariant $\ell$-sheaf on $X$ is an $\ell$-sheaf $\CF$ on
$X$, together with a continuous group action
\[
  G\times \bigsqcup_{x\in X} \CF_x\rightarrow \bigsqcup_{x\in X} \CF_x
\]
such that for all $x\in X$, the action of each $g\in G$ restricts to a
linear map $\CF_x\to \CF_{g.x}$.
\end{dfn}

 Given a $G$-equivariant $\ell$-sheaf $\CF$ on $X$, the space $\Gamma_c(\CF)$ is a smooth representation of $G$ so that
\[
  (g.s)_{g.x}=g.s_x \quad \textrm{for all } g\in G,\, x\in X,  \,s\in \Gamma_c(\CF).
\]
For each $G$-stable locally closed subset $Z$ of $X$, the
restriction $\CF|_Z$ is clearly a  $G$-equivariant $\ell$-sheaf on
$Z$.

The main purpose of this section is to prove the following
localization principle for extensions.
\begin{thmd}
\label{Fiber_Vanishing} Let $\CF$ be a $G$-equivariant $\ell$-sheaf
on $X$. Let $Y$ be an $\ell$-space with a continuous map $\pi:X\to
Y$ so that $\pi(g.x)=\pi(x)$, for all $x\in X$ and $g\in G$. Let
$V_1$, $V_2$ be two smooth representations of $G$, and let $i\in
\Z$. Assume  that
$$\Ext^i_{G}(\Gamma_c(\CF|_{X_y})\otimes V_1,V_2^\vee)=0 \qquad \text{ for all }  y\in Y,$$
where $X_y:=\pi^{-1}(y)$. Then
$$ \Ext_{G}^i(\Gamma_c(\CF)\otimes V_1, V_2^\vee)=0.
$$
\end{thmd}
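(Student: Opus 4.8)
The plan is to reduce the statement about the total space $X$ to the hypothesis on the fibers $X_y$ by a ``dévissage over $Y$'' argument, in the spirit of the Bernstein--Zelevinsky localization principle \cite[Theorem 6.9]{BZ}, but now tracking $\Ext$-groups rather than just $\Hom$. First I would rewrite everything in terms of homology: by Lemma \ref{Ext_Coinvariant} we have $\Ext^i_G(\Gamma_c(\CF)\otimes V_1, V_2^\vee)\cong \oH_i(G, \Gamma_c(\CF)\otimes V_1\otimes V_2)^*$, and likewise for each fiber with $\CF$ replaced by $\CF|_{X_y}$. So the task becomes: if $\oH_i(G, \Gamma_c(\CF|_{X_y})\otimes V_1\otimes V_2)=0$ for all $y\in Y$, then $\oH_i(G,\Gamma_c(\CF)\otimes V_1\otimes V_2)=0$. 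Absorbing $V_1\otimes V_2$ into a single auxiliary smooth representation $W$ (note $\Gamma_c(\CF)\otimes W\cong \Gamma_c(\CF\otimes W)$, where $\CF\otimes W$ carries the diagonal $G$-action, using Lemma \ref{tensorg0}), it suffices to show that $\oH_i(G,\Gamma_c(\CF))=0$ whenever $\oH_i(G,\Gamma_c(\CF|_{X_y}))=0$ for all $y$.

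The key step is a filtration of $Y$ by $G$-invariant (equivalently, since $G$ acts trivially on $Y$, arbitrary) open subsets together with the fact that, for an open subset $V\subset Y$ with closed complement $Z$, the functor $\pi^{-1}$ produces a short exact sequence of $G$-equivariant $\ell$-sheaves
\[
0\to j_!(\CF|_{\pi^{-1}(V)})\to \CF\to i_*(\CF|_{\pi^{-1}(Z)})\to 0,
\]
hence, applying $\Gamma_c$ (which is exact on $\ell$-sheaves by \cite[Section 1.16]{BZ}), a short exact sequence of smooth $G$-representations
\[
0\to \Gamma_c(\CF|_{\pi^{-1}(V)})\to \Gamma_c(\CF)\to \Gamma_c(\CF|_{\pi^{-1}(Z)})\to 0.
\]
The associated long exact sequence in $G$-homology then lets me run an induction on the structure of $Y$. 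Concretely, I would cover $Y$ by countably many compact open sets and, using that $Y$ is an $\ell$-space, stratify so that each successive quotient is supported over a point; more precisely I would show by a limiting argument that $\oH_i(G,\Gamma_c(\CF))$ is controlled by the groups $\oH_i(G,\Gamma_c(\CF|_{X_y}))$ ranging over $y\in Y$ and their neighbors under the long exact sequences. The passage through infinitely many strata is handled exactly as in \cite[Theorem 6.9]{BZ}: writing $\Gamma_c(\CF)$ as a colimit over sections supported on larger and larger compact sets, one checks each such piece is built from finitely many fiberwise contributions, and $\oH_i(G,-)$ commutes with the relevant filtered colimits.

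The main obstacle I anticipate is the ``point-by-point to sheaf'' passage: a single fiber hypothesis gives vanishing of $\oH_i$ for the sheaf restricted to one fiber $X_y$, but to glue these I need vanishing for $\CF$ restricted to $\pi^{-1}(S)$ for various locally closed $S\subset Y$, and this requires a continuity/approximation argument showing that a compactly supported section over $\pi^{-1}(S)$ can be ``localized'' near individual fibers. This is precisely the technical heart of the Bernstein--Zelevinsky localization principle, and here it must be upgraded from the statement $\Hom_G(\Gamma_c(\CF),V)=0$ (degree $0$) to all degrees $i$; the long exact sequences make this bookkeeping manageable, but care is needed to ensure the filtration of $Y$ is countable and that the colimit arguments are legitimate in the category of smooth $G$-representations. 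Once this localization is in place, the theorem follows formally by combining the long exact sequences with the fiberwise vanishing hypothesis and dualizing back via Lemma \ref{Ext_Coinvariant}.
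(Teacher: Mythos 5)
Your reduction to group homology and the observation that it suffices to prove $\oH_i(G,\Gamma_c(\CF))=0$ given $\oH_i(G,\Gamma_c(\CF|_{X_y}))=0$ for all $y$ is exactly how the paper begins. But from there you take a genuinely different route, and it contains a gap.

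Your plan is to stratify $Y$ by open and closed pieces, combine the resulting long exact sequences with a colimit over compact open $K\subset Y$, and reduce everything to fibers. The weak link is the sentence claiming that ``each such piece is built from finitely many fiberwise contributions.'' For a typical $\ell$-space $Y$ (say $Y\cong\Z_p$), a compact open subset has uncountably many points and admits no finite stratification by single fibers, so you cannot reduce $\oH_i(G,\Gamma_c(\CF|_{\pi^{-1}(K)}))$ to finitely many fibers and the dévissage never terminates. You correctly flag this as the ``technical heart,'' but the long exact sequence bookkeeping you propose does not resolve it: one cannot localize group homology over $Y$ after the fact, because $\oH_i(G,-)$ does not commute with the relevant inverse limits or with passage to stalks.

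The paper's proof instead moves the localization to the level of complexes of sheaves over $Y$, where it is a triviality. One takes a projective resolution $\CP_\bullet\to\CF$ in the category $\CS h_G(X)$ of $G$-equivariant $\ell$-sheaves, uses Corollaries \ref{gcexact} and \ref{gcexact2} to ensure that both $\Gamma_c(\CP_\bullet)\to\Gamma_c(\CF)$ and each $\Gamma_c(\CP_\bullet|_{X_y})\to\Gamma_c(\CF|_{X_y})$ remain projective resolutions in $\CM(G)$, and then applies coinvariants. The complex $(\Gamma_c(\CP_\bullet))_G$ is a complex of non-degenerate $\oS(Y)$-modules, i.e.\ of $\ell$-sheaves on $Y$, and Proposition \ref{bloc} identifies its stalk at $y$ with $(\Gamma_c(\CP_\bullet|_{X_y}))_G$. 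Since a complex of $\ell$-sheaves on $Y$ is exact at degree $i$ if and only if all its stalk complexes are, the fiberwise vanishing hypothesis immediately gives exactness of $(\Gamma_c(\CP_\bullet))_G$ at degree $i$, hence $\oH_i(G,\Gamma_c(\CF))=0$. This is the ingredient your plan is missing: you should resolve $\CF$ equivariantly on $X$ before taking coinvariants, so that the localization needed is the degree-zero statement of Proposition \ref{bloc} applied termwise, rather than a new higher-degree localization principle that you would have to establish from scratch.
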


By Lemma \ref{gin}, Theorem
\ref{Fiber_Vanishing} has the following obvious consequence.
\begin{cord}
\label{Fiber_Vanishing2} Let $\CF$ and  $\pi:X\to Y$ be as in
Theorem \ref{Fiber_Vanishing}. Let $\chi$ be a character of $G$. Let
$k\in \BN, $ and let $i\in \Z$. Assume  that
$$\Ext^i_{G,k}(\Gamma_c(\CF|_{X_y}),\chi)=0 \qquad \text{ for all }  y\in Y,$$
where $X_y:=\pi^{-1}(y)$. Then
$$ \Ext_{G,k}^i(\Gamma_c(\CF), \chi)=0.
$$
\end{cord}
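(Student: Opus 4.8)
The plan is to obtain this as a formal specialization of Theorem \ref{Fiber_Vanishing}, using Lemma \ref{gin} to translate freely between the generalized $\Ext$-groups $\Ext_{G,k}^i$ and ordinary $\Ext$-groups over $G$. Recall that $\Ext_{G,k}^i(\cdot,\cdot)$ is by definition the $i$-th left derived bi-functor of $\Hom_{G,k}(\cdot,\cdot)$, so that for any smooth representation $W$ of $G$ it is computed from a projective resolution $P_\bullet\to W$ as $\Ext_{G,k}^i(W,\chi)=H^i(\Hom_{G,k}(P_\bullet,\chi))$. Since here $k\in\BN$ is finite, Lemma \ref{gin} identifies this cohomology group with $\Ext_G^i(\oJ_{G,k}\otimes W,\chi)$; thus
\[
  \Ext_{G,k}^i(W,\chi)\;\cong\;\Ext_G^i(\oJ_{G,k}\otimes W,\chi),\qquad i\in\Z,
\]
for every smooth representation $W$ of $G$. (No direct limit is involved, so we stay clear of the more delicate $k=\infty$ case of Lemma \ref{gin}.)

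With this in hand, I would apply Theorem \ref{Fiber_Vanishing} with $V_1:=\oJ_{G,k}$ and $V_2:=\chi^{-1}$. Both are smooth representations of $G$: $\oJ_{G,k}$ is locally unipotent, hence smooth, and $\chi^{-1}$ is smooth because characters are locally constant; moreover $V_2^\vee=(\chi^{-1})^\vee=\chi$. For every $y\in Y$ the fibre $X_y=\pi^{-1}(y)$ is $G$-stable, since $\pi(g.x)=\pi(x)$, so $\CF|_{X_y}$ is a $G$-equivariant $\ell$-sheaf and $\Gamma_c(\CF|_{X_y})$ a smooth representation of $G$. Combining the $G$-equivariant commutativity isomorphism $\oJ_{G,k}\otimes W\cong W\otimes\oJ_{G,k}$ with the displayed identity, the hypothesis $\Ext^i_{G,k}(\Gamma_c(\CF|_{X_y}),\chi)=0$ becomes precisely $\Ext^i_G(\Gamma_c(\CF|_{X_y})\otimes V_1,V_2^\vee)=0$ for all $y\in Y$, which is exactly the hypothesis of Theorem \ref{Fiber_Vanishing} for this choice of $\CF$, $\pi$, $V_1$, $V_2$, $i$.

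Theorem \ref{Fiber_Vanishing} then yields $\Ext^i_G(\Gamma_c(\CF)\otimes V_1,V_2^\vee)=0$, that is, $\Ext^i_G(\oJ_{G,k}\otimes\Gamma_c(\CF),\chi)=0$; applying the displayed identity once more with $W=\Gamma_c(\CF)$ converts this back into $\Ext^i_{G,k}(\Gamma_c(\CF),\chi)=0$, which is the assertion of the corollary. There is no real obstacle in this argument: it is a purely formal specialization of Theorem \ref{Fiber_Vanishing}. The only points deserving a line of justification are the identification of the smooth contragredient $(\chi^{-1})^\vee$ with $\chi$, the harmless reordering of the two tensor factors, and the observation — used above — that since $k\in\BN$ is finite we may invoke Lemma \ref{gin} in the clean form $\Ext_{G,k}^i(W,\chi)\cong\Ext_G^i(\oJ_{G,k}\otimes W,\chi)$ rather than its direct-limit version.
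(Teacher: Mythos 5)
Your argument is correct and is exactly the route the paper intends: the paper derives Corollary \ref{Fiber_Vanishing2} as an ``obvious consequence'' of Theorem \ref{Fiber_Vanishing} via Lemma \ref{gin}, which is precisely your specialization $V_1=\oJ_{G,k}$, $V_2=\chi^{-1}$ together with the identification $\Ext_{G,k}^i(W,\chi)\cong\Ext_G^i(\oJ_{G,k}\otimes W,\chi)$ for finite $k$. Your write-up simply makes explicit the bookkeeping (the contragredient $(\chi^{-1})^\vee=\chi$ and the reordering of tensor factors) that the paper leaves unstated.
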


\subsection{A projective generator}

Write $\oH(G)$ for the Hecke algebra of $G$, namely
$\oH(G):=\oS(G)\,dg$, for a left invariant Haar measure
$dg$ on $G$. Denote by $\C_{G,X}$ the sheaf of $\oH(G)$-valued
locally constant functions on $X$. It is a $G$-equivariant
$\ell$-sheave under the diagonal action of $G$ on $\oH(G)\times X$.
Here $G$ acts on $\oH(G)$ by the left translations, and the obvious
identification
\[
   \bigsqcup_{x\in X} (\C_{G,X})_x=\oH(G)\times X
\]
is used.

Denote by $\CS h_G(X)$ the abelian category of $G$-equivariant
$\ell$-sheaves on $X$ (a morphism in this category is a sheaf
homomorphism $\CF\rightarrow \CF'$ so that the induced map
$\bigsqcup_{x\in X} \CF_x\rightarrow \bigsqcup_{x\in X} \CF'_x$ is
$G$-equivariant). Denote by $\CM_X(G)$ the category of all smooth
representations $V$ of $G$ equipped with a non-degenerate
$\oS(X)$-module structure on it such that
\[
 g.(\phi v)=(g.\phi)(g. v),\quad\textrm{for all } g\in G,\, \phi\in \oS(X), \,v\in V.
\]
Here the $\oS(X)$-module structure is  non-degenerate
means that
\[
   \oS(X)\cdot V=V.
\]
By \cite[Proposition 1.14]{BZ}, $\Gamma_c$ establishes an
equivalence between the category of $\ell$-sheaves on $X$ and the
category of non-degenerate $\oS(X)$-modules. This implies the following equivariant version of the localization theorem.
\begin{proposition}
\label{equi-loc}
The functor \be \label{equgc}
  \Gamma_c: \CS h_G(X)\rightarrow \CM_X(G)
\ee is an equivalence of categories.
\end{proposition}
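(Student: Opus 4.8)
The plan is to reduce the statement to the non-equivariant localization theorem of Bernstein--Zelevinsky \cite[Proposition 1.14]{BZ}, which is already cited just above, and then check that the added $G$-structure on both sides corresponds under the equivalence $\Gamma_c$. First I would recall that \cite[Proposition 1.14]{BZ} gives an equivalence between the category of $\ell$-sheaves on $X$ (no group action) and the category of non-degenerate $\oS(X)$-modules, with quasi-inverse sending a non-degenerate module $V$ to the $\ell$-sheaf whose space of compactly supported sections is $V$; concretely, the stalk at $x$ is $V/\oI_x V$, where $\oI_x\subset \oS(X)$ is the ideal of functions vanishing near $x$. So on underlying objects (forgetting $G$) the functor $\Gamma_c$ is already known to be an equivalence, and the content is purely about the equivariant enhancement.

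Next I would make the correspondence between $G$-structures precise. Given $\CF\in \CS h_G(X)$, the continuous action $G\times\bigsqcup_x\CF_x\to\bigsqcup_x\CF_x$ induces, on compactly supported sections, the smooth $G$-representation on $\Gamma_c(\CF)$ defined by $(g.s)_{g.x}=g.s_x$; combined with the tautological $\oS(X)$-module structure (pointwise multiplication) this gives an object of $\CM_X(G)$, since the identity $g.(\phi v)=(g.\phi)(g.v)$ is immediate from the definitions of the two actions on sections. Conversely, given $V\in\CM_X(G)$, apply the BZ quasi-inverse to the underlying $\oS(X)$-module to get an $\ell$-sheaf $\CF$ with $\Gamma_c(\CF)=V$; the key point is that the $G$-action on $V$ is $\oS(X)$-semilinear in the sense above, hence descends for each $g$ to a well-defined linear map $\CF_x=V/\oI_x V\to V/\oI_{g.x}V=\CF_{g.x}$ (because $g.(\oI_x V)=(g.\oI_x)(g.V)=\oI_{g.x}V$, using that $g$ permutes the ideals $\oI_x$ according to $\oI_{g.x}=g.\oI_x$). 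One then checks that the resulting map $G\times\bigsqcup_x\CF_x\to\bigsqcup_x\CF_x$ is continuous: this follows from smoothness of the $G$-action on $V$ together with the description of the topology on $\bigsqcup_x\CF_x$ via the sections $x\mapsto s_x$, since locally the action of a small open subgroup on a fixed section is by translation of the base point.

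Finally I would verify that these two constructions are mutually inverse and functorial, which is a routine consequence of the corresponding statements in the non-equivariant case together with the uniqueness of the $G$-structure once it is pinned down on stalks; morphisms on each side are exactly morphisms of the underlying objects compatible with the (uniquely determined) $G$-structure, so full faithfulness and essential surjectivity follow directly from \cite[Proposition 1.14]{BZ}. The main obstacle, such as it is, is the continuity check for the reconstructed equivariant sheaf: one has to argue carefully that the abstract algebraic $G$-action on $V=\Gamma_c(\CF)$, which a priori only knows about global sections, produces a genuinely continuous action on the étale space $\bigsqcup_x\CF_x$; everything else is bookkeeping that transports the non-equivariant equivalence across the forgetful functors. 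I would therefore spend the bulk of the write-up on that continuity point and treat the rest as ``routine to check''.
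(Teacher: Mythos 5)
Your proof is correct and takes essentially the same route as the paper, which gives no argument at all and simply asserts the proposition as an immediate consequence of the non-equivariant equivalence in \cite[Proposition 1.14]{BZ}. You have filled in exactly what the paper leaves implicit: transporting the $G$-structure across the equivalence via $g.(\oI_x V)=\oI_{g.x}V$, with the continuity of the reconstructed action on the \'etale space $\bigsqcup_x\CF_x$ correctly identified as the one point requiring genuine care.
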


\begin{lemd}\label{projx}
For each $\ell$-space $X$, the sheaf  $\C_X$ is a projective object
in the category of $\ell$-sheaves on $X$.
\end{lemd}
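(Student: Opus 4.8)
The plan is to reduce projectivity of $\C_X$ to exactness of the global sections functor on $\ell$-sheaves, and then to prove that exactness directly by exploiting the fact that an $\ell$-space has a basis of compact open sets.

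First I would record the (standard) natural isomorphism $\Hom(\C_X,\CF)\cong\Gamma(X,\CF)$ for every $\ell$-sheaf $\CF$: a sheaf morphism $\phi$ is sent to $\phi_X(\mathbf 1_X)$, where $\mathbf 1_X$ is the constant function $1$; conversely a global section $s$ is sent to the morphism that on an open set $U$ takes $f\in\C_X(U)$ to the section $f\cdot s|_U$, which is well defined since $f$ is locally constant. A morphism $\CF\to\CG$ of $\ell$-sheaves induces, under this identification, precisely the map $\Gamma(X,\CF)\to\Gamma(X,\CG)$. Since $\Gamma(X,-)$ is automatically left exact, it follows that $\C_X$ is projective if and only if $\Gamma(X,-)$ carries epimorphisms of $\ell$-sheaves to surjections of $\C$-vector spaces.

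Next I would verify the latter by hand. Let $p\colon\CF\to\CG$ be an epimorphism of $\ell$-sheaves and $s\in\Gamma(X,\CG)$. Since $p$ is surjective on stalks and $X$ has a basis of compact open sets, every $x\in X$ has a compact open neighbourhood $U_x$ together with $t_x\in\CF(U_x)$ such that $p(t_x)=s|_{U_x}$. As $X$ is second countable it is Lindel\"of, so countably (possibly finitely) many of the $U_x$, say $U_1,U_2,\dots$, cover $X$; set $V_1:=U_1$ and $V_n:=U_n\setminus(U_1\cup\cdots\cup U_{n-1})$. A finite union of compact open subsets of a Hausdorff space is clopen, so each $V_n$ is again compact open, and the $V_n$ partition $X$ into open sets. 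By the sheaf axiom the restrictions of the corresponding lifts $t_n|_{V_n}\in\CF(V_n)$ glue to a unique $t\in\Gamma(X,\CF)$, and $p(t)|_{V_n}=s|_{V_n}$ for all $n$ forces $p(t)=s$. With the previous paragraph this proves the lemma. (Equivalently, one may argue through the equivalence $\Gamma_c$ between $\ell$-sheaves on $X$ and non-degenerate $\oS(X)$-modules of \cite[Proposition 1.14]{BZ}: here $\C_X$ corresponds to $\oS(X)$ as a module over itself, and writing $X=\bigsqcup_n X_n$ as a countable disjoint union of compact opens exhibits $\oS(X)=\bigoplus_n\oS(X_n)$ as a sum of unital ideals, so $\Hom_{\oS(X)}(\oS(X),M)\cong\prod_n\mathbf 1_{X_n}M$ is an exact functor of the non-degenerate module $M$, whence $\oS(X)$ is projective.)

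I do not expect a serious obstacle: the only place where total disconnectedness is essential is the gluing step, where one converts the countable cover by compact opens into an honest partition so that the stalkwise lifts assemble with no cocycle obstruction; everything else is formal.
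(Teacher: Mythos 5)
Your proof is correct, and your main argument takes a genuinely different route from the paper's. The paper passes through the equivalence $\Gamma_c\colon \CS h(X)\simeq\{\text{non-degenerate }\oS(X)\text{-modules}\}$ of \cite[Proposition 1.14]{BZ}, writes $X=\bigsqcup_{i} X_i$ with $X_i$ compact open, obtains $\oS(X)=\bigoplus_i\oS(X_i)$ as a direct sum of unital rings, and concludes projectivity of $\oS(X)$ from this decomposition --- exactly the alternative you sketch in your closing parenthetical. Your primary argument instead stays inside the sheaf category: you identify $\Hom(\C_X,-)\cong\Gamma(X,-)$ and then prove directly that $\Gamma(X,-)$ preserves epimorphisms, by lifting a global section locally over compact opens, invoking second countability (Lindel\"of) to extract a countable subcover, and disjointifying it into compact opens $V_n$ so that the local lifts glue with no overlap condition to check. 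This is more self-contained, since it does not use the $\Gamma_c$ equivalence, and it makes transparent exactly where the $\ell$-space hypotheses enter (a basis of compact opens makes each $V_n$ compact open, hence clopen; second countability gives the countable partition). The paper's proof is shorter once the equivalence of categories is available, and it packages the same combinatorial input --- a countable partition of $X$ into compact opens --- algebraically via orthogonal idempotents in $\oS(X)$. Both proofs hinge on that same decomposition of $X$; the substantive difference is whether one works on the sheaf side or the module side of $\Gamma_c$.
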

\begin{proof}
By the equivalence of categories, we only need to show that
$\oS(X)$ is a projective object in the category of
non-degenerate $\oS(X)$-modules. It is elementary and
well known that $X$ is a countable disjoint union of open compact subsets:
\[
  X=\bigsqcup_{i\in I} X_i.
\]
Then $\oS(X)=\bigoplus_{i\in I} \oS(X_i)$ and
the lemma easily follows.
\end{proof}

When $X$ has only one element, the following proposition is proved
by P. Blanc \cite{Bla}. See also \cite[Theorem A.4]{Ca}.
\begin{prpd}
\label{Generator_Equivariant} The  $G$-equivariant $\ell$-sheaf
$\C_{G,X}$ is a projective generator in $\CS h_G(X)$, that is, it is
a projective object of $\CS h_G(X)$, and for each $G$-equivariant
$\ell$-sheaf $\CF$ on $X$, there exist an epimorphism
$\bigoplus_{i\in I} \C_{G,X}\to \CF$ in $\CS h_G(X)$ for some index
set $I$.
\end{prpd}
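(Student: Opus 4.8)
The plan is to transport the statement through the localization equivalence $\Gamma_c\colon\CS h_G(X)\xrightarrow{\sim}\CM_X(G)$ of Proposition~\ref{equi-loc}: since an equivalence preserves coproducts and projective objects, it suffices to show that $P:=\Gamma_c(\C_{G,X})$ is a projective generator of $\CM_X(G)$. First I would record an explicit model of $P$. Unwinding the definition of $\C_{G,X}$, one has $P=\oS(X)\otimes_\C\oH(G)$ — equivalently, the space of compactly supported, locally constant $\oH(G)$-valued functions on $X$ — with $G$ acting diagonally, $(g.s)(x)=L_g\!\bigl(s(g^{-1}x)\bigr)$, where $L_g$ denotes left translation on $\oH(G)$, and with $\oS(X)$ acting by pointwise multiplication, $(\phi\cdot s)(x)=\phi(x)\,s(x)$.

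For the generator property, given $W\in\CM_X(G)$ and $v\in W$, I would define $\theta_v\colon P\to W$ by $\theta_v(\phi\otimes\mu)=\phi\cdot(\mu\ast v)$, where $\mu\ast v$ is the action of $\mu\in\oH(G)$ on $v$ afforded by the smooth representation structure. The map $\theta_v$ is manifestly $\oS(X)$-linear, and its $G$-equivariance reduces to the identity $(L_g\mu)\ast v=g.(\mu\ast v)$, a substitution in the Haar integral defining $\mu\ast v$; thus $\theta_v$ is a morphism in $\CM_X(G)$. Then $\bigoplus_{v\in W}\theta_v\colon\bigoplus_{v\in W}P\to W$ is an epimorphism: for $w\in W$, choosing a compact open subgroup $K$ of $G$ fixing $w$ and a compact open $C\subseteq X$ with $\mathbf 1_C\cdot w=w$ (available because the $\oS(X)$-module $W$ is non-degenerate), one has $w=\theta_w\bigl(\mathbf 1_C\otimes e_K\bigr)$ with $e_K=\vol(K)^{-1}\mathbf 1_K\,dg$. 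Transporting back, $\bigoplus_I\C_{G,X}\twoheadrightarrow\CF$.

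For projectivity, I would realise $\CM_X(G)$ as the category of non-degenerate left modules over the convolution algebra $\CA:=\oS(X)\otimes_\C\oH(G)$ of the transformation groupoid $G\ltimes X$: an object of $\CM_X(G)$ is nothing but a vector space that is at once a non-degenerate $\oS(X)$-module and a smooth $G$-representation (equivalently, a non-degenerate $\oH(G)$-module), subject to the compatibility relation, and that relation is precisely what the product of $\CA$ encodes (so that, as operators on a module, $g\cdot\phi=(g.\phi)\cdot g$). A direct computation then identifies the object $P$ above — with its pointwise $\oS(X)$-action and diagonal $G$-action — with $\CA$ viewed as a left module over itself. Since $\oS(X)$ has local units, namely the idempotents $\mathbf 1_C$ for $C\subseteq X$ compact open, so does $\CA$; hence the free rank-one module $\CA$ is projective and every non-degenerate $\CA$-module is a quotient of a free one. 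Transporting back, $\C_{G,X}$ is projective in $\CS h_G(X)$. (For $X$ a point this is exactly Blanc's theorem that $\oH(G)$ is a projective generator of $\CM(G)$, cf.\ \cite[Theorem A.4]{Ca}; the above is its relative version.)

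The step I expect to be the main obstacle is the reformulation used for projectivity: establishing the equivalence $\CM_X(G)\simeq\CA\text{-Mod}_{\mathrm{nd}}$ with due attention to non-degeneracy and local units, and — the real content — checking that the $\ell$-sheaf $\C_{G,X}$, with the module structure carried by its space of compactly supported sections, is the regular left $\CA$-module; this comes down to chasing the groupoid-convolution product on $\oS(X)\otimes\oH(G)$ against the commutation relation $g.(\phi\cdot s)=(g.\phi)\cdot(g.s)$. An alternative that sidesteps $\CA$: by Lemma~\ref{tensorg0} applied with $\oH(G)\cong\ind_{\{1\}}^G\C$, the object $P$ is isomorphic in $\CM_X(G)$ to the Schwartz induction $\ind_{\{1\}}^G\oS(X)$ of $\oS(X)$, which is projective in $\CM_X(\{1\})\simeq\CS h(X)$ by Lemma~\ref{projx}; since Schwartz induction from the trivial subgroup is left adjoint to the exact restriction functor $\CM_X(G)\to\CS h(X)$, it carries projectives to projectives — here the delicate point is to match the two $\oS(X)$-module structures under the isomorphism of Lemma~\ref{tensorg0}.
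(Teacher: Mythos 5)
Your overall strategy --- transport through the equivalence $\Gamma_c$ of Proposition~\ref{equi-loc} and work inside $\CM_X(G)$ --- is the same as the paper's, and your treatment of the generator part coincides with the paper's up to rewriting the single surjection $\oS(X)\otimes\oH(G)\otimes V\to V$, $\phi\otimes\eta\otimes v\mapsto\phi\cdot(\eta.v)$, as a coproduct of cyclic maps indexed by $V$. The projectivity part is where you diverge, and there are issues with both of your proposed routes.

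In the main route (groupoid convolution algebra $\CA$): the reformulation of $\CM_X(G)$ as non-degenerate $\CA$-modules and the identification of $P=\Gamma_c(\C_{G,X})$ with the left regular $\CA$-module are plausible, and you correctly flag them as the steps requiring care. However, the inference ``$\CA$ has local units, hence the free rank-one module $\CA$ is projective'' is \emph{not} automatic and is where the real content lies. What is automatic is that $\CA e$ is projective for a single idempotent $e$ (lift the image of $e$ and generate); to get projectivity of $\CA$ itself one must in addition exhibit $\CA$ as a direct sum of such cyclic projectives, e.g.\ by choosing a countable cofinal chain $e_1\leq e_2\leq\cdots$ of idempotents (available because $X$ and $G$ are second countable), noting that each inclusion $\CA e_n\hookrightarrow\CA e_{n+1}$ is split by $a\mapsto ae_n$, and writing $\CA=\CA e_1\oplus\bigoplus_n \ker(\,\CA e_{n+1}\to\CA e_n)$, a direct sum of direct summands of the projectives $\CA e_{n+1}$. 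This is a genuine argument, not a formality, and it is precisely what the paper accomplishes in one stroke by a different device: Lemma~\ref{projx} gives an $\oS(X)$-linear lift $F'$, and the formula $F''(\phi\otimes\omega)=\int_G g^{-1}.F'(g.\phi\otimes(\eta_0\cdot(g.\omega)))\,d_rg$, with $\int_G\eta_0\,d_rg=1$, averages it into a morphism of $\CM_X(G)$. The averaging is the direct incarnation of what the local-units bookkeeping above achieves, and it should not be absorbed into a ``hence.''

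The alternative route is incorrect: Schwartz induction $\ind_{\{1\}}^G$ from the trivial (non-open) subgroup is \emph{not} left adjoint to restriction. The Frobenius reciprocity for $\ind_H^G$ in Lemma~\ref{lemfro} passes through smooth duals and is not a bare adjunction, and the adjunction you invoke already fails for $G$ an infinite profinite group: if $W=\bigoplus_n V_n$ is the multiplicity-one sum of all irreducibles, then $\Hom_G(\oH(G),W)\cong\prod_n V_n^{\oplus\dim V_n}$, whereas $\Hom_\C(\C,W)=W=\bigoplus_n V_n$, and these are not isomorphic. (The would-be counit $\oH(G)\otimes V\to V$, $\eta\otimes v\mapsto\eta.v$, exists, but there is no natural unit $V\to\oH(G)\otimes V$.) Schwartz induction does send projectives to projectives --- this is Lemma~\ref{tensorprojind} --- but the proof of that fact reduces to projectivity of $\ind_{\{1\}}^G V_0$, i.e.\ to Blanc's theorem, which is itself proved by averaging and not by an adjunction; so this alternative is circular.
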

\begin{proof}
By Proposition \ref{equi-loc}, we only need to show that
$\oS(X) \otimes \oH(G)$ is a projective generator in
$\CM_X(G)$. Here $G$ acts on $\oS(X) \otimes \oH(G)$
diagonally, and $\oS(X)$ acts on $\oS(X) \otimes
\oH(G)$ through the multiplication on $\oS(X)$.

For every $V\in \CM_X(G)$, the linear map
\[
   \oS(X) \otimes \oH(G)\otimes V\to V,\quad \phi\otimes \eta\otimes v\mapsto \phi\cdot(\eta. v),
\]
is an epimorphism in  $\CM_X(G)$, where $G$ and $\oS(X)$
act on $\oS(X) \otimes \oH(G)\otimes V$ through their
action on $\oS(X) \otimes \oH(G)$. This proves the second
assertion of the proposition.

Now we show that $\oS(X) \otimes \oH(G)$ is projective.  Fix an
element $\eta_0\in \oS(G)$ so that
\[
  \int_G \eta_0(g)\,d_r g=1,
\]
where $d_r g$ denotes a fixed right invariant Haar measure $G$.

Let
\[
\xymatrix{
 &\oS(X)\otimes \oH(G)\ar[d]^{F} &\\
 U \ar[r]^{P} &V \ar[r] & 0
}
\]
be a diagram  in $\CM_X(G)$ so that the map $P$ is surjective. Lemma
\ref{projx} implies that there exists a $\oS(X)$-module
homomorphism $F': \oS(X)\otimes \oH(G)\rightarrow U$ which
is a lifting of  $F$, that is, $P\circ F'=F$. Define a linear map
\[
  F'': \oS(X)\otimes \oH(G)\rightarrow U, \quad \phi\otimes \omega\mapsto \int_G g^{-1}.F'(g.\phi\otimes (\eta_0\cdot(g.\omega)))\,d_r g.
\]
Then it is routine to check that $F''$ is a well-define morphism in
$\CM_X(G)$ which lifts $F$. This finishes the proof.
\end{proof}

\begin{cord}\label{gcexact}
The functor $\Gamma_c: \CS h_G(X)\rightarrow \CM(G)$ is exact and
maps projective objects to projective objects.
\end{cord}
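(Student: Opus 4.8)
The plan is to deduce Corollary \ref{gcexact} directly from the projective-generator statement Proposition \ref{Generator_Equivariant}, the equivalence of categories Proposition \ref{equi-loc}, and the structural results on Schwartz inductions. First I would recall that under the equivalence $\Gamma_c\colon \CS h_G(X)\xrightarrow{\sim}\CM_X(G)$, the $G$-equivariant $\ell$-sheaf $\C_{G,X}$ corresponds to the module $\oS(X)\otimes\oH(G)$; moreover, as a plain smooth representation of $G$ (forgetting the $\oS(X)$-action), this is nothing but $\oS(X)\otimes\oH(G)=\oS(X\times G)\,dg\cong\ind_{\{1\}}^G \oS(X)$, via Lemma \ref{tensorg0} applied with $H=\{1\}$, $V_0 = \oS(X)$ and $V$ the free smooth representation, or more directly by identifying locally constant compactly supported functions on $X\times G$. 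By \cite[Theorem A.4]{Ca}, $\ind_{\{1\}}^G W$ is a projective object of $\CM(G)$ for any vector space $W$; hence $\Gamma_c(\C_{G,X})$ is projective in $\CM(G)$.

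Next I would treat a general projective object $\CP$ of $\CS h_G(X)$. By the second assertion of Proposition \ref{Generator_Equivariant}, there is an epimorphism $\bigoplus_{i\in I}\C_{G,X}\twoheadrightarrow \CP$ in $\CS h_G(X)$; since $\CP$ is projective this epimorphism splits, so $\CP$ is a direct summand (in $\CS h_G(X)$, hence after applying $\Gamma_c$, in $\CM(G)$) of $\bigoplus_{i\in I}\Gamma_c(\C_{G,X})$. A direct sum of projectives in $\CM(G)$ is projective, and a direct summand of a projective is projective; therefore $\Gamma_c(\CP)$ is a projective object of $\CM(G)$. This establishes that $\Gamma_c$ carries projectives to projectives.

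For exactness of $\Gamma_c\colon \CS h_G(X)\to\CM(G)$, I would argue as follows. The functor $\Gamma_c$ into $\CM_X(G)$ is an equivalence (Proposition \ref{equi-loc}), in particular exact; composing with the forgetful functor $\CM_X(G)\to\CM(G)$, it suffices to see that this forgetful functor is exact. But a sequence in $\CM_X(G)$ is exact if and only if it is exact as a sequence of smooth $G$-representations, because exactness of a sequence of $\oS(X)$-modules is tested on underlying vector spaces; equivalently, one can invoke the description of $\Gamma_c$ at the level of stalks — a sequence $0\to\CF'\to\CF\to\CF''\to 0$ in $\CS h_G(X)$ is exact iff it is stalkwise exact, and taking compactly supported sections over the $\ell$-space $X$ is exact since $X$ is a disjoint union of compact open sets (as used in the proof of Lemma \ref{projx}), making $\Gamma_c$ exact on all $\ell$-sheaves. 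Combining these two points gives the corollary.

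I do not expect a serious obstacle here; the only point requiring a little care is the identification of $\Gamma_c(\C_{G,X})$, as a bare smooth $G$-representation, with an induced representation of the form $\ind_{\{1\}}^G(\cdot)$ so that \cite[Theorem A.4]{Ca} applies — one must check that the diagonal $G$-action on $\oH(G)\otimes\oS(X)$ matches the $G$-action by right translation on $\ind_{\{1\}}^G\oS(X)$ under the isomorphism of Lemma \ref{tensorg0}, which is a routine unwinding of definitions. Everything else is formal nonsense about projective objects, direct summands, and exactness under an equivalence of categories.
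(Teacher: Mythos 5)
Your proposal is correct and follows essentially the same route as the paper: realize any projective as a direct summand of $\bigoplus_{i\in I}\C_{G,X}$ via Proposition~\ref{Generator_Equivariant}, identify $\Gamma_c(\C_{G,X})$ as a $G$-representation with an induction from the trivial subgroup (hence projective in $\CM(G)$), and get exactness from the equivalence $\Gamma_c\colon\CS h_G(X)\xrightarrow{\sim}\CM_X(G)$ composed with the exact forgetful functor. The only cosmetic difference is that you invoke \cite[Theorem A.4]{Ca} directly for $\ind_{\{1\}}^G\oS(X)$, while the paper first writes $\ind_{\{1\}}^G\oS(X)$ as a direct sum of copies of $\oH(G)$ and uses the point case of Proposition~\ref{Generator_Equivariant}; these rest on the same underlying fact, and your slight misassignment of $V_0$ versus $V$ in applying Lemma~\ref{tensorg0} is harmless given the direct $\oS(X\times G)$ identification you also supply.
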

\begin{proof}
The functor is exact since \eqref{equgc} is an equivalence of
categories. Proposition \ref{Generator_Equivariant} implies that
every projective object in $\CS h_G(X)$ is isomorphic to a direct
summand  of $\bigoplus_{i\in I} \C_{G,X}$ for some index set $I$.
Lemma \ref{tensorg0} implies that as representations of $G$,
$\Gamma_c(\C_{G,X})=\oS(X)\otimes \oH(G)$ is isomorphic to
a direct sum of copies of $\oH(G)$. As a special case of Proposition
\ref{Generator_Equivariant}, we know that $\oH(G)$ is a projective
object in $\CM(G)$. This proves that the functor $\Gamma_c$  maps
projective objects to projective objects.
\end{proof}

\begin{cord}\label{gcexact2}
Let $Z\subset X$ be a $G$-stable locally closed subset of $X$. Then
the functor
\[
 \CS h_G(X)\rightarrow \CS h_G(Z), \quad \CF\mapsto \CF|_Z
 \]
  is exact and maps projective objects to projective objects.
\end{cord}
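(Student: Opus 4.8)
The plan is to deduce both assertions from the corresponding facts about $\CS h_G(X)$ itself, via the projective generator $\C_{G,X}$ supplied by Proposition \ref{Generator_Equivariant}. The first thing I would record is that $Z$, being a locally closed subset of the $\ell$-space $X$, is again an $\ell$-space (a locally closed subspace of a locally compact Hausdorff space is locally compact, and the Hausdorff, totally disconnected, and second countability properties pass to subspaces), and that $G$ acts continuously on it; so $\CS h_G(Z)$ is defined and Proposition \ref{Generator_Equivariant} applies equally well with $Z$ in place of $X$.

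For exactness: a sequence in $\CS h_G(X)$ (resp.\ in $\CS h_G(Z)$) is exact precisely when the underlying sequence of $\ell$-sheaves is exact, and exactness of $\ell$-sheaves is tested on stalks. Since $(\CF|_Z)_z = \CF_z$ for every $z\in Z$, the functor $\CF\mapsto \CF|_Z$ is exact.

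For the preservation of projectives: I would first note the canonical identification $\C_{G,X}|_Z \cong \C_{G,Z}$ of $G$-equivariant $\ell$-sheaves on $Z$ — restricting the sheaf of $\oH(G)$-valued locally constant functions on $X$, with the $G$-action coming from left translation on $\oH(G)$, yields exactly the analogous sheaf on $Z$. By Proposition \ref{Generator_Equivariant} applied to $Z$, the object $\C_{G,Z}$ is projective in $\CS h_G(Z)$; since restriction is additive and commutes with arbitrary direct sums, $(\bigoplus_{i\in I}\C_{G,X})|_Z \cong \bigoplus_{i\in I}\C_{G,Z}$ is projective in $\CS h_G(Z)$ for any index set $I$. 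Now let $\CF$ be a projective object of $\CS h_G(X)$. By Proposition \ref{Generator_Equivariant} there is an epimorphism $\bigoplus_{i\in I}\C_{G,X}\to \CF$ for some $I$, and since $\CF$ is projective it is a direct summand of $\bigoplus_{i\in I}\C_{G,X}$. Applying the restriction functor exhibits $\CF|_Z$ as a direct summand of $(\bigoplus_{i\in I}\C_{G,X})|_Z$, which is projective; hence $\CF|_Z$ is projective in $\CS h_G(Z)$.

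I do not anticipate a genuine obstacle here: the statement is a formal consequence of the existence of the projective generator $\C_{G,X}$ together with its good behaviour under restriction. The only points that require a line of justification are that $Z$ is still an $\ell$-space and the identification $\C_{G,X}|_Z \cong \C_{G,Z}$, both routine. (Alternatively, one could transport the whole argument through the equivalence $\Gamma_c$ of Proposition \ref{equi-loc} and Lemma \ref{projx}, mirroring the proof of Corollary \ref{gcexact}, but the direct sheaf-level argument above is the shortest route.)
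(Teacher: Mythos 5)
Your proof is correct and takes essentially the same route as the paper: the paper's one-line proof likewise hinges on the identification $\C_{G,X}|_Z \cong \C_{G,Z}$ and then repeats the direct-summand-of-$\bigoplus_i\C_{G,X}$ argument from Corollary \ref{gcexact}, with exactness being the standard stalkwise fact $(\CF|_Z)_z=\CF_z$. You have merely written out explicitly what the paper leaves as ``by the argument as in the proof of Corollary \ref{gcexact}.''
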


\begin{proof}
Since $\C_{G,X}|_Z=\C_{G,Z}$, the corollary follows by the argument
as in the proof of Corollary \ref{gcexact}.
\end{proof}

\subsection{The proof of Theorem \ref{Fiber_Vanishing}}

Let $\CF$ be a $G$-equivariant $\ell$-sheaf on $X$ as in Theorem
\ref{Fiber_Vanishing}. For each smooth representation $V$ of $G$,
$\CF\otimes V$ is clearly a $G$-equivariant $\ell$-sheaf on $X$.
Moreover, we have that \be\label{gtensor}
  \Gamma_c(\CF\otimes V)=\Gamma_c(\CF)\otimes V
\ee as a smooth representation of $G$.

Let  $Y$ and $\pi: X\rightarrow Y$ be as in Theorem
\ref{Fiber_Vanishing}. Note that $\Gamma_c(\CF)$ is a
$\con^\infty(X)$-module, where $\con^\infty(X)$ denotes the algebra
of all $\C$-valued locally constant functions on $X$. The pull-back
through $\pi$ yields an algebra homomorphism $\oS(Y)\to
\con^\infty(X)$. Using this homomorphism, we view  $\Gamma_c(\CF)$
as a non-degenerate $\oS(Y)$-module. For each smooth
representation $V$ of $G$, recall that its co-invariant space is
defined to be
\[
  V_G:=\frac{V}{\span \{g.v-v\mid g\in G, \,v\in V\}}.
\]
For each non-degenerate $\oS(Y)$-module $M$ and each $y\in
Y$, denote by $M_y$ the stalk at $y$ of the $\ell$-sheaf $\tilde M$ on $Y$
associated to $M$. To be explicit,
\[
  M_y:=M\otimes_{\oS(Y)} \C_{y},
\]
where $\C_y$ denotes the ring $\C$ with the evaluation map $\oS(Y)\to \C$  at $y$.

The following proposition is proved in \cite[Proposition 2.36]{BZ}.
\begin{prpd}\label{bloc}
The coinvariant space $(\Gamma_c(\CF))_G$ is a non-degenerate $\oS(Y)$-module. Moreover, for
each $y\in Y$, one has a natural vector space isomorphism
\[
  ((\Gamma_c(\CF))_G)_y\cong (\Gamma_c(\CF|_{X_y}))_G\qquad (X_y:=\pi^{-1}(y)).
\]
\end{prpd}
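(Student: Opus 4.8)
The plan is to separate the two assertions: the module-theoretic part is formal, and the stalk identification reduces to the non-equivariant localization of $\ell$-sheaves after one observation about commuting idempotents.

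First I would record the interaction of the two structures. Since $\pi(g.x)=\pi(x)$, for every $\psi\in\oS(Y)$ the pull-back $\pi^*\psi\in\con^\infty(X)$ is $G$-invariant; hence the $\oS(Y)$-module structure on $\Gamma_c(\CF)$ (coming from the $\con^\infty(X)$-action together with $\pi^*\colon\oS(Y)\to\con^\infty(X)$) commutes with the $G$-action, i.e. $\psi\cdot(g.s)=g.(\psi\cdot s)$. Therefore $\span\{g.s-s\mid g\in G,\ s\in\Gamma_c(\CF)\}$ is an $\oS(Y)$-submodule, and $(\Gamma_c(\CF))_G$ inherits an $\oS(Y)$-module structure. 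Non-degeneracy of $\Gamma_c(\CF)$ over $\oS(Y)$ is immediate: given $s$, the set $\pi(\mathrm{supp}\,s)$ is compact, so picking a compact open $U\supseteq\pi(\mathrm{supp}\,s)$ one has $\pi^*\mathbf 1_U=\mathbf 1_{\pi^{-1}(U)}$ equal to $1$ on $\mathrm{supp}\,s$, whence $(\pi^*\mathbf 1_U)\cdot s=s$; non-degeneracy then passes to the quotient $(\Gamma_c(\CF))_G$.

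Next I would reduce the stalk computation to the non-equivariant case. Write $\mathbf 1_U$ for the operator ``multiply by $\pi^*\mathbf 1_U$'' on any such module; it is $G$-equivariant, so $\mathbf 1_U\Gamma_c(\CF)$ is a $G$-subrepresentation. A direct check (using that $\mathbf 1_U$ is an idempotent commuting with $G$) gives the natural identity $\mathbf 1_U\bigl((\Gamma_c(\CF))_G\bigr)=\bigl(\mathbf 1_U\Gamma_c(\CF)\bigr)_G$. Since the stalk at $y$ of a non-degenerate $\oS(Y)$-module $M$ is the filtered colimit $\varinjlim_{U\ni y}\mathbf 1_U M$ over compact open neighbourhoods $U$ of $y$, and since $(-)_G$ commutes with filtered colimits, we get
\[
\bigl((\Gamma_c(\CF))_G\bigr)_y=\varinjlim_{U\ni y}\mathbf 1_U\bigl((\Gamma_c(\CF))_G\bigr)=\varinjlim_{U\ni y}\bigl(\mathbf 1_U\Gamma_c(\CF)\bigr)_G=\bigl((\Gamma_c(\CF))_y\bigr)_G .
\]
So it remains to produce a natural (in particular $G$-compatible) isomorphism $(\Gamma_c(\CF))_y\cong\Gamma_c(\CF|_{X_y})$, which no longer involves $G$.

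For this last step I would argue directly with $\ell$-sheaves: by non-degeneracy $(\Gamma_c(\CF))_y=\varinjlim_{U\ni y}\mathbf 1_U\Gamma_c(\CF)=\varinjlim_{U\ni y}\Gamma_c(\CF|_{\pi^{-1}(U)})$, the last identification because $\pi^{-1}(U)$ is clopen so that $\mathbf 1_{\pi^{-1}(U)}\Gamma_c(\CF)$ is exactly the space of compactly supported sections of $\CF$ supported in $\pi^{-1}(U)$. Restriction of sections to the closed subset $X_y\subseteq\pi^{-1}(U)$ yields compatible maps $\Gamma_c(\CF|_{\pi^{-1}(U)})\to\Gamma_c(\CF|_{X_y})$, hence a map out of the colimit; surjectivity uses that $X$ is an $\ell$-space (a compactly supported section of $\CF|_{X_y}$ extends to a compactly supported section of $\CF$ over a compact open neighbourhood of its support, which, the support mapping into $\{y\}$, can be shrunk into $\pi^{-1}(U)$ for $U$ small), and injectivity uses that the vanishing locus $\{x\mid s_x=0\}$ of a section $s$ of an $\ell$-sheaf is open (if $s\in\Gamma_c(\CF|_{\pi^{-1}(U)})$ vanishes on $X_y$, the compact set $\mathrm{supp}\,s\smallsetminus\{x\mid s_x=0\}$ misses $X_y$, so its image misses $y$ and $s$ already vanishes over $\pi^{-1}(U')$ for a smaller $U'\ni y$). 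I expect this final step to be the only real work — the extension/restriction bookkeeping for compactly supported sections over the closed fibres $X_y$ — while everything preceding it is formal manipulation of commuting idempotents and filtered colimits; alternatively, this isomorphism is essentially the content of the Bernstein–Zelevinsky sheaf machinery on $\ell$-spaces and could simply be cited.
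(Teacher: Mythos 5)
The paper does not give a proof of this proposition at all --- it simply records it as \cite[Proposition 2.36]{BZ} and moves on. So your argument cannot be compared to ``the paper's proof''; rather, you have supplied a self-contained proof of a fact the authors take as imported, and you yourself note at the end that a citation would do. Your proof is correct, and it is a reasonable unpacking of the Bernstein--Zelevinsky localization machinery: identifying $M_y$ with $\varinjlim_{U\ni y}\mathbf 1_U M$, noting that the idempotents $\mathbf 1_{\pi^{-1}(U)}$ are $G$-equivariant so commute with taking coinvariants, and then identifying $\varinjlim_{U\ni y}\Gamma_c(\CF|_{\pi^{-1}(U)})$ with $\Gamma_c(\CF|_{X_y})$ via restriction to the closed fibre. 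The two facts about $\ell$-sheaves you lean on --- that a compactly supported section over a closed subset extends, and that the vanishing locus $\{x:s_x=0\}$ is open (so that the support $\{x:s_x\neq 0\}$ is closed and hence compact) --- are exactly the points where the $\ell$-space/$\ell$-sheaf structure enters, and you have used them in the right places; it would be cleanest to name them explicitly as the relevant lemmas of \cite[\S1]{BZ} rather than re-derive them, but there is no gap. One small point of hygiene: when you write ``the stalk at $y$ of a non-degenerate $\oS(Y)$-module $M$ is $\varinjlim_{U\ni y}\mathbf 1_U M$'', you are silently reconciling this with the paper's definition $M_y:=M\otimes_{\oS(Y)}\C_y$; that reconciliation is standard but worth a sentence, since the colimit here is not over inclusions (the transition maps $\mathbf 1_U M\to\mathbf 1_{U'}M$ for $U'\subseteq U$ are multiplication by $\mathbf 1_{U'}$).
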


Now we come to the proof of Theorem \ref{Fiber_Vanishing}.
 In view
of Lemma \ref{Ext_Coinvariant} and the equality \eqref{gtensor},
replacing $\CF$ by $\CF\otimes V_1\otimes V_2$, we only need to show
that \be\label{vohi}
 \oH_i(G,\Gamma_c(\CF))=0,
\ee under the assumption that \be\label{ass0}
\oH_i(G,\Gamma_c(\CF|_{X_y}))=0 \qquad \text{ for all }  y\in Y. \ee

Take a projective resolution $\CP_\bullet\rightarrow \CF$ of $\CF$
in the category $\CS h_G(X)$. By Corollary \ref{gcexact} and
Corollary \ref{gcexact2}, for all $y\in Y$,
$\Gamma_c({\CP_\bullet}|_{X_y})\rightarrow \Gamma_c(\CF|_{X_y})$ is
a projective resolution of $\Gamma_c(\CF|_{X_y})$ in the category
$\CM(G)$. By the assumption of \eqref{ass0}, the complex
$(\Gamma_c({\CP_\bullet}|_{X_y}))_G$ is exact at degree $i$.
Applying Proposition \ref{bloc}, we know that the complex
$((\Gamma_c({\CP_\bullet}))_G)_y$ is exact at degree $i$. Therefore
$(\Gamma_c({\CP_\bullet}))_G$ is exact at degree $i$ as a complex of
non-degenerate $\oS(Y)$-modules. (Recall that the category
of $\ell$-sheaves on $Y$ is equivalent to the category of
non-degenerate $\oS(Y)$-modules.) This proves \eqref{vohi}
since by Corollary
 \ref{gcexact}, $\Gamma_c(\CP_\bullet)\rightarrow \Gamma_c(\CF)$ is a projective resolution of $\Gamma_c(\CF)$ in the category $\CM(G)$.

\section{A theorem of automatic extensions}\label{secaut}

\subsection{Frobenius reciprocity and Shapiro's lemma}\label{sfro}
Let $H$ be a closed subgroup of  an $\ell$-group $G$. Then
there is a unique character $\delta_{H\backslash G}$ of $H$ such
that \be\label{homdhg}
  \Hom_G(\ind_H^G \delta_{H\backslash G}^{-1},\C)\neq 0.
\ee Here $\ind_H^G$ indicate the un-normalized Schwartz induction as
in Section \ref{secshw}. The space \eqref{homdhg} is then
one-dimensional.

Let $V$ be a smooth representation of $G$ and let $V_0$ be a smooth
representation of $H$. Recall the following well-known Frobenius
reciprocity.
\begin{lem}\label{lemfro}
Fix a generator of the space \eqref{homdhg}. Then there is a
canonical linear isomorphism
\[
     \Hom_G(\ind_H^G  V_0,V^\vee)\cong  \Hom_H(\delta_{H\backslash G} \otimes V_0, (V|_H)^\vee).
\]
\end{lem}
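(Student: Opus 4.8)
The plan is to realize both sides of the desired isomorphism as $\Hom_G$ out of the same induced representation, using the tensor identity of Lemma \ref{tensorg0} together with the defining property \eqref{homdhg} of $\delta_{H\backslash G}$. First I would apply Lemma \ref{tensorg0} with the roles played by $V^\vee$ in place of $V$: for a smooth representation $W$ of $G$ and a smooth representation $V_0$ of $H$, one has
\[
  W\otimes \ind_H^G V_0 \;\cong\; \ind_H^G\big((W|_H)\otimes V_0\big)
\]
as smooth representations of $G$. Taking $W=V^\vee$ and applying the (contravariant) functor $\Hom_G(-,\C)$, and using that $\Hom_G(\ind_H^G V_0, V^\vee)\cong \Hom_G(V\otimes \ind_H^G V_0, \C)\cong \Hom_G\big(\ind_H^G((V|_H)\otimes V_0),\C\big)$ — here the first step is the standard adjunction $\Hom_G(A, B^\vee)\cong\Hom_G(A\otimes B,\C)$ for $A$ arbitrary smooth and $B$ smooth, which is exactly the observation underlying Lemma \ref{Ext_Coinvariant} in degree $0$ — I reduce to computing $\Hom_G(\ind_H^G V_0',\C)$ for a general smooth $H$-representation $V_0'=(V|_H)\otimes V_0$.

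Next I would establish the key special case: for any smooth representation $V_0'$ of $H$,
\[
  \Hom_G(\ind_H^G V_0',\C)\;\cong\;\Hom_H\big(\delta_{H\backslash G}\otimes V_0',\C\big).
\]
This is the genuine content. One direction: fixing the chosen generator $\lambda$ of the one-dimensional space $\Hom_G(\ind_H^G\delta_{H\backslash G}^{-1},\C)$, which concretely is (up to scalar) a $G$-invariant integration functional $\phi\mapsto \int_{H\backslash G}\phi$ on $\ind_H^G\delta_{H\backslash G}^{-1}$, one sends $T\in\Hom_G(\ind_H^G V_0',\C)$ to the $H$-map $v\mapsto T(s_v)$ where $s_v\in\ind_H^G V_0'$ is a suitable ``delta-like'' section supported near the identity coset with value $v$; one checks the $H$-equivariance against $\delta_{H\backslash G}$ comes precisely from the modular-character twist in how such sections transform. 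The inverse is built by the usual averaging: given $\tau\in\Hom_H(\delta_{H\backslash G}\otimes V_0',\C)$, pair a section $\phi\in\ind_H^G V_0'$ against $\tau$ fiberwise and integrate over $H\backslash G$ using $\lambda$, the compatibility of the $\delta_{H\backslash G}$-twist ensuring the integrand descends to a compactly supported function on $H\backslash G$. Checking these two constructions are mutually inverse and canonical (given the fixed generator) is routine but is where the care lies.

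Finally I would combine the two steps: plugging $V_0'=(V|_H)\otimes V_0$ into the special case gives
\[
  \Hom_G(\ind_H^G V_0, V^\vee)\;\cong\;\Hom_H\big(\delta_{H\backslash G}\otimes (V|_H)\otimes V_0,\C\big)\;\cong\;\Hom_H\big(\delta_{H\backslash G}\otimes V_0, (V|_H)^\vee\big),
\]
the last step again being the degree-zero adjunction $\Hom_H(A\otimes B,\C)\cong\Hom_H(A,B^\vee)$ with $A=\delta_{H\backslash G}\otimes V_0$ and $B=V|_H$. All identifications are canonical once the generator of \eqref{homdhg} is fixed, so tracking naturality through the chain is straightforward. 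The main obstacle is the middle step — producing the explicit mutually-inverse maps in $\Hom_G(\ind_H^G V_0',\C)\cong\Hom_H(\delta_{H\backslash G}\otimes V_0',\C)$ and verifying that the modular character $\delta_{H\backslash G}$ is exactly the twist that makes the averaging well defined and the composites the identity; everything else is formal manipulation with Lemma \ref{tensorg0} and the $\Hom(-\otimes-,\C)$ adjunction.
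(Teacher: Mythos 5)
The paper itself gives no proof of Lemma \ref{lemfro} --- it is introduced with ``Recall the following well-known Frobenius reciprocity'' and simply cited. So there is no in-text argument to compare against, and your task is essentially to supply the folklore proof.

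Your strategy is sound and is one of the standard routes. The chain
\[
\Hom_G(\ind_H^G V_0, V^\vee)\cong\Hom_G(\ind_H^G V_0\otimes V,\C)\cong\Hom_G\bigl(\ind_H^G(V_0\otimes V|_H),\C\bigr)
\cong\Hom_H\bigl(\delta_{H\backslash G}\otimes V_0\otimes V|_H,\C\bigr)\cong\Hom_H\bigl(\delta_{H\backslash G}\otimes V_0,(V|_H)^\vee\bigr)
\]
using the degree-zero case of Lemma \ref{Ext_Coinvariant}, the tensor identity of Lemma \ref{tensorg0}, and then the special case with target $\C$, is correct, and correctly lands on the $H$-smooth dual $(V|_H)^\vee$ (not $V^\vee|_H$) as required. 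You also correctly identify that the entire content is the special case $\Hom_G(\ind_H^G V_0',\C)\cong\Hom_H(\delta_{H\backslash G}\otimes V_0',\C)$ --- equivalently, $(\ind_H^G V_0')_G\cong(\delta_{H\backslash G}\otimes V_0')_H$ --- and that the chosen generator of \eqref{homdhg} is the data that rigidifies the isomorphism. The normalization checks out: taking $V_0'=\delta_{H\backslash G}^{-1}$ gives $\C\cong\C$ on both sides.

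One imprecision worth fixing: as written, your forward map $T\mapsto(v\mapsto T(s_v))$ does not visibly depend on the fixed generator $\lambda$, whereas your inverse map does (it integrates against $\lambda$). A ``delta-like'' section $s_v$ supported near the identity coset is not canonical --- different choices of the compact open set of support rescale $T(s_v)$ --- so the forward map as described is only well defined up to scalar. To make the two maps genuinely mutually inverse, the forward construction must also be normalized against $\lambda$ (equivalently, against a Haar measure on $H\backslash G$); this is standard but should be said explicitly, since without it the ``canonical'' in the statement is not accounted for. Once that is patched, the argument goes through.
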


Combining Lemmas \ref{ginv2},  \ref{tensorg0} and \ref{lemfro},  we
get the following proposition.

\begin{prpl}\label{gfrob}
Fix a generator of the space \eqref{homdhg}. Then there is a
canonical linear isomorphism
\[
   \Hom_{G,k}(\ind_H^G V_0,V^\vee)\cong \Hom_H(\delta_{H\backslash
   G} \otimes  \oJ_{G,k}\otimes V_0, (V|_H)^\vee).
\]
\end{prpl}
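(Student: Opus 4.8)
The plan is to chain together the three isomorphisms that the problem statement tells us to combine. Start from the classical Frobenius reciprocity of Lemma \ref{lemfro}, but apply it not to $V_0$ directly but to the auxiliary module $\oJ_{G,k}\otimes V_0$, where $V_0$ is regarded as an $H$-module; more precisely, we will work with $\ind_H^G(\oJ_{G,k}|_H \otimes V_0)$. The first step is therefore to rewrite $\Hom_{G,k}(\ind_H^G V_0, V^\vee)$ using Lemma \ref{ginv2}: since $\oJ_{G,k}$ is a smooth representation of $G$ and $\ind_H^G V_0$ is a smooth representation of $G$, Lemma \ref{ginv2} gives a canonical $\Lambda_G$-equivariant isomorphism
\[
  \Hom_{G,k}(\ind_H^G V_0, V^\vee)\;\cong\;\Hom_G(\oJ_{G,k}\otimes \ind_H^G V_0,\, V^\vee).
\]

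The second step is to move the $\oJ_{G,k}$ factor inside the induction. This is exactly Lemma \ref{tensorg0}, applied with the smooth representation $\oJ_{G,k}$ in the role of $V$ and $V_0$ in the role of $V_0$: it yields a $G$-equivariant isomorphism $\oJ_{G,k}\otimes \ind_H^G V_0\cong \ind_H^G(\oJ_{G,k}|_H\otimes V_0)$. Substituting this in, we get
\[
  \Hom_G(\oJ_{G,k}\otimes \ind_H^G V_0,\, V^\vee)\;\cong\;\Hom_G\bigl(\ind_H^G(\oJ_{G,k}|_H\otimes V_0),\, V^\vee\bigr).
\]

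The third step is to apply ordinary Frobenius reciprocity (Lemma \ref{lemfro}), with the chosen generator of the space \eqref{homdhg} fixed throughout, to the $H$-module $\oJ_{G,k}|_H\otimes V_0$. This gives
\[
  \Hom_G\bigl(\ind_H^G(\oJ_{G,k}|_H\otimes V_0),\, V^\vee\bigr)\;\cong\;\Hom_H\bigl(\delta_{H\backslash G}\otimes \oJ_{G,k}|_H\otimes V_0,\, (V|_H)^\vee\bigr),
\]
which is the right-hand side of the asserted isomorphism (with $\oJ_{G,k}$ read as $\oJ_{G,k}|_H$, i.e. restricted to $H$, as the statement implicitly intends since it appears inside a $\Hom_H$). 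Composing the three displayed isomorphisms gives the claim, and each of them is canonical once the generator of \eqref{homdhg} is fixed.

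The only point requiring care — and the one I would expect to be the main obstacle — is the compatibility of these identifications with the $\Lambda_G$-module structures, so that the composite is genuinely an isomorphism of locally unipotent representations of $\Lambda_G$ rather than merely of vector spaces. One must track how $G$ acts (by right translation on $\ind_H^G$, and via $\bar g\mapsto$ right multiplication on $\oJ_{G,k}$ in Lemma \ref{ginv2}) through Lemma \ref{tensorg0}'s explicit isomorphism $v\otimes\phi\mapsto (g\mapsto g.v\otimes\phi(g))$, and check that the residual $\Lambda_G$-action on both sides matches; this is the routine-but-not-trivial bookkeeping that the paper typically leaves to the reader, and indeed here the statement as phrased only claims a ``canonical linear isomorphism,'' so strictly speaking even the bare vector-space statement suffices and the compatibility can be recorded as an addendum if needed. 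I would present the three-step composition as above and remark that the $\Lambda_G$-equivariance is inherited step by step from Lemmas \ref{ginv2}, \ref{tensorg0}, and \ref{lemfro}.
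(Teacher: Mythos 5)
Your three-step chain — Lemma \ref{ginv2} to pass from $\Hom_{G,k}$ to $\Hom_G(\oJ_{G,k}\otimes-,\cdot)$, Lemma \ref{tensorg0} to move $\oJ_{G,k}$ inside the Schwartz induction, then Lemma \ref{lemfro} (Frobenius reciprocity) — is exactly the combination the paper indicates, and the proof is correct. Your closing remark that the equivariance bookkeeping is routine and that the statement only asserts a linear isomorphism is also consistent with how the paper treats it.
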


It is well known that Schwartz inductions preserve projectiveness, as in the following lemma.
\begin{lem}\label{tensorprojind}
If  $V_0$ is projective as a smooth representations of $H$, then the
smooth representation $\ind_H^G V_0$ of $G$ is also  projective.
\end{lem}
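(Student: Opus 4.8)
The plan is to mimic the proof of Lemma \ref{tensorproj}: realize $\ind_H^G V_0$ as a direct summand of an induction of the form $\ind_{\{1\}}^G W$, which is already known to be projective by \cite[Theorem A.4]{Ca}. First I would note that it suffices to treat the case where $V_0$ is of the form $\ind_{\{1\}}^H W_0$ for a vector space $W_0$ (with trivial group action): indeed, since $V_0$ is projective in $\CM(H)$, it is a direct summand of $\ind_{\{1\}}^H V_0$ (every smooth representation of $H$ is a quotient of $\ind_{\{1\}}^H V_0$, and projectivity upgrades ``quotient'' to ``direct summand''), and the functor $\ind_H^G$ is additive, so a direct summand of a projective object is carried to a direct summand of $\ind_H^G(\ind_{\{1\}}^H V_0)$.

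The key step is then the transitivity of Schwartz induction: there is a natural isomorphism
\[
  \ind_H^G\bigl(\ind_{\{1\}}^H W_0\bigr)\;\cong\;\ind_{\{1\}}^G W_0
\]
of smooth representations of $G$. This is the standard ``induction in stages'' for un-normalized Schwartz inductions; the isomorphism sends a function $\phi\colon G\to \ind_{\{1\}}^H W_0$ with the appropriate $H$-equivariance and support condition to the function $g\mapsto \bigl(\phi(g)\bigr)(1_G)$ in the other direction one uses that a function on $G$ with compact support modulo $\{1\}$, i.e. compact support, restricts along cosets to produce the inner function, and one checks the compact-support-modulo-$H$ condition matches up. Once this isomorphism is in hand, $\ind_{\{1\}}^G W_0$ is projective by \cite[Theorem A.4]{Ca}, hence so is $\ind_H^G(\ind_{\{1\}}^H V_0)$, and therefore its direct summand $\ind_H^G V_0$ is projective as well.

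The main obstacle is verifying the transitivity isomorphism with the correct handling of the ``compact support modulo $H$'' condition on the outer induction versus ``compact support'' on $\ind_{\{1\}}^G$; one must check that local constancy, the equivariance relations, and the support conditions all correspond under the map, and that the $G$-actions by right translation agree. All of this is routine but requires care with the two induction functors being un-normalized. (Alternatively, one can avoid the explicit transitivity map: choose a set-theoretic continuous section $s\colon H\backslash G\to G$ with relatively compact image on compacta, use it to build an $H$-equivariant splitting realizing $\ind_H^G V_0$ as a summand of $\C_c(G)\otimes V_0$-type object, and invoke projectivity of the latter; but the transitivity route is cleaner and is the one I would write up.)
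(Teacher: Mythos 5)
Your proposal is correct and takes essentially the same route as the paper: realize $V_0$ as a direct summand of $\ind_{\{1\}}^H V_0$ via projectivity, apply transitivity of Schwartz induction to identify $\ind_H^G(\ind_{\{1\}}^H V_0)$ with $\ind_{\{1\}}^G V_0$, and conclude by the projectivity of $\ind_{\{1\}}^G$ from \cite[Theorem A.4]{Ca}.
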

\begin{proof}
As in the proof of Lemma \ref{tensorproj}, $V_0$ is isomorphic to a
direct summand of $\ind_{\{1\}}^H V_0$. Therefore $\ind_H^G V_0$ is
isomorphic to a direct summand of
\[
  \ind_H^G (\ind_{\{1\}}^H V_0)\cong \ind_{\{1\}}^G V_0.
\]
By \cite[Theorem A.4]{Ca}, $\ind_{\{1\}}^G V_0 $ is projective.
Therefore $\ind_H^G V_0$ is also
 projective.
\end{proof}

We have the following Shapiro's lemma for generalized extensions.
\begin{prpl}\label{gshap}
Fix a generator of the space \eqref{homdhg}. Then there is a
canonical linear isomorphism
\[
   \Ext_{G,k}^i(\ind_H^G V_0,V^\vee)\cong \Ext_H^i(\delta_{H\backslash
   G} \otimes  \oJ_{G,k}\otimes V_0, (V|_H)^\vee), \quad k\in \BN, \,i\in \Z.
\]
\end{prpl}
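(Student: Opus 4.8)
The plan is to deduce Proposition \ref{gshap} from the generalized Frobenius reciprocity of Proposition \ref{gfrob} by the standard trick of turning a single isomorphism of Hom-functors into an isomorphism of derived functors via a projective resolution. First I would take a projective resolution $P_\bullet \to V_0$ of $V_0$ in the category $\CM(H)$ of smooth representations of $H$. By Lemma \ref{tensorprojind}, the Schwartz induction functor $\ind_H^G$ sends projectives to projectives, and it is exact (it is exact because $\ind_H^G$ is exact on smooth representations; this is standard for Schwartz induction, and in any case it is clear from the description of $\ind_H^G V_0$ as sections of a sheaf). Hence $\ind_H^G P_\bullet \to \ind_H^G V_0$ is a projective resolution of $\ind_H^G V_0$ in $\CM(G)$, and therefore
\[
   \Ext_{G,k}^i(\ind_H^G V_0, V^\vee) \;=\; \oH^i\bigl(\Hom_{G,k}(\ind_H^G P_\bullet, V^\vee)\bigr),
\]
using Lemma \ref{gin} to identify the cohomology of $\Hom_{G,k}(\ind_H^G P_\bullet, V^\vee)$ with $\Ext^i$ of the $k$-th generalized Ext bi-functor (here one reads $\Ext_{G,k}^i$ as the left derived functor introduced right after Lemma \ref{gin}).

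Next I would apply Proposition \ref{gfrob} term by term to the complex $\Hom_{G,k}(\ind_H^G P_\bullet, V^\vee)$. Since the isomorphism in Proposition \ref{gfrob} is canonical (depending only on the fixed generator of \eqref{homdhg}), it is functorial in $V_0$, hence gives an isomorphism of complexes
\[
   \Hom_{G,k}(\ind_H^G P_\bullet, V^\vee) \;\cong\; \Hom_H\bigl(\delta_{H\backslash G}\otimes \oJ_{G,k}\otimes P_\bullet,\, (V|_H)^\vee\bigr).
\]
Taking $i$-th cohomology of both sides then yields the desired identification, provided I check that the right-hand complex computes $\Ext_H^i(\delta_{H\backslash G}\otimes \oJ_{G,k}\otimes V_0,\,(V|_H)^\vee)$. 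For that it suffices that $\delta_{H\backslash G}\otimes \oJ_{G,k}\otimes P_\bullet \to \delta_{H\backslash G}\otimes \oJ_{G,k}\otimes V_0$ is a projective resolution in $\CM(H)$: exactness is clear since tensoring (over $\C$) with a fixed complex of vector spaces is exact, and projectivity of each $\delta_{H\backslash G}\otimes \oJ_{G,k}\otimes P_i$ follows from Lemma \ref{tensorproj} (tensor product with a projective is projective), noting that $\delta_{H\backslash G}\otimes \oJ_{G,k}$ is a smooth representation of $H$ and $P_i$ is projective. Then the $i$-th cohomology of $\Hom_H(\delta_{H\backslash G}\otimes \oJ_{G,k}\otimes P_\bullet, (V|_H)^\vee)$ is by definition $\Ext_H^i(\delta_{H\backslash G}\otimes \oJ_{G,k}\otimes V_0, (V|_H)^\vee)$, completing the argument. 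The statement is only claimed for $k\in\BN$, so no passage to the limit over $k$ is needed; had one wanted $k=\infty$ one would invoke the commutation of cohomology with the direct limit over $k$, exactly as in Lemma \ref{gin}.

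I do not expect a serious obstacle here: the whole proof is the routine "derive a functorial isomorphism of Hom-functors along a projective resolution" argument, and every ingredient (exactness and projectivity-preservation of $\ind_H^G$, projectivity-preservation of $-\otimes-$, the identification of generalized Ext with cohomology of $\Hom_{G,k}$ against a projective resolution) is already available in the excerpt. The only points requiring a word of care are: (i) verifying that the isomorphism of Proposition \ref{gfrob} is genuinely natural in $V_0$, so that it assembles into an isomorphism of complexes rather than merely of individual cohomology groups — this is where one should be slightly careful, tracing through the chain of identifications in Lemmas \ref{ginv2}, \ref{tensorg0}, \ref{lemfro}; and (ii) keeping straight which side $\oJ_{G,k}$ sits on and that $\Hom_{G,k}(\ind_H^G P_i, V^\vee) = \Hom_G(\oJ_{G,k}\otimes \ind_H^G P_i, V^\vee)$ matches, under Lemma \ref{tensorg0}, with $\Hom_G(\ind_H^G(\oJ_{G,k}\otimes P_i), V^\vee)$ before Frobenius reciprocity is applied. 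Neither of these is a real difficulty, just bookkeeping.
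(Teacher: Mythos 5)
Your argument is essentially identical to the paper's proof: take a projective resolution $P_\bullet \to V_0$, use exactness and projectivity-preservation of $\ind_H^G$ (Lemma \ref{tensorprojind}) to get a projective resolution of $\ind_H^G V_0$, apply Proposition \ref{gfrob} termwise, and use Lemma \ref{tensorproj} to see that $\delta_{H\backslash G}\otimes \oJ_{G,k}\otimes P_\bullet$ is again a projective resolution in $\CM(H)$. The bookkeeping points you flag are the right ones, and the paper handles them in the same way.
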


\begin{proof}
Take a projective resolution $P_\bullet\rightarrow V_0$ of $V_0$.
Since ``$\ind$" is an exact functor, by Lemma \ref{tensorprojind},
$\ind_H^G P_\bullet\to \ind_H^G V_0$ is also a projective
resolution. Then $\Ext_{G,k}^i(\ind_H^G V_0,V^\vee)$ equals the
$i$-th cohomology of the complex
\[
   \Hom_{G,k}(\ind_H^G P_\bullet,V^\vee).
   \]
The later is isomorphic to the complex
\[
 \Hom_H(\delta_{H\backslash G} \otimes  \oJ_{G,k}\otimes P_\bullet,{V_H}^\vee)
 \]
 by Proposition \ref{gfrob}. By Lemma \ref{tensorproj}, $\delta_{H\backslash G} \otimes  \oJ_{G,k}\otimes P_\bullet\to \delta_{H\backslash
   G} \otimes  \oJ_{G,k}\otimes V_0$ is also a projective resolution in the category $\CM(H)$. Therefore the proposition follows.

\end{proof}

\subsection{The case of homogeneous spaces}
Let $\chi$ be a character of $G$.  Note that there exists a natural isomorphism $\oS(G/H)\simeq \ind_H^G \C$ as representations of $G$ via the following map
$$\phi \mapsto  \{g\mapsto \phi(g^{-1})\}, $$
for any $\phi\in \oS(G/H)$.  Hence
 as a special case of Proposition
\ref{gshap}, we have the following proposition.

\begin{prpl}\label{gshap2}
There is a linear isomorphism
\[
   \Ext_{G,k}^i(\oS(G/H),\chi)\cong \Ext_H^i(\delta_{H\backslash
   G} \otimes  \oJ_{G,k}, \chi), \quad k\in \BN; \,i\in \Z.
\]
\end{prpl}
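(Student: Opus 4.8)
The plan is to obtain this as a direct specialization of the generalized Shapiro's lemma, Proposition \ref{gshap}. First I would recall the stated identification $\oS(G/H)\simeq \ind_H^G\C$ of smooth representations of $G$, given explicitly by $\phi\mapsto\{g\mapsto\phi(g^{-1})\}$; one checks at once that this intertwines the translation action on functions on $G/H$ with the right-translation action defining $\ind_H^G$, and that it is bijective since a function on $G/H$ is the same as a left-$H$-invariant function on $G$. With this identification in hand, I would apply Proposition \ref{gshap} with $V_0=\C$ the trivial representation of $H$ and with $V=\chi^\vee$, so that $V^\vee=\chi$ (a one-dimensional, hence smooth, representation, and its smooth contragredient is just $\chi$ again since characters are locally constant). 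This yields
\[
\Ext_{G,k}^i(\ind_H^G\C,\chi)\cong\Ext_H^i(\delta_{H\backslash G}\otimes\oJ_{G,k}\otimes\C,(\chi^\vee|_H)^\vee),\quad k\in\BN,\ i\in\Z.
\]

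It then remains only to simplify both sides cosmetically. On the left, substitute $\ind_H^G\C=\oS(G/H)$ via the isomorphism above. On the right, $\oJ_{G,k}\otimes\C=\oJ_{G,k}$ canonically, and $(\chi^\vee|_H)^\vee=\chi|_H$, which we write simply as $\chi$ by abuse of notation (restricting a character of $G$ to $H$); here one uses that $\oJ_{G,k}$ is finite-dimensional so that the identifications involving contragredients are unproblematic, and that Proposition \ref{gshap} is stated for exactly this $k\in\BN$ range. Combining, we get the asserted isomorphism $\Ext_{G,k}^i(\oS(G/H),\chi)\cong\Ext_H^i(\delta_{H\backslash G}\otimes\oJ_{G,k},\chi)$.

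I do not expect a genuine obstacle here: the content is entirely in Proposition \ref{gshap} (which in turn rests on the generalized Frobenius reciprocity of Proposition \ref{gfrob} and the projectivity-preservation of Schwartz induction, Lemma \ref{tensorprojind}). The only points requiring a word of care are the bookkeeping of contragredients — making sure $V$ is chosen so that $V^\vee$ is literally $\chi$, which forces $V=\chi^\vee$ and then one checks $(V|_H)^\vee=\chi|_H$ — and noting that the fixed generator of the one-dimensional space \eqref{homdhg} is what makes the isomorphism canonical, exactly as in Proposition \ref{gshap}. Everything else is formal substitution.
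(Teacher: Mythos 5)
Your proposal is correct and matches the paper's own proof exactly: the paper likewise establishes the $G$-equivariant identification $\oS(G/H)\simeq\ind_H^G\C$ (by the same explicit formula) and then records Proposition \ref{gshap2} as the specialization of Proposition \ref{gshap} to $V_0=\C$ with $V$ chosen so that $V^\vee=\chi$. The contragredient bookkeeping you spell out is the only thing the paper leaves implicit.
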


Recall from the induction that $G/H$ is said to be $\chi$-admissible
if $\Hom_G(\oS(G/H),\chi)\neq 0$. Proposition \ref{gshap2}
implies that \be\label{ado}
  \textrm{$G/H$ is $\chi$-admissible $\ \Longleftrightarrow\ $ $\chi|_H =\delta_{H\backslash G}$.}
\ee

\begin{thml}\label{gshap3}
Assume that $H$ contains $\mathsf H(\rk)$ as an open normal subgroup of finite index, where $\mathsf H$ is a connected linear algebraic group defined over $\rk$. If
$G/\mathsf H(\rk)$ is not $\chi$-admissible, then
\[
   \Ext_{G,k}^i(\oS(G/H),\chi)=0, \quad k=0,1,2,\cdots, \infty;\,i\in \Z.
\]
\end{thml}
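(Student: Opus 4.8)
The plan is to reduce the statement to the vanishing result Theorem~\ref{Vanishing_Theorem} (more precisely, to Corollary~\ref{corvanish}, since $H$ need not be connected) by means of Shapiro's lemma for generalized extensions, Proposition~\ref{gshap2}. By Lemma~\ref{gin} it suffices to treat the case $k=0$ and all $i\in\Z$: indeed, for finite $k$ the group $\Ext_{G,k}^i(\oS(G/H),\chi)$ is computed by $\Ext_G^i(\RJ_{G,k}\otimes\oS(G/H),\chi)$, and $\RJ_{G,k}$ is a successive extension of copies of the trivial representation, so the vanishing propagates; the case $k=\infty$ follows by passing to the direct limit over $r$. Alternatively, one can carry the argument out uniformly in $k$ by keeping the factor $\oJ_{G,k}$ throughout, since it is locally unipotent and tensoring by it preserves local unipotence.

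First I would apply Proposition~\ref{gshap2} to rewrite
\[
  \Ext_{G,k}^i(\oS(G/H),\chi)\cong \Ext_H^i(\delta_{H\backslash G}\otimes \oJ_{G,k},\chi).
\]
Tensoring both arguments by $\delta_{H\backslash G}^{-1}$ (Lemma~\ref{translate} applied to the group $H$) identifies this with $\Ext_H^i(\oJ_{G,k},\chi\otimes\delta_{H\backslash G}^{-1})$. Now set $V_1:=\oJ_{G,k}$, which is a locally unipotent representation of $H$ (being locally unipotent as a representation of $G$, hence a fortiori over $H$), and $V_2:=\chi\otimes\delta_{H\backslash G}^{-1}$, which is a character of $H$. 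The hypothesis that $G/\mathsf H(\rk)$ is \emph{not} $\chi$-admissible means, by \eqref{ado} applied with $\mathsf H(\rk)$ in place of $H$, that $\chi|_{\mathsf H(\rk)}\neq \delta_{\mathsf H(\rk)\backslash G}$. I would then need to relate $\delta_{H\backslash G}|_{\mathsf H(\rk)}$ to $\delta_{\mathsf H(\rk)\backslash G}$: since $\mathsf H(\rk)$ is open of finite index in $H$, the modulus characters agree on $\mathsf H(\rk)$ up to the relevant index factors, and in fact $\delta_{H\backslash G}|_{\mathsf H(\rk)}=\delta_{\mathsf H(\rk)\backslash G}\cdot(\delta_{\mathsf H(\rk)\backslash H})^{-1}$-type bookkeeping shows they coincide because $\mathsf H(\rk)$ is unimodular-normal inside $H$ of finite index, so $\delta_{\mathsf H(\rk)\backslash H}$ is trivial. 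Hence the restriction of $V_2=\chi\otimes\delta_{H\backslash G}^{-1}$ to $\mathsf H(\rk)$ is the nontrivial character $\chi\otimes\delta_{\mathsf H(\rk)\backslash G}^{-1}$.

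To conclude I would invoke Corollary~\ref{corvanish} with the group $H$ (which contains $\mathsf H(\rk)$ as an open normal subgroup of finite index), taking $\chi_2$ trivial and $\chi_1$ the nontrivial character $(V_2)|_{\mathsf H(\rk)}^{-1}$ of $\mathsf H(\rk)$: then $(V_1)|_{\mathsf H(\rk)}\otimes\chi_1^{-1}=(V_1)|_{\mathsf H(\rk)}$ is locally unipotent and $(V_2)|_{\mathsf H(\rk)}\otimes\chi_2^{-1}=(V_2)|_{\mathsf H(\rk)}$ is locally unipotent (a nontrivial character of $\mathsf H(\rk)$ — wait, it is not locally unipotent as such), so instead one tensors: $(V_2\otimes\chi_1)|_{\mathsf H(\rk)}$ is trivial, hence locally unipotent, and $(V_1\otimes\chi_1)|_{\mathsf H(\rk)}$ is $\chi_1$-isotypic-generalized, i.e.\ $(V_1\otimes\chi_1)\otimes\chi_1^{-1}$ is locally unipotent. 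Applying Corollary~\ref{corvanish} to the pair $(V_1\otimes\chi_1,\ V_2\otimes\chi_1)=(V_1\otimes\chi_1,\ \C)$ with the two distinct characters $\chi_1\neq\mathbf 1$ yields $\Ext_{H,0}^i(V_1\otimes\chi_1,\C)=0$, equivalently $\Ext_H^i(V_1,V_2)=0$, which is what we wanted. The main obstacle I anticipate is the careful comparison of the modulus characters $\delta_{H\backslash G}$ and $\delta_{\mathsf H(\rk)\backslash G}$ on the finite-index open subgroup $\mathsf H(\rk)$, and correspondingly the precise translation of ``$G/\mathsf H(\rk)$ not $\chi$-admissible'' into a statement about characters of $H$ restricted to $\mathsf H(\rk)$; once that bookkeeping is pinned down, the homological input is exactly Corollary~\ref{corvanish} and nothing further is needed.
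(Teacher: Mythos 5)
Your proposal is correct and follows essentially the same route as the paper, which combines Proposition~\ref{gshap2} (Shapiro's lemma) with Corollary~\ref{corvanish} for finite $k$, then passes to $k=\infty$ via Lemma~\ref{gin}. The modular character bookkeeping you work through — namely that $\delta_{H\backslash G}|_{\mathsf H(\rk)}=\delta_{\mathsf H(\rk)\backslash G}$ because $\mathsf H(\rk)$ is open of finite index in $H$, so that the non-$\chi$-admissibility of $G/\mathsf H(\rk)$ translates exactly into the condition $\chi_1\neq\chi_2$ needed in Corollary~\ref{corvanish} — is indeed the point the paper's one-line proof leaves implicit, and your verification of it is sound despite the mid-paragraph self-correction about which pair of characters to feed the corollary.
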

\begin{proof}
When $k$ is finite, this is a direct consequence of Proposition
\ref{gshap2} and Corollary \ref{corvanish}. Then for
$k=\infty$, the theorem follows by Lemma \ref{gin}.

\end{proof}

\subsection{The automatic extension theorem}

Let $\mathsf G$ be a linear algebraic group  over $\rk$, acting
algebraically on an algebraic variety $\mathsf Z$  over $\rk$. We
say that $\mathsf Z$ is homogeneous if the action of $\mathsf
G(\bar \rk)$ on $\mathsf Z(\bar \rk)$ is transitive, where $\bar
\rk$ denotes an algebraic closure of $\rk$. The following result on
homogeneous spaces over $p$-adic fields is  well known.

 \begin{lem}\label{finitehom}\cite[Section 6.4, Corollary 2 and Section 3.1, Corollary 2]{PR}
 If $\mathsf Z$ is homogeneous, then $\mathsf Z(\rk)$ has only finitely many $\mathsf G(\rk)$-orbits, and every  $\mathsf G(\rk)$-orbit is open in $\mathsf Z(\rk)$.
 \end{lem}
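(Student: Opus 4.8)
**Plan for proving Lemma \ref{finitehom} (the statement that a $p$-adic homogeneous variety $\mathsf Z(\rk)$ has finitely many open $\mathsf G(\rk)$-orbits).**

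The cited statement is a classical fact about Galois cohomology over local fields, so the plan is to reduce it to the finiteness of Galois cohomology sets. First I would fix a point $z \in \mathsf Z(\rk)$ (nonempty after a finite separable extension, but for a genuine orbit statement one works with the rational points we have) and let $\mathsf H = \mathsf G_z$ be its stabilizer, so that the $\bar\rk$-variety $\mathsf Z$ is identified $\mathsf G$-equivariantly with $\mathsf G/\mathsf H$. The key input is that the orbit map $\mathsf G \to \mathsf Z$ is smooth and surjective, hence by Hensel's lemma / the implicit function theorem for $p$-adic analytic maps it is open on $\rk$-points; this already gives the "every orbit is open" half, and since $\mathsf Z(\rk)$ is compact when $\mathsf Z$ is projective — and in general a countable disjoint union of compact open sets, each meeting only finitely many open orbits — openness plus the covering argument forces finiteness \emph{provided} we know the orbit set is at most countable, which it is.

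For the sharper claim that there are only finitely many orbits, the standard argument is the exact sequence of pointed sets in Galois cohomology: the $\mathsf G(\rk)$-orbits on $\mathsf Z(\rk) = (\mathsf G/\mathsf H)(\rk)$ inject into $\ker\!\big(H^1(\rk,\mathsf H) \to H^1(\rk,\mathsf G)\big)$, or more precisely the orbit set is in bijection with a subset of $H^1(\rk,\mathsf H)$. Thus I would invoke the theorem of Borel–Serre (respectively the treatment in Platonov–Rapinchuk) that for a linear algebraic group $\mathsf H$ over a $p$-adic field $\rk$, the cohomology set $H^1(\rk,\mathsf H)$ is \emph{finite}. Combining the injection with this finiteness yields that $\mathsf Z(\rk)$ has only finitely many $\mathsf G(\rk)$-orbits.

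The main obstacle — really the only nontrivial point — is the finiteness of $H^1(\rk,\mathsf H)$ for an arbitrary (not necessarily connected, not necessarily reductive) linear algebraic group over a non-archimedean local field of characteristic zero; but this is precisely the content of the references \cite[Section 6.4, Corollary 2 and Section 3.1, Corollary 2]{PR} that the lemma cites, so in the write-up I would simply quote it rather than reprove it. A minor technical care is needed in passing from "injection of orbits into a Galois cohomology set" to an honest statement, since the orbit map behaves well only because $\mathsf H \to \mathsf G$ has the property that $H^1$ of the quotient is computed by the long exact sequence; here one uses that $\mathsf G \to \mathsf G/\mathsf H$ is a torsor under $\mathsf H$ locally in the fppf topology, which suffices. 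Since the paper treats this as a known lemma with a direct citation, the expected proof is essentially the one-line deduction: openness from smoothness of the orbit map, finiteness from finiteness of local Galois cohomology.
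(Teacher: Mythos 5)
The paper gives no proof of this lemma; it simply cites Platonov--Rapinchuk, and your write-up correctly reconstructs the standard argument that the reference provides: smoothness of the orbit map $\mathsf G \to \mathsf Z$ plus the $p$-adic implicit function theorem gives openness of each $\mathsf G(\rk)$-orbit, and the twisting bijection between orbits of $\mathsf G(\rk)$ on $(\mathsf G/\mathsf H)(\rk)$ and $\ker\!\big(H^1(\rk,\mathsf H)\to H^1(\rk,\mathsf G)\big)$, combined with the Borel--Serre finiteness of $H^1(\rk,\mathsf H)$ for a linear algebraic group over a $p$-adic field of characteristic zero, gives finiteness of the orbit set. This is exactly what the cited corollaries in~[PR] cover, so the approach matches.

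One remark: the parenthetical topological aside is not correct as stated. Countability of the orbit set together with openness of orbits does \emph{not} force finiteness --- a countable disjoint union of compact open pieces can perfectly well meet infinitely many distinct open orbits (e.g.\ $\rk^\times$ under the action of $(\rk^\times)^2$ by translation when $\rk^\times/(\rk^\times)^2$ is finite is fine, but there is no \emph{a priori} bound coming from topology alone, and $\mathsf Z(\rk)$ need not be compact in general). Fortunately you do not actually rely on this: the Galois cohomology argument you give next is the honest proof of finiteness, so the aside can simply be deleted without affecting the rest of the reasoning.
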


In general, recall the following result which is due to  M.
Rosenlicht \cite{Ro}. See also \cite[Theorem 4.4, p.187]{PV}.
\begin{prpl}
\label{Quotient_Rationality} There exists a $\mathsf G$-stable
open dense subvariety $\mathsf U$ of $\mathsf Z$, a variety
$\mathsf V$ over $\rk$, and a $\mathsf G$-invariant morphism $f:
\mathsf U\to \mathsf V$ of algebraic varieties over $\rk$ such
that for all $\rk$-rational point $y\in \mathsf V$, the subvariety
$f^{-1}(y)$ of $\mathsf U$ is homogeneous (under the action of
$\mathsf G$).
\end{prpl}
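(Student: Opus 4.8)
The statement is Rosenlicht's theorem on the existence of a rational (geometric) quotient; I would prove the case $\mathsf G$ connected directly, following Rosenlicht, and then deduce the general case. So suppose first that $\mathsf G$ is connected. It then stabilizes each irreducible component of $\mathsf Z$, so after deleting the pairwise intersections of the components we are left with a $\mathsf G$-stable dense open that is a disjoint union of $\mathsf G$-stable irreducible varieties; handling these one at a time, we may assume $\mathsf Z$ irreducible. Next I would invoke Rosenlicht's lemma that, after replacing $\mathsf Z$ by a $\mathsf G$-stable dense open, every $\mathsf G$-orbit in $\mathsf Z$ is closed; shrinking once more, using the lower semicontinuity of orbit dimension, we may assume in addition that all orbits have the common maximal dimension $d$.

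Let $K:=\rk(\mathsf Z)^{\mathsf G}$ be the field of $\mathsf G$-invariant rational functions. As a subfield of the finitely generated extension $\rk(\mathsf Z)/\rk$, it is finitely generated over $\rk$, hence the function field of an irreducible $\rk$-variety $\mathsf V$, and the inclusion $K\hookrightarrow\rk(\mathsf Z)$ gives a dominant, $\mathsf G$-invariant rational map $\mathsf Z\to\mathsf V$. Its domain of definition is dense open and $\mathsf G$-stable, since the complement of the domain of each coordinate function of the map is $\mathsf G$-stable (that function being $\mathsf G$-invariant and $\mathsf G$ acting by automorphisms); shrinking $\mathsf V$ to a dense open inside the image and $\mathsf Z$ to its preimage, we get a surjective $\mathsf G$-invariant morphism $f\colon\mathsf U_1\to\mathsf V$ with $\mathsf U_1$ a $\mathsf G$-stable dense open of $\mathsf Z$ and $\dim\mathsf U_1-\dim\mathsf V=d$. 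The crucial point is that, because $\mathsf G$ is connected, $K$ is algebraically closed in $\rk(\mathsf Z)$: over $\bar\rk$, a connected group fixing $K$ acts trivially on the finite set of conjugates of any element algebraic over $K$. Hence $\rk(\mathsf Z)/K$ is a regular extension and the generic fibre of $f$ is geometrically irreducible. Since every nonempty fibre of $f$ has dimension $\geq d$, and the conditions ``fibre of dimension $d$'' and ``fibre geometrically irreducible'' each hold over a dense open of $\mathsf V$, we may pass to a dense open $\mathsf V'\subseteq\mathsf V$ and set $\mathsf U:=f^{-1}(\mathsf V')$, so that for $y\in\mathsf V'(\bar\rk)$ the fibre $f^{-1}(y)$ is a geometrically irreducible $d$-dimensional $\mathsf G$-variety. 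As every orbit in $\mathsf U$ is closed of dimension $d$, such a fibre is a disjoint union of finitely many orbits, hence a single one; this settles the proposition for connected $\mathsf G$ (a fortiori over $\rk$-rational points).

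For general $\mathsf G$, apply the connected case to the normal subgroup $\mathsf G^\circ$, obtaining $f_0\colon\mathsf U_0\to\mathsf V_0$ whose geometric fibres are single $\mathsf G^\circ$-orbits; by construction $\mathsf V_0$ is a model of $\rk(\mathsf Z)^{\mathsf G^\circ}$, on which the finite group $\Gamma:=\mathsf G/\mathsf G^\circ$ accordingly acts, birationally and compatibly with $f_0$. After shrinking (over a $\Gamma$-stable, then $\mathsf G$-stable, dense open) so that this $\Gamma$-action is regular, $\mathsf U_0$ is $\mathsf G$-stable, and $\mathsf V_0$ is affine, let $h$ be the composite $\mathsf U_0\to\mathsf V_0\to\mathsf V_0/\Gamma$, which is $\mathsf G$-invariant. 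A geometric fibre $h^{-1}(y)$ is the union of the $\mathsf G^\circ$-orbits lying over the $\Gamma$-orbit above $y$, and $\mathsf G(\bar\rk)$ permutes these transitively, so $h^{-1}(y)$ is a single $\mathsf G$-orbit; this holds in particular for $y\in(\mathsf V_0/\Gamma)(\rk)$, which gives the proposition.

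The one genuinely geometric ingredient, and the step I would expect to cost real work, is Rosenlicht's lemma used at the end of the first paragraph: that after passing to a $\mathsf G$-stable dense open every orbit is closed. Its proof is a Noetherian induction which presents the orbit-closure equivalence relation as a correspondence inside $\mathsf Z\times_\rk\mathsf Z$ and controls the locus on which orbit boundaries meet a given open set. A secondary, but routine-in-characteristic-zero, point is to check that all the shrinkings above can be carried out $\mathsf G$-equivariantly and over $\rk$ rather than merely over $\bar\rk$. As all of this is classical, an acceptable alternative is to cite \cite{Ro} or \cite[Theorem~4.4]{PV} directly.
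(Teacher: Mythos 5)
The paper does not prove this proposition---it is recalled with citations to \cite{Ro} and \cite[Theorem~4.4, p.~187]{PV}, which is also the fallback you offer at the end---so there is no argument of the paper's to compare against; the question is whether your sketch stands on its own.

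It does not quite. You rely on the equality $\dim\mathsf U_1-\dim\mathsf V=d$, equivalently $\operatorname{trdeg}_\rk\,\rk(\mathsf Z)^{\mathsf G}=\dim\mathsf Z-d$, asserted in passing. Only the inequality $\geq d$ for the generic fibre dimension is clear (any nonempty fibre is $\mathsf G$-stable, hence contains a $d$-dimensional orbit); the reverse direction, that there exist \emph{enough} $\mathsf G$-invariant rational functions, is precisely the non-trivial content of Rosenlicht's theorem, and without it ``fibre of dimension $d$'' is not a condition holding over a dense open of $\mathsf V$. Granting that equality, the rest is sound: geometric irreducibility of the generic fibre because $K$ is algebraically closed in $\rk(\mathsf Z)$ (using connectedness of $\mathsf G$), a closed $d$-dimensional orbit filling a $d$-dimensional irreducible fibre, and the passage to disconnected $\mathsf G$ via the finite quotient $\Gamma$ acting on the $\mathsf G^\circ$-quotient. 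You also mislocate the hard step: orbit-closedness after shrinking is immediate once all orbits have the generic maximal dimension $d$, because $\bar O_x\setminus O_x$ is a union of lower-dimensional orbits and so misses the open stratum; the Noetherian-induction/correspondence machinery you gesture at is in fact what is needed for the dimension count, i.e.\ for producing enough invariants. So either supply that lemma in detail, or, as the paper does, simply cite \cite{Ro} or \cite[Theorem~4.4]{PV}.
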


Let $\chi$ be a character of $\mathsf G(\rk)$ as in Theorem
\ref{main1}. Recall the notion of weakly $\chi$-admissible from Definition \ref{weakad}.

\begin{thml}
\label{vextz} Assume that every $\mathsf G(\rk)$-orbit in
$\mathsf Z(\rk)$ is not weakly $\chi$-admissible. Then
\[
   \Ext_{\mathsf G(\rk),k}^i(\oS(\mathsf Z(\rk)),\chi)=0, \quad\textrm{for all } k=0,1,2,\cdots, \infty; \,i\in \Z.
\]
\end{thml}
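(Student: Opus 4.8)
The plan is to prove the vanishing by induction on $\dim\mathsf Z$, reducing at each stage to the case of homogeneous spaces via Rosenlicht's rational quotient theorem together with the localization principle. Throughout I write $G=\mathsf G(\rk)$. First I would dispose of the case $k=\infty$: by Lemma \ref{gin} one has $\Ext^i_{G,\infty}=\varinjlim_r\Ext^i_{G,r}$, so it is enough to treat $k\in\BN$. The base case $\mathsf Z=\emptyset$ is trivial, since $\oS(\emptyset)=0$.

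For the inductive step I would apply Proposition \ref{Quotient_Rationality} to obtain a $\mathsf G$-stable open dense subvariety $\mathsf U\subseteq\mathsf Z$, a variety $\mathsf V$ over $\rk$, and a $\mathsf G$-invariant morphism $f:\mathsf U\to\mathsf V$ whose $\rk$-rational fibres are $\mathsf G$-homogeneous. Set $\mathsf W:=\mathsf Z\setminus\mathsf U$ with its reduced structure; it is a $\mathsf G$-stable closed subvariety with $\dim\mathsf W<\dim\mathsf Z$, and $\mathsf U(\rk)$ is open, $\mathsf W(\rk)=\mathsf Z(\rk)\setminus\mathsf U(\rk)$ closed, in $\mathsf Z(\rk)$. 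Applying the exact functor $\Gamma_c$ to the short exact sequence $0\to j_!\C\to\C\to i_*\C\to 0$ of $G$-equivariant $\ell$-sheaves on $\mathsf Z(\rk)$ (with $j,i$ the inclusions of $\mathsf U(\rk)$ and $\mathsf W(\rk)$) gives a short exact sequence of smooth representations $0\to\oS(\mathsf U(\rk))\to\oS(\mathsf Z(\rk))\to\oS(\mathsf W(\rk))\to 0$. Its long exact sequence for $\Ext^\bullet_{G,k}(-,\chi)$ then reduces the claim to the vanishing of $\Ext^i_{G,k}(\oS(\mathsf W(\rk)),\chi)$ and of $\Ext^i_{G,k}(\oS(\mathsf U(\rk)),\chi)$ for all $i$. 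The first follows from the inductive hypothesis, since every $G$-orbit in $\mathsf W(\rk)$ is a $G$-orbit in $\mathsf Z(\rk)$, and whether an orbit is weakly $\chi$-admissible depends only on the orbit, not on the ambient variety.

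To treat $\mathsf U(\rk)$ I would invoke Corollary \ref{Fiber_Vanishing2} with the $G$-equivariant $\ell$-sheaf $\C_{\mathsf U(\rk)}$ and the $G$-invariant continuous map $\pi:=f(\rk):\mathsf U(\rk)\to\mathsf V(\rk)$: it reduces $\Ext^i_{G,k}(\oS(\mathsf U(\rk)),\chi)=0$ to $\Ext^i_{G,k}(\oS(\pi^{-1}(y)),\chi)=0$ for every $y\in\mathsf V(\rk)$. For $y$ outside the image this is trivial; otherwise $\pi^{-1}(y)=f^{-1}(y)(\rk)$ with $f^{-1}(y)$ a $\mathsf G$-homogeneous variety, so by Lemma \ref{finitehom} it is a finite disjoint union of $G$-orbits $O_1,\dots,O_m$, each open and hence also closed. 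Therefore $\oS(\pi^{-1}(y))=\bigoplus_j\oS(O_j)$ and $\Ext^i_{G,k}(\oS(\pi^{-1}(y)),\chi)=\bigoplus_j\Ext^i_{G,k}(\oS(O_j),\chi)$, a finite sum. Picking $x_j\in O_j$, the orbit map identifies $\oS(O_j)$ with $\oS(G/G_{x_j})$, and $G_{x_j}=\mathsf G_{x_j}(\rk)$ contains $\mathsf G_{x_j}^\circ(\rk)$ as an open normal subgroup of finite index (standard for algebraic groups over local fields in characteristic zero), with $\mathsf G_{x_j}^\circ$ connected. As $O_j$ is a $G$-orbit in $\mathsf Z(\rk)$, by hypothesis it is not weakly $\chi$-admissible, i.e.\ $G/\mathsf G_{x_j}^\circ(\rk)$ is not $\chi$-admissible, so Theorem \ref{gshap3} yields $\Ext^i_{G,k}(\oS(G/G_{x_j}),\chi)=0$. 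This closes the induction.

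The hard part will not be a single deep step but the careful dovetailing of the algebraic-geometric and analytic pictures: pinning down the correct $G$-equivariant short exact sequence of Schwartz spaces, checking that the contribution of the fibre over $y$ splits as the finite direct sum of the contributions of the open $G$-orbits on it, and --- the essential point --- verifying that the stabilizer of an arbitrary $G$-orbit in a $p$-adic homogeneous space has exactly the shape ($\mathsf G_{x_j}^\circ(\rk)$ open, normal, of finite index in $G_{x_j}$, with $\mathsf G_{x_j}^\circ$ connected) needed to apply Definition \ref{weakad} and Theorem \ref{gshap3} verbatim.
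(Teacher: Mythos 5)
Your proof is correct and follows essentially the same route as the paper: reduce to finite $k$ via Lemma \ref{gin}, use Rosenlicht's rational quotient (Proposition \ref{Quotient_Rationality}) with the long exact sequence to induct on $\dim\mathsf Z$ and reduce to the dense open $\mathsf U$, apply the localization Corollary \ref{Fiber_Vanishing2} over $\mathsf V(\rk)$, and finish with Lemma \ref{finitehom} and Theorem \ref{gshap3}. You merely spell out what the paper compresses into ``using Proposition \ref{Quotient_Rationality} inductively, and using the long exact sequences for extensions, we assume WLOG that $\mathsf Z$ equals $\mathsf U$,'' including the justification that weak $\chi$-admissibility of an orbit is intrinsic and that the stabilizers have the form required by Theorem \ref{gshap3}.
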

\begin{proof}
Using Lemma \ref{gin}, we assume that $k$ is finite. Using
Proposition \ref{Quotient_Rationality} inductively, and using the
long exact sequences for extensions, we assume without loss of
generality that $\mathsf Z$ equals the variety $\mathsf U$ of
Proposition \ref{Quotient_Rationality}. Then the morphism $f$ of
Proposition \ref{Quotient_Rationality} yields a $\mathsf
G(\rk)$-invariant continuous map
\[
   f_0: \mathsf U(\rk)\rightarrow \mathsf V(\rk).
\]
By Corollary \ref{Fiber_Vanishing2}, we only need to show that
\[
   \Ext_{\mathsf G(\rk),k}^i(\oS(f_0^{-1}(y)),\chi)=0.
\]
for all $y\in \mathsf V(\rk)$. This is obviously implied by Lemma
\ref{finitehom} and Theorem \ref{gshap3}.
\end{proof}

We remark that Theorem \ref{vextz} fails if  the condition ``not weakly $\chi$-admissible" is replaced  by the weaker condition ``not $\chi$-admissible", even when $\mathsf G$ is connected and reductive, and $\mathsf Z$ is $\mathsf G$-homogeneous.

As in Theorem \ref{main1}, let $\mathsf X$ be an algebraic variety
over $\rk$ on which $\mathsf G$ acts algebraically, and let
$\mathsf U$ be a $\mathsf G$-stable open subvariety of
$\mathsf X$. Using the long exact sequence for generalized
extensions, Theorem \ref{vextz} clearly implies the following
automatic extension theorem, which contains Theorem \ref{main1} as a
special case.
\begin{thml}\label{autoext}
 Assume that every $\mathsf G(\rk)$-orbit in $(\mathsf X\setminus \mathsf U)(\rk)$ is not weakly $\chi$-admissible. Then for every $k=0,1,2,\cdots, \infty$ and every $i\in \Z$, the restriction map
\[
  \Ext_{\mathsf G(\rk),k}^i(\oS(\mathsf X(\rk)),\chi)\to \Ext_{\mathsf G(\rk),k}^i(\oS(\mathsf U(\rk)),\chi)
\]
is a linear  isomorphism.
\end{thml}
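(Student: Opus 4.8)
The plan is to deduce Theorem~\ref{autoext} from Theorem~\ref{vextz} by a standard excision/long-exact-sequence argument for equivariant $\ell$-sheaves. Write $G := \mathsf G(\rk)$, $X := \mathsf X(\rk)$, $U := \mathsf U(\rk)$, and $Z := (\mathsf X \setminus \mathsf U)(\rk)$, which is a $G$-stable closed subset of $X$ with open complement $U$. Consider the sheaf $\C_X$ on $X$ as a $G$-equivariant $\ell$-sheaf; restriction to the closed subset $Z$ and extension-by-zero from the open subset $U$ give the usual short exact sequence of $G$-equivariant $\ell$-sheaves on $X$,
\[
0 \to j_! \C_U \to \C_X \to i_* \C_Z \to 0,
\]
where $j\colon U \hookrightarrow X$ and $i\colon Z \hookrightarrow X$ are the inclusions. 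Applying the exact functor $\Gamma_c$ (Corollary~\ref{gcexact}) yields the short exact sequence of smooth $G$-representations $0 \to \oS(U) \to \oS(X) \to \oS(Z) \to 0$, the first map being extension by zero and the second being restriction of functions.

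Next I would feed this short exact sequence into the derived bifunctor $\Ext^\bullet_{G,k}(-\,,\chi)$. Using Lemma~\ref{gin} one may assume $k \in \BN$ is finite (the case $k = \infty$ follows by passing to the direct limit over $r$, which is exact). Since $\Ext^i_{G,k}(-,\chi)$ is, by Lemma~\ref{gin}, computed as $\Ext^i_G(\oJ_{G,k}\otimes -\,,\chi)$ and $\oJ_{G,k}$ is a fixed smooth representation, tensoring the short exact sequence with $\oJ_{G,k}$ keeps it exact, so we obtain a long exact sequence
\[
\cdots \to \Ext^i_{G,k}(\oS(Z),\chi) \to \Ext^i_{G,k}(\oS(X),\chi) \xrightarrow{\ \mathrm{res}\ } \Ext^i_{G,k}(\oS(U),\chi) \to \Ext^{i+1}_{G,k}(\oS(Z),\chi) \to \cdots.
\]
Now apply Theorem~\ref{vextz} with $\mathsf Z := \mathsf X \setminus \mathsf U$: by hypothesis every $\mathsf G(\rk)$-orbit in $(\mathsf X \setminus \mathsf U)(\rk)$ is not weakly $\chi$-admissible, so $\Ext^i_{G,k}(\oS(Z),\chi) = 0$ for all $i$ and all $k$. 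The long exact sequence then forces the restriction map $\Ext^i_{G,k}(\oS(X),\chi) \to \Ext^i_{G,k}(\oS(U),\chi)$ to be an isomorphism for every $i \in \Z$, which is exactly the assertion of Theorem~\ref{autoext}.

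There is essentially no serious obstacle here: the genuine content has already been isolated in Theorem~\ref{vextz}, and the remaining work is the bookkeeping of the excision triangle together with checking that $\mathsf X \setminus \mathsf U$ is a genuine algebraic variety (it is a closed subscheme of $\mathsf X$, which is separated, reduced, and of finite type once we give it the reduced induced structure) so that Theorem~\ref{vextz} applies to it verbatim. The one point that warrants a sentence of care is the identification $\C_X / j_!\C_U \cong i_*\C_Z$ in $\CS h_G(X)$ and the corresponding $\Gamma_c$ sequence: concretely, a compactly supported locally constant function on $X$ restricts to one on the closed set $Z$, the kernel of this restriction is exactly the functions vanishing on a neighbourhood of $Z$, i.e.\ $\oS(U)$ extended by zero, and surjectivity onto $\oS(Z)$ holds because any compactly supported locally constant function on the closed $\ell$-subspace $Z$ extends (being locally constant and compactly supported, it extends to a clopen neighbourhood and then by zero). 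Granting these, the theorem is immediate. If one prefers, the same argument runs with $\oS(X)$ replaced by $\Gamma_c(\CF)$ for an arbitrary $G$-equivariant $\ell$-sheaf $\CF$, but for Theorem~\ref{autoext} the sheaf $\C_X$ suffices.
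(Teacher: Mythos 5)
Your proof is correct and is essentially the argument the paper has in mind: the paper simply states ``Using the long exact sequence for generalized extensions, Theorem \ref{vextz} clearly implies the following automatic extension theorem,'' and you have spelled out precisely that argument, including the short exact sequence $0\to\oS(U)\to\oS(X)\to\oS(Z)\to 0$ coming from the equivariant excision triangle, the observation that $\oJ_{G,k}\otimes(-)$ preserves exactness so the long exact sequence of $\Ext_{G,k}^\bullet(-,\chi)$ is available, the vanishing from Theorem \ref{vextz}, and the passage to $k=\infty$ via an exact direct limit.
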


\section{Semialgebraic  spaces and meromorphic continuations}
\label{sec_Igusa}
For the proof of Theorem \ref{main2}, we describe a general form of
the rationality of Igusa's zeta integral, in the setting of
semialgebraic geometry over $p$-adic fields.
For the basics of $p$-adic semialgebraic geometry, we refer the readers to the following papers \cite{CCL,Cl,ClL,De,DV,Ma}.
\subsection{Semialgebraic  spaces}
Recall that a subset of $\rk^n$ ($n\in \BN$) is said to be
semialgebraic if it is a finite Boolean combination of sets of the
form
\[
   \{x\in \rk^n\mid f(x)=y^k\textrm{ for some } y\in \rk^\times\},
\]
where $f: \rk^n\rightarrow \rk$ is a polynomial function, and $k$ is
a positive integer. Given a semialgebraic subset $X$ of $\rk^n$ and
$Y$ of $\rk^m$ ($m\in \BN$), a map from $X$ to $Y$ is said to be
semialgebraic if its graph is a semialgebraic subset of $\rk^{n+m}$.

Let $X$ be a set.  We denote by $\mathrm A_X$  the set of all  triples $(U,U', \phi)$, $U$ is a semialgebraic subset of $\rk^n$ for some $n\in \BN$, $U'$ is a subset of $X$, and $\phi:U\rightarrow U'$ is a bijection.

\begin{dfn}\label{dfnnash}
 A semialgebraic structure over $\rk$ on a set $X$ is a subset  $\mathcal A$ of  $\mathrm A_X$ with the following properties:
\begin{enumerate}
\item[(a)]
 every two elements $(U_1,U'_1, \phi_1)$ and $(U_2,U'_2,\phi_2)$ of $\mathcal A$ are
 semialgebraically  compatible, namely, the bijection
  \[
  \phi_2^{-1}\circ \phi_1: \phi_1^{-1}(U_1'\cap U_2')\rightarrow
 \phi_2^{-1}(U_1'\cap U_2')
 \]
has semialgebraic domain and codomain, and is semialgebraic;
\item[(b)]
  there are finitely many elements $(U_i,U'_i,\phi_i)$ of $\mathcal A$,
 $i=1,2,\cdots, r$ ($r\in \BN$), such that
\[
   X=U'_1\cup U'_2\cup \cdots \cup U_r';
\]
  \item[(c)]
  for every element of $\mathrm A_X$, if it is semialgebraically compatible with all elements of $\CA$, then itself is an element of $\CA$.
  \end{enumerate}
\end{dfn}

A semialgebraic  space over $\rk$ (or a semialgebraic space for
brevity) is defined to be a set together with a semialgebraic
structure (over $\rk$) on it. By a semialgebraic chart of a
semialgebraic space, we mean an element of the semialgebraic
structure.

The following lemma is routine to  check.
\begin{lemd}\label{defnashs0}
With the notation as in Definition \ref{dfnnash}, let
\[
 \mathcal A_0=\{( U_i, U_i', \phi_i)\mid i=1,2,\cdots,r\}
 \]
be a finite subset of $\RA_X$ whose elements are pairwise
semialgebraically compatible with each other. If
\[
  X=U'_1\cup U'_2\cup \cdots \cup U_r',
\]
then the set of all elements in $\RA_X$ which are semialgebraically
compatible with all elements of $\CA_0$ is a semialgebraic structure
on $X$.
\end{lemd}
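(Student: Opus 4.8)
The plan is to verify directly that the set
\[
\CA := \{\,(U,U',\phi)\in\RA_X \mid (U,U',\phi)\text{ is semialgebraically compatible with every element of }\CA_0\,\}
\]
satisfies conditions (a), (b), (c) of Definition \ref{dfnnash}. Conditions (b) and (c) should be essentially formal. For (b) one notes that every $(U_i,U_i',\phi_i)\in\CA_0$ is semialgebraically compatible with each $(U_j,U_j',\phi_j)\in\CA_0$ — with $\phi_j$ for $j\neq i$ by hypothesis, and with itself since the associated transition map is the identity on $U_i$ — so $\CA_0\subseteq\CA$; combined with $X=U_1'\cup\cdots\cup U_r'$ this yields the covering requirement (b). For (c) one observes that if an element of $\RA_X$ is semialgebraically compatible with every element of $\CA$, then, because $\CA_0\subseteq\CA$, it is in particular compatible with every element of $\CA_0$, hence lies in $\CA$ by definition. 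Thus the substantive point is condition (a): that any two elements of $\CA$ are semialgebraically compatible with each other.

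For (a), I would fix $(V_1,V_1',\psi_1),(V_2,V_2',\psi_2)\in\CA$ and analyse the transition map $\psi_2^{-1}\circ\psi_1$ defined on $\psi_1^{-1}(V_1'\cap V_2')$ by localizing through the finite cover $X=\bigcup_{i=1}^r U_i'$. Since $V_1'\cap V_2'=\bigcup_{i=1}^r(V_1'\cap V_2'\cap U_i')$, it suffices to control the restriction to each piece $W_i:=\psi_1^{-1}(V_1'\cap V_2'\cap U_i')$. Here one uses that $\psi_1$ is compatible with $\phi_i$ — so that $\psi_1^{-1}(V_1'\cap U_i')$ and the bijection $\phi_i^{-1}\circ\psi_1$ on it are semialgebraic — and that $\psi_2$ is compatible with $\phi_i$ — so that $\phi_i^{-1}(V_2'\cap U_i')$ is semialgebraic, whence $\phi_i^{-1}(V_1'\cap V_2'\cap U_i')=\phi_i^{-1}(V_1'\cap U_i')\cap\phi_i^{-1}(V_2'\cap U_i')$ is semialgebraic and so is its preimage $W_i$ under $\phi_i^{-1}\circ\psi_1$. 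Hence $\psi_1^{-1}(V_1'\cap V_2')=\bigcup_i W_i$ is semialgebraic, and symmetrically so is $\psi_2^{-1}(V_1'\cap V_2')$. Finally, on $W_i$ one has the factorization $\psi_2^{-1}\circ\psi_1=(\psi_2^{-1}\circ\phi_i)\circ(\phi_i^{-1}\circ\psi_1)$, a composition of two semialgebraic maps whose intermediate domain and codomain are semialgebraic (again by compatibility of $\psi_1$ and $\psi_2$ with $\phi_i$, once one checks that the intermediate image $\phi_i^{-1}(V_1'\cap V_2'\cap U_i')$ lies in the domain $\phi_i^{-1}(V_2'\cap U_i')$ of $\psi_2^{-1}\circ\phi_i$); thus $\psi_2^{-1}\circ\psi_1$ is semialgebraic on $W_i$, and since its graph over $\psi_1^{-1}(V_1'\cap V_2')$ is the finite union of the graphs over the $W_i$, it is semialgebraic. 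This establishes (a).

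The only non-formal input is the package of closure properties of $p$-adic semialgebraic sets invoked above: that semialgebraic subsets of $\rk^n$ are stable under finite Boolean operations and under coordinate projections, and consequently that semialgebraic maps are stable under composition, restriction to semialgebraic subsets, and formation of preimages of semialgebraic sets. These are standard (they rest on quantifier elimination for $p$-adic fields; see \cite{Ma,CCL,De}), and once granted the argument above is pure bookkeeping — which is why the statement is ``routine to check''. I do not foresee any genuine obstacle; the only place that demands a little attention is the matching of domains and codomains in the factorization $\psi_2^{-1}\circ\psi_1=(\psi_2^{-1}\circ\phi_i)\circ(\phi_i^{-1}\circ\psi_1)$ on each $W_i$.
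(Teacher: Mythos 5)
Your proposal is correct. The paper offers no proof of this lemma, dismissing it as ``routine to check,'' so there is nothing to compare against except the obvious direct verification, which is exactly what you carry out. Your handling of (b) and (c) is the formal observation one expects (with the small but worthwhile remark that each chart is compatible with itself), and for (a) you correctly localize the transition map $\psi_2^{-1}\circ\psi_1$ over the finite cover $\{U_i'\}$, factor it on each piece $W_i$ through $\phi_i$, and use the closure of $p$-adic semialgebraic sets under Boolean operations and projections (Macintyre) to conclude that $W_i$, its image, and the graph of the restricted transition map are all semialgebraic; gluing the graphs over the finite cover finishes the argument. The one point you flag as requiring care --- that $(\phi_i^{-1}\circ\psi_1)(W_i)=\phi_i^{-1}(V_1'\cap V_2'\cap U_i')$ lands inside the domain $\phi_i^{-1}(V_2'\cap U_i')$ of $\psi_2^{-1}\circ\phi_i$ --- is indeed the place where a careless writer could slip, and you have it right. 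In short, this is the routine check the authors had in mind, done carefully.
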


By Lemma \ref{defnashs0}, it is clear that the product of two
semialgebraic spaces is naturally a semialgebraic space.

\begin{dfn}
A subset $S$ of a semialgebraic space $X$ is said to be
semialgebraic if $\phi^{-1}(S\cap U')$ is semialgebraic  for every
semialgebraic
 chart $(U,U',\phi)$ of $X$. A map from a  semialgebraic  space $X$ to a semialgebraic space $Y$ is said to be  semialgebraic if its graph is semialgebraic in $X\times Y$.

 \end{dfn}

It is clear that every semialgebraic subset of a semialgebraic space
is naturally a semialgebraic space.  Recall the following famous result
of Macintyre \cite{Ma}.

\begin{lemd}\label{tarski}
Let $f:X\rightarrow Y$ be a semialgebraic map of  semialgebraic
spaces. Then for each semialgebraic subset $S$ of $X$, $f(S)$ is a
semialgebraic subset of $Y$.
\end{lemd}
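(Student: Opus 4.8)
The plan is to reduce the assertion for abstract semialgebraic spaces to the classical theorem of Macintyre on $\rk$, namely that the coordinate projection of a semialgebraic subset of $\rk^N$ is again semialgebraic (equivalently, that $\rk$ admits quantifier elimination in the ring language enriched by the power predicates $P_m=\{x\mid \exists y,\ x=y^m\}$). First I would record the elementary closure properties of semialgebraic subsets of $\rk^N$ that follow directly from Definition \ref{dfnnash}'s description as finite Boolean combinations of sets $\{f(x)=y^k\}$: finite unions and finite intersections of semialgebraic sets are semialgebraic. Combined with Macintyre's projection theorem \cite{Ma} this already gives the \emph{affine case} of the lemma: if $X\subseteq\rk^n$, $Y\subseteq\rk^m$ are semialgebraic, $f\colon X\to Y$ is semialgebraic with (semialgebraic) graph $\Gamma\subseteq\rk^{n+m}$, and $S\subseteq X$ is semialgebraic, then $f(S)=\pi\bigl(\Gamma\cap(S\times\rk^m)\bigr)$ is the projection of a semialgebraic set, hence semialgebraic. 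In particular the image of a semialgebraic set under a semialgebraic map between affine semialgebraic sets is semialgebraic; applying this to the transition bijections of Definition \ref{dfnnash}(a) shows that ``semialgebraic subset'' is stable under passing between charts (and that each chart image $U_i'$ is itself a semialgebraic subset of $X$), which is the fact that makes the globalization work.

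Next I would globalize using the finite atlases. Choose semialgebraic charts $(U_i,U_i',\phi_i)$, $i=1,\dots,r$, covering $X$ and $(V_j,V_j',\psi_j)$, $j=1,\dots,s$, covering $Y$, as granted by Definition \ref{dfnnash}(b). Since $S\cap U_i'$ is semialgebraic in $X$ and $f(S)=\bigcup_{i,j}\bigl(f(S\cap U_i')\cap V_j'\bigr)$, and finite unions of semialgebraic sets are semialgebraic, it suffices to treat one pair $(i,j)$. On $U_i'\times V_j'$ the map $\phi_i\times\psi_j$ is a semialgebraic chart of $X\times Y$, under which $\Gamma\cap(U_i'\times V_j')$ corresponds to a semialgebraic $\Gamma_{ij}\subseteq\rk^n\times\rk^m$ and $S\cap U_i'$ corresponds to a semialgebraic $S_i\subseteq\rk^n$; a direct chase of definitions gives $\psi_j^{-1}\bigl(f(S\cap U_i')\cap V_j'\bigr)=\pi\bigl(\Gamma_{ij}\cap(S_i\times\rk^m)\bigr)$, which is semialgebraic in $\rk^m$ by the affine case. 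To conclude that $f(S)\cap V_j'$ is a semialgebraic subset of $Y$ one must check its preimage under every chart $\psi_k$; but that preimage is the image of $\psi_j^{-1}\bigl(f(S)\cap V_j'\cap V_k'\bigr)$ under the transition map $\psi_k^{-1}\circ\psi_j$, hence semialgebraic by the affine case once more. Taking the union over $i,j$ finishes the proof.

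The only genuine input is Macintyre's quantifier-elimination/projection theorem; everything else is bookkeeping with the finite atlases and the compatibility axiom. Accordingly the point most worth stating with care — the ``main obstacle,'' such as it is — is not an estimate but the verification that the class of sets in Definition \ref{dfnnash} coincides with the class of $\rk$-definable sets to which \cite{Ma} applies and is stable under the transition maps, so that semialgebraicity is intrinsic to the abstract space and the reduction to charts is legitimate. Once that is in place the statement is a one-line application of \cite{Ma}.
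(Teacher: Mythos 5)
The paper supplies no proof of this lemma --- it simply writes ``Recall the following famous result of Macintyre'' and cites \cite{Ma}, which is a theorem about coordinate projections of semialgebraic subsets of $\rk^N$, not about the abstract semialgebraic spaces of Definition \ref{dfnnash}. So your blind proof is filling a gap the authors left tacit. Your reduction is correct and does exactly what is needed: establish the affine case $f(S)=\pi\bigl(\Gamma\cap(S\times\rk^m)\bigr)$ directly from Macintyre's projection theorem together with the closure of semialgebraic subsets of $\rk^N$ under finite Boolean operations; then globalize via the finite atlases guaranteed by Definition \ref{dfnnash}(b), using the compatibility axiom (a) to verify that semialgebraicity of $f(S)\cap V_j'$ checked in one chart propagates to all charts. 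The one point you are right to flag as the crux --- that ``semialgebraic subset'' is intrinsic, i.e.\ stable under transition maps, so the reduction to a single chart is legitimate --- is precisely the content that a bare citation of \cite{Ma} elides, and your treatment of it (applying the affine case to the transition bijections $\psi_k^{-1}\circ\psi_j$) is the correct way to close that loop. No gaps.
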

It is well-known and easy to see that Lemma \ref{tarski} implies
that the composition of two semialgebraic maps is semialgebraic, and
the inverse image of a  semialgebraic set under a semialgebraic map
is semialgebraic. All semialgebraic spaces form a category whose
morphisms are semialgebraic maps.

\begin{dfn}
 The dimension $\dim X$ of a semialgebraic space $X$ is defined to be the largest non-negative integer $n$ such that there is a semialgebraic subset of $X$ which is isomorphic to a non-empty open semialgebraic subset of $\rk^n$ as a semialgebraic space. By convention,  the dimension of the empty set is defined to be $-\infty$.
\end{dfn}

The following proposition asserts that infinite semialgebraic spaces are classified by their dimensions.

\begin{prpd}\cite[Theorem 2]{Cl}
Every infinite semialgebraic space $X$ has positive dimension and is
isomorphic to $\rk^{\dim X}$.
\end{prpd}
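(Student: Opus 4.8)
The plan is to reduce the statement to a local one about semialgebraic subsets of an affine space $\rk^m$, and then to show that every infinite semialgebraic $X\subseteq\rk^m$ is semialgebraically isomorphic to $\rk^{\dim X}$ by induction on $m$, using $p$-adic cell decomposition. The ``positive dimension'' assertion will then be automatic: once $X\cong\rk^{d}$ is known with $d=\dim X$, and since $\dim$ is visibly an isomorphism invariant, monotone under inclusion, with $\dim\rk^n=n$ (a standard fact of $p$-adic dimension theory, or a consequence of the classification once $\rk^n\not\cong\rk^{n'}$ for $n\neq n'$ is known), the case $d=0$ is excluded because $\rk^0$ is a single point while $X$ is infinite and nonempty.

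First I would realise $X$ as a semialgebraic subset of some $\rk^{N+1}$. By Definition \ref{dfnnash}, $X$ is covered by finitely many semialgebraic charts $(U_i,U_i',\phi_i)$, $i=1,\dots,r$, with $U_i\subseteq\rk^{n_i}$ semialgebraic and all transition maps semialgebraic. Replacing $U_i'$ by $V_i':=U_i'\setminus(U_1'\cup\cdots\cup U_{i-1}')$ and $U_i$ by $W_i:=\phi_i^{-1}(V_i')$ --- which is semialgebraic in $\rk^{n_i}$ because each $\phi_i^{-1}(U_i'\cap U_j')$ is, by chart compatibility --- one obtains a pairwise compatible atlas with pairwise disjoint images covering $X$, so $X$ is isomorphic to the disjoint union $\bigsqcup_i W_i$. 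Setting $N:=\max_i n_i$ and embedding $W_i\hookrightarrow\rk^{N+1}$ by padding each point of $\rk^{n_i}$ with zeros and appending the constant coordinate $i\in\Z\subseteq\rk$, the images are disjoint semialgebraic subsets of $\rk^{N+1}$, hence their union is a semialgebraic subset of $\rk^{N+1}$ isomorphic to $X$. So it suffices to treat an infinite semialgebraic $X\subseteq\rk^m$ and prove $X\cong\rk^{\dim X}$.

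Next comes the induction on $m$, driven by Denef's cell decomposition \cite{De}: every semialgebraic $X\subseteq\rk^m$ is a finite disjoint union of cells over cells $C\subseteq\rk^{m-1}$, each cell being either the graph of a semialgebraic function on $C$ (a $0$-cell) or a set of the form $\{(\bar x,t):\bar x\in C,\ v(t-c(\bar x))\text{ lies in an interval cut out by }v(a_1(\bar x)),v(a_2(\bar x)),\ t-c(\bar x)\in\lambda(\rk^\times)^k\}$ (a $1$-cell), with $a_i,c,\lambda$ semialgebraic on $C$. For a $0$-cell, projection to $C$ is a semialgebraic isomorphism and one applies the inductive hypothesis to $C$. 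For a $1$-cell, I would rescale $t-c(\bar x)$ fibrewise (a semialgebraic operation) to see that the cell is semialgebraically isomorphic to $C\times F$ for a fixed infinite semialgebraic $F\subseteq\rk$ (an annulus meeting a coset of $(\rk^\times)^k$), then use $F\cong\rk$ together with $C\cong\rk^e$ to conclude the $1$-cell is $\cong\rk^{e+1}$. Finally, $X$ being a finite disjoint union of pieces each isomorphic to some $\rk^{e_j}$, one combines them via the gluing lemma $\rk^a\sqcup\rk^b\cong\rk^{\max(a,b)}$ to get $X\cong\rk^{\max_j e_j}$; invariance of dimension identifies $\max_j e_j$ with $\dim X$.

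The main obstacle is the package of ``gluing lemmas'' feeding the last two steps: that an infinite semialgebraic subset of $\rk$ (equivalently, an annulus, a punctured ball, a finite disjoint union of balls, or an annulus meeting a power coset) is semialgebraically isomorphic to $\rk$, and that $\rk^a\sqcup\rk^b\cong\rk^{\max(a,b)}$. Here the totally disconnected nature of $\rk$ is essential --- semialgebraic bijections need not be continuous, so there is no compactness obstruction --- and the arguments require explicit constructions built from the decomposition $\CO=\pi\CO\sqcup\CO^\times$, the finite residue field, and the $\Z$-valued valuation. The genuinely delicate point is to perform the $1$-cell construction \emph{uniformly in the parameter} $\bar x\in C$, i.e.\ to produce the fibrewise isomorphisms by a single semialgebraic formula so that they glue; this is precisely what is carried out in \cite{Cl}, whose treatment I would follow. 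Granted these lemmas and Denef's theorem, the chart-gluing reduction, the bookkeeping of cells, and the passage to arbitrary $\dim X$ are routine.
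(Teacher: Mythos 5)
The paper supplies no proof of this proposition; it is quoted directly from Cluckers \cite[Theorem 2]{Cl}, so there is no internal argument to compare against. Your outline---disjointify an atlas so as to realize $X$ as a semialgebraic subset of some $\rk^m$, then run an induction on $m$ via Denef cell decomposition and close with gluing lemmas---is indeed the strategy of \cite{Cl}, and you correctly flag the uniform-in-$\bar x$ analysis of $1$-cells as the hard step you are deferring to that source.

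One substantive caveat: the ``gluing lemma'' $\rk^a\sqcup\rk^b\cong\rk^{\max(a,b)}$ for $a<b$ is not achieved by direct gluing. It requires knowing that $\rk^b$ with a suitably placed semialgebraic copy of $\rk^a$ removed is again $\cong\rk^b$, which is itself an instance of the classification being established and so threatens circularity. Cluckers instead produces mutual semialgebraic injections and invokes a semialgebraic Schr\"oder--Bernstein argument to obtain the bijection; your sketch should name that mechanism rather than present the absorption as a self-evident gluing step. Relatedly, the positive-dimension assertion is cleaner if derived directly: after cell decomposition, an infinite semialgebraic subset of $\rk^m$ must contain an infinite cell, whose tower necessarily contains a $1$-cell, hence an annulus in $\rk$, so $\dim X\geq1$; phrasing it as ``automatic once $X\cong\rk^{\dim X}$'' leans on the very conclusion being proved when $\dim X=0$.
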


 By a semialgebraic function on a semialgebraic space, we mean a semialgebraic map from it to $\rk$.

\begin{dfn}
A $\C$-valued function on a semialgebraic space $X$ is said to be
definable of order $\leq 0$ if
 it belongs to the $\C$-algebra generated by the functions of the form
\[
  1_S,\quad \abs{f}_\rk^{s_0},
\]
where $s_0\in \C$, $1_S$ denotes the characteristic function of a
semialgebraic subset $S$ of $X$, and $f$ is a nowhere vanishing
semialgebraic function on $X$. It is  said to be definable of order
$\leq k$ ($k\geq 1$) if
 it is a linear combination of the functions of the form
\[
  \phi\cdot (\mathrm{val}\circ g_1)\cdot (\mathrm{val}\circ g_2)\cdot\cdots \cdot (\mathrm{val}\circ g_k),
\]
 where $\phi$ is a  definable function on $X$ of order $\leq 0$, and $g_1, g_2, \cdots, g_k$ are nowhere vanishing semialgebraic functions on $X$.
\end{dfn}

Here $\abs{\,\cdot\,}_\rk$ denotes the normalized absolute value on
$\rk$, and $\mathrm{val}:\rk^\times \to \Z$ denotes the normalized
valuation on $\rk$.

\subsection{Definable measures}

Let us review some basic measure theory.  Let $X$ be a measurable
space, that is, it is a set with a $\sigma$-algebra $\Sigma$ on it,
namely, $\Sigma$ is a non-empty set of subsets of $X$ which is
closed under taking countable union and taking complement. An
element of $\Sigma$ is called a measurable subset of $X$. A
non-negative measure on $X$ is defined to be a map $\nu: \Sigma\to
[0,\infty]$ which is countably additive, namely,
\[
 \nu\left(\bigsqcup_{i=1}^\infty S_i\right)=\sum_{i=1}^\infty \nu(S_i)\quad \textrm{for all pairwise disjoint elements $S_1, S_2, S_3, \cdots$ of $\Sigma$.}
\]
A complex function $\phi$ on $X$ is said to be measurable if for
each open subset $U$ of $\C$, $\phi^{-1}(U)\in \Sigma$. Write
$\oM(X)$ for the space of all measurable functions on
  $X$. We say that two elements of $\oM(X)$ are equal to each other
  almost everywhere with respect to $\nu$ if they are equal outside
  a set $S\in \Sigma$ with $\nu(S)=0$.
\begin{dfn}\label{defmeas}
A measure $\mu$ on $X$ is a pair $(\nu, f)$, where $\nu$ is a
non-negative measure on $X$, and $f$ is an element of
\[
  \frac{\{\phi\in \oM(X)\mid \abs{\phi} \textrm{ equals $1$ almost everywhere with respect to }\nu\}}{ \{\phi\in \oM(X)\mid \phi \textrm{ equals $0$ almost everywhere with respect to
  }\nu\}}.
\]
Here the denominator is a vector space, and the numerator is a subset of $\oM(X)$  which is stable under translations by the denominator. Therefore the above quotient makes sense.
\end{dfn}

The non-negative measure $\nu$ of Definition \ref{defmeas} is called
the total variation of $\mu=(\nu,f)$, and is denoted by $\abs{\mu}$.
For each measurable function $\phi$ on  $X$, we say that the
integral $\int_X \phi \mu$ converges if $\int_X \abs{\phi}
\nu<\infty$. In this case, the integration
\[
  \int_X \phi\,\mu:=\int_X \phi f\,\nu
  \]
  is a well-defined complex number. For each $Y\in \Sigma$, $Y$ is a measurable space with the
$\sigma$-algebra $\Sigma_Y:=\{S\in \Sigma\mid S\subset Y\}$.  We
define the restriction $\mu|_Y$ of $\mu$ to $Y$ in the obvious way.
For each measurable function $\phi$ on $X$, the multiplication
$\phi\mu$ is defined to be the measure $(\abs{\phi}\nu,
\frac{\phi}{\abs{\phi}}f)$ on $X$.

Note that every semialgebraic space is naturally a measurable space: the
$\sigma$-algebra is generated by all the semialgebraic subsets.

\begin{dfn}
\label{def-measure}
Let $X$ be a semialgebraic space. A measure $\mu$ on $X$ is said to
be definable of order $\leq k$ ($k\in \BN$) if there is a family
$\{f_i: S_i\to X_i\}_{i=1,2,\cdots, r}$ ($r\in \BN$) of isomorphisms
of semialgebraic spaces such that
\begin{itemize}
  \item $S_i$ is a semialgebraic subset of $\rk^{n_i}$, for some $n_i\in \BN$ ($i=1,2,\cdots, r$);
  \item $\{X_i\}_{i=1,2,\cdots, r}$ is a cover of $X$ by its semialgebraic subsets;
  \item for each $i=1,2,\cdots, r$, the restriction of $\mu$ to $S_i$ via $f_i$ has the form $\phi_i \mu_{S_i}$, where $\mu_{S_i}$ denotes the restriction to $S_i$ of a Haar measure of $\rk^{n_i}$,
   and $\phi_i$ is a definable function on $S_i$ of order $\leq k$.
  \end{itemize}
\end{dfn}

Write $\rr$ for the ring of integers in $\rk$. Fix a uniformizer
$\varpi\in \rr$. For each integer $k\geq 1$, put
\[
  \rr_k:=\bigsqcup_{r=0}^\infty \varpi^r (1+\varpi^k \rr).
\]
Then $\mathrm{R}_k$ is a semialgebraic set, since ${\rm R}_k=\{ x\in {\rm F} | x\not= 0,  \text{ and } {\rm ac}(x)\equiv 1 \mod \varpi^k  \}$ (see \cite[Lemma
2.1 (4)]{De}), where ${\rm ac}(x)$ is the annular component of $x$, i.e. ${\rm ac}(x) = x\varpi^{-{\rm ord}(x)}$.
 We say that a semialgebraic function $f$ on $(\rr_k)^n$
($n\in \BN$) is order monomial if there are integers $d_1,
d_2,\cdots, d_n$ and an element $\beta\in \rk$ such that
\[
   \abs{f(x_1,x_2,\cdots, x_n)}_\rk=\abs{\beta x_1^{d_1} x_2^{d_2} \cdots x_n^{d_n}}_\rk\qquad \textrm{for all }\,(x_1,x_2,\cdots, x_n)\in
   (\rr_k)^n.
\]

The following theorem of rectilinearization with good Jacobians is
proved in \cite[Theorem 7]{ClL}.

\begin{prpd}\label{celld} Let $X$ be a semi-algebraic set in $\rk^n$ ($n\in \BN$),  and let $\{f_j\}_{j = 1,\cdots, r}$ ($r\in \BN$) be a family of semialgebraic functions on $X$. Then there exists  a family
\[
\{\phi_i: (\rr_{k_i})^{n_i}\to \rk^n\}_{i=1,2,\cdots, t} \quad(t\in
\BN,\, k_i\geq 1,\,n_i\in \BN)
\]
 of injective semialgebraic maps such that
\begin{itemize}
  \item $\{\phi_i((\rr_{k_i})^{n_i})\}_{i=1,2,\cdots, t}$ forms a partition of $X$;
  \item the restriction of $f_j$ to $(\rr_{k_i})^{n_i}$ through $\phi_i$ is order monomial ($j=1,2,\cdots, r$, $i=1,2,\cdots, t$);  and
  \item for each $i=1,2,\cdots, t$, if $n_i=n$, then  $\phi_i$ is continuously differentiable and their Jacobian is order monomial.
  \end{itemize}
\end{prpd}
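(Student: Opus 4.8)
This is \cite[Theorem 7]{ClL}, quoted here; for completeness we outline the shape of the argument, which builds Denef's $p$-adic cell decomposition theorem into an induction on the ambient dimension $n$, preparing simultaneously the family $\{f_j\}$ and the semialgebraic maps that straighten cells into boxes of the form $(\rr_k)^m$, so that the ``good Jacobian'' conclusion is built in along the way.

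The case $n=0$ is vacuous. For the inductive step, split the coordinates as $(x',x_n)$ with $x'\in\rk^{n-1}$. After enlarging $\{f_j\}$ by the finitely many auxiliary semialgebraic functions that will occur (the ``centers'', the bounding functions, and the coefficient functions produced below), Denef's theorem partitions $X$ into finitely many cells of two kinds: $0$-cells, on which $x_n=c(x')$ for a semialgebraic $c$; and $1$-cells, on which $x'$ ranges over a semialgebraic $D\subseteq\rk^{n-1}$ while $\ord(x_n-c(x'))$ lies in a fixed congruence class modulo some $d\geq 1$ and $\mathrm{ac}(x_n-c(x'))$ lies in a fixed coset modulo $\varpi^k$; and on each cell every $f_j$ equals $\beta_j(x')\,(x_n-c(x'))^{e_j}$ times a semialgebraic function of norm $1$. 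On a $0$-cell the coordinate $x_n$ is eliminated and we are back in dimension $n-1$. On a $1$-cell, the translation $x_n\mapsto x_n-c(x')$ followed by an explicit monomial reparametrization of the one-dimensional slice, of the form $y_n\mapsto \gamma\,y_n^{d}$, identifies that slice with $\rr_k$ (possibly after splitting off lower-dimensional subboxes when extra bounds on $\ord(x_n-c(x'))$ are present) and turns each $f_j$ into a monomial in $y_n$ times a function of $x'$ of norm $1$.

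Next, apply the inductive hypothesis to the base $D$ of each $1$-cell, feeding in the restricted functions $f_j$ together with the coefficient functions $\beta_j$ and the center $c$: this rectilinearizes the $x'$-part into boxes $(\rr_{k'})^{n'}$ on which all these functions are order monomial. Composing with the fibre reparametrization produces injective semialgebraic maps $\phi_i:(\rr_{k_i})^{n_i}\to\rk^n$ whose images partition $X$ and with each $f_j\circ\phi_i$ order monomial; passing to a common refinement of the finitely many moduli $k',k$ makes the $k_i$ uniform, which settles the first two bullets.

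The delicate point is the third bullet, the good Jacobian. When $n_i=n$ the map $\phi_i$ is an honest change of variables onto an open subset of $\rk^n$, of the form $(y',y_n)\mapsto(\psi(y'),\,c(\psi(y'))+\gamma\,y_n^{d})$. Its Jacobian matrix is block triangular, so its determinant is the product of the Jacobian of the straightening $\psi$ — order monomial in $y'$ by the inductive hypothesis — and the factor $\gamma d\,y_n^{d-1}$, which is monomial in $y_n$; hence the Jacobian of $\phi_i$ is order monomial. For $\phi_i$ to be $C^1$ one needs $\psi$ to be $C^1$ (inductive hypothesis, top-dimensional case) and the center $c$ to be $C^1$ on each cell, which forces one to run the whole induction with the refined form of cell decomposition producing $C^1$ centers — and, where the iteration demands it, centers whose partial derivatives are themselves prepared, so that the $C^1$ property and the monomiality survive composition. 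I expect this last issue, keeping the $C^1$ and order-monomial control of the reparametrizations consistent across all cells simultaneously rather than in the single reduction step sketched here, to be the main obstacle; it is exactly what \cite[Theorem 7]{ClL} carries out, and in the body of the paper we simply invoke that reference.
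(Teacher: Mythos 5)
The paper treats Proposition \ref{celld} as a direct citation of \cite[Theorem 7]{ClL} without reproducing a proof, and your approach is identical: you invoke that same reference and add an informal sketch of the Denef cell-decomposition induction for the reader's orientation, honestly flagging the one genuinely delicate point (propagating $C^1$ centers and order-monomial Jacobians through the induction) as the content of the cited theorem. This matches the paper's treatment and is correct.
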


We say that a measure $\mu$ on  $(\rr_k)^n$ is simple of order $\leq k$ if
it is a linear combination of measures of the form
\[
  P(\mathrm{val}(x_1), \mathrm{val}(x_2), \cdots, \mathrm{val}(x_n))
  u_1^{\mathrm{val}(x_1)} u_2^{\mathrm{val}(x_2)}\cdots
  u_n^{\mathrm{val}(x_n)}   \mu_{(\rr_k)^n},
\]
where $P$ is a (complex) polynomial of degree $\leq k$, $u_1,
u_2,\cdots, u_n\in \C^\times$, and $\mu_{(\rr_k)^n}$ is the
restriction of a Haar measure on $\rk^n$ to $(\rr_k)^n$.  Clearly if a measure $\mu$ on $(\rr_k)^n$ is simple of order $\leq k$, then $\mu$ is definable of order $\leq k$.

\begin{lemd}\label{zerom}
Every semialgebraic set of dimension $<n$ in $\rk^n$ has measure $0$
with respect to a Haar measure on $\rk^n$.
\end{lemd}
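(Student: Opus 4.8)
The plan is to reduce to the local, coordinate form of semialgebraic sets and use Fubini. First I would recall that by the cell-decomposition / rectilinearization results available in the paper's toolkit (Proposition \ref{celld}, or already \cite[Theorem 2]{Cl}), any semialgebraic set $S\subset \rk^n$ admits a finite partition $S=\bigsqcup_i \phi_i((\rr_{k_i})^{n_i})$ into images of boxes $(\rr_{k_i})^{n_i}$ under injective semialgebraic maps $\phi_i$. Since $\dim S<n$, each piece has $\dim \phi_i((\rr_{k_i})^{n_i})\le \dim S<n$, hence $n_i<n$ for every $i$ (an infinite semialgebraic set is isomorphic to $\rk^{n_i}$ by \cite[Theorem 2]{Cl}, so its dimension is exactly $n_i$; finite pieces have dimension $-\infty$ or $0<n$). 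Because a countable union of null sets is null and the partition is finite, it suffices to show each $\phi_i((\rr_{k_i})^{n_i})$ is a null set in $\rk^n$.

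Next I would handle the single piece. The image $T:=\phi_i((\rr_{k_i})^{n_i})$ is a semialgebraic subset of $\rk^n$ with $\dim T=m<n$. Here I would invoke the structure theorem again at the level of $T$ itself: $T$ is a finite union of graphs of semialgebraic functions over $m$-dimensional semialgebraic sets, or more directly, $T$ is contained in the zero locus of a nonzero polynomial. Indeed, a standard fact of $p$-adic semialgebraic geometry (following from Macintyre's theorem, Lemma \ref{tarski}, together with dimension theory) is that a semialgebraic set of dimension $<n$ in $\rk^n$ is contained in an algebraic hypersurface $\{P=0\}$ for some nonzero $P\in\rk[x_1,\dots,x_n]$. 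So it is enough to prove: the zero locus $Z(P)$ of a nonzero polynomial $P$ in $n$ variables has Haar measure zero in $\rk^n$.

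Then I would prove the hypersurface case by induction on $n$ using Fubini's theorem for the product Haar measure on $\rk^n=\rk\times\rk^{n-1}$. For the base case $n=1$: a nonzero polynomial in one variable over a field has only finitely many roots, and a finite set has Haar measure zero in $\rk$. For the inductive step, write $P$ as a polynomial in $x_n$ with coefficients in $\rk[x_1,\dots,x_{n-1}]$; let $c(x_1,\dots,x_{n-1})$ be the leading coefficient, a nonzero polynomial in $n-1$ variables. For every fixed $(x_1,\dots,x_{n-1})$ with $c(x_1,\dots,x_{n-1})\ne 0$, the slice $\{x_n\mid P(x_1,\dots,x_n)=0\}$ is finite, hence null in $\rk$; by the inductive hypothesis the bad set $\{c=0\}$ is null in $\rk^{n-1}$. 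By Fubini, $\int_{\rk^{n-1}}\left(\int_{\rk} 1_{Z(P)}(x_1,\dots,x_n)\,dx_n\right)dx_1\cdots dx_{n-1}=0$, so $Z(P)$ is null in $\rk^n$.

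The main obstacle is the geometric input "a semialgebraic set of dimension $<n$ in $\rk^n$ lies in a proper algebraic hypersurface" — or, equivalently, that the local boxes $(\rr_{k_i})^{n_i}$ with $n_i<n$ map to null sets; one must be careful that semialgebraic maps need not be smooth, so one cannot naively invoke the change-of-variables / Jacobian argument. This is precisely why I would route through the hypersurface characterization (which follows from dimension theory for $p$-adic semialgebraic sets, e.g.\ \cite{Cl,ClL}), rather than trying to bound measures directly through $\phi_i$; once we are reduced to $Z(P)$, the Fubini induction is entirely routine.
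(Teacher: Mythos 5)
Your proof is correct in outline but takes a genuinely different route from the paper's. The paper's own argument (one line plus a citation) invokes the $p$-adic stratification theorem of Denef--van den Dries and Cluckers--Comte--Loeser: every semialgebraic $S\subset\rk^n$ contains a dense semialgebraic open subset $S^\circ$ that is a locally closed, locally analytic submanifold of $\rk^n$ with $\dim(S\setminus S^\circ)<\dim S$. Since $S^\circ$ is a proper-dimensional analytic submanifold it is Haar-null, and the complement is handled by induction on $\dim S$. You instead route through an algebro-geometric fact --- that a semialgebraic set of dimension $<n$ is contained in a proper algebraic hypersurface $\{P=0\}$ --- and then use the elementary Fubini induction for $Z(P)$.

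Both paths work, but yours smuggles in a substantive input that you gloss over. The claim that $\dim S<n$ implies $S\subset\{P=0\}$ is \emph{not} a routine consequence of Macintyre's quantifier elimination or of Lemma~\ref{tarski} alone; it is essentially the statement that $\rk$ is algebraically bounded (equivalently, that the semialgebraic dimension of $S$ equals the Krull dimension of its Zariski closure), which requires its own argument and should be cited precisely (e.g.\ van den Dries on algebraic boundedness of $p$-adic fields, or Scowcroft--van den Dries). So you are not avoiding a structure theorem for $p$-adic semialgebraic sets; you are merely replacing the analytic stratification theorem the paper cites with a different, comparably deep, model-theoretic input. Once that input is in hand, the Fubini step for $Z(P)$ is indeed routine and correct. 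One further remark: your opening rectilinearization step via Proposition~\ref{celld} is unnecessary in your own scheme --- if you know the hypersurface containment for arbitrary semialgebraic sets of dimension $<n$, you can apply it directly to $S$ rather than to the pieces --- so the proof can be shortened. Relative to the paper's argument, yours trades a geometric/analytic structure theorem for a model-theoretic one and replaces ``submanifolds are null'' by ``Fubini for polynomial zero sets''; what the paper's approach buys is that the cited stratification theorem is already part of the toolbox used elsewhere in the paper (Proposition~\ref{celld} etc.), so its invocation is very lightweight there.
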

\begin{proof}
Let $S\subset \rk^n$ be a non-empty semialgebraic set. Then there is
a  semialgebraic open subset  $S^\circ$ of $S$ such that
$S^\circ$ is a locally closed and locally analytic submanifold of
$\rk^n$, and $\dim(S\setminus S^\circ)<\dim S$ (see \cite{DV} and
\cite[Section 1.2]{CCL}). Note that the measure of $S^\circ$ is $0$.
Therefore the lemma follows by induction on $\dim S$.

\end{proof}

Proposition \ref{celld} and Lemma \ref{zerom} easily imply the
following proposition.

\begin{prpd}\label{dm}
Let $X$ be a semialgebraic space. Let $\{\mu_i\}_{i=1,2,\cdots, r}$
($r\in \BN$) be a family of definable measures on $X$. Let
$\{f_j\}_{j=1,2,\cdots, s}$ ($s\in \BN$) be a family of
semialgebraic functions on $X$. Assume that $\mu_i$ has order $\leq
d_i$ ($i=1,2,\cdots, r$, $d_i\in \BN$). Then there is a family
$\{\phi_k: (\rr_{m_k})^{n_k}\to X_k\}_{k=1,2,\cdots, t}$ ($t\in \BN$,
$m_k\geq 1$, $n_k\in \BN$) of isomorphisms of semialgebraic spaces
such that
\begin{itemize}
   \item $\{X_k\}_{k=1,2,\cdots, t}$ is a partition of $X$ by its semialgebraic subsets;
  \item  the restriction of $\mu_i$ to $(\rr_{m_k})^{n_k}$ via $\phi_k$ is simple of order $\leq d_i$, and the restriction of $f_j$ to $(\rr_{m_k})^{n_k}$ via $\phi_k$ is order monomial,
  for all $k=1,2,\cdots, t$; $i=1,2,\cdots, r$; $j=1,2,\cdots, s$.
  \end{itemize}
\end{prpd}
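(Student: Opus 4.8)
The plan is to reduce everything to the rectilinearization Proposition \ref{celld}, applied once to all the data simultaneously, with Lemma \ref{zerom} used to dispose of the lower-dimensional cells. First I would pass to a single semialgebraic chart. Since the asserted family may be assembled from such families for the pieces of any finite semialgebraic partition of $X$, I may treat one piece at a time. By Definition \ref{def-measure}, each $\mu_i$ becomes, on the pieces of some finite semialgebraic cover of $X$, a Haar measure times a definable function of order $\le d_i$; taking the common refinement of these $r$ covers and, on each refined piece $Y$, the chart coming from $\mu_1$, I obtain a semialgebraic isomorphism $Y\cong S\subset\rk^{n}$ along which $\mu_1=h_1\,\mu_{\rk^{n}}|_{S}$ with $h_1$ definable of order $\le d_1$. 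For $i\ge 2$, the transition from the $\mu_1$-chart to the $\mu_i$-chart on $Y$ is a semialgebraic isomorphism; if either affine space has dimension $>\dim Y$, then by Lemma \ref{zerom} the relevant semialgebraic subset is null, so either $\mu_i|_Y=0$ (nothing to prove for $\mu_i$ on $Y$) or $\mu_1|_Y=0$ and in fact every $\mu_j|_Y=0$; otherwise the transition is an isomorphism between top-dimensional semialgebraic subsets of affine spaces of the same dimension, and the $p$-adic change-of-variables formula, together with the facts that composing with a semialgebraic map and multiplying by the absolute value of a Jacobian both preserve definability of a fixed order, rewrites $\mu_i$ on $S$ as $h_i\,\mu_{\rk^{n}}|_{S}$ with $h_i$ definable of order $\le d_i$. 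Hence I may assume $X\subset\rk^{n}$ is a semialgebraic set, $\mu_i=h_i\,\mu_{\rk^{n}}|_X$ with $h_i$ definable of order $\le d_i$, and $f_1,\dots,f_s$ are semialgebraic functions on $X$.

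Next I would render the coefficients $h_i$ elementary. By definition each $h_i$ is a finite $\C$-linear combination of products of characteristic functions $1_T$ of semialgebraic subsets $T\subset X$, of powers $\abs{g}_\rk^{s_0}$ of nowhere-vanishing semialgebraic functions, and of at most $d_i$ factors of the form $\mathrm{val}\circ g$ with $g$ nowhere vanishing. Let $T_1,\dots,T_L$ list all the sets $T$ occurring, and let $g_1,\dots,g_M$ list all the functions $g$ occurring together with $f_1,\dots,f_s$. Partition $X$ into the finitely many semialgebraic ``atoms'' determined by membership in each $T_\ell$; on such an atom every $1_{T_\ell}$ is constant, so each $h_i$ there is a $\C$-linear combination of products of at most $d_i$ factors $\mathrm{val}\circ g_m$ and of finitely many factors $\abs{g_m}_\rk^{s_0}$. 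As it suffices to prove the proposition on each atom, I may assume $X$ is one such atom.

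Finally I would rectilinearize and verify the conclusion cell by cell. Apply Proposition \ref{celld} to $X\subset\rk^{n}$ with the family $g_1,\dots,g_M$, obtaining injective semialgebraic maps $\phi_k\colon(\rr_{m_k})^{n_k}\to\rk^{n}$ ($k=1,\dots,t$, $m_k\ge 1$) whose images $X_k$ partition $X$, with each $g_m\circ\phi_k$ order monomial, and with $\phi_k$ continuously differentiable and of order-monomial Jacobian whenever $n_k=n$. Each $\phi_k$ is an isomorphism of semialgebraic spaces onto $X_k$ by Lemma \ref{tarski}, and $f_j\circ\phi_k$ is order monomial for every $j$. If $n_k<n$, then $X_k\cong(\rr_{m_k})^{n_k}$ has dimension $n_k<n$, so $\mu_{\rk^{n}}|_{X_k}=0$ by Lemma \ref{zerom} and $\phi_k^{*}\mu_i=0$, which is simple of order $\le d_i$ vacuously. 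If $n_k=n$, the change-of-variables formula gives $\phi_k^{*}(\mu_{\rk^{n}}|_{X_k})=\abs{J_{\phi_k}}_\rk\,\mu_{(\rr_{m_k})^{n_k}}$, so $\phi_k^{*}\mu_i=(h_i\circ\phi_k)\,\abs{J_{\phi_k}}_\rk\,\mu_{(\rr_{m_k})^{n_k}}$; since each $g_m\circ\phi_k$ is order monomial, $\mathrm{val}(g_m(\phi_k(x)))$ is affine in $(\mathrm{val}(x_1),\dots,\mathrm{val}(x_{n_k}))$, so a product of at most $d_i$ of these is a polynomial of degree $\le d_i$ in the valuations, while each $\abs{g_m(\phi_k(x))}_\rk^{s_0}$ and $\abs{J_{\phi_k}(x)}_\rk$ is a nonzero constant times a product $\prod_\nu u_\nu^{\mathrm{val}(x_\nu)}$ with $u_\nu\in\C^{\times}$; multiplying out, $(h_i\circ\phi_k)\,\abs{J_{\phi_k}}_\rk$ is a $\C$-linear combination of terms $P(\mathrm{val}(x_1),\dots,\mathrm{val}(x_{n_k}))\prod_\nu u_\nu^{\mathrm{val}(x_\nu)}$ with $\deg P\le d_i$, i.e. $\phi_k^{*}\mu_i$ is simple of order $\le d_i$. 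Collecting the maps $\phi_k$ obtained from all the atoms of the previous step yields the required family.

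The routine part is the gluing and bookkeeping; the one point requiring genuine care throughout is the control of orders, namely checking that neither the change-of-variables Jacobians (in the passage to one chart and in the rectilinearization) nor the power factors $\abs{g}_\rk^{s_0}$ ever increase the degree of the valuation-polynomial, so that the bounds $d_i$ are preserved.
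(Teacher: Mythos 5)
Your overall strategy (reduce to a single chart where each $\mu_i$ is a definable density against Haar measure, separate out the characteristic functions, and then apply the rectilinearization Proposition \ref{celld} to the remaining list of semialgebraic functions, with Lemma \ref{zerom} disposing of the lower-dimensional cells) is the right one, and the final rectilinearization step is carried out correctly. But the first reduction has a genuine gap.

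In passing to a common refinement and declaring that you will use ``the chart coming from $\mu_1$'' on each piece $Y$, you consider the case in which the $\mu_1$-chart maps a subset $S_1\subset\rk^{n_1}$ of dimension $<n_1$ onto $Y$, observe (correctly) that then $\mu_1|_Y=0$, and then assert that ``in fact every $\mu_j|_Y=0$.'' That implication is false: the vanishing of $\mu_1$ on $Y$ reflects only that $\mu_1$'s chosen chart embeds $Y$ into an overly large affine space and says nothing about $\mu_2,\dots,\mu_r$, which may have charts of ambient dimension $\dim Y$ and be nonzero on $Y$. Since you are using the $\mu_1$-chart as the reference chart, your argument breaks precisely in this case: you cannot express $\mu_j|_Y$ as a density against $\mu_{\rk^{n_1}}|_{S_1}$, because the latter is zero while $\mu_j|_Y$ may not be. The fix is straightforward: on each piece $Y$ choose a reference chart $S\to Y$ with $S$ a full-dimensional semialgebraic subset of $\rk^{\dim Y}$ (such a chart always exists, e.g.\ by Cluckers' classification $Y\cong\rk^{\dim Y}$ for $Y$ infinite, or by refining any chart with Proposition \ref{celld} and applying Lemma \ref{zerom} to discard the lower-dimensional cells). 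With that choice, every $\mu_i$-chart of ambient dimension $>\dim Y$ contributes $\mu_i|_Y=0$, and the remaining ones all have ambient dimension exactly $\dim Y$, so the transition argument applies uniformly. Finally, the invocation of the $p$-adic change-of-variables formula for a semialgebraic bijection between full-dimensional sets deserves a sentence: such a map is only continuously differentiable with nonvanishing Jacobian after removing a lower-dimensional semialgebraic subset, so one should either first partition off that exceptional set (another application of Lemma \ref{zerom}) or remark that equality of measures up to a null set suffices. These are both small repairs, but as written the proof does not go through in the case $\mu_1|_Y=0$.
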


\subsection{Igusa zeta integrals}

Write $q_\rk$ for the cardinality of the residue field $\rr/\varpi
\rr$. In this subsection, we prove the following general form of the
convergence and rationality of Igusa zeta integrals.

\begin{thmd}\label{igusa}
Let $\mu$ be a definable measure of order $\leq k$ ($k\in \BN$) on a
semialgebraic space $X$.  Let $f$ be a nowhere vanishing bounded
semialgebraic function on $X$ such that
\[
  \abs{\mu}(X_{f,\epsilon})<\infty\quad \textrm{for all $\epsilon>0$},
 \]
 where $X_{f,\epsilon}:=\{x\in X\mid \abs{f(x)}_\rk> \epsilon\}$.
Then the integral
\[
  \oZ_{\mu}(f, s):=\int_{X} \abs{f}_\rk^s\,\mu\qquad (s\in \C)
 \]
  converges when the real part of $s$ is sufficiently large. Moreover, there exists a meromorphic function
 \[
   M(s)=\frac{P(q_\rk^{-s}, q_\rk^{s} )}{(1-a_1 q_\rk^{-s})^{n_1}(1-a_2 q_\rk^{-s})^{n_2} \cdot \cdots \cdot (1-a_r q_\rk^{-s})^{n_r} }
 \]
 on $\C$,  where
 \begin{itemize}
   \item $r\in \BN$, $a_1, a_2, \cdots, a_r$ are pairwise distinct non-zero complex numbers;
      \item $n_1, n_2, \cdots, n_r\in \{1,2, \cdots,\dim X+k\}$;
      \item $P$ is a two variable polynomial with complex coefficients,
     \end{itemize}
     such that  if $\oZ_{\mu}(f,s)$ is absolutely convergent for $s=s_0\in\C$, then $M(s)$ is holomorphic at $s_0$, and $\oZ_{\mu}(f,s_0)=M(s_0)$.

 \end{thmd}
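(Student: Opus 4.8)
The plan is to reduce everything to an explicit computation on the standard charts $(\rr_k)^n$ produced by Proposition \ref{dm}, where all the data become ``monomial''. First I would apply Proposition \ref{dm} to the single measure $\mu$ (with $r=1$, $d_1=k$) and the single semialgebraic function $f$ (with $s=1$), obtaining a finite partition $X=\bigsqcup_{k'=1}^t X_{k'}$ together with isomorphisms $\phi_{k'}\colon (\rr_{m_{k'}})^{n_{k'}}\to X_{k'}$ such that, pulled back via $\phi_{k'}$, the measure $\mu$ is simple of order $\le k$ and $|f|_\rk$ is order monomial. Since the integral $\oZ_\mu(f,s)$ is additive over this finite partition, and since a finite sum of functions of the asserted form $M(s)$ is again of that form (bring to a common denominator, the exponents $n_j$ only possibly increase but the ambient bound $\dim X + k$ is respected because each chart has $n_{k'}\le \dim X$ and the order is $\le k$), it suffices to treat a single chart, i.e.\ to compute
\[
   \int_{(\rr_m)^n} |f(x)|_\rk^s\,\mu(x)
\]
where $\mu$ is a $\C$-linear combination of terms $P(\mathrm{val}(x_1),\dots,\mathrm{val}(x_n))\,u_1^{\mathrm{val}(x_1)}\cdots u_n^{\mathrm{val}(x_n)}\,\mu_{(\rr_m)^n}$ with $\deg P\le k$, and $|f|_\rk = |\beta x_1^{d_1}\cdots x_n^{d_n}|_\rk$ on this chart.

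Next I would carry out the explicit evaluation. Writing $x_j = \varpi^{e_j}\cdot(\text{unit in }1+\varpi^m\rr)$, the set $\varpi^{e_j}(1+\varpi^m\rr)$ has volume $q_\rk^{-e_j}\cdot q_\rk^{-m}$ (up to the fixed normalization of the Haar measure), and $\mathrm{val}(x_j)=e_j$ ranges over $\BN$. Hence the chart integral becomes, up to a nonzero constant, a finite $\C$-linear combination of sums of the shape
\[
   \sum_{e_1,\dots,e_n\ge 0} Q(e_1,\dots,e_n)\, \prod_{j=1}^n u_j^{e_j}\, q_\rk^{-e_j}\, q_\rk^{-s\,d_j\,e_j}\,|\beta|_\rk^{s},
\]
with $Q$ a polynomial of degree $\le k$. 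Each variable $e_j$ separates, so this is a product over $j$ of one-variable sums $\sum_{e\ge 0} q(e)\,t^e$ where $t = u_j q_\rk^{-1} q_\rk^{-s d_j}$ and $q$ has degree $\le k$ (if $d_j=0$ the corresponding factor simply contributes $u_j q_\rk^{-1}$ after summing a geometric-type series, which is harmless — actually one restricts to the region of $s$ where $|t|<1$ for all $j$ with $d_j>0$ for convergence). Using the classical identity $\sum_{e\ge 0} e^\ell t^e = R_\ell(t)/(1-t)^{\ell+1}$ for a polynomial $R_\ell$, each factor is $\tfrac{R_j(q_\rk^{-s})}{(1-u_j q_\rk^{-1} q_\rk^{-d_j s})^{\le k+1}}$ — a rational function in $q_\rk^{-s}$ (and in $q_\rk^{s}$ via $|\beta|_\rk^s = q_\rk^{-\mathrm{val}(\beta)s}$) whose denominator is a product of factors $(1-a\,q_\rk^{-s})$ with $a = u_j q_\rk^{-d_j}$, each appearing to a power $\le k+1\le \dim X + k$. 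This establishes that $\oZ_\mu(f,s)$, on its domain of convergence $\Re(s)\gg 0$, equals a meromorphic function $M(s)$ of the stated form; collecting denominators across the finitely many charts and the finitely many monomial terms and grouping equal $a_i$'s gives the pairwise distinct $a_1,\dots,a_r$ with multiplicities $n_i\le \dim X+k$, and the numerator $P(q_\rk^{-s},q_\rk^s)$ (the $q_\rk^s$ dependence coming only from the finitely many $|\beta|_\rk^s$ factors).

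It remains to see that $M(s)$ is holomorphic at any $s_0$ where $\oZ_\mu(f,s)$ converges absolutely and that the two agree there. For this I would use the hypothesis that $|\mu|(X_{f,\epsilon})<\infty$ for all $\epsilon>0$ together with the boundedness of $f$: writing $X = \bigsqcup_{\nu\ge 0}\{x: |f(x)|_\rk = \epsilon_0 q_\rk^{-\nu}\}$ for suitable $\epsilon_0$ (possible since $f$ is bounded and $|f|_\rk$ takes values in a discrete set on each chart), the integral $\oZ_\mu(f,s)$ decomposes as $\sum_{\nu\ge 0} c_\nu\,(\epsilon_0 q_\rk^{-\nu})^{s}$ where $c_\nu = \int_{|f|_\rk=\epsilon_0 q_\rk^{-\nu}}\mu$ is a finite complex number; on each chart $c_\nu$ is, by the computation above, a quasi-polynomial in $\nu$ (polynomial of degree $\le k$ times exponentials $a^\nu$), so $\oZ_\mu(f,s)$ is a Dirichlet-type series whose abscissa of absolute convergence is governed exactly by the poles of $M(s)$. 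A standard Abel-summation / analytic-continuation argument then shows: wherever the series converges absolutely it equals its meromorphic continuation $M(s)$, and $M$ has no pole there. I expect the main obstacle to be this last point — namely making the passage from the region $\Re(s)\gg 0$ to an arbitrary point of absolute convergence fully rigorous, i.e.\ controlling cancellation among the quasi-polynomial coefficients $c_\nu$ coming from different charts so that a potential pole of an individual chart's contribution is genuinely cancelled precisely when the global integral converges. The finiteness hypothesis $|\mu|(X_{f,\epsilon})<\infty$ is what rules out contributions ``at $f=0$'', and I would lean on it to justify term-by-term integration and the interchange of sum and integral; everything else is the bookkeeping of collecting rational functions.
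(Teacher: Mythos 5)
Your approach through Proposition \ref{dm} is exactly the paper's: partition $X$ into finitely many charts identified with $(\rr_m)^n$ on which $\mu$ is simple of order $\leq k$ and $|f|_\rk$ is order monomial, turn each chart integral into a $\BN^n$-indexed sum that factors variable by variable, and collapse each factor via the identity $\sum_{e\geq 0}\binom{e}{\ell}t^e = t^\ell/(1-t)^{\ell+1}$ (Lemma \ref{elemsum}). Where you diverge is the final step, and the worry you flag there is not actually an obstruction. Because the partition into charts is \emph{finite} and each chart is a measurable subset of $X$, absolute convergence of $\oZ_\mu(f,s_0)$, i.e.\ finiteness of $\int_X |f|_\rk^{\Re s_0}\,|\mu|$, is \emph{equivalent} to absolute convergence of the integral over each chart separately; there is no possibility of one chart's divergence being cancelled by another. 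So no Abel-summation or continuation-across-a-boundary argument is needed. The paper works entirely chart by chart (Proposition \ref{igusaz}): the hypothesis $|\mu|(X_{f,\epsilon})<\infty$ for all $\epsilon>0$ passes to each chart and, via Lemma \ref{a0}, forces the coefficient function $\phi$ to lie in $\aut^\circ(\Z^t)\otimes\aut(\Z^{n-t})$, i.e.\ to be absolutely summable in the directions where $d_j=0$; Lemma \ref{a0} further says that absolute convergence at $s_0$ is \emph{equivalent} to $\phi\chi^{s_0}\in\aut^\circ(\Z^n)$, and at such $s_0$ the explicit evaluation from Lemma \ref{elemsum} is literally valid term by term, so each chart's rational function is holomorphic at $s_0$ and equals the chart integral there. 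Summing the finitely many chart rational functions gives $M(s)$, and the agreement $M(s_0)=\oZ_\mu(f,s_0)$ is then immediate because the sum over charts is finite. In short: the one place you expected trouble is precisely the place the paper's Lemma \ref{a0} turns into a clean equivalence, with nothing to cancel.
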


For each $k\in \BN$, write $\aut_k(\Z^n)$ ($n\in \BN$) for the space
of all complex functions which are linear combinations of the
functions of the form
 \[
    x\mapsto \chi(x) P(x),
 \]
 where $\chi$ is a character of $\Z^n$, and $P$ is a polynomial of degree $\leq k$.

By Proposition \ref{dm}, in order to prove Theorem \ref{igusa}, we
assume without loss of generality that $X=(\rr_{m})^n$ ($m\geq 1$,
 $n\in \BN$), $\mu$ is simple of order $\leq k$, and $f$ is order
 monomial. Then $\mu$ is the multiple of $\mu_X$ with a function
\[
  (x_1,x_2, \cdots, x_n)\mapsto \phi(\mathrm{val}(x_1), \mathrm{val}(x_2),\cdots, \mathrm{val}(x_n)),
\]
where $\mu_X$ denotes the restriction of the normalized Haar measure $\mu$
on $\rr^n$ (i.e. $\mu(\rr^n)=1$) to $X$, and $\phi\in \aut_k(\Z^n)$. Since $f$ is
non-zero, bounded, and order monomial, we have that
\[
     \abs{f(x_1,x_2,\cdots, x_n)}_\rk=q_\rk^{c} \abs{x_1^{d_1} x_2^{d_2} \cdots x_n^{d_n}}_\rk\qquad \textrm{for all }\,(x_1,x_2,\cdots, x_n)\in (\rr_m)^n,
\]
for some  $c\in \Z$ and $d_1, d_2,\cdots, d_n\in \BN$. Then
\begin{eqnarray*}
 && \oZ_{\mu}(f, s) \\
 &=& q_\rk^{c s-m n} \sum_{x=(x_1,x_2, \cdots, x_n)\in \BN^n} q_\rk^{-x_1} q_\rk^{-x_2}\cdot \cdots
  \cdot q_\rk^{-x_n}\phi(x) q_\rk^{-s d_1 x_1} q_\rk^{-s d_2 x_2}\cdot \cdots
  \cdot q_\rk^{-s d_n x_n}.
\end{eqnarray*}
Therefore Theorem \ref{igusa} is implied by the following
Proposition, which will be proved in the next subsection.

 \begin{prpd}\label{igusaz}
 Let $\chi$ be a character on $\Z^n$ of the form
 \[
    (x_1,x_2, \cdots, x_n)\mapsto q_\rk^{-d_{1}x_{1}} q_\rk^{-d_{2}x_{2}}\cdot \cdots \cdot q_\rk^{-d_n x_n},
 \]
 where $d_1, d_2, \cdots, d_n\in \BN$. Let
$\phi\in  \aut_k(\Z^n)$ ($k\in \BN$).  Assume that \be\label{sumep0}
   \sum_{x\in \BN^n, \abs{\chi(x)}>\epsilon} \abs{\phi(x)}<\infty\quad
   \textrm{for all }\epsilon>0.
 \ee
Then the summation
 \[
  \oZ_\phi(\chi,s):=\sum_{x\in \BN^n} \phi(x) \chi(x)^s \qquad (s\in \C)
 \]
 absolutely converges when the real part of $s$ is sufficiently
large. Moreover, there exists a meromorphic function
 \[
   M(s)=\frac{P(q_\rk^{-s})}{(1-a_1 q_\rk^{-s})^{n_1}(1-a_2 q_\rk^{-s})^{n_2} \cdot \cdots \cdot (1-a_r q_\rk^{-s})^{n_r} }
 \]
 on $\C$, where
 \begin{itemize}
   \item $r\in \BN$, $a_1, a_2, \cdots, a_r$ are pairwise distinct non-zero complex numbers;
      \item $n_1, n_2, \cdots, n_r\in \{0,1,2, \cdots,n+k\}$;
      \item $P$ is a polynomial with complex coefficients,
      \end{itemize}
 such that  if $ \oZ_\phi(\chi,s)$ is absolutely convergent for $s=s_0\in\C$, then $M(s)$ is holomorphic at $s_0$, and $ \oZ_\phi(\chi,s_0)=M(s_0)$.
 \end{prpd}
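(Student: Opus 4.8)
The plan is to exploit the summability hypothesis \eqref{sumep0} to reduce to the case where every $d_j\ge 1$, then to reduce by linearity to a single exponential--monomial summand and evaluate the series as a product of one--variable geometric--type series, and finally to identify the resulting rational function of $q_\rk^{-s}$ with $\oZ_\phi(\chi,s)$ on the entire half--plane of absolute convergence by analytic continuation.

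\textbf{Step 1: reduction to the case $d_j\ge 1$ for all $j$.} First I would split the coordinates according to $J_0:=\{j:d_j=0\}$ and $J_+:=\{j:d_j\ge 1\}$, writing $x=(y,z)$ with $y=(x_j)_{j\in J_0}$ and $z=(x_j)_{j\in J_+}$. Since $\abs{\chi(x)}=q_\rk^{-\sum_j d_jx_j}$ depends only on $z$ and $\{z:\sum_{j\in J_+}d_jz_j<C\}$ is finite for every $C$, the hypothesis \eqref{sumep0} is equivalent to $\sum_{y\in\BN^{J_0}}\abs{\phi(y,z)}<\infty$ for every fixed $z$. Writing $\phi(y,z)=\sum_\beta\psi'_\beta(y)\,R_\beta(y;z)$, where the $\psi'_\beta$ are the finitely many distinct characters of $\Z^{J_0}$ occurring and each $R_\beta(\,\cdot\,;z)$ is a polynomial of degree $\le k$ in $y$ whose coefficients lie in $\aut_k(\Z^{J_+})$ as functions of $z$, I would invoke linear independence of the functions $y\mapsto\psi'_\beta(y)\,y^\gamma$ together with the elementary fact that an exponential--polynomial on $\BN^{J_0}$ is absolutely summable only if each character occurring with a nonzero polynomial coefficient has $\abs{\psi'_\beta(g)}<1$ for every generator $g$ of $\Z^{J_0}$, to conclude that $R_\beta\equiv 0$ for the ``bad'' $\beta$. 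After discarding those one may assume $\abs{\psi'_\beta(g)}<1$ throughout; then, expanding $R_\beta$ in monomials in $y$ and summing the absolutely convergent series $\sum_y\psi'_\beta(y)y^\gamma$ to constants, $\Phi(z):=\sum_{y\in\BN^{J_0}}\phi(y,z)$ exists and is a finite $\C$--linear combination of elements of $\aut_k(\Z^{J_+})$, hence lies in $\aut_k(\Z^{J_+})$; one also gets a crude bound $\sum_y\abs{\phi(y,z)}\le C\,(1+\sum_jz_j)^k\rho^{\sum_jz_j}$ for a suitable $\rho\ge 1$. From this bound and Fubini --- valid for $\Re s$ large, and at any $s_0$ where $\oZ_\phi(\chi,s)$ converges absolutely --- one gets both the absolute convergence of $\oZ_\phi(\chi,s)$ for $\Re s\gg0$ and the identity $\oZ_\phi(\chi,s)=\oZ_\Phi(\chi',s)$ at each such $s$, where $\chi'(z):=\prod_{j\in J_+}q_\rk^{-d_jz_j}$ has all exponents $\ge 1$. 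This reduces the whole statement to the case $J_0=\emptyset$.

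\textbf{Step 2: explicit evaluation.} Assuming now $d_j\ge 1$ for all $j$, linearity reduces to $\phi(x)=\psi(x)\,x_1^{\gamma_1}\cdots x_n^{\gamma_n}$ with $\psi(x)=\prod_j b_j^{x_j}$, $b_j\in\C^\times$, $\sum_j\gamma_j\le k$. With $T=q_\rk^{-s}$ the series separates as
\[
 \oZ_\phi(\chi,s)=\prod_{j=1}^{n}\Bigl(\sum_{m=0}^{\infty}m^{\gamma_j}(b_jT^{d_j})^m\Bigr)=\prod_{j=1}^{n}\frac{A_{\gamma_j}(b_jT^{d_j})}{(1-b_jT^{d_j})^{\gamma_j+1}},
\]
where $A_\gamma\in\C[w]$ is determined by $\sum_{m\ge0}m^\gamma w^m=A_\gamma(w)/(1-w)^{\gamma+1}$; the $j$--th factor converges absolutely exactly for $\abs{b_j}\,q_\rk^{-d_j\Re s}<1$, hence the product converges for $\Re s$ large. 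Factoring each $1-b_jT^{d_j}$ into $d_j$ distinct linear factors $1-aT$ with $\abs{a}=\abs{b_j}^{1/d_j}\ne0$, collecting equal factors over all $j$ and over the finitely many monomial summands of the general $\phi\in\aut_k(\Z^n)$, and clearing denominators, I would obtain, for $\Re s\gg0$,
\[
 \oZ_\phi(\chi,s)=M(s):=\frac{P(q_\rk^{-s})}{\prod_{t=1}^{r}(1-a_tq_\rk^{-s})^{n_t}},
\]
with the $a_t\in\C^\times$ pairwise distinct, $P\in\C[T]$, and $n_t\le\sum_j(\gamma_j+1)\le n+k$; since $q_\rk^{-s}$ is entire, $M$ is meromorphic on $\C$.

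\textbf{Step 3: matching on the whole convergence region.} Because $\abs{\chi(x)}\le1$, the set of $s$ at which $\oZ_\phi(\chi,s)$ converges absolutely is a half--plane $\{\Re s>\sigma_0\}$, possibly with the line $\Re s=\sigma_0$ adjoined; on its interior the series converges locally uniformly, so $\oZ_\phi(\chi,\cdot)$ is holomorphic there, and it coincides with $M$ for $\Re s$ large by Step 2. As the pole set of $M$ is discrete, $\{\Re s>\sigma_0\}$ with that set removed is still connected, so the identity theorem forces $M=\oZ_\phi(\chi,\cdot)$ wherever $M$ is holomorphic in this half--plane; then holomorphy of $\oZ_\phi$ shows $M$ has no poles there at all, and the equality holds throughout $\{\Re s>\sigma_0\}$. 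If $\sigma_0$ itself lies in the convergence set, dominated convergence shows $\oZ_\phi(\chi,\cdot)$ is continuous up to $\Re s=\sigma_0$, so the finiteness of $\oZ_\phi(\chi,s_0)$ precludes a pole of $M$ at $s_0$ and a limit gives $M(s_0)=\oZ_\phi(\chi,s_0)$; together with the identity from Step 1 this yields the proposition. The two points needing the most care are the summability lemma for exponential--polynomial functions used in Step 1 --- the combination of linear independence of the $\psi'_\beta(y)y^\gamma$ with growth estimates, which is what licenses discarding the ``bad'' characters and concluding $\Phi\in\aut_k$ --- and the analytic--continuation step, where one must rule out, using only the a priori holomorphy of $\oZ_\phi$ on its convergence half--plane, that the assembled rational function $M$ acquires spurious poles there through cancellation among the individual monomial summands.
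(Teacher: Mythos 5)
Your proof is correct and follows the same broad outline as the paper's (split the coordinates according to whether $d_j=0$, constrain the characters appearing in $\phi$ via the summability hypothesis, reduce by linearity to a single exponential--monomial, and evaluate as a product of one--variable geometric--type series), but the two arguments genuinely diverge at two points. First, where the paper encodes the summability constraints in a single ``iff'' statement, Lemma \ref{a0} --- condition \eqref{sumep0} holds iff $\phi\in\aut^\circ(\Z^t)\otimes\aut(\Z^{n-t})$, and $\oZ_\phi(\chi,s)$ converges absolutely iff $\phi\chi^s\in\aut^\circ(\Z^n)$ --- you instead explicitly sum out the $J_0$ coordinates to produce a new function $\Phi\in\aut_k(\Z^{J_+})$; this is a valid alternative, though fully justifying the ``elementary fact'' you invoke amounts to re--proving one direction of Lemma \ref{a0}. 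Second, and more substantively, for the claim that $\oZ_\phi(\chi,s_0)=M(s_0)$ at every $s_0$ of absolute convergence, the paper's argument is purely algebraic: since $\aut^\circ(\Z^n)$ is a union of generalized $\Z^n$-eigenspaces and $\chi^s$ only translates eigenvalues, membership $\phi\chi^{s_0}\in\aut^\circ(\Z^n)$ automatically passes to every individual monomial summand, so the term--by--term evaluation via Lemma \ref{elemsum} is already valid at $s_0$ with no continuity argument needed. Your Step 3 replaces this with the identity theorem on the convergence half--plane plus dominated convergence up to its boundary; this is sound --- you correctly recognize that one must rule out spurious poles of $M$ inside the half--plane, and do so from the a priori holomorphy of $\oZ_\phi$ there --- but it imports analytic machinery the paper sidesteps. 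In short, you buy a more concrete intermediate reduction (an honest function $\Phi$ in fewer variables) at the cost of a heavier matching step, whereas the paper's somewhat more abstract Step 1 makes the matching step a one--line consequence of the eigenspace decomposition.
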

\subsection{Proof of Proposition \ref{igusaz}}

 Write $\aut(\Z^n):=\bigcup_{k=0}^\infty \aut_{k}(\Z^n)$. We view the space $\con(\Z^n)$ of $\C$-valued functions on $\Z^n$ as a
 representation of $\Z^n$ under translations:
 \[
  (x_0.\phi)(x):=\phi(x+x_0), \quad x,x_0\in \Z^n, \,\phi \in \con(\Z^n).
 \]

 \begin{lemd}\label{gpo}
 The space of $\Z^n$-finite vectors in $\con(\Z^n)$ equals to $\aut(\Z^n)$.
\end{lemd}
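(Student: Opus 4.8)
The plan is to prove the two inclusions separately. First I would verify that $\aut(\Z^n) \subseteq \con(\Z^n)^{\Z^n\text{-fin}}$: given $\phi(x) = \chi(x)P(x)$ with $\chi$ a character and $P$ a polynomial of degree $\leq k$, and given $x_0 \in \Z^n$, one computes $(x_0.\phi)(x) = \chi(x+x_0)P(x+x_0) = \chi(x_0)\,\chi(x)P(x+x_0)$. Since $P(x+x_0)$ is again a polynomial in $x$ of degree $\leq k$ (expand by the multivariable binomial theorem), each translate $x_0.\phi$ lies in the finite-dimensional space $\chi \cdot \{$polynomials of degree $\leq k\}$, which depends only on $\chi$ and $k$, not on $x_0$. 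Hence the span of all translates of $\phi$ is finite-dimensional, and summing over the finitely many characters and degrees appearing in a general element of $\aut(\Z^n)$ gives the first inclusion.

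For the reverse inclusion I would argue that a $\Z^n$-finite vector $\phi$ generates a finite-dimensional subrepresentation $W \subset \con(\Z^n)$ of the abelian group $\Z^n$. Decompose $W$ according to the generalized eigenspaces of the commuting operators $T_1, \dots, T_n$, where $T_j$ denotes translation by the $j$-th standard basis vector $e_j$: since $W$ is finite-dimensional over $\C$ and the $T_j$ commute, $W$ breaks up as a direct sum of subspaces on each of which $(T_j - \lambda_j)$ is nilpotent for some $\lambda_j \in \C^\times$ (nonzero because each $T_j$ is invertible). So without loss of generality $\phi$ lies in a single such block, i.e. $(T_j - \lambda_j)^{N}\phi = 0$ for all $j$ and some $N$. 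Writing $\chi$ for the character $x \mapsto \lambda_1^{x_1}\cdots\lambda_n^{x_n}$ and setting $\psi := \chi^{-1}\phi$, a direct calculation shows that $T_j$ acting on $\phi$ corresponds to $\lambda_j$ times the operator $\psi(x) \mapsto \psi(x+e_j)$ on $\psi$, so $\psi$ is killed by $(S_j - 1)^N$ for each $j$, where $S_j$ is translation by $e_j$ on $\con(\Z^n)$.

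It then remains to show that a function $\psi$ on $\Z^n$ annihilated by $(S_j-1)^N$ for each $j=1,\dots,n$ is a polynomial of bounded degree. This is the combinatorial heart of the argument: by induction on $n$, one uses that $(S_j - 1)$ is the discrete difference operator in the $j$-th variable, and a function of one integer variable killed by the $N$-th power of the difference operator is a polynomial of degree $< N$ (standard finite-difference calculus — e.g. the Newton forward-difference formula expresses $\psi$ as $\sum_{i<N} \binom{x}{i} (\Delta^i\psi)(0)$). Applying this variable by variable shows $\psi$ is a polynomial of degree $\leq n(N-1)$ in $x$, hence $\phi = \chi\psi \in \aut_{n(N-1)}(\Z^n) \subset \aut(\Z^n)$. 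I expect the main obstacle to be organizing the passage from ``$\Z^n$-finite'' to the generalized-eigenspace decomposition cleanly — one must check that the finitely many eigenvalues $\lambda_j$ are genuinely nonzero (using invertibility of translations) and that the block decomposition is compatible across all $n$ commuting operators simultaneously; the one-variable finite-difference fact and the binomial expansion are routine.
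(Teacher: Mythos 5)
Your proof is correct and follows essentially the same strategy as the paper: decompose the $\Z^n$-finite vectors into simultaneous generalized eigenspaces of the commuting translation operators, twist by the corresponding character to reduce to generalized invariant functions, and identify those with polynomials. The only difference is one of detail: the paper simply cites the last step as "well-known," whereas you supply the Newton forward-difference argument, and you work with the finite-dimensional subrepresentation generated by a single vector rather than decomposing all of $\con(\Z^n)^{f}$ at once — both choices are fine.
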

\begin{proof}
Let $\con(\Z^n)^f$ be the space of all $\Z^n$-finite vectors.  With respect to the action of $\Z^n$, we can decompose $\con(\Z^n)^f$ into the direct sum of generalized eigenspaces,
$$\con(\Z^n)^f =\bigoplus_{\chi}  \con(\Z^n)^\chi.$$
Here $\chi$ is taken over all characters of $\Z^n$, and $\con(\Z^n)^\chi$ consists of functions $\phi$ such that for some $N>0$,
 \[
 (x_1-\chi(x_1))(x_2-\chi(x_2)) \,\cdots \, (x_N-\chi(x_N)). \phi=0\quad \textrm{for all }x_1, x_2, \cdots, x_N\in \Z^n.
 \]
  Then the space $\chi^{-1}\con(\Z^n)^\chi$ exactly consists of the generalized invariant functions on $\Z^n$. It is well-known that the space of generalized invariant functions on $\Z^n$ with respect to the translations action coincide with the space of polynomials on $\Z^n$. This finishes the proof of the lemma.
\end{proof}

 Define $\aut^\circ(\Z^n)$ to be the subspace of $\aut(\Z^n)$ spanned by functions of the form
 \[
    (x_1,x_2, \cdots, x_n)\mapsto u_1^{x_1} u_2^{x_2}\cdot \cdots \cdot u_n^{x_n}P(x_1,x_2,\cdots, x_n),
 \]
 where $u_1, u_2, \cdots, u_n$ are non-zero complex numbers of absolute value $<1$, and $P$ is a polynomial.

 \begin{lemd}\label{a0}
Let $\chi$ be a character on $\Z^n$ of the form
 \[
    (x_1,x_2, \cdots, x_n)\mapsto u_1^{x_1} u_2^{x_2}\cdot \cdots \cdot u_n^{x_n},
 \]
 where $u_1, u_2, \cdots, u_t$ are complex numbers of absolute value $1$, and $u_{t+1}, u_{t+2}, \cdots, u_n$ are complex numbers of absolute value $<1$ ($0\leq t\leq n$). Let
$\phi\in  \aut(\Z^n)$. Then
 \be\label{sumep}
   \sum_{x\in \BN^n, \abs{\chi(x)}>\epsilon} \abs{\phi(x)}<\infty\quad
   \textrm{for all }\epsilon>0
 \ee
 if and only if $\phi\in \aut^\circ(\Z^t)\otimes \aut(\Z^{n-t})$.
 \end{lemd}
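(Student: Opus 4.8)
\emph{Approach.} The key point is that the constraint $|\chi(x)|>\epsilon$ only involves the last $n-t$ coordinates. Since $|u_i|=1$ for $i\le t$, for $x=(y,z)\in\BN^t\times\BN^{n-t}$ we have $|\chi(x)|=\prod_{i>t}|u_i|^{z_i}$, which depends only on $z$; and since each $|u_i|<1$ for $i>t$, the set $F_\epsilon:=\{z\in\BN^{n-t}:\prod_{i>t}|u_i|^{z_i}>\epsilon\}$ is finite while $\bigcup_{\epsilon>0}F_\epsilon=\BN^{n-t}$. So I would first record that \eqref{sumep} is equivalent to the condition that $\sum_{y\in\BN^t}|\phi(y,z)|<\infty$ for every $z\in\BN^{n-t}$.

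Next I would reduce to a one-factor statement. By Lemma \ref{gpo}, $\aut(\Z^n)=\aut(\Z^t)\otimes\aut(\Z^{n-t})$ inside $\con(\Z^n)$, so write $\phi=\sum_{j=1}^m a_j\otimes b_j$ with the $b_j\in\aut(\Z^{n-t})$ linearly independent. The ``if'' direction is then immediate: if all $a_j\in\aut^\circ(\Z^t)$, then each slice $\phi(\cdot,z)$ lies in the subspace $\aut^\circ(\Z^t)$, and every element of $\aut^\circ(\Z^t)$ is absolutely summable over $\BN^t$ because the geometric factors (all of modulus $<1$) dominate the polynomial factor. For the converse, linear independence of the $b_j$ — equivalently of their restrictions to $\BN^{n-t}$, as in the proof of Lemma \ref{gpo} — lets me pick $z^{(1)},\dots,z^{(m)}\in\BN^{n-t}$ with $\det\big(b_j(z^{(l)})\big)_{j,l}\ne0$; inverting this matrix writes each $a_j$ as a finite $\C$-linear combination of the slices $\phi(\cdot,z^{(l)})$, so $\sum_{y}|a_j(y)|<\infty$. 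Thus the lemma reduces to the claim: \emph{if $a\in\aut(\Z^t)$ and $\sum_{y\in\BN^t}|a(y)|<\infty$, then $a\in\aut^\circ(\Z^t)$.}

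To prove the claim I write $a=\sum_\chi\chi(y)P_\chi(y)$ as a finite sum over distinct characters of $\Z^t$ with $P_\chi\ne0$ (the generalized eigenspace decomposition of Lemma \ref{gpo}), and I must show $|\chi(e_i)|<1$ for every coordinate $i$ and every $\chi$ occurring. I first treat $t=1$: write $a(y)=\sum_k u_k^y P_k(y)$ with distinct $u_k\in\C^\times$, set $\rho=\max_k|u_k|$, let $d$ be the largest degree of a $P_k$ with $|u_k|=\rho$, and let $g(y)=\sum_k c_k\zeta_k^y$ be the corresponding sum of leading terms (so $u_k=\rho\zeta_k$ with $|\zeta_k|=1$, the $\zeta_k$ distinct, the $c_k\ne0$); then $a(y)=\rho^y y^d\big(g(y)+o(1)\big)$, while a Ces\`aro average together with orthogonality of the distinct unimodular $\zeta_k$ gives $\tfrac1N\sum_{y<N}|g(y)|^2\to\sum_k|c_k|^2>0$, so $|g(y)|$ is bounded below along a subsequence. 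Since summability forces $a(y)\to0$, this is compatible only with $\rho<1$. For general $t$, fix $i$; for every choice $m\in\BN^{t-1}$ of the remaining coordinates the one-variable slice in $y_i$ lies in $\aut(\Z^1)$ and its sum over $y_i$ is a sub-sum of $\sum_{y\in\BN^t}|a(y)|$, hence finite. Applying the $t=1$ case after grouping the $\chi$ by the value $u=\chi(e_i)$, every group with $|u|\ge1$ must contribute the zero polynomial in $y_i$; letting $m$ vary, and using that distinct characters of $\Z^{t-1}$ remain distinct after fixing $\chi(e_i)=u$, Lemma \ref{gpo} forces each such $P_\chi\equiv0$, contradicting $P_\chi\ne0$. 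Hence $|\chi(e_i)|<1$ for all $i$, i.e.\ $a\in\aut^\circ(\Z^t)$.

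The heart of the argument — and the only nonformal step — is the displayed claim, and within it the $t=1$ case: one needs an honest lower bound for $|a(y)|$ along some sequence $y\to\infty$, which is exactly what the orthogonality/Ces\`aro input for the leading exponential sum $g$ supplies. Once that is in hand, the passage to general $t$ and the reduction steps are routine applications of Lemma \ref{gpo} and linear algebra.
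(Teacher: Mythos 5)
Your proof is correct, and it takes a genuinely different route from the paper's.  The paper's argument is structural: it enlarges the summability condition to make the set $A$ of all $\phi$ satisfying it a $\Z^n$-stable subspace of $\aut(\Z^n)$, then uses the generalized-eigenspace decomposition together with two soft observations — that every one-dimensional $\Z^n$-subrepresentation of $A$ already lies in $\aut^\circ(\Z^t)\otimes\aut(\Z^{n-t})$, and that this latter space is closed under multiplication by polynomials — to conclude $A\subset\aut^\circ(\Z^t)\otimes\aut(\Z^{n-t})$.  You instead bypass the module structure entirely: you reduce to the single-factor claim (absolute summability over $\BN^t$ of an element of $\aut(\Z^t)$ forces membership in $\aut^\circ(\Z^t)$) by picking a finite set of slices $z^{(l)}$ whose evaluation matrix on the linearly independent $b_j$ is invertible, and then prove that claim by an explicit analytic estimate, using a Ces\`aro average and orthogonality of distinct unimodular characters to extract a lower bound for the leading exponential sum and thereby force the dominant modulus $\rho$ below $1$; the general-$t$ case is then handled by taking one-variable slices and invoking the uniqueness of the generalized-eigenspace decomposition on the remaining $t-1$ coordinates.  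The paper's proof is shorter and avoids any quantitative analysis, while yours makes the analytic mechanism explicit (which also gives more, e.g.\ $a(y)\to 0$ along no subsequence when $\rho\geq1$) and sidesteps the need to check that $A$ is $\Z^n$-stable.  Both are complete; you should note, as a small point you leave implicit, that elements of $\aut(\Z^m)$ vanishing on $\BN^m$ vanish identically, which is used both in the Vandermonde step and in the slice argument, though this is standard and follows from the same kind of induction as Lemma~\ref{gpo}.
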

\begin{proof}
It is easy to see that \eqref{sumep} holds if and only if
\be\label{sumep1}
   \sum_{x\in \BN^{t}} \abs{\phi(x,x')}<\infty\quad
   \textrm{for all $x'\in \BN^{n-t}$}.
 \ee
 Write $A$ for the space of all $\phi\in \aut(\Z^n)$ such that for some
$x_0\in \Z^t$ and $x_0'\in \Z^{n-t}$,
 \be\label{sumep2}
   \sum_{x\in x_0+\BN^{t}} \abs{\phi(x,x')}<\infty\quad
   \textrm{for all $x'\in x_0'+\BN^{n-t}$}.
 \ee
The space $A$ is a $\Z^n$-subrepresentation of $\aut(\Z^n)$ containing $
\aut^\circ(\Z^t)\otimes \aut(\Z^{n-t})$.

Note that every one
dimensional $\Z^n$-subrepresentation of $A$ is contained in
$\aut^\circ(\Z^t)\otimes \aut(\Z^{n-t})$. We also note that  $\aut^\circ(\Z^t)\otimes \aut(\Z^{n-t})$ is  closed under the multiplication by polynomials on $\Z^n$.  It implies that
$A\subset \aut^\circ(\Z^t)\otimes \aut(\Z^{n-t})$, by considering the generalized eigenspace decomposition of $\aut(\Z^n)$ under the action of $\Z^n$.  Hence the space $A$ is exactly identical to $\aut^\circ(\Z^t)\otimes \aut(\Z^{n-t})$.

On the other
hand, it is obvious that for all $\phi\in \aut^\circ(\Z^t)\otimes
\aut(\Z^{n-t})$, \eqref{sumep2} holds for all $x_0\in \Z^t$ and
$x_0'\in \Z^{n-t}$,  in particular  $\eqref{sumep1}$ holds.
 This proves the lemma.

\end{proof}

The following lemma is easy to check and we omit the details.
 \begin{lemd}\label{elemsum}
  Let $u$ be a non-zero complex number of absolute value $<1$. Then
 \[
  \sum_{x\in \BN} \left(\!\begin{array}{c}\!x \!\\ \! k\!\end{array}\!\right) u^x=\frac{u^k}{(1-u)^{k+1}}
  \quad \textrm{for all }k\in \BN.
 \]
 \end{lemd}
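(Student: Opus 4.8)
The plan is to prove the identity by induction on $k$, using the absolute convergence guaranteed by $\abs u<1$ to justify all rearrangements of the series. Write $S_k:=\sum_{x\in\BN}\binom{x}{k}u^x$. Since $\binom{x}{k}$ is a polynomial in $x$ of degree $k$ while $\abs u^x$ decays geometrically, the series defining $S_k$ converges absolutely for every $u\in\C$ with $\abs u<1$; moreover $\binom{x}{k}=0$ whenever $0\le x<k$, so the sum effectively runs over $x\ge k$.

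For the base case $k=0$ the assertion is just the geometric series, $S_0=\sum_{x\ge 0}u^x=\frac{1}{1-u}$. For the inductive step ($k\ge 1$), I would apply Pascal's rule $\binom{x}{k}=\binom{x-1}{k-1}+\binom{x-1}{k}$, valid for $x\ge 1$, noting that the term $x=0$ contributes $\binom{0}{k}=0$; then splitting the sum and reindexing $y=x-1$ gives
\[
  S_k=\sum_{x\ge 1}\binom{x-1}{k-1}u^x+\sum_{x\ge 1}\binom{x-1}{k}u^x
     =u\,S_{k-1}+u\,S_k .
\]
Hence $(1-u)\,S_k=u\,S_{k-1}$, so $S_k=\frac{u}{1-u}\,S_{k-1}$, and feeding in the induction hypothesis $S_{k-1}=\frac{u^{k-1}}{(1-u)^{k}}$ yields $S_k=\frac{u^k}{(1-u)^{k+1}}$, as claimed.

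As a cross-check one can give a second, non-inductive argument: differentiate the geometric series $\sum_{x\ge 0}u^x=(1-u)^{-1}$ exactly $k$ times with respect to $u$ — term-by-term differentiation being legitimate inside the open disc of convergence — to obtain $\sum_{x\ge k}x(x-1)\cdots(x-k+1)\,u^{x-k}=k!\,(1-u)^{-(k+1)}$; dividing by $k!$ and multiplying through by $u^k$ recovers the stated formula. I do not expect any genuine obstacle here: the only point needing (entirely routine) care is the absolute convergence of the series, which is immediate from $\abs u<1$, after which every step is a formal manipulation of power series — consistent with the paper's assertion that the lemma is easy to check.
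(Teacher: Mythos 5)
Your proof is correct; the paper states this lemma as routine and omits the details entirely, so there is no argument of the paper to compare against. Both your induction via Pascal's rule (giving $(1-u)S_k=uS_{k-1}$) and your cross-check by $k$-fold term-by-term differentiation of the geometric series inside the disc $\abs{u}<1$ are standard, complete verifications of the identity.
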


Now we come to the proof of Proposition \ref{igusaz}. Without loss
of generality, assume that $d_1,d_2, \cdots d_t$ are all $0$, and
$d_{t+1}, d_{t+2}, \cdots, d_n$ are all positive ($0\leq t\leq n$).
By Lemma \ref{a0}, the assumption $\eqref{sumep0}$ implies that $\phi\in \aut^\circ(\Z^t)\otimes
\aut(\Z^{n-t})$. Lemma \ref{a0}  also implies that
$\oZ_\phi(\chi,s)$ is absolutely convergent if and only if
 \be
\label{phiconv}
    \phi\chi^s \in \aut^\circ(\Z^n)= \aut^\circ(\Z^t)\otimes \aut^\circ(\Z^{n-t}),
\ee
 where $\chi^s$ denotes the character
\[
  \Z^{n}\to \C^\times,\quad  (x_{1},x_{2}, \cdots, x_n)\mapsto q_\rk^{-s d_{t+1}x_{t+1}} q_\rk^{-s d_{t+2}x_{t+2}}\cdot \cdots \cdot q_\rk^{-s d_n x_n}.
 \]
 It is clear that \eqref{phiconv} holds when the real part of $s$ is
 large. This proves the first assertion of the proposition. Now assume
 that \eqref{phiconv} holds. Without loss of generality, we further assume that
 \[
 \phi(x_1, x_2, \cdots, x_n)= \left(\!\begin{array}{c}\!x_1 \!\\ \!
 k_1\!\end{array}\!\right)  \left(\!\begin{array}{c}\!x_2 \!\\ \!
 k_2\!\end{array}\!\right)\cdot \cdots \cdot  \left(\!\begin{array}{c}\!x_n \!\\ \!
 k_n\!\end{array}\!\right) q_\rk^{\alpha_1 x_1}q_\rk^{\alpha_2 x_2} \cdot \cdots \cdot q_\rk^{\alpha_n
 x_n},
 \]
 for all $x_1,x_2, \cdots, x_n\in \Z^n$, where $\alpha_i\in \C$, $k_i\in \BN$ ($i=1,2,\cdots, n$) and $k_1+k_2+\cdots +k_n\leq
 k$. Using Lemma \ref{elemsum}, it is now easy to see that the
 summation $\oZ_\phi(\chi, s)$ has the desired property.

\subsection{Semialgebraic $\ell$-spaces and meromorphic
continuations of distributions}

 \begin{dfn}
 A semialgebraic $\ell$-space (over $\rk$) is a Hausdorff topological space $X$ which is at the same time a semialgebraic space (over $\rk$), with the following property: there is a  finite family of semialgebraic charts $\{(U_i,U'_i,\phi_i)\}_{i=1,2,\cdots, r}$ ($r\in \BN$) of $X$ such that
 \begin{itemize}
   \item for all $i=1,2,\cdots, r$, $U_i$ is  locally closed in $\rk^{n_i}$ for some $n_i\in \BN$, $U_i'$ is open in $X$, and $\phi_i$ is a homeomorphism; and
   \item  $X=U'_1\cup U'_2\cup \cdots \cup U_r'$.
 \end{itemize}
\end{dfn}

 It is clear that every semialgebraic $\ell$-space is an $\ell$-space; the product of two  semialgebraic $\ell$-spaces is still a  semialgebraic $\ell$-space;
 and a locally closed semialgebraic subset of a  semialgebraic $\ell$-space is a  semialgebraic $\ell$-space.
 All semialgebraic $\ell$-spaces form a category whose morphisms are semialgebraic continuous
 maps. For each algebraic variety $\mathsf X$ over $\rk$,
 $\mathsf X(\rk)$ is obviously a  semialgebraic $\ell$-space.
 Note that every semialgebraic $\ell$-space is naturally a
 measurable space, with
 the $\sigma$-algebra  generated by all the
open sets, which coincides with the one generated by all semialgebraic sets. An element of this $\sigma$-algebra is called a Borel
subset of the semialgebraic $\ell$-space.

Recall the following Riesz representation theorem.

\begin{thmd}\label{riesz}

Let $X$ be a locally compact Hausdorff topological space which is
locally secondly countable, namely, every point of $X$ has a
neighborhood which is secondly countable as a topological space.
Then the map
\[
  \begin{array}{rcl}
   \{\textrm{locally finite measures on $X$}\}&\to &
   \{\textrm{continuous linear functionals on $\con_c(X)$}\},\\
    \mu& \mapsto& (\phi \mapsto \int_X \phi \mu)
  \end{array}
\]
is bijective.
\end{thmd}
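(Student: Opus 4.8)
The plan is to reduce to the classical Riesz--Markov theorem for locally compact Hausdorff spaces that are $\sigma$-compact (or second countable), which is completely standard, and then glue over a countable decomposition. First I would observe that the map $\mu \mapsto (\phi \mapsto \int_X \phi\,\mu)$ is well defined: for $\phi \in \con_c(X)$ the support $K = \mathrm{supp}(\phi)$ is compact, $\phi$ is bounded, and $\abs{\mu}(K) < \infty$ by local finiteness (cover $K$ by finitely many neighborhoods of finite total variation), so $\int_X \abs{\phi}\,\abs{\mu} \le \norm{\phi}_\infty \abs{\mu}(K) < \infty$; continuity with respect to the inductive limit topology on $\con_c(X)$ follows from the same estimate restricted to a fixed compact set. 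Here one must be slightly careful because $\mu = (\nu, f)$ is a complex-valued measure in the sense of Definition \ref{defmeas}; the functional is $\phi \mapsto \int_X \phi f\, \nu$, and $f$ is only defined up to $\nu$-a.e. equality, which does not affect the integral.

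Next I would address \emph{injectivity}. Suppose $\int_X \phi\,\mu = 0$ for all $\phi \in \con_c(X)$. Writing $\mu = (\nu, f)$, this says $\int_X \phi f\,\nu = 0$ for all such $\phi$. Since $X$ is locally compact, Hausdorff, totally disconnected (being an $\ell$-space, or more precisely since $X$ here need only be locally second countable --- but in our application it is an $\ell$-space) the characteristic functions of compact open sets lie in $\con_c(X)$, and these separate points of any locally finite measure; more robustly, one applies the classical uniqueness part of Riesz--Markov on each member of a countable cover of $X$ by open, second countable, relatively compact sets, concluding $f\nu = 0$, i.e. $f = 0$ $\nu$-a.e. on each piece, hence $\nu(S) = 0$ for every Borel $S$ contained in $\{f \ne 0\}$; but $\abs{f} = 1$ $\nu$-a.e., so $\nu = 0$ and $\mu = 0$.

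For \emph{surjectivity}, let $\ell$ be a continuous linear functional on $\con_c(X)$. Using the local second countability and local compactness, I would write $X = \bigsqcup_{i \in I} X_i$ as a countable disjoint union of open, second countable, $\sigma$-compact subsets (in the $\ell$-space case one can even take the $X_i$ compact open). On each $X_i$ the restriction $\ell_i := \ell|_{\con_c(X_i)}$ is a continuous functional on $\con_c(X_i)$, and $X_i$ is a second countable locally compact Hausdorff space, so the classical complex Riesz representation theorem yields a unique regular complex Borel measure $\mu_i$ on $X_i$ of finite total variation representing $\ell_i$; decompose $\mu_i = (\nu_i, f_i)$ by its polar decomposition, with $\nu_i = \abs{\mu_i}$ a finite non-negative measure. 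Now define $\nu$ on $X$ by $\nu(S) := \sum_{i} \nu_i(S \cap X_i)$ and $f$ by $f|_{X_i} := f_i$; then $\nu$ is a non-negative measure, it is locally finite because any compact $K \subset X$ meets only finitely many $X_i$ (the $X_i$ are open and cover $K$), and $\abs{f} = 1$ $\nu$-a.e. Set $\mu := (\nu, f)$. For $\phi \in \con_c(X)$ the support meets only finitely many $X_i$, so $\phi = \sum_{i} \phi|_{X_i}$ is a finite sum and $\int_X \phi\,\mu = \sum_i \int_{X_i} \phi\,\mu_i = \sum_i \ell_i(\phi|_{X_i}) = \ell(\phi)$ by linearity of $\ell$. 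Hence $\mu$ represents $\ell$.

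The main obstacle I expect is purely bookkeeping rather than conceptual: one has to check that the locally finite complex measures in the sense of Definition \ref{defmeas} (the pairs $(\nu, f)$) correspond exactly to the regular complex measures arising from Riesz--Markov on each piece, in particular that local finiteness on $X$ is the right gluing condition and that regularity is automatic (which it is, since on a second countable locally compact space every locally finite Borel measure is regular). A secondary point requiring a line of care is independence of the choice of countable decomposition $X = \bigsqcup X_i$, which follows from the uniqueness part already proved. None of these steps is deep, and the whole argument is standard measure theory; the statement is included here only to fix notation and conventions for the measures used in Section \ref{sec_Igusa}.
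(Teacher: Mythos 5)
The paper does not actually prove Theorem \ref{riesz}: it is stated as a ``recall'' of a standard Riesz representation theorem, with no proof given (and with a citation to Cohen's \emph{Measure Theory} only for the auxiliary remark that locally finite measures on such spaces are regular). So there is no proof in the paper to compare against; your plan is the natural ``localize and glue'' reduction to the classical second-countable Riesz--Markov theorem, and it is the right way to think about it.

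One caveat worth flagging. Your surjectivity step hinges on writing $X = \bigsqcup_{i} X_i$ as a disjoint union of \emph{open} second countable $\sigma$-compact subsets, and your injectivity step similarly uses a countable open cover by second countable relatively compact pieces. In the generality literally stated in the theorem (locally compact Hausdorff and locally second countable), neither need exist. For example, the long line is locally compact, Hausdorff, locally second countable, and connected; so it admits no nontrivial clopen decomposition at all, and any countable cover by second countable open pieces would make it Lindel\"of (hence second countable), which it is not. Similarly, for $\omega_1$ with the order topology, a pressing-down (Fodor) argument shows it cannot be partitioned into bounded clopen pieces, even allowing an uncountable index set. Thus, as written, your argument does not cover the full hypothesis. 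That said, two things mitigate this. First, the countability of the index set $I$ is not actually needed anywhere in your gluing step: since the $X_i$ are open and pairwise disjoint, any compact subset of $X$ and any element of $\con_c(X)$ meets only finitely many of them, so the glued $\nu$ is locally finite and all the sums are finite regardless of $|I|$. Second, and more to the point, in every application in this paper $X$ is an $\ell$-space --- hence second countable by the conventions of the Introduction --- and such a space does admit a countable partition into compact open pieces, so your argument is complete in all the cases the paper actually uses. If one wants a clean statement with a fully self-contained proof, it would be safer to either assume $X$ is a disjoint union of second countable locally compact Hausdorff spaces, or simply specialize the statement to $\ell$-spaces.
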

Here $\con_c(X)$ denotes the space of all compactly supported
continuous functions on $X$, with the usual inductive topology. A
measure $\mu$ on $X$ is said to be locally finite if
\[
        \abs{\mu}(K)<\infty \quad \textrm{for every compact subset $K$ of $X$.}
\]
Recall that  every locally finite measure on $X$ is regular, as $X$ is assumed to be locally secondly countable (see \cite[Proposition 7.2.3]{Co}).

If $X$ is an $\ell$-space, then $\oS(X)$ is a
dense subspace of $\con_c(X)$. Write $\oD(X):=
\Hom(\oS(X), \C)$ for the space of distributions on $X$.
By Theorem \ref{riesz}, we have an embedding
\[
   \{\textrm{locally finite measure on $X$}\}\hookrightarrow
   \oD(X).
\]
Using this embedding, we view every locally finite measure on $X$ as
a distribution on it.

 Now let $X$ be a semialgebraic
$\ell$-space and let $f$ be a continuous semialgebraic function on
$X$. Write $X_f:=\{x\in X\mid f(x)\neq 0\}$, which is also a
semialgebraic $\ell$-space. Let $\mu$ be a  locally finite definable
measure on $X_f$ of order $\leq k$ ($k\in \BN$). Let $\phi\in
\oS(X)$. Note that $\phi$ is definable of order $\leq 0$.
Theorem \ref{igusa} implies that the integral
\[
  \oZ_{\mu,f}(\phi,s):=\int_{X_f}\abs{f}_\rk^s \phi \mu
\]
defines a meromorphic function on $\C$. Moreover, for each $a_0\in \C^\times$, $\oZ_{\mu,f}(\phi,s)$ is a rational function of $1-a_0 q_\rk^{-s}$. Therefore we have the Laurent expansion
\[
  \oZ_{\mu,f}(\phi,s)=\sum_{i\in \Z} \oZ_{\mu,f,a_0,i}(\phi)
  (1-a_0 q_\rk^{-s})^i.
\]
We are interested in
the distribution $\oZ_{\mu,f,a_0,i}$ on $X$.   Note that $\oZ_{\mu,f,a_0,i}=0$ when $i<-(\dim X+ k)$.

\subsection{The invariance property of $\oZ_{\mu,f,a_0,i}$}
\label{Invariance_property}
Let $G$ be an abstract group which acts as automorphisms of the
semialgebraic $\ell$-space $X$. For every $g\in G$, and every distribution $\eta$ on $X$ (or on some $G$-stable locally closed subset of $X$), write $g*\eta$ for the push forward of $\eta$ through the action of $g$. It is clear that for every distribution $\eta$ on $X$, $\eta\in \Hom_{G, k}(\oS(X), \chi)$ if and only if
$$\left((g_0-\chi(g_0^{-1}))(g_1-\chi(g_1^{-1}))\cdots  (g_k-\chi(g_k^{-1}))\right)*\eta=0,$$
 for all  $ g_0, g_1,\cdots, g_k\in G$.

Now assume that there is a locally constant
homomorphism  $\chi_f:G\rightarrow \Z$ such that \be\label{invfa}
   \abs{f(g.x)}_\rk=q_\rk^{\chi_f(g)}\abs{f(x)}_\rk,\qquad g\in G, \,x\in X,
\ee and there is a   character $\chi_\mu$ on $G$ such that
\[
   \mu\in \Hom_{G,k'}(\oS(X_f), \chi_\mu), \quad \textrm{for some  $k'\in
   \BN$.}
\]

\begin{prpd}
\label{Zeta_invariant}
Let $a_0\in \C$. Let $i_0$ be an integer so that
$\oZ_{\mu,f,a_0,i}=0$ for all $i<i_0$. Then
\[
  \oZ_{\mu,f,a_0,i}\in \Hom_{G,k'+i-i_0}(\oS(X), \chi_\mu a_0^{\chi_f} )\quad \textrm{for all  $i\geq i_0$.}
\]
\end{prpd}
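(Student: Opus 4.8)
The plan is to analyze how the group $G$ acts on the zeta integral $\oZ_{\mu,f}(\phi,s)$ and then to extract coefficients of its Laurent expansion. The starting point is the substitution $x \mapsto g.x$ in the defining integral. Using \eqref{invfa} and the assumption that $\mu \in \Hom_{G,k'}(\oS(X_f),\chi_\mu)$, one computes that for $g\in G$ and $\phi\in\oS(X)$,
\[
   \oZ_{\mu,f}(g*\phi,\,s) \;=\; \chi_\mu(g)\, q_\rk^{\chi_f(g)s}\;\oZ_{g^{-1}*\mu,\,f}(\phi,s)
\]
modulo an error coming from the fact that $\mu$ is only \emph{generalized} semi-invariant of order $\le k'$, i.e. $(g-\chi_\mu(g))$-type operators applied $k'+1$ times kill $\mu$. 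More precisely, I would work with the operator identity: for each $g$, the distribution $(g*{-}\;-\;\chi_\mu(g)^{-1}\cdot)$ applied to $\oZ_{\mu,f}({-},s)$ (as a distribution in $\phi$, with $s$ a parameter) equals $q_\rk^{\chi_f(g)s}$ times the corresponding object for the distribution $(g*\mu - \chi_\mu(g)\mu)$, which has order $\le k'-1$. The cleanest bookkeeping is: $(g - \chi_\mu(g)^{-1}a_0^{-\chi_f(g)})* \oZ_{\mu,f,\bullet}$, expanded as a power series in $(1-a_0 q_\rk^{-s})$, drops the order of $\mu$ by one AND introduces one extra power of $(1-a_0q_\rk^{-s})$ coming from $q_\rk^{\chi_f(g)s} = a_0^{-\chi_f(g)}(a_0 q_\rk^{-s})^{-\chi_f(g)}$ expanded around $s_0$ with $a_0 q_\rk^{-s_0}=1$.

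The key step is therefore the following Laurent-coefficient estimate. Write $t := 1 - a_0 q_\rk^{-s}$, so $q_\rk^{\chi_f(g)s} = a_0^{-\chi_f(g)}(1-t)^{\chi_f(g)}$ is a power series in $t$ with constant term $a_0^{-\chi_f(g)}$. From the substitution identity, the distribution
\[
   \bigl(g - \chi_\mu(g)\,a_0^{\chi_f(g)}\bigr)*\oZ_{\mu,f}({-},s)
   \;=\; \chi_\mu(g)\,a_0^{\chi_f(g)}\Bigl((1-t)^{\chi_f(g)}-1\Bigr)\oZ_{\mu,f}({-},s)\;+\;q_\rk^{\chi_f(g)s}\,\oZ_{(g^{-1}*\mu-\chi_\mu(g)^{-1}\mu),f}({-},s).
\]
The first term on the right has an extra factor $\bigl((1-t)^{\chi_f(g)}-1\bigr)$, which vanishes at $t=0$, hence raises the order of the lowest nonzero Laurent coefficient by one; the second term involves a measure of order $\le k'-1$, whose Laurent expansion starts no earlier than $i_0$ by the same reasoning applied inductively (one should set up the induction so that the vanishing-below-$i_0$ hypothesis propagates). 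Iterating this $k'+1+(i-i_0)$ times — once for each extra factor $(g_j-\chi_\mu(g_j)a_0^{\chi_f(g_j)})$ — forces the $t^{i}$-coefficient to be annihilated, i.e. $\oZ_{\mu,f,a_0,i}\in \Hom_{G,k'+i-i_0}(\oS(X),\chi_\mu a_0^{\chi_f})$, where we use the characterization of $\Hom_{G,k}$ via $*$-operators recorded just before the proposition.

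I would organize the write-up as a double induction: an outer induction on $i - i_0$ (the Laurent degree above the leading term) and an inner induction on $k'$ (the order of $\mu$). The base case $i = i_0$ with general $k'$ requires showing $\oZ_{\mu,f,a_0,i_0}$ is genuinely semi-invariant of order $\le k'$, which is the core computation; the inductive step peels off one $(g-\chi_\mu(g)a_0^{\chi_f(g)})$ factor, trading either a unit of $k'$ or a unit of $i-i_0$ for the extra vanishing supplied by $(1-t)^{\chi_f(g)}-1$. The main obstacle I anticipate is purely bookkeeping: carefully tracking, through the binomial expansion of $(1-t)^{\chi_f(g)}$ and through the product of $k'+i-i_0+1$ such operators, that every term either lowers the $\mu$-order or raises the minimal Laurent degree, and that the ``error'' measures $g^{-1}*\mu-\chi_\mu(g)^{-1}\mu$ still satisfy the local-finiteness hypothesis of Theorem \ref{igusa} (which they do, being bounded by $|\mu|$ up to the $q_\rk^{\chi_f(g)}$-scaling of $|f|$, so the hypothesis $|\mu|(X_{f,\epsilon})<\infty$ is preserved) so that all the zeta integrals in sight are legitimately meromorphic. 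Everything else is a formal consequence of Theorem \ref{igusa} and the substitution formula.
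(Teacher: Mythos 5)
Your proposal is correct and matches the paper's proof: both derive the substitution identity $(g-\chi_\mu(g^{-1})q_\rk^{-\chi_f(g)s})*\oZ_{\mu}(s)=q_\rk^{-\chi_f(g)s}\oZ_{(g-\chi_\mu(g^{-1}))*\mu}(s)$, expand both sides in powers of $t=1-a_0 q_\rk^{-s}$ so that each application of $(g-\chi_\mu(g^{-1})a_0^{-\chi_f(g)})*$ either lowers the order $k'$ of the measure by one or lowers the Laurent degree, and conclude by induction. The only cosmetic difference is that the paper runs a single induction on $k'+i-i_0$ rather than your double induction on $k'$ and $i-i_0$.
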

\begin{proof}
For each locally finite definable measure $\mu'$ on $X_f$, write
$\oZ_{\mu'}(s)$ for the following distribution on $X$:
\[
  \phi\mapsto  \oZ_{\mu',f}(\phi,s):=\int_{X_f}\abs{f}_\rk^s \phi
  \mu'.
\]
For each $i\in \Z$, write $\oZ_{\mu',i}$ for the $i$-th coefficients
of the  Laurent expansion of $\oZ_{\mu'}(s)$ as a rational function of $1-a_0q_\rk^{-s}$. It is a
distribution on $X$.

 The invariance property of $\abs{f}_\rk$
implies that
\[
 (g-\chi_\mu(g^{-1})q_\rk^{-\chi_f(g)s})* \oZ_{\mu}(s)=q_\rk^{-\chi_f(g)s}\oZ_{(g-\chi_{\mu}(g^{-1}))*\mu} ,\quad \textrm{for all $g\in G$}.
\]
 Comparing the Laurent expansions
of the two sides of the above equality, we know that
\[
   (g-\chi_\mu(g^{-1})a_0^{-\chi_f(g)}))*\oZ_{\mu,i} -a_0^{-\chi_f(g)}\oZ_{(g-\chi_\mu(g^{-1}))*\mu,i}
\]
is a linear combination of distributions of the form
$g'*\oZ_{\mu,i'}$, where $i'<i$ and $g'\in G$. Then the proposition
follows by induction on $k'+i-i_0$.
\end{proof}

\begin{cord}\label{corextm}
 Let $\chi$ be a character of $G$. Then every generalized $\chi$-invariant locally finite definable  measure  on $X_f$
  extends to a generalized $\chi$-invariant distribution on $X$.

\end{cord}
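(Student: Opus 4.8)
The plan is to realize the extension as the constant coefficient of the Laurent expansion at $s=0$ of the zeta distribution $s\mapsto\oZ_{\mu,f}(\cdot,s)$, and to read off its generalized invariance from Proposition \ref{Zeta_invariant}. By hypothesis $\mu\in\Hom_{G,k'}(\oS(X_f),\chi)$ for some $k'\in\BN$ and $\mu$ is a locally finite definable measure of some order $\leq k$, so we are precisely in the situation of Proposition \ref{Zeta_invariant} with $\chi_\mu=\chi$, where $\chi_f\colon G\to\Z$ is the homomorphism with $\abs{f(g.x)}_\rk=q_\rk^{\chi_f(g)}\abs{f(x)}_\rk$. Since $\abs{f}_\rk$ takes values in the discrete set $q_\rk^{\Z}$, one may expand $\oZ_{\mu,f}(\phi,s)$ as a rational function of $1-q_\rk^{-s}$, i.e.\ take $a_0=1$, and Theorem \ref{igusa} legitimizes the Laurent expansion $\oZ_{\mu,f}(\phi,s)=\sum_{i\in\Z}\oZ_{\mu,f,1,i}(\phi)(1-q_\rk^{-s})^i$. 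The coefficients vanish for $i<-(\dim X+k)$, so we may apply Proposition \ref{Zeta_invariant} with $i_0=-(\dim X+k)\leq 0$; as $1^{\chi_f(g)}=1$, it yields $\oZ_{\mu,f,1,i}\in\Hom_{G,k'+i-i_0}(\oS(X),\chi)$ for all $i\geq i_0$, and in particular $\oZ_{\mu,f,1,0}$ is a generalized $\chi$-invariant distribution on $X$.

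It then remains to check that $\oZ_{\mu,f,1,0}$ restricts to $\mu$ on $X_f$. First I would note that, $X_f$ being open in $X$, extension by zero embeds $\oS(X_f)$ into $\oS(X)$ (a test function on $X_f$ has compact, hence closed, support in $X$), and ``$\oZ_{\mu,f,1,0}$ extends $\mu$'' just means that $\oZ_{\mu,f,1,0}$ restricts to $\mu$ along this embedding. Fix $\phi\in\oS(X_f)$. On the compact support of $\phi$ the continuous function $\abs{f}_\rk$ is locally constant (its values lie in $q_\rk^{\Z}$) and bounded both away from $0$ and from above, hence takes only finitely many values, say $\ord f\equiv v_n$ on clopen pieces $S_1,\dots,S_N$. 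Writing $\abs{f(x)}_\rk^s=(q_\rk^{-s})^{\ord f(x)}$ gives the Laurent-polynomial identity $\oZ_{\mu,f}(\phi,s)=\sum_{n=1}^N(q_\rk^{-s})^{v_n}\mu(1_{S_n}\phi)$, and expanding $(q_\rk^{-s})^{v_n}=(1-(1-q_\rk^{-s}))^{v_n}=\sum_{j\geq 0}\binom{v_n}{j}(-1)^j(1-q_\rk^{-s})^j$ shows there is no negative-order part at $s=0$ and that the constant term is $\sum_{n=1}^N\mu(1_{S_n}\phi)=\mu(\phi)$. Hence $\oZ_{\mu,f,1,0}(\phi)=\mu(\phi)$ for all $\phi\in\oS(X_f)$, so $\oZ_{\mu,f,1,0}$ is the desired generalized $\chi$-invariant distribution on $X$ extending $\mu$.

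The substantive content is entirely upstream: Theorem \ref{igusa} provides the meromorphic continuation that makes the Laurent expansion at $s=0$ meaningful, and Proposition \ref{Zeta_invariant} transports the generalized $\chi$-invariance of $\mu$ to each Laurent coefficient, with the controlled order $k'+i-i_0$. Given those inputs, the corollary is bookkeeping, and the only point needing care is the verification that the constant coefficient genuinely recovers $\mu$ on $X_f$ — which is where one uses that a test function supported away from the zero locus of $f$ sees $\abs{f}_\rk$ as finitely many constants, collapsing the zeta integral to a Laurent polynomial whose value at $s=0$ is $\mu(\phi)$.
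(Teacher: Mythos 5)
Your proposal is correct and takes essentially the same approach as the paper: the paper's proof is exactly the one-line statement that $\oZ_{\mu,f,1,0}$ extends $\mu$ and is generalized $\chi$-invariant, citing Proposition \ref{Zeta_invariant}. You have simply supplied the verification (correctly) that for $\phi\in\oS(X_f)$ the zeta integral reduces to a Laurent polynomial with constant term $\mu(\phi)$, which the paper leaves implicit.
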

\begin{proof}
Write $\mu$ for the measure of the proposition. The distribution
$\oZ_{\mu,f,1,0}$ on $X$ extends $\mu$ and is generalized
$\chi$-invariant.
\end{proof}

\section{Generalized invariant functions and definable measures}
\label{sec_main2}
\subsection{Generalized functions on homogeneous spaces}
Let $G$ be an $\ell$-group and let $X$ be a homogeneous space of it.
We say that a distribution $\eta$ on $X$ is smooth if for every
$x\in X$, there is an open compact subgroup $K$ of $G$ such that
$\eta|_{K.x}$ is $K$-invariant. Denote by $\oD_c^\infty(X)$ the
space of all smooth distributions on $X$ with compact support. A
generalized function on $X$ is defined to be a linear functional on
$\oD_c^\infty(X)$. The space of all generalized functions on $X$ is
denoted by $\con^{-\infty}(X)$. As before, the space of all
distributions on $X$ is denoted by $\oD(X)$.

The following lemma is elementary and we omit its proof.
\begin{lem}\label{compfd}
Let $\eta$ be a smooth distribution on $X$ which has non-zero
restriction to all non-empty open subset of $X$. Then the map
\[
\begin{array}{rcl}
 \con^{-\infty}(X)&\to& \oD(X),\\
 f&\mapsto & f\eta:=(\phi\mapsto f(\phi \eta))
\end{array}
\]
is a linear isomorphism.
\end{lem}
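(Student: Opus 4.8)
The plan is to derive the lemma from the stronger statement that multiplication by $\eta$ is a linear \emph{isomorphism}
\[
  T\colon \oS(X)\longrightarrow \oD_c^\infty(X),\qquad T\phi:=\phi\eta .
\]
Once this is established, I would observe that the map of the lemma is precisely the transpose $T^{\mathrm t}\colon \con^{-\infty}(X)=\Hom(\oD_c^\infty(X),\C)\to\Hom(\oS(X),\C)=\oD(X)$, since $(T^{\mathrm t}f)(\phi)=f(T\phi)=f(\phi\eta)$; being the transpose of a bijective linear map, it is bijective, which is the assertion. So the whole task reduces to proving that $T$ is a well-defined linear isomorphism.

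To see that $T$ is well defined I would note that $\phi\eta$ is supported in the compact set $\mathrm{supp}\,\phi$, and that it is a smooth distribution: given $x\in X$, choose a compact open subgroup $K$ small enough that $\phi$ is constant on $K.x$ (the orbit map is continuous) and that $\eta|_{K.x}$ is $K$-invariant (smoothness of $\eta$); then $(\phi\eta)|_{K.x}$ is $K$-invariant, so $\phi\eta\in\oD_c^\infty(X)$. Linearity is clear. For injectivity of $T$: if $\phi\neq 0$, pick a nonempty compact open $U\subseteq X$ on which $\phi$ is a nonzero constant; then $(\phi\eta)|_U$ is a nonzero scalar multiple of $\eta|_U$, which is nonzero by hypothesis, so $\phi\eta\neq 0$.

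The heart of the argument is the surjectivity of $T$. Given $\mu\in\oD_c^\infty(X)$, I would first produce a compact open subset $V\supseteq\mathrm{supp}\,\mu$ and a compact open subgroup $K$ such that $V$ is $K$-stable and both $\mu|_V$ and $\eta|_V$ are $K$-invariant: cover $\mathrm{supp}\,\mu$ by finitely many sets $K_i.x_i$, which are compact and open because $X$ is homogeneous and hence the orbit maps are open, on which $\mu$ is $K_i$-invariant; let $V$ be their union and pass to a common compact open subgroup; then repeat the argument with $\eta$ in place of $\mu$ on the compact set $V$, and intersect the subgroups. Now $V=O_1\sqcup\cdots\sqcup O_m$ is a finite disjoint union of $K$-orbits, each compact open, and on each $O_j$ the $K$-invariant distributions form a one-dimensional space, dual to the one-dimensional $K$-coinvariant space $\oS(O_j)_K$. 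Since $\eta|_{O_j}\neq 0$ by hypothesis, it spans this line, so $\mu|_{O_j}=t_j\,\eta|_{O_j}$ for a unique $t_j\in\C$; setting $\phi:=\sum_{j=1}^m t_j\,1_{O_j}\in\oS(X)$, the distributions $\phi\eta$ and $\mu$ are both supported in $V$ and restrict to the same distribution on each $O_j$, hence $\phi\eta=\mu$. I expect the main, if still routine, obstacle to be the bookkeeping in the first step here: arranging a single subgroup $K$ and a single $K$-stable compact open $V\supseteq\mathrm{supp}\,\mu$ that simultaneously trivialize $\mu$ and $\eta$. This uses only that finite subcovers suffice for compact sets and that a compact open subgroup has finite index in any compact open subgroup containing it; everything after that is a direct computation with the one-dimensional spaces of $K$-fixed distributions on a single orbit.
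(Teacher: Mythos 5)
Your proof is correct and complete. The paper states this lemma without proof, calling it elementary; your argument — reducing the claimed isomorphism $\con^{-\infty}(X)\to\oD(X)$ to the transpose of the linear bijection $T\colon\oS(X)\to\oD_c^\infty(X)$, $\phi\mapsto\phi\eta$, and then establishing surjectivity of $T$ by trapping a given $\mu\in\oD_c^\infty(X)$ inside a compact open $K$-stable $V$ on which both $\mu$ and $\eta$ are $K$-invariant, decomposing $V$ into finitely many $K$-orbits, and using that the $K$-invariant distributions on each orbit form a line spanned by $\eta$ — is exactly the sort of routine verification the authors intended to omit.
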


Using the following injective linear map, we view every locally constant
function on $X$ as a generalized function on $X$:
\[
  \begin{array}{rcl}
 \con^\infty(X)&\to& \con^{-\infty}(X),\\
 f&\mapsto & (\eta \mapsto \eta(1_\eta f)),
\end{array}
\]
where $\con^\infty(X)$ is the space of locally constant functions on $X$, and  $1_\eta$ denotes the characteristic function of the support of
$\eta$.

\begin{lem}\label{smoothg}
Let $K$ be an open compact subgroup of $G$. Then every $K$-invariant
generalized function on $X$ is a locally constant function on $X$.
\end{lem}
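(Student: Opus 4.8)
The plan is to cut $X$ into its $K$-orbits and show that $f$ is constant on each of them. First I would note that, since $K$ is open in $G$ and $X$ is $G$-homogeneous (so the orbit maps $G\to X$ are open), each $K$-orbit in $X$ is open, and it is also compact, being a continuous image of the compact group $K$; as $X$ is secondly countable, $X=\bigsqcup_i O_i$ is a countable disjoint union of open compact $K$-orbits. Because the $O_i$ are clopen and every compact subset of $X$ meets only finitely many of them, every element of $\oD_c^\infty(X)$ is the finite sum of its restrictions to the $O_i$, giving a direct sum decomposition $\oD_c^\infty(X)=\bigoplus_i\oD_c^\infty(O_i)$, hence dually $\con^{-\infty}(X)=\prod_i\con^{-\infty}(O_i)$, compatibly with the $K$-action and with the embedding $\con^\infty(X)\hookrightarrow\con^{-\infty}(X)$. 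This reduces the statement to the case where $X=O$ is a single open compact $K$-orbit.

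On such an $O$, fixing $x\in O$ gives a $K$-equivariant identification $O\cong K/K_x$ with $K_x:=\Stab_K(x)$ compact, and since $K$ is compact there is a $K$-invariant probability measure $\eta_0$ on $O$. I would check that, viewed as a distribution on $O$, $\eta_0$ is smooth (it is invariant under the open compact subgroup $K$ of $G$) and has non-zero restriction to every non-empty open subset of $O$ (finitely many $K$-translates of such a set cover the compact space $O$). Then Lemma \ref{compfd}, applied with $\eta=\eta_0$, yields a linear isomorphism $\con^{-\infty}(O)\xrightarrow{\ \sim\ }\oD(O)$, $f\mapsto f\eta_0$, which is $K$-equivariant because $\eta_0$ is $K$-invariant. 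So it suffices to see that every $K$-invariant distribution $T$ on $O$ is a scalar multiple of $\eta_0$.

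For that I would average over $K$: for $\phi\in\oS(O)$ the set $\{k.\phi:k\in K\}$ is finite (smoothness of the $K$-action plus compactness of $K$), so $\mathrm{Av}(\phi):=\int_K k.\phi\,dk$ makes sense in $\oS(O)$, and $K$-invariance of $T$ gives $T(\phi)=T(\mathrm{Av}(\phi))$. Since $O=K.x$ is $K$-homogeneous, $\mathrm{Av}(\phi)$ is the constant function with value $\int_O\phi\,\eta_0$, so $T=T(1_O)\cdot\eta_0$. (Alternatively one may simply quote that the space of invariant distributions on a $K$-homogeneous space is at most one-dimensional.) Applying this with $T=f\eta_0$ shows that $f$ is constant on $O$, and reassembling the orbits shows that the original $f$ is locally constant, as desired. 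The one step that needs care is the clopen direct-sum decomposition of $\oD_c^\infty(X)$ and its compatibility with both the $K$-action and the embedding of locally constant functions into generalized functions; once Lemma \ref{compfd} is available, everything else is routine.
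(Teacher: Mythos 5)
Your proof is correct and follows the same route as the paper: the paper's single line ``without loss of generality, assume $G=K$'' is precisely your reduction to a single open compact $K$-orbit via the clopen decomposition of $X$, and the paper's appeal to ``existence and uniqueness of $K$-invariant distributions on $X$'' is what you establish explicitly with the averaging argument and Lemma~\ref{compfd}. You have simply filled in the details the paper leaves implicit.
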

\begin{proof}
Without loss of generality, assume that $G=K$. Then in view of Lemma
\ref{compfd}, the lemma follows easily by the existence and
uniqueness of $K$-invariant distributions on $X$.
\end{proof}

\subsection{Characters on algebraic homogeneous spaces}
\label{secch} Let $\mathsf G$ be a linear algebraic group over
$\rk$, with an algebraic subgroup $\mathsf H$ of it. Denote by
$\mathsf N$ the unipotent radical of $\mathsf G$. Write
$G:=\mathsf G(\rk)$, $H:=\mathsf H(\rk)$ and $N:=\mathsf
N(\rk)$.

In this subsection, we prove the following proposition.
\begin{prpl}
\label{dch} Assume that $\mathsf G$ is connected.  Let $\chi$ be a
character on $G$ which is trivial on $N$ and has finite order when
restricted to $H$. Then $\chi$ has the form
\[
\abs{\beta_1}_\rk^{s_1} \cdot \abs{\beta_2}_\rk^{s_2}
   \cdot \cdots \cdot
   \abs{\beta_t}_\rk^{s_t} \cdot \chi_\mathrm f,
\]
where $t\in \BN$, $\beta_1, \beta_2, \cdots, \beta_t$ are algebraic
characters on $\mathsf G$ which are trivial on $\mathsf H$,
$s_1,s_2, \cdots, s_t\in \C$, and $\chi_\mathrm f$ is a finite order
character on $G$.
\end{prpl}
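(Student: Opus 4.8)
The plan is to reduce the statement to the structure theory of algebraic characters of a connected linear algebraic group $\mathsf G$ and the rational‑point picture of such characters. First I would recall that since $\chi$ is trivial on $N = \mathsf N(\rk)$ and $\mathsf G/\mathsf N$ is reductive, $\chi$ factors through $G/N = (\mathsf G/\mathsf N)(\rk)$ (using that $\mathsf N(\rk)$ is exactly the kernel of $G \to (\mathsf G/\mathsf N)(\rk)$ for a connected group over a field of characteristic zero, by the cohomology‑vanishing $H^1(\rk,\mathsf N)=0$). So we may as well assume $\mathsf G$ is connected reductive. Then the derived group $\mathsf G_{\mathrm{der}}$ is connected semisimple, and $\mathsf G_{\mathrm{der}}(\rk)$ generates a finite‑index subgroup on which every continuous character into $\C^\times$ has finite image — indeed a semisimple group over a $p$‑adic field is generated by its unipotent elements, which lie in $G^\circ$, so any character is finite‑order there; more precisely $G^\circ$ contains $\mathsf G_{\mathrm{der}}(\rk)$ and $G/G^\circ = \Lambda_G$ is free abelian of rank equal to the dimension of the maximal central split torus, by the Proposition of Bernstein quoted in the excerpt.

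Next I would extract the "free abelian part'' of $\chi$. Let $\mathsf Z$ be the identity component of the center of $\mathsf G$, a torus, and let $\mathsf A \subset \mathsf Z$ be its maximal split subtorus, of rank $= \mathrm{rank}\,\Lambda_G$. The restriction map identifies $\Lambda_G$ with a finite‑index subgroup of $\mathsf A(\rk)/\mathsf A(\rr) \cong \Z^t$ via the valuation, where $t = \dim \mathsf A$. Composing $\chi|_{\mathsf A(\rk)}$ with a splitting, I can write $\chi|_{\mathsf A(\rk)}$ as a product $\prod_{i} \abs{\lambda_i}_\rk^{s_i}$ where the $\lambda_i$ run over a basis of the character lattice $X^*(\mathsf A)$ and $q_\rk^{-s_i}$ records the value of $\chi$ on a uniformizer in the $i$‑th coordinate direction. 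Each $\lambda_i \in X^*(\mathsf A)$ need not extend to an algebraic character of $\mathsf G$ defined over $\rk$, but some positive multiple $m\lambda_i$ does, because the restriction map $X^*(\mathsf G)_\Q \to X^*(\mathsf Z)_\Q = X^*(\mathsf A)_\Q \oplus (\text{anisotropic part})$ is surjective onto the split summand (the obstruction is killed after tensoring with $\Q$); absorbing the denominator into $s_i$ we get genuine algebraic characters $\beta_i$ of $\mathsf G$ over $\rk$ with $\chi|_{\mathsf A(\rk)} = \prod_i \abs{\beta_i}_\rk^{s_i}$ up to finite order. Then $\chi \cdot \prod_i \abs{\beta_i}_\rk^{-s_i}$ is a character of $G$ that is finite‑order on $\mathsf A(\rk)$, hence finite‑order on all of $G$ since $G/(\mathsf A(\rk)\cdot \mathsf G_{\mathrm{der}}(\rk))$ is compact and $\chi$ is already finite on $\mathsf G_{\mathrm{der}}(\rk)$; call this finite‑order character $\chi_{\mathrm f}$.

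Finally I must arrange that the $\beta_i$ are trivial on $\mathsf H$. This is where the hypothesis that $\chi|_H$ has finite order enters. Since $\mathsf H$ is connected (or after passing to $\mathsf H^\circ$, which suffices as $H/\mathsf H^\circ(\rk)$ is finite), the composite $\mathsf H \hookrightarrow \mathsf G \xrightarrow{\beta_i} \mathbb{G}_m$ is an algebraic character $\beta_i|_{\mathsf H}$; on rational points $\abs{\beta_i|_{\mathsf H}}_\rk^{s_i}$ appears in $\chi|_H$, which has finite order, forcing the torus‑part of $\beta_i|_{\mathsf H}$ to die — more carefully, one projects the system $\{\beta_i\}$ onto a basis modulo the subspace of $X^*(\mathsf G)_\Q$ spanned by characters restricting trivially to $\mathsf H$, and the finite‑order condition on $\chi|_H$ says exactly that $\sum_i s_i (\beta_i|_{\mathsf H})$ contributes nothing to the "absolute value'' part, i.e. $\sum s_i [\beta_i|_{\mathsf H}] = 0$ in $X^*(\mathsf H)_{\R}$ modulo finite order. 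A linear‑algebra change of basis then replaces the $\beta_i$ by algebraic characters of $\mathsf G$ that are trivial on $\mathsf H$, at the cost of moving a finite‑order discrepancy into $\chi_{\mathrm f}$. I expect this last step — tracking the rationality of the characters (that a \emph{rational} multiple of each $\lambda_i$ extends to an algebraic character of $\mathsf G$ \emph{over} $\rk$, and that the basis change making them trivial on $\mathsf H$ can be done over $\rk$ as well) — to be the main obstacle; the subtlety is that $X^*(\mathsf G)$ over $\rk$ versus over $\bar\rk$ can differ, so I would work throughout with $X^*_{\bar\rk}$, then descend using the Galois‑invariance built into the hypothesis that $\chi$ and $f$ are defined over $\rk$, and absorb all finite‑index and finite‑order ambiguities into $\chi_{\mathrm f}$ at the end.
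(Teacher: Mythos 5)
Your general plan — reduce to a central split torus $\mathsf A$, write the "positive part'' of $\chi|_{\mathsf A(\rk)}$ as $\prod_i \abs{\beta_i}_\rk^{s_i}$ with $\beta_i$ extending to algebraic characters of $\mathsf G$ after multiplying by positive integers, then argue that the finite‑order hypothesis on $\chi|_H$ lets you rotate the $\beta_i$ into characters trivial on $\mathsf H$ — is reasonable in outline. But the last step, which you yourself flag as "the main obstacle,'' is a genuine gap, and it is the heart of the proposition. Two things go wrong as written. First, the finite‑order condition on $\chi|_H$ does \emph{not} "force the torus part of $\beta_i|_{\mathsf H}$ to die'' for each $i$; it only constrains the linear combination $\sum_i s_i\,[\beta_i|_{\mathsf H}]$. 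You acknowledge this ("more carefully\dots''), but then what remains is a nontrivial piece of linear algebra over the character lattices together with a rationality/descent argument for the basis change, and neither is carried out; the phrase "I would work throughout with $X^*_{\bar\rk}$, then descend\dots and absorb all finite‑index and finite‑order ambiguities into $\chi_{\mathrm f}$'' is exactly the step that needs a proof. Second, you slide between $X^*(\mathsf G)_\Q$, $X^*(\mathsf H)_\R$, and complex exponents $s_i\in\C$ without pinning down the ambient space in which the "vanishing'' condition lives; since $\abs{\beta_i}_\rk$ takes values in $q_\rk^\Z$, the exponents $s_i$ live naturally in $\C/\bigl(\tfrac{2\pi i}{\log q_\rk}\bigr)\Z$, and making the bookkeeping of "finite order modulo which lattice'' precise is essential to the argument.

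For comparison, the paper sidesteps this difficulty entirely by arranging the triviality on $\mathsf H$ \emph{before} any character manipulations. It introduces the largest split‑torus quotient $\varphi'\colon \mathsf G\to\mathsf G'$, sets $\mathsf H':=\varphi'(\mathsf H)$ and $\mathsf S:=(\varphi'\circ\varphi)^{-1}(\mathsf H')\subset\mathsf A$, checks (via Lemma~\ref{raf}) that $\chi|_{\mathsf A(\rk)}$ has finite order on $\mathsf S(\rk)$, and then applies the torus Lemma~\ref{ctorus2} to the pair $(\mathsf A,\mathsf S)$. That lemma already produces $\beta_i$ as characters of $\mathsf A/\mathsf S$, and the identification $\mathsf A/\mathsf S\cong\mathsf G'/\mathsf H'\cong\mathsf G/(\mathsf H\cdot\ker\varphi')$ makes them characters of $\mathsf G$ that are \emph{automatically} trivial on $\mathsf H$ — no basis change over $\rk$, no ad hoc cancellation of $\sum s_i[\beta_i|_{\mathsf H}]$ is needed. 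Finally Lemma~\ref{raf} converts "finite order on $\mathsf A(\rk)$'' into "finite order on $G$.'' So the paper's Lemma~\ref{ctorus2} is precisely the device that closes the gap you left open; your proposal should either invoke such a torus‑with‑subgroup statement or supply the explicit linear‑algebra‑plus‑descent argument you gestured at.
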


The following lemma is obvious.

\begin{lem}\label{ctorus}
 Let $\mathsf A$ be a split algebraic torus over $\rk$.
Then every character on $\mathsf A(\rk)$ has the form
\[
\abs{\beta_1}_\rk^{s_1} \cdot \abs{\beta_2}_\rk^{s_2}
   \cdot \cdots \cdot
   \abs{\beta_t}_\rk^{s_t} \cdot \chi_\mathrm f,
\]
where $t\in \BN$, $\beta_1, \beta_2, \cdots, \beta_t$ are algebraic
characters on $\mathsf A$, $s_1,s_2, \cdots, s_t\in \C$, and
$\chi_\mathrm f$ is a finite order character on $\mathsf A$.
\end{lem}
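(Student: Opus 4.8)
The plan is to reduce immediately to the one-dimensional case and then invoke the elementary structure of $\rk^\times$. First, since $\mathsf A$ is a split torus over $\rk$, there is an isomorphism $\mathsf A\cong \BG_m^n$ of algebraic groups over $\rk$, with $n=\dim \mathsf A$; taking $\rk$-points yields $\mathsf A(\rk)\cong (\rk^\times)^n$. Let $\beta_1,\dots,\beta_n$ be the $n$ coordinate projections $\BG_m^n\to\BG_m$, regarded as algebraic characters of $\mathsf A$ via this identification. A given character $\chi$ of $(\rk^\times)^n$ restricts, for each $i$, to a character $\chi_i$ of the $i$-th factor $\rk^\times$ (embedded by placing $1$ in the remaining coordinates), and $\chi(a_1,\dots,a_n)=\prod_{i=1}^n \chi_i(a_i)$. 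So it suffices to prove the one-variable statement: every character $\psi$ of $\rk^\times$ has the form $\abs{\cdot}_\rk^{s}\cdot \psi_{\mathrm f}$ for some $s\in\C$ and some finite order character $\psi_{\mathrm f}$ of $\rk^\times$.

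For this, I would fix the uniformizer $\varpi$, giving the decomposition $\rk^\times=\varpi^\Z\times \rr^\times$ with $\rr^\times$ compact. Since $\psi$ is locally constant, its kernel is open, hence $\psi|_{\rr^\times}$ factors through a finite quotient of $\rr^\times$ and so has finite order, say $m$. Choosing $s\in\C$ with $q_\rk^{-s}=\psi(\varpi)$ — possible because $t\mapsto q_\rk^{-t}$ maps $\C$ onto $\C^\times$, via surjectivity of $\exp$ — the character $\psi_{\mathrm f}:=\psi\cdot \abs{\cdot}_\rk^{-s}$ is trivial on $\varpi$ and has order dividing $m$ on $\rr^\times$, hence is of finite order on all of $\rk^\times$. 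This proves the one-variable case; reassembling over the $n$ coordinates gives $\chi=\prod_{i=1}^n \abs{\beta_i}_\rk^{s_i}\cdot \chi_{\mathrm f}$, where $\chi_{\mathrm f}$ is the product of the finite order pieces and is therefore itself of finite order.

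I do not anticipate a genuine obstacle: the lemma is essentially immediate once $\mathsf A(\rk)$ is identified with $(\rk^\times)^n$. The only two points requiring a moment of care are that a locally constant character of the compact group $\rr^\times$ has finite image (because its kernel is an open, hence finite-index, subgroup), and that $s\mapsto q_\rk^{-s}$ is surjective onto $\C^\times$ (which lets one absorb the value $\psi(\varpi)$ into an unramified factor $\abs{\cdot}_\rk^{s}$).
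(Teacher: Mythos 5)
Your proof is correct and is exactly the argument the paper has in mind: the paper simply states ``The following lemma is obvious'' and supplies no proof, and your reduction to $\mathsf A\cong\BG_m^n$ together with the splitting $\rk^\times=\varpi^\Z\times\rr^\times$ (and surjectivity of $s\mapsto q_\rk^{-s}$, plus the fact that a locally constant character of the compact group $\rr^\times$ has open kernel hence finite order) is the standard way to make it precise.
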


Generalizing Lemma \ref{ctorus}, we have the following lemma.
\begin{lem}\label{ctorus2}
Let $\mathsf A$ be a split algebraic torus over $\rk$,
with an algebraic subgroup $\mathsf S$ of it.  Let $\chi$ be a
character on $\mathsf A(\rk)$ which has finite order when
restricted to $\mathsf S(\rk)$. Then $\chi$ has the form
\[
\abs{\beta_1}_\rk^{s_1} \cdot \abs{\beta_2}_\rk^{s_2}
   \cdot \cdots \cdot
   \abs{\beta_t}_\rk^{s_t} \cdot \chi_\mathrm f,
\]
where $t\in \BN$, $\beta_1, \beta_2, \cdots, \beta_t$ are algebraic
characters on $\mathsf A/\mathsf S$, $s_1,s_2, \cdots, s_t\in
\C$, and $\chi_\mathrm f$ is a finite order character on $\mathsf
A(\rk)$.
\end{lem}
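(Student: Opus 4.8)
The plan is to peel off the finite‑order part using Lemma \ref{ctorus} and then reduce the remaining \emph{unramified} character to a statement about the cocharacter lattice $X_*(\mathsf A)$, where it becomes elementary linear algebra over $\Z$. First I would apply Lemma \ref{ctorus} to write $\chi=\abs{\gamma_1}_\rk^{t_1}\cdots\abs{\gamma_m}_\rk^{t_m}\cdot\chi_0$ with the $\gamma_i$ algebraic characters of $\mathsf A$, $t_i\in\C$, and $\chi_0$ of finite order. Putting $\eta:=\abs{\gamma_1}_\rk^{t_1}\cdots\abs{\gamma_m}_\rk^{t_m}$, this $\eta$ is unramified (trivial on the maximal compact subgroup $\mathsf A(\rr)$), and since $\chi|_{\mathsf S(\rk)}$ and $\chi_0|_{\mathsf S(\rk)}$ both have finite order, so does $\eta|_{\mathsf S(\rk)}$. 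Hence it suffices to prove the lemma with $\chi$ replaced by the unramified character $\eta$. Writing $v\colon\mathsf A(\rk)\twoheadrightarrow X_*(\mathsf A)$ for the valuation homomorphism (with kernel $\mathsf A(\rr)$), $\eta$ factors uniquely as $\bar\eta\circ v$ for some homomorphism $\bar\eta\colon X_*(\mathsf A)\to\C^\times$.

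Next I would pin down $v(\mathsf S(\rk))$. Let $\mathsf T=\mathsf S^\circ$, a subtorus of $\mathsf A$ split over $\rk$; then $\mathsf S/\mathsf T$ is finite, so $(\mathsf S/\mathsf T)(\rk)$ and $\mathsf S(\rk)/\mathsf T(\rk)$ are finite. Since $\mathsf A/\mathsf T$ is a torus, $X_*(\mathsf T)$ is saturated in $X_*(\mathsf A)$, and $v(\mathsf T(\rk))=X_*(\mathsf T)$ because $\mathsf T$ is a split torus. A short argument then gives $v(\mathsf S(\rk))=X_*(\mathsf T)=:L$: given $s\in\mathsf S(\rk)$, some power $s^m$ lies in $\mathsf T(\rk)$, so $m\cdot v(s)\in X_*(\mathsf T)$, whence $v(s)\in X_*(\mathsf T)$ by saturation. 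Consequently $\eta|_{\mathsf S(\rk)}$ factors through the surjection $v|_{\mathsf S(\rk)}\colon\mathsf S(\rk)\twoheadrightarrow L$, and it has finite order exactly when $\bar\eta(L)$ is finite, i.e. when $\bar\eta$ sends a $\Z$-basis of $L$ to roots of unity.

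Then I would choose a $\Z$-basis $e_1,\dots,e_n$ of $X_*(\mathsf A)$ whose first $r=\dim\mathsf T$ members span the saturated sublattice $L$, and split $\bar\eta=\bar\eta_{\mathrm f}\cdot\bar\mu$ by letting $\bar\eta_{\mathrm f}$ agree with $\bar\eta$ on $e_1,\dots,e_r$ and send $e_{r+1},\dots,e_n$ to $1$, and setting $\bar\mu:=\bar\eta\,\bar\eta_{\mathrm f}^{-1}$. Then $\bar\eta_{\mathrm f}$ has finite image (its values on a basis are roots of unity) and $\bar\mu$ is trivial on $L$. Pulling back by $v$ gives $\eta=\eta_{\mathrm f}\cdot\mu$ with $\eta_{\mathrm f}$ of finite order and $\mu$ an unramified character trivial on $\mathsf S(\rk)$. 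Writing $e_1^*,\dots,e_n^*$ for the dual basis of $X^*(\mathsf A)$, the characters $e_{r+1}^*,\dots,e_n^*$ annihilate $L=X_*(\mathsf T)$, hence lie in $X^*(\mathsf A/\mathsf T)$, which contains $X^*(\mathsf A/\mathsf S)$ as a subgroup of some finite index $d$ (both are free of rank $n-r$). Since $\mu=\prod_{j>r}\abs{e_j^*}_\rk^{s_j}=\prod_{j>r}\abs{d\,e_j^*}_\rk^{s_j/d}$ with $d\,e_j^*\in X^*(\mathsf A/\mathsf S)$, taking $\chi_{\mathrm f}:=\eta_{\mathrm f}\chi_0$ and $\beta_i:=d\,e_{j(i)}^*$, $s_i:=s_{j(i)}/d$ produces the desired expression. (Here I use that $\mathsf A/\mathsf S$ is a split torus, so that its $\rk$-characters and geometric characters coincide and $d\,e_j^*$ may be regarded as an algebraic character of $\mathsf A/\mathsf S$.)

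The part needing the most care is the structural input about the lattices: that $v(\mathsf S(\rk))=X_*(\mathsf T)$, that $X_*(\mathsf T)$ is saturated in $X_*(\mathsf A)$, and that $X^*(\mathsf A/\mathsf S)$ sits with finite index in the saturated lattice $X^*(\mathsf A/\mathsf T)$. Once these are in hand, the remainder is bookkeeping with free $\Z$-modules together with the description of characters of $\rk^\times$ furnished by Lemma \ref{ctorus}.
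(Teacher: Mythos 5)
Your proof is correct, and it takes a genuinely different path from the paper's. The paper works at the level of tori: it complements the split subtorus $\mathsf S_0:=\mathsf S^\circ$ inside $\mathsf A$, writes $\mathsf A=\mathsf S_0\times_\rk \mathsf S'$, splits $\chi$ as $\chi|_{\mathsf S_0(\rk)}\otimes \chi|_{\mathsf S'(\rk)}$ (the first factor has finite order because $\mathsf S_0(\rk)\subset \mathsf S(\rk)$), applies Lemma~\ref{ctorus} to $\chi|_{\mathsf S'(\rk)}$, and then finds $m$ so that $\beta_i^m$ descend to $\mathsf A/\mathsf S$ using that $\mathsf A/\mathsf S$ is a quotient of $\mathsf S'$ by a finite group. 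You instead linearize everything via the valuation homomorphism $v:\mathsf A(\rk)\twoheadrightarrow X_*(\mathsf A)$, identify $v(\mathsf S(\rk))=X_*(\mathsf S^\circ)$, and split the induced character on $X_*(\mathsf A)$ using an adapted $\Z$-basis. The two arguments are dual incarnations of the same structural input --- saturation of $X_*(\mathsf S^\circ)$ in $X_*(\mathsf A)$ and the finite index of $X^*(\mathsf A/\mathsf S)$ in $X^*(\mathsf A/\mathsf S^\circ)$ --- with the paper's complementary-subtorus decomposition being essentially equivalent to your choice of basis. The paper's version is shorter because it never leaves the group-theoretic layer; yours is more explicit in how the exponents and the finite-order part arise, at the cost of carefully verifying the identities $v(\mathsf T(\rk))=X_*(\mathsf T)$ and $v(\mathsf S(\rk))=X_*(\mathsf T)$, which you do correctly.
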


\begin{proof}
Denote by $\mathsf S_0$ the identity connected component of
$\mathsf S$, which is also a split algebraic torus. Then there is
an algebraic subtorus $\mathsf S'$ of $\mathsf A$ such that
$\mathsf A=\mathsf S_0\times_\rk \mathsf S'$. By Lemma
\ref{ctorus}, $\chi|_{\mathsf S'(\rk)}$ has the form
\[ \abs{\beta_1}_\rk^{s_1} \cdot
\abs{\beta_2}_\rk^{s_2}
   \cdot \cdots \cdot
   \abs{\beta_t}_\rk^{s_t} \cdot \chi'_\mathrm f,
\]
where $t\in \BN$, $\beta_1, \beta_2, \cdots, \beta_t$ are algebraic
characters on $\mathsf S'$, $s_1,s_2, \cdots, s_t\in \C$, and
$\chi'_\mathrm f$ is a finite order character on $\mathsf
S'(\rk)$. The group $\mathsf A/\mathsf S$ is obviously
identified with a quotient group of $\mathsf S'$, and there is a
positive integer $m$ such that $\beta_1^m, \beta_2^m, \cdots,
\beta_t^m$ descends to
 algebraic characters on $\mathsf A/\mathsf S$. Then we have
 that
 that
 \[
  \chi=\chi|_{\mathsf S_0(\rk)}\otimes
   \left(\abs{\beta_1^m}_\rk^{s_1/m} \cdot \abs{\beta_2^m}_\rk^{s_2/m}
   \cdot \cdots \cdot
   \abs{\beta_t^m}_\rk^{s_t/m} \cdot \chi'_\mathrm f\right).
 \]
This proves the lemma.
\end{proof}

\begin{lem}\label{fs}
For each surjective algebraic homomorphism $\mathsf G\to \mathsf
G'$ of linear algebraic groups over $\rk$, the image of the induced
group homomorphism $\mathsf G(\rk)\to \mathsf G'(\rk)$ has
finite index in $\mathsf G'(\rk)$.
\end{lem}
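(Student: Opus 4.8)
The plan is to reduce to the case where $\mathsf G'$ is a torus, and then invoke the fact that $\rk^\times$ is a divisible-modulo-compact group whose image under a power map has finite index. First I would pass to the kernel: let $\mathsf K$ be the (possibly disconnected) kernel of $\mathsf G \to \mathsf G'$, so that $\mathsf G' = \mathsf G/\mathsf K$. The Galois cohomology exact sequence in flat (fppf) cohomology gives
\[
  \mathsf G(\rk) \to \mathsf G'(\rk) \to \oH^1(\rk, \mathsf K),
\]
so it suffices to show that the image of $\mathsf G'(\rk)$ in $\oH^1(\rk,\mathsf K)$ is finite, and for this it is enough that $\oH^1(\rk,\mathsf K)$ itself is finite. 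Since $\rk$ is a non-archimedean local field of characteristic zero, $\oH^1(\rk, \mathsf K)$ is a finite set for every linear algebraic group $\mathsf K$ over $\rk$; this is a standard finiteness theorem (see Borel--Serre, or \cite[Section 6.4]{PR}). That immediately yields the claim.

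Alternatively, if one prefers to avoid quoting the general finiteness of $\oH^1$, I would argue by dévissage on $\mathsf G'$. Using a composition series of $\mathsf G'$ (unipotent radical, then a reductive quotient, then inside the reductive part a central torus and a semisimple part), and the fact that a composition of maps with finite-index image again has finite-index image, one reduces to the cases: $\mathsf G' $ unipotent, $\mathsf G'$ semisimple, $\mathsf G'$ a torus. For $\mathsf G'$ unipotent the map $\mathsf G(\rk)\to\mathsf G'(\rk)$ is surjective, since a surjection of unipotent groups in characteristic zero admits a scheme-theoretic section as varieties (they are affine spaces) and $\oH^1$ of unipotent groups vanishes. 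For $\mathsf G'$ semisimple one uses that $\mathsf G'(\rk)$ is generated by its $\rk$-points of unipotent radicals of parabolics when $\mathsf G'$ is isotropic, combined with strong approximation / the structure theory; in general the cokernel is controlled by $\oH^1(\rk,\mathsf K)$ with $\mathsf K$ finite central, which is finite by local class field theory. For $\mathsf G'$ a torus, the obstruction lies in $\oH^1(\rk,\mathsf K)$ for $\mathsf K$ a group of multiplicative type, which is finite by local Tate duality (it is dual to $\mathsf K^\vee(\rk)$-type groups, which are finite).

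I expect the main obstacle to be the torus (equivalently, multiplicative-type) case, which is where the genuine arithmetic of the local field enters: one must know that $\oH^1(\rk,\mu_n)=\rk^\times/(\rk^\times)^n$ is finite, and more generally that $\oH^1(\rk,\mathsf K)$ is finite for $\mathsf K$ of multiplicative type. Everything else (unipotent vanishing, dévissage, the ``finite index is transitive'' bookkeeping) is formal. Since the cleanest route is simply to cite the finiteness of $\oH^1(\rk,\mathsf K)$ for $\mathsf K$ a linear algebraic group over a $p$-adic field, I would in the end write the short cohomological argument of the first paragraph and relegate the dévissage to a remark.
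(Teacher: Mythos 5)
Your argument is correct, and the first (cohomological) paragraph is indeed the standard way to establish this fact. The paper, however, dispatches the lemma in one line: it observes that the statement is an immediate consequence of Lemma~\ref{finitehom}, the quoted result of Platonov--Rapinchuk that for any homogeneous $\mathsf G$-variety $\mathsf Z$ over $\rk$ the set $\mathsf Z(\rk)$ decomposes into finitely many open $\mathsf G(\rk)$-orbits. Applied to $\mathsf Z = \mathsf G'$ with $\mathsf G$ acting by left translation through the surjection, the $\mathsf G(\rk)$-orbits on $\mathsf G'(\rk)$ are precisely the cosets of the image of $\mathsf G(\rk)\to\mathsf G'(\rk)$, so ``finitely many orbits'' is literally ``finite index.'' Your cohomological argument -- the exact sequence of pointed sets $\mathsf G(\rk)\to\mathsf G'(\rk)\to \oH^1(\rk,\mathsf K)$ together with Borel--Serre finiteness of $\oH^1(\rk,\mathsf K)$ -- is exactly the proof of that Platonov--Rapinchuk corollary, so you have essentially unrolled the paper's citation rather than found a genuinely different path; what you gain is self-containedness (modulo citing $\oH^1$-finiteness), what you lose is brevity, since the orbit-finiteness lemma has already been invoked earlier in the paper anyway. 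One small point worth flagging in your write-up: the map $\mathsf G'(\rk)\to\oH^1(\rk,\mathsf K)$ is only a map of pointed sets, not a homomorphism, so passing from ``its image is finite'' to ``the kernel has finite index'' requires the usual twisting argument identifying the nonempty fibers with the $\mathsf G(\rk)$-orbits, i.e.\ the cosets; you use this implicitly and it is standard, but it should be said. The d\'evissage sketch in your second paragraph is a reasonable fallback, though it is strictly heavier than either the one-line citation or the $\oH^1$ argument.
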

\begin{proof}
This is a direct consequence of Lemma \ref{finitehom}.
\end{proof}

\begin{lem}\label{raf}
Assume that $\mathsf G$ is connected. Let $\mathsf A$ be the
largest central split torus in a Levi component $\mathsf L$ of
$\mathsf G$. Let $\chi$ be a character on $G$ which is trivial on
$N$. Then $\chi$ has finite order if and only if its restriction to
$\mathsf A(\rk)$ has finite order.
\end{lem}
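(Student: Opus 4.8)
The plan is to reduce the statement to the structure theory of connected linear algebraic groups together with the compactness of $\mathsf L(\rk)/\mathsf A(\rk)$ modulo the semisimple part. One direction is trivial: if $\chi$ has finite order, then so does its restriction to $\mathsf A(\rk)$. For the converse, suppose $\chi|_{\mathsf A(\rk)}$ has finite order; I want to show $\chi$ itself has finite order.

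First I would use the Levi decomposition $\mathsf G = \mathsf L \ltimes \mathsf N$, so that $G = L \ltimes N$ with $L = \mathsf L(\rk)$. Since $\chi$ is assumed trivial on $N$, it is determined by its restriction to $L$, and it suffices to show $\chi|_L$ has finite order. Now $\mathsf L$ is connected reductive; write $\mathsf S$ for its derived subgroup, which is connected semisimple, and recall that $\mathsf A$ is the largest central split torus of $\mathsf L$. The multiplication map $\mathsf A \times_\rk \mathsf S \to \mathsf L$ is a surjective algebraic homomorphism with finite central kernel. By Lemma \ref{fs}, the image of $\mathsf A(\rk)\times \mathsf S(\rk) \to \mathsf L(\rk)$ has finite index in $L$. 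Therefore it is enough to show that $\chi$ restricted to this image has finite order, equivalently that $\chi|_{\mathsf A(\rk)}$ and $\chi|_{\mathsf S(\rk)}$ both have finite order.

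The first is the hypothesis. For the second, I would invoke the fact that a connected semisimple group over a non-archimedean local field has no nontrivial characters of its rational points of infinite order — indeed $\mathsf S(\rk)$ equals its own derived group up to finite index (or: $S^\circ = S$ and $\mathrm H_0(S,\chi) = 0$ for nontrivial $\chi$, as recorded in Lemma \ref{Vanishing_ss}), so any character of $\mathsf S(\rk)$ is trivial, a fortiori of finite order. Combining: $\chi$ has finite order on a finite-index subgroup of $L$, hence $\chi|_L$ has finite order, hence $\chi$ has finite order on $G$.

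The main obstacle, such as it is, is bookkeeping with the finite kernel and finite index: one must be careful that "finite order on a finite-index subgroup" genuinely implies "finite order" (true for any group homomorphism to $\C^\times$, since the index bounds the order of the quotient character), and that the isogeny $\mathsf A \times_\rk \mathsf S \to \mathsf L$ together with Lemma \ref{fs} really does cover $L$ up to finite index. Neither point is deep; the proof is essentially a short assembly of the Levi decomposition, the almost-direct-product decomposition of a reductive group, Lemma \ref{fs}, and the vanishing of characters on semisimple $p$-adic groups.
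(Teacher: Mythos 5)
Your argument has a genuine gap: the claim that the multiplication map $\mathsf A \times_\rk \mathsf S \to \mathsf L$ is surjective with finite kernel is false in general. The connected center $\mathsf Z$ of a connected reductive group $\mathsf L$ is a torus, and $\mathsf A$ is only its maximal \emph{split} subtorus; the anisotropic part $\mathsf T$ of $\mathsf Z$ is missed entirely. Concretely, if $\mathsf L = \mathrm{Res}_{E/\rk}\,\mathbb G_m$ for a quadratic extension $E/\rk$, then $\mathsf S = \{1\}$, $\mathsf A$ is one-dimensional, but $\mathsf L$ is two-dimensional, so $\mathsf A\times_\rk\mathsf S\to\mathsf L$ is nowhere near surjective. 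The paper's proof uses the isogeny $\mathsf S \times \mathsf T \times \mathsf A \to \mathsf L$, with $\mathsf T$ the maximal anisotropic central torus, and closes the case of $\mathsf T$ by noting $\mathsf T(\rk)$ is compact (hence every character of it has finite order). You need to include this factor; the conclusion is salvageable with exactly that ingredient, but as written the reduction to ``$\chi|_{\mathsf A(\rk)}$ and $\chi|_{\mathsf S(\rk)}$ both have finite order'' is not enough.

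A secondary, smaller slip: your claim that ``any character of $\mathsf S(\rk)$ is trivial'' for $\mathsf S$ the derived subgroup is not correct — e.g.\ $\mathrm{PGL}_n(\rk)$ has a nontrivial finite abelianization. Lemma \ref{Vanishing_ss} only asserts $\mathrm H_0(S,\chi)=0$ for nontrivial $\chi$, which is tautological for a one-dimensional representation and does not say that nontrivial characters fail to exist. What is true, and what you actually need, is that every character of $\mathsf S(\rk)$ has finite order. The paper avoids this by replacing the derived subgroup with its simply connected cover, for which $\mathsf S(\rk)$ really is perfect (Kneser); you could instead invoke finiteness of the abelianization of $\mathsf S(\rk)$, but as stated your justification is off.
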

\begin{proof}
The ``only if" part of the lemma is trivial. We prove the ``if"
part. Assume that $\chi|_{\mathsf A(\rk)}$ has finite order.  Let
$\mathsf S$ denote the simply connected covering of the derived
subgroup of $\mathsf L$, let $\mathsf T$ denote the maximal
anisotropic central torus in $\mathsf L$. Then by Lemma \ref{fs},
the image of the multiplication map \be \label{decomg2}
 \varphi: (\mathsf S(\rk)\times \mathsf T(\rk)\times \mathsf A(\rk))\ltimes N\to G=\mathsf G(\rk)
\ee has finite index in  $G$. Therefore it suffices to show that
$\chi\circ \varphi$ has finite order. This holds because $\mathsf
S(\rk)$ is a perfect group, $\mathsf T(\rk)$ is compact,
$\chi|_{\mathsf A(\rk)}$ has finite order, and $\chi|_N$ is
trivial.

\end{proof}

We are now ready to prove Proposition \ref{dch}.
\begin{proof}
 Assume that
$\mathsf G$ is connected  and let $\mathsf A$ be as in Lemma
\ref{raf}. Let $\chi$ be as in Proposition \ref{dch}. Write
$\mathsf G'$ for the largest quotient of $\mathsf G$ which is a
split algebraic torus. Consider the commutative diagram
\[
  \begin{CD}
           \mathsf A @>\varphi>>\mathsf G@>\varphi' >>\mathsf G'\\
            @AAA           @AAA @AAA\\
          \mathsf S:=(\varphi'\circ\varphi)^{-1}(\mathsf H') @>>> \varphi'^{-1}(\mathsf H')=\mathsf H \cdot \ker \varphi' @>>>\mathsf H':=\varphi'(\mathsf H),
  \end{CD}
\]
where $\varphi$ denotes the inclusion homomorphism, $\varphi'$
denotes the quotient homomorphism, and the vertical arrows are
inclusion homomorphisms. As in the proof of Lemma \ref{raf}, we know
that $\chi$ has finite order when restricted to
$(\varphi'^{-1}(\mathsf H'))(\rk)$. In particular,
$\chi|_{\mathsf S(\rk)}$ has finite order. By Lemma \ref{ctorus2},
there are algebraic characters $\beta_1, \beta_2, \cdots, \beta_t$
($t\in \BN$) on $\mathsf A/\mathsf S$ and $s_1,s_2, \cdots,
s_t\in \C$ so that the character
\[
  \chi|_{\mathsf S(\rk)}\cdot \left(\abs{\beta_1}_\rk^{s_1} \cdot
\abs{\beta_2}_\rk^{s_2}
   \cdot \cdots \cdot
   \abs{\beta_t}_\rk^{s_t}\right)^{-1},
\]
on $\mathsf S(\rk)$ has finite order. Since $\mathsf A/\mathsf
S=\mathsf G'/\mathsf H'=\mathsf G/(\mathsf H\cdot\ker \varphi')$, we may view
$\beta_1, \beta_2, \cdots, \beta_t$ as algebraic characters on
$\mathsf G$ which are trivial on $\mathsf H$. Therefore
Proposition \ref{dch} follows by Lemma \ref{raf}.
\end{proof}

\begin{prpl}
\label{dch2} Assume that $\mathsf G$ is connected.  Let $\chi':
G\to \C$ be a locally constant group homomorphism which is trivial
on $H$. Then $\chi'$ is a linear combination of the characters of
the form $\mathrm{val}\circ \alpha$, where $\alpha$ is an  algebraic
characters on $\mathsf G$ which are trivial on $\mathsf H$.
\end{prpl}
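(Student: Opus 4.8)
The plan is to follow the proof of Proposition \ref{dch} step by step, replacing multiplicative characters by $\C$-valued (additive) homomorphisms throughout; under this translation the three ``finite order'' hypotheses appearing in Lemmas \ref{ctorus}, \ref{ctorus2} and \ref{raf} all degenerate to ``trivial'', which in fact simplifies the bookkeeping. The first task is to record the additive analogues of these three lemmas. Note at the outset that any locally constant homomorphism $\chi'\colon G\to\C$ automatically kills every compact subgroup of $G$ (a compact subgroup of $(\C,+)$ is trivial), hence kills $G^\circ$, and in particular kills $N$; so, exactly as in Proposition \ref{dch}, the hypothesis that $\chi'$ be trivial on $N$ costs nothing.

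For the additive version of Lemma \ref{ctorus}: if $\mathsf A$ is a split torus over $\rk$, then $\mathsf A(\rk)^\circ$ is the maximal compact subgroup of $\mathsf A(\rk)$ and the valuation map identifies $\mathsf A(\rk)/\mathsf A(\rk)^\circ$ with the cocharacter lattice $X_*(\mathsf A)$; a $\C$-valued homomorphism on $\mathsf A(\rk)$ therefore factors through this lattice, and $\Hom_{\Z}(X_*(\mathsf A),\C)$ is spanned by the functionals $\mathrm{val}\circ\beta$ with $\beta\in X^*(\mathsf A)$. The additive version of Lemma \ref{ctorus2} follows exactly as its multiplicative counterpart: with $\mathsf S\le\mathsf A$ an algebraic subgroup, $\mathsf S_0$ its identity component, and $\mathsf A=\mathsf S_0\times_\rk\mathsf S'$, one applies the previous statement to $\chi'|_{\mathsf S'(\rk)}$, observes that $\chi'|_{\mathsf S_0(\rk)}=0$, clears the finite part by replacing $\beta$ by a power $\beta^m$ that descends to $\mathsf A/\mathsf S$, and uses $\mathrm{val}\circ\beta=\tfrac1m\,\mathrm{val}\circ\beta^m$; the conclusion is that a locally constant homomorphism $\mathsf A(\rk)\to\C$ trivial on $\mathsf S(\rk)$ is a $\C$-linear combination of the functions $\mathrm{val}\circ\beta$ with $\beta\in X^*(\mathsf A/\mathsf S)$. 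The additive version of Lemma \ref{raf} reads: for $\mathsf G$ connected with $\mathsf A$ the largest central split torus in a Levi component, a locally constant homomorphism $\chi'\colon G\to\C$ is zero if and only if $\chi'|_{\mathsf A(\rk)}=0$; the nontrivial implication is proved as in Lemma \ref{raf}, using Lemma \ref{fs} to reduce to a finite-index subgroup of $G$ (the image of $(\mathsf S(\rk)\times\mathsf T(\rk)\times\mathsf A(\rk))\ltimes N$) on which $\chi'$ vanishes, because $\mathsf S(\rk)$ is perfect, $\mathsf T(\rk)$ is compact, $N$ is killed, and $\mathsf A(\rk)$ is killed by hypothesis; one then invokes the fact that a locally constant homomorphism into the torsion-free group $\C$ vanishing on a finite-index subgroup must vanish identically.

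With these in hand I would repeat the diagram argument of Proposition \ref{dch}: let $\mathsf G'$ be the largest split-torus quotient of $\mathsf G$ with quotient map $\varphi'$, form $\mathsf A\xrightarrow{\varphi}\mathsf G\xrightarrow{\varphi'}\mathsf G'$, and set $\mathsf H':=\varphi'(\mathsf H)$ and $\mathsf S:=(\varphi'\circ\varphi)^{-1}(\mathsf H')$. As there, $\ker\varphi'$ has no nontrivial split-torus quotient, so by the additive Lemma \ref{raf} (applied to its identity component) together with Lemma \ref{fs}, $\chi'$ vanishes on $(\ker\varphi')(\rk)$; combining this with the hypothesis that $\chi'$ is trivial on $H=\mathsf H(\rk)$ and one more appeal to Lemma \ref{fs}, $\chi'$ vanishes on $(\varphi'^{-1}(\mathsf H'))(\rk)$, hence a fortiori on $\mathsf S(\rk)$. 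Since $\varphi'\circ\varphi\colon\mathsf A\to\mathsf G'$ is surjective (an isogeny), it induces an isomorphism $\mathsf A/\mathsf S\xrightarrow{\sim}\mathsf G'/\mathsf H'$, and $\mathsf G'/\mathsf H'$ is a quotient of $\mathsf G$ through which $\mathsf H$ maps trivially; hence every $\beta\in X^*(\mathsf A/\mathsf S)$ is the restriction to $\mathsf A$ of an algebraic character $\tilde\beta$ of $\mathsf G$ that is trivial on $\mathsf H$. Applying the additive Lemma \ref{ctorus2} to $\chi'|_{\mathsf A(\rk)}$ gives $\chi'|_{\mathsf A(\rk)}=\sum_i c_i\,(\mathrm{val}\circ\tilde\beta_i)|_{\mathsf A(\rk)}$ with $\tilde\beta_i\in X^*(\mathsf G)$ trivial on $\mathsf H$; then $\chi'-\sum_i c_i\,\mathrm{val}\circ\tilde\beta_i$ is a locally constant homomorphism $G\to\C$ that vanishes on $\mathsf A(\rk)$, so by the additive Lemma \ref{raf} it is identically zero, yielding $\chi'=\sum_i c_i\,\mathrm{val}\circ\tilde\beta_i$.

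The hard part will be the geometric bookkeeping in the last step --- checking that $\mathsf A$ surjects onto $\mathsf G'$, that $\mathsf A/\mathsf S\cong\mathsf G'/\mathsf H'$ genuinely arises as a quotient of $\mathsf G$ carrying the required characters, and that the several reductions modulo finite-index subgroups go through --- but all of this runs exactly parallel to the proof of Proposition \ref{dch}, the only genuinely new point being that wherever that proof concludes ``has finite order'' one now concludes ``is identically zero'', using that $\C$ is torsion-free.
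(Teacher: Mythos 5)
Your proposal is correct and follows the same route the paper takes: the paper's own proof of Proposition \ref{dch2} is a one-line remark that $\C$ is torsion-free (so $\chi'|_N$ is automatically trivial) followed by ``the same argument as Proposition \ref{dch}''; you simply carry out that argument explicitly, replacing multiplicative characters by additive homomorphisms and ``finite order'' by ``trivial'' throughout Lemmas \ref{ctorus}, \ref{ctorus2}, \ref{raf} and the diagram argument. Your observation that any locally constant $\chi'\colon G\to\C$ automatically kills all compact subgroups (hence $G^\circ\supset N$) is a clean and correct way to see the degeneration, and the finite-index step at the end is handled correctly via torsion-freeness of $\C$.
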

\begin{proof}
Note that $\C$ has no nontrivial finite subgroup. This implies that
$\chi'|_N$ is trivial. Then the proposition is proved by the same
argument of the proof of Proposition \ref{dch}
\end{proof}

\subsection{Generalized invariant functions on algebraic homogeneous spaces}
\label{secch2} We continue with the notation of the last subsection. Put
$X:=G/H$.

This subsection is to prove the following proposition.
\begin{prpl}
\label{H} Assume that $\mathsf G$ is connected. Let $\chi$ be a
character on $G$ which is trivial on $N$. Then every non-zero element
of $\Hom_{G,k}(\oD_c^\infty(X), \chi)$ ($k\in \BN$) is a smooth
function on $X$ of the form
\[
   P(\mathrm{val}\circ \alpha_1, \mathrm{val}\circ \alpha_2, \cdots, \mathrm{val}\circ \alpha_r) \cdot \abs{\beta_1}_\rk^{s_1} \cdot \abs{\beta_2}_\rk^{s_2}
   \cdot \cdots \cdot
   \abs{\beta_t}_\rk^{s_t} \cdot \chi_\mathrm f,
\]
where $r,t\in \BN$, $\alpha_1, \alpha_2, \cdots, \alpha_r$ and
$\beta_1, \beta_2, \cdots, \beta_t$ are algebraic characters on
$\mathsf G$ which are trivial on $\mathsf H$, $s_1,s_2, \cdots,
s_t\in \C$, $P$ is a polynomial of degree $\leq k$, and
$\chi_\mathrm f$ is a finite order character on $G$ which is trivial
on $H$ such that
\[
  \chi= \abs{\beta_1}_\rk^{s_1} \cdot \abs{\beta_2}_\rk^{s_2}
   \cdot \cdots \cdot
   \abs{\beta_t}_\rk^{s_t} \cdot \chi_\mathrm f.
\]
\end{prpl}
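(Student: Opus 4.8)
The plan is to pull a generalized $\chi$-invariant generalized function on $X=G/H$ back to a locally constant function on $G$, recognize the product of it with $\chi^{-1}$ as a polynomial in valuations of algebraic characters of $\mathsf G$, and then read off the asserted shape from Propositions \ref{dch} and \ref{dch2}. First I would reduce to locally constant functions: given a non-zero $f\in\Hom_{G,k}(\oD_c^\infty(X),\chi)$, since $\oD_c^\infty(X)$ is a smooth representation of $G$, Lemma \ref{homg} forces $f$ to be fixed by some open compact $K\subseteq G$ inside $\Hom_\C(\oD_c^\infty(X),\chi)$; after shrinking $K$ so that $\chi|_K=1$, $f$ is genuinely $K$-invariant, hence a locally constant function on $X$ by Lemma \ref{smoothg}. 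Fixing $x_0:=eH$, put $F(g):=f(g.x_0)$, a locally constant, right-$H$-invariant function on $G$.

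The key step is to transport the generalized invariance to $G$. Under the twisted $G$-action on $\Hom_\C(\oD_c^\infty(X),\chi)$ the operator ``$g\cdot$'' acting on $f$ corresponds, after pull-back, to $F\mapsto\chi(g)L_gF$ with $L_g$ left translation; using that $\chi^{-1}$ is an $L_g$-eigenvector of eigenvalue $\chi(g)$ one gets $(L_g-1)(\chi^{-1}F)=\chi^{-1}(\chi(g)L_g-1)F$, and iterating this identity converts $(g_0-1)\cdots(g_k-1).f=0$ into $(L_{g_0}-1)\cdots(L_{g_k}-1)\tilde F=0$ for $\tilde F:=\chi^{-1}F$. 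Hence $\tilde F$ is a $K$-fixed locally constant function on $G$ that is a generalized invariant vector of level $\le k$ for the left regular action. The $G$-submodule of $\con^\infty(G)$ generated by $\tilde F$ is a quotient of $\RJ_{G,k}$, so it is finite dimensional, and a compact subgroup of $G$ acts on it through unipotent operators, hence trivially (a compact group of unipotent matrices is trivial); thus $G^\circ$ acts trivially and $\tilde F$ descends to $\Lambda_G=G/G^\circ$, a finitely generated free abelian group. Since the generalized invariant functions on $\Z^r$ for the translation action are exactly the polynomials, $\tilde F=P(\mathrm{val}\circ\alpha_1,\dots,\mathrm{val}\circ\alpha_m)$ for algebraic characters $\alpha_i$ of $\mathsf G$ and a polynomial $P$ with $\deg P\le k$.

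It remains to descend to $X$ and identify $\chi$. Right-$H$-invariance of $F=\chi\tilde F$ gives $\chi(h)\,P((\mathrm{val}\circ\alpha_i(gh))_i)=P((\mathrm{val}\circ\alpha_i(g))_i)$ for all $g\in G,\ h\in H$; comparing top-degree homogeneous parts (where $P\neq0$) forces $\chi|_H=1$, after which $P$ is invariant under translation by the sublattice $\{(\mathrm{val}\circ\alpha_i(h))_i:h\in H\}$ and hence is a polynomial in those $\Z$-linear combinations of the $\mathrm{val}\circ\alpha_i$ that vanish on $H$. Each such combination is a locally constant homomorphism $G\to\C$ trivial on $H$, so by Proposition \ref{dch2} it is a $\C$-linear combination of characters $\mathrm{val}\circ\alpha$ with $\alpha$ an algebraic character of $\mathsf G$ trivial on $\mathsf H$; therefore $\tilde F$ has the form $Q(\mathrm{val}\circ\alpha_1,\dots,\mathrm{val}\circ\alpha_r)$ with each $\alpha_j$ trivial on $\mathsf H$ and $\deg Q\le k$. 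Finally, since $\chi$ is trivial on $N$ and on $H$, Proposition \ref{dch} yields $\chi=\abs{\beta_1}_\rk^{s_1}\cdots\abs{\beta_t}_\rk^{s_t}\,\chi_\mathrm f$ with the $\beta_j$ algebraic characters of $\mathsf G$ trivial on $\mathsf H$ and $\chi_\mathrm f$ of finite order; since $\chi|_H=1=\abs{\beta_j}_\rk|_H$ also $\chi_\mathrm f|_H=1$, and substituting $F=\chi\tilde F$ presents $f$ in the stated form.

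The step I expect to be the main obstacle is the transport in the second paragraph: carefully matching the \emph{twisted} generalized-equivariance of $f\in\Hom_\C(\oD_c^\infty(X),\chi)$ with the \emph{untwisted} generalized invariance of $\tilde F$, and --- since $\con^\infty(G)$ is not a smooth representation --- justifying that a $K$-fixed generalized invariant locally constant function on the connected group $\mathsf G(\rk)$ factors through $\Lambda_G$ and is polynomial. The remaining bookkeeping, notably the passage from ``$\mathrm{val}\circ\alpha$ trivial on $H$'' to ``$\alpha$ trivial on $\mathsf H$'' (a non-trivial algebraic character of $\mathsf H$ is either finite order or, by Lemma \ref{fs}, takes all valuations on $\mathsf H(\rk)$), is routine.
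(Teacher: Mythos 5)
Your strategy---identify $f$ with a locally constant function, twist by $\chi^{-1}$ to reduce to the trivial character, descend to $\Lambda_G$, apply the polynomial characterization of generalized translation-invariants, and finish with Propositions \ref{dch2} and \ref{dch}---is the same as the paper's, which packages these steps into Lemmas \ref{smooth}, \ref{smoothg2}, \ref{poly0} and \ref{H2}. The two cosmetic deviations (working on $G$ rather than on $X$, and extracting $\chi|_H=1$ from the top-degree homogeneous part of $P$ instead of quoting Lemma \ref{smooth}) are both fine.

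The one real gap is precisely where you suspected. The assertion that the $G$-submodule of $\con^\infty(G)$ generated by $\tilde F$ ``is a quotient of $\RJ_{G,k}$, so it is finite dimensional'' is circular: $\RJ_{G,k}$ is a $\C[\Lambda_G]$-module, so to realize the $G$-span as a quotient of it one must already know that $G^\circ$ fixes $\tilde F$---the very thing to be proved. Without that, the span is a priori only a quotient of $\C[G]$ modulo the $(k+1)$-st power of its augmentation ideal, and that ring need not be finite-dimensional (already its degree-one graded piece is $G^{\mathrm{ab}}\otimes_\Z\C$, which is infinite dimensional for $G=\GL_2(\rk)$ since $\rk^\times\otimes_\Z\C$ is). The repair needs no dimension count: because $\tilde F$ is $K$-fixed under $L$, each translate $L_g\tilde F$ is $gKg^{-1}$-fixed, so the $G$-span of $\tilde F$ consists of smooth vectors and is therefore a smooth representation of $G$; then the elementary remark preceding the Proposition in Section 2.1 of the paper (which is exactly your unipotence argument, but applied to a compact group acting on the genuinely finite-dimensional $K$-span of a $K$-fixed vector) shows that every compact subgroup, and hence $G^\circ$, fixes $\tilde F$. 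This is precisely what the paper extracts from Lemma \ref{smoothg2} once the character has been twisted to be trivial. With this repaired your argument is complete; note only that the ``$\Z$-linear combinations'' in your last paragraph should be $\C$-linear combinations, as in Lemma \ref{poly0}.
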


First we have the following lemma.

\begin{lem}
\label{smoothg2} Let $\chi$ be a character on $G$ which is trivial
on an open compact subgroup $K$ of $G$. Then every element $f\in
\Hom_{G,k}(\oD_c^\infty(X), \chi)$ ($k\in \BN$) is a $K$-invariant
smooth function on $X$.
\end{lem}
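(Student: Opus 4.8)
The plan is to deduce the lemma directly from two facts already at hand: that generalized homomorphisms are automatically invariant under open compact subgroups (Lemma~\ref{homg}), and that a $K$-invariant generalized function on $X$ is necessarily locally constant (Lemma~\ref{smoothg}). No new computation is needed beyond unwinding the actions.

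First I would regard $f$ as an element of $\Hom_{G,\infty}(\oD_c^\infty(X),\chi)\supseteq \Hom_{G,k}(\oD_c^\infty(X),\chi)$, i.e.\ as a generalized homomorphism from the $G$-representation $\oD_c^\infty(X)$ to $\chi$. To invoke Lemma~\ref{homg} one must know that $\oD_c^\infty(X)$ is a smooth representation of $G$ (equivalently, a smooth representation when restricted to any open compact subgroup); this follows from the definition of a smooth distribution together with a routine covering argument on the compact support of such a distribution. Granting this, Lemma~\ref{homg} applied with $V_1=\oD_c^\infty(X)$, $V_2=\chi$, and with the given open compact subgroup $K$, yields
\[
 f\in \Hom_K(\oD_c^\infty(X),\chi).
\]

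Next I would bring in the hypothesis $\chi|_K=1$. Since $K$ then acts trivially on the one-dimensional space $\chi$, the $G$-action on $\Hom_\C(\oD_c^\infty(X),\chi)$ of \eqref{actint} restricts on $K$ to the untwisted action on $\con^{-\infty}(X)=\Hom_\C(\oD_c^\infty(X),\C)$; hence $\Hom_K(\oD_c^\infty(X),\chi)=\con^{-\infty}(X)^K$, and the previous step says precisely that $f$ is a $K$-invariant generalized function on $X$. Lemma~\ref{smoothg} now applies verbatim and shows that $f$ is a locally constant, hence smooth, function on $X$; it is $K$-invariant by construction. This completes the argument.

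The only step that calls for any care is the verification that $\oD_c^\infty(X)$ is a smooth $G$-representation, which is what makes Lemma~\ref{homg} applicable; everything else is formal, being a consequence of the two cited lemmas and the triviality of $\chi$ on $K$. I expect that verification to be straightforward, proceeding along the same lines as the proof of Lemma~\ref{smoothg} and the definition of smoothness for distributions on $X$.
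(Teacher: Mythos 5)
Your proposal is correct and matches the paper's own (very terse) proof: apply Lemma \ref{homg} to get $K$-invariance of $f$, then apply Lemma \ref{smoothg} to conclude that $f$ is a locally constant function. The caveat you flag — that $\oD_c^\infty(X)$ must be a smooth $G$-representation for Lemma \ref{homg} to apply — is left implicit in the paper but is the right thing to note and does hold as you expect.
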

\begin{proof}
Lemma \ref{homg} implies that $f$ is $K$-invariant. Therefore $f$ is
a smooth function by  Lemma \ref{smoothg}.

\end{proof}

\begin{lem}
\label{smooth} Let $\chi$ be a character on $G$. If the space
$\Hom_{G,\infty}(\oD_c^\infty(X), \chi)$ is non-zero, then $\chi|_H$
is trivial.
\end{lem}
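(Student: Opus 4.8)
The plan is to reduce to honest homomorphisms, realize a nonzero such homomorphism as a genuine locally constant function on $X=G/H$, and then read off the triviality of $\chi|_H$ from the $\chi$-semi-invariance of that function evaluated at the base point.

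First I would apply Lemma \ref{homg2} to the smooth representations $\oD_c^\infty(X)$ and $\chi$: the space $\Hom_{G,\infty}(\oD_c^\infty(X),\chi)$ is nonzero if and only if $\Hom_G(\oD_c^\infty(X),\chi)=\Hom_{G,0}(\oD_c^\infty(X),\chi)$ is nonzero. So fix a nonzero $f\in\Hom_G(\oD_c^\infty(X),\chi)$ together with an open compact subgroup $K$ of $G$ on which $\chi$ is trivial. Then Lemma \ref{smoothg2} with $k=0$ (equivalently, Lemma \ref{smoothg}, since $f$ is a $K$-invariant generalized function once $\chi|_K$ is trivial) shows that $f$ is a nonzero $K$-invariant smooth function on $X$; in particular $f$ lies in $\con^\infty(X)$, regarded inside $\con^{-\infty}(X)$ through the canonical embedding recalled just after Lemma \ref{compfd}.

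Next I would unwind the equivariance. Since that embedding $\con^\infty(X)\hookrightarrow\con^{-\infty}(X)$ intertwines the translation action of $G$ on $\con^\infty(X)$ with the contragredient action on $\con^{-\infty}(X)=\oD_c^\infty(X)^*$, the hypothesis $f\in\Hom_G(\oD_c^\infty(X),\chi)$ becomes the pointwise identity $f(g.x)=\chi(g)\,f(x)$ for all $g\in G$ and $x\in X$; to verify this identity of locally constant functions it is enough to pair both sides with an arbitrary element of $\oD_c^\infty(X)$, which pairs faithfully with $\con^\infty(X)$. Now let $x_0\in X$ be the image of the identity of $G$, so that $X=G.x_0$ and $\Stab_G(x_0)=H$. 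If $f(x_0)=0$, then $f(g.x_0)=\chi(g)f(x_0)=0$ for every $g\in G$, so $f\equiv 0$ by transitivity, contradicting $f\neq 0$; hence $f(x_0)\neq 0$. Finally, for $h\in H$ we have $h.x_0=x_0$, so $f(x_0)=f(h.x_0)=\chi(h)f(x_0)$, which forces $\chi(h)=1$. Thus $\chi|_H$ is trivial.

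The one genuinely delicate point is the bookkeeping in the third paragraph: one must carefully track the two relevant $G$-actions --- translation on functions and the $\chi$-twisted contragredient on generalized functions --- and confirm that the embedding $\con^\infty(X)\hookrightarrow\con^{-\infty}(X)$ is equivariant, so that the abstract semi-invariance of $f$ really does translate into the pointwise equation $f(g.x)=\chi(g)f(x)$. The remaining ingredients are formal: the reduction to $k=0$ is Lemma \ref{homg2}, the identification of a $K$-invariant generalized function with a locally constant function is Lemma \ref{smoothg} (or Lemma \ref{smoothg2}), and the conclusion uses only that $G$ acts transitively on $X=G/H$ with $H$ the stabilizer of the base point.
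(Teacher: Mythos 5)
Your proof is correct and follows essentially the same route as the paper's (much terser) argument: reduce from $\Hom_{G,\infty}$ to $\Hom_G$ via Lemma \ref{homg2}, invoke Lemma \ref{smoothg2} to realize the homomorphism as a nonzero locally constant function $f$ satisfying $f(g.x)=\chi(g)f(x)$, and evaluate at the base point. The only difference is that you carefully spell out the equivariance of the embedding $\con^\infty(X)\hookrightarrow\con^{-\infty}(X)$ and the nonvanishing $f(x_0)\neq 0$, both of which the paper leaves implicit.
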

\begin{proof}
Assume that $\Hom_{G,\infty}(\oD_c^\infty(X), \chi)\neq 0$. Then
$\Hom_{G}(\oD_c^\infty(X), \chi)\neq 0$. By Lemma \ref{smoothg2}, we
have a non-zero smooth function $f$ on $X$ such that
\[
  f(g.x)=\chi(g)f(x),\quad \textrm{for all }g\in G, \,x\in X.
\]
This implies that $\chi|_H$ is trivial.

\end{proof}

For each free abelian group $\Lambda$ of finite rank, we view the
space $\con(\Lambda)$ of all complex functions on $\Lambda$ as a
representation of $\Lambda$ by left translations:
 \[
    g.f(x):=f(-g+x),\qquad g,x\in \Lambda, \,f\in \con(\Lambda).
 \]

\begin{lem}\label{poly0}
Let $\Lambda$ be a  free abelian group of finite rank, with a
subgroup $\Lambda_0$ of it. Then every element in
$\con(\Lambda)^{\Lambda, k}\cap
 \con(\Lambda)^{\Lambda_0}$ ($k\in \BN$) has the form
 \[
  P(\lambda_1, \lambda_2, \cdots, \lambda_r)
 \]
 where $r\in \BN$, $\lambda_1, \lambda_2,\cdots, \lambda_r$ are
 group homomorphisms from $\Lambda/\Lambda_0$ to $\C$, and $P$ is a
 polynomial of degree $\leq k$.
\end{lem}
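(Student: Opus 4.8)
The plan is to first reduce to the case $\Lambda_0 = 0$ and then identify $\con(\Lambda)^{\Lambda,k}$ with the space of polynomial functions of degree $\leq k$ on $\Lambda$, which is the content of a standard fact already invoked in the proof of Lemma \ref{gpo}. For the reduction: an element $f \in \con(\Lambda)^{\Lambda_0}$ is constant on cosets of $\Lambda_0$, hence descends to a function $\bar f$ on $\Lambda/\Lambda_0$. Since $\Lambda$ is free of finite rank and $\Lambda/\Lambda_0$ is a finitely generated abelian group, one should be slightly careful here: $\Lambda/\Lambda_0$ need not be torsion-free. However, any finite-order element acting on $\con(\Lambda/\Lambda_0)$ through translation that also annihilates $f$ via the unipotency condition $(g_0 - 1)\cdots(g_k - 1).f = 0$ forces the torsion to act trivially on the relevant piece, so in fact $f$ factors through the free quotient $\Lambda/(\Lambda_0)_{\mathrm{sat}}$, where $(\Lambda_0)_{\mathrm{sat}}$ is the saturation of $\Lambda_0$ in $\Lambda$. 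Replacing $\Lambda_0$ by its saturation (which only enlarges the allowed denominators among the $\lambda_i$, since homomorphisms $\Lambda/(\Lambda_0)_{\mathrm{sat}} \to \C$ are the same as homomorphisms $\Lambda/\Lambda_0 \to \C$ as $\C$ is torsion-free and divisible), we may assume $\Lambda/\Lambda_0$ is free of finite rank, say of rank $m$.

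Now $\bar f \in \con(\Lambda/\Lambda_0)^{\Lambda/\Lambda_0, k}$, because the condition $(g_0-1)\cdots(g_k-1).f = 0$ for all $g_i \in \Lambda$ descends to the same condition on $\bar f$ for all elements of $\Lambda/\Lambda_0$ (the map $\Lambda \to \Lambda/\Lambda_0$ is surjective). So it suffices to prove: for $\Lambda' := \Lambda/\Lambda_0 \cong \Z^m$, every $h \in \con(\Lambda')^{\Lambda',k}$ is a polynomial function of degree $\leq k$ in coordinate functions $x_1,\dots,x_m : \Z^m \to \C$. This is exactly the classical characterization of "polynomial sequences" via finite differences: writing $\Delta_i$ for the difference operator associated to the $i$-th standard generator $e_i$, the condition $(g_0-1)\cdots(g_k-1).h = 0$ for all $g_j$ is equivalent (by taking the $g_j$ to range over generators and using that the $\Delta_i$ commute) to $\Delta_{i_0}\Delta_{i_1}\cdots\Delta_{i_k} h = 0$ for all tuples $(i_0,\dots,i_k)$. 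An induction on $m$ then finishes: for $m=1$, $\Delta^{k+1}h = 0$ forces $h$ to be a polynomial of degree $\leq k$ by the standard Newton forward-difference argument; for the inductive step, fix the last coordinate and apply the case $m=1$ in that variable to write $h(x', x_m) = \sum_{j=0}^{k} c_j(x') \binom{x_m}{j}$, observe that each $c_j(x')$ lies in $\con(\Z^{m-1})^{\Z^{m-1}, k-j}$ by applying the difference operators in the first $m-1$ variables, invoke the inductive hypothesis, and collect terms, checking the total degree bound.

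The main obstacle I anticipate is the torsion subtlety in the first paragraph: one must justify carefully that a function fixed by a subgroup $\Lambda_0$ which is also generalized-unipotent over all of $\Lambda$ actually factors through the \emph{free} quotient, so that the homomorphisms $\lambda_i$ in the conclusion can legitimately be taken $\C$-valued on $\Lambda/\Lambda_0$ (rather than needing a finite-order correction, which would not be a polynomial). The cleanest route is to note that $\con(\Lambda)^{\Lambda,\infty}$ contains no nonzero vector on which a nontrivial finite-order translation acts by a nontrivial root of unity — indeed if $g$ has finite order $n$ then $(g-1)^k$ annihilating a vector together with $g^n = 1$ forces $(g-1)$ itself to annihilate it, since the minimal polynomial of $g$ on any finite-dimensional $g$-stable subspace divides both $(T-1)^k$ and $T^n - 1$, whose gcd is $T-1$. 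Everything else is a routine but somewhat lengthy finite-difference induction, and I would relegate the bookkeeping of degree bounds to a brief remark rather than spell it out in full.
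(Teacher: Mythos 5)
Your proposal is correct, and its mathematical content matches what the paper invokes, but you organize the argument in the opposite order and make explicit two steps that the paper leaves implicit. The paper's proof is a two-liner: it first cites Lemma \ref{gpo} to identify $\con(\Lambda)^{\Lambda,k}$ with the space of polynomial functions of degree $\leq k$ on $\Lambda$, and then declares the passage to $\Lambda_0$-invariant polynomials ``elementary'' (the implicit argument being that a polynomial in a full set of coordinates which is $\Lambda_0$-periodic must be constant along $\Lambda_0$, after an adapted choice of basis via Smith normal form, and is therefore a polynomial in coordinates of the free quotient). You instead descend $f$ to $\Lambda/\Lambda_0$ first, dispose of the torsion in $\Lambda/\Lambda_0$ by the $\gcd\bigl((T-1)^{k+1},\,T^n-1\bigr)=T-1$ observation (correctly applied: the cyclic $\C[\bar g]$-submodule generated by $\bar f$ is finite-dimensional because $\bar g$ has finite order, so the minimal-polynomial argument is legitimate), and then prove the polynomial characterization on the free quotient from scratch by iterated finite differences. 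Your route buys self-containment — you do not rely on Lemma \ref{gpo}, whose own proof delegates the key characterization to ``well-known'' — at the cost of a longer induction. One small point worth noting: your observation that homomorphisms $\Lambda/\Lambda_0\to\C$ coincide with homomorphisms out of the free quotient uses only that $\C$ is torsion-free; divisibility is not needed. The reduction $(g_0-1)\cdots(g_k-1)=0$ for all $g_j$ to $\Delta_{i_0}\cdots\Delta_{i_k}=0$ for standard generators, which you gesture at, is justified because the augmentation ideal of $\C[\Z^m]$ is generated by $e_i-1$ (and $e_i^{-1}-1=-e_i^{-1}(e_i-1)$), so its $(k+1)$-th power is spanned by products of $k+1$ such generators times units; this is worth a sentence if you write the proof out in full.
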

\begin{proof}  Lemma \ref{gpo} easily implies that
\[
  \con(\Lambda)^{\Lambda, k}=\{\textrm{polymonial functions on $\Lambda$ of
  degree $\leq k$}\},\quad (k\in \BN).
 \]
 Then it is elementary to see that the lemma holds.
\end{proof}

\begin{lem}
\label{H2} Assume that $\mathsf G$ is connected. Then every
element $f$ of $\Hom_{G,k}(\oD_c^\infty(X), \C)$ ($k\in \BN$) is a
smooth function on $X$ of the form
\[
   P(\mathrm{val}\circ \alpha_1, \mathrm{val}\circ \alpha_2, \cdots, \mathrm{val}\circ \alpha_r)
\]
where $r\in \BN$, $\alpha_1, \alpha_2, \cdots, \alpha_r$ are
algebraic characters on $\mathsf G$ which are trivial on
$\mathsf H$, and $P$ is a polynomial of degree $\leq k$.
\end{lem}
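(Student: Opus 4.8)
The plan is to reduce the assertion to a statement about functions on the discrete abelian group $\Lambda_G=G/G^\circ$, and then to invoke Lemma~\ref{poly0} together with Proposition~\ref{dch2}. First apply Lemma~\ref{smoothg2} with $\chi$ the trivial character (which is trivial on every open compact subgroup of $G$): this already shows that $f$ is a $K$-invariant smooth, hence locally constant, function on $X$, for some open compact subgroup $K$. Thus $f$ is a smooth vector of the representation $\con^\infty(X)$ of $G$, and, as a vector of the $G$-subrepresentation $V\subset\con^\infty(X)$ of all smooth vectors, it satisfies $f\in V^{G,k}$. Since $V$ is a smooth representation of $G$, the remark following the definition of $G^\circ$ in Section~\ref{secginv} applies: every compact subgroup of $G$, hence the subgroup $G^\circ$ generated by them, acts trivially on $V^{G,\infty}\ni f$. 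In particular $f$ is $G^\circ$-invariant.

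Next, write $\Lambda_0$ for the image of $H$ in $\Lambda_G=G/G^\circ$ and let $p\colon X=G/H\to \Lambda_G/\Lambda_0 = G/HG^\circ$ be the natural projection. It is $G$-equivariant once we let $G$ act on the target through $G\to\Lambda_G$ by translations, and, because $G^\circ$ is normal, its fibres are precisely the $G^\circ$-orbits on $X$. Hence the $G^\circ$-invariant function $f$ is the pullback $p^*\bar f$ of a function $\bar f$ on $\Lambda_G/\Lambda_0$; pulling back once more along $\Lambda_G\to\Lambda_G/\Lambda_0$ yields a function on $\Lambda_G$, still denoted $\bar f$, which is invariant under translation by $\Lambda_0$. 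Since $p$ is $G$-equivariant and the $G$-action on functions on $\Lambda_G$ factors through left translation by $\Lambda_G$, the condition $f\in\con^\infty(X)^{G,k}$ transports to $\bar f\in\con(\Lambda_G)^{\Lambda_G,k}\cap\con(\Lambda_G)^{\Lambda_0}$. As $\mathsf G$ is connected, $\Lambda_G$ is a finitely generated free abelian group (the proposition of Bernstein recalled in Section~\ref{secginv}), so Lemma~\ref{poly0} applies and gives $\bar f=P(\mu_1,\dots,\mu_r)$ for some group homomorphisms $\mu_j\colon\Lambda_G/\Lambda_0\to\C$ and some polynomial $P$ of degree $\le k$.

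Finally, each $\mu_j$ pulls back along $G\to\Lambda_G/\Lambda_0=G/(HG^\circ)$ to a locally constant group homomorphism $\chi'_j\colon G\to\C$ which is trivial on $H$; by Proposition~\ref{dch2} (again using that $\mathsf G$ is connected), $\chi'_j$ is a linear combination of functions $\mathrm{val}\circ\alpha$ with $\alpha$ an algebraic character of $\mathsf G$ trivial on $\mathsf H$, and each such $\mathrm{val}\circ\alpha$ is constant on cosets of $H$, hence descends to a smooth function on $X$. Substituting these expressions into $P$ and relabelling, one obtains $f=P'(\mathrm{val}\circ\alpha_1,\dots,\mathrm{val}\circ\alpha_r)$ on $X$ with the $\alpha_i$ algebraic characters of $\mathsf G$ trivial on $\mathsf H$ and $P'$ of degree $\le k$, as desired. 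The step I expect to require the most care is the second one: checking that the successive pullbacks along $X\to\Lambda_G/\Lambda_0\leftarrow\Lambda_G$ are $G$-equivariant and injective enough to faithfully carry the order-$k$ generalized-invariance condition from $f$ to $\bar f$, and that $f$ is genuinely annihilated by $G^\circ$; once $\bar f$ is placed in $\con(\Lambda_G)^{\Lambda_G,k}\cap\con(\Lambda_G)^{\Lambda_0}$ the conclusion is immediate from the two cited results.
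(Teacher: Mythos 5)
Your proof is correct and takes essentially the same route as the paper's: use Lemma~\ref{smoothg2} to get $G^\circ$-invariance, descend $f$ to an $[H]$-invariant, $\Lambda_G$-generalized-invariant function on $\Lambda_G$, and combine Lemma~\ref{poly0} with Proposition~\ref{dch2}. The only cosmetic difference is in how you obtain $G^\circ$-invariance (via the remark that compact subgroups act trivially on $V^{G,\infty}$ for $V$ smooth), whereas the paper reads it off directly from Lemma~\ref{smoothg2} since the trivial character is trivial on \emph{every} open compact subgroup; both are valid.
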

\begin{proof}
By Lemma \ref{smoothg2}, $f$ is a $G^\circ$-invariant function on
$X$. We identify it with a function on $\Lambda_G$ which is
$[H]$-invariant, where $[H]$ denotes the image of $H$ under the
quotient homomorphism $G\to \Lambda_G$. Then
\[
  f\in \con(\Lambda_G)^{\Lambda_G, k}\cap
 \con(\Lambda_G)^{[H]}.
\]
Therefore the lemma follows by combining Lemma \ref{poly0} and
Proposition \ref{dch2}.
\end{proof}

Now we prove Proposition \ref{H}. Let $f$ be a non-zero element of
$\Hom_{G,k}(\oD_c^\infty(X), \chi)$ ($k\in \BN$). We view $\chi$ as
a function on $X$ since $\chi|_H$ is trivial by Lemma \ref{smooth}.
Then
\[
  \chi^{-1}\cdot f\in \Hom_{G,k}(\oD_c^\infty(X), \C).
\]
Therefore Proposition \ref{H} follows by combining Lemma \ref{H2}
and Proposition \ref{dch}.

\subsection{Nash manifolds and volume forms}
 \begin{dfn}
 A Nash manifold  (over $\rk$) is a locally analytic manifold $X$ over $\rk$ which is at the same time a semialgebraic space (over $\rk$) with the following property: there is a  finite family of semialgebraic charts $\{(U_i,U'_i,\phi_i)\}_{i=1,2,\cdots, r}$ ($r\in \BN$) of $X$ such that
 \begin{itemize}
   \item for all $i=1,2,\cdots, r$, $U_i$ is an  open semialgebraic subset in $\rk^{n_i}$ for some $n_i\geq 0$, $U_i'$ is open in $X$, and $\phi_i$ is a locally analytic diffeomorphism; and
   \item  $X=U'_1\cup U'_2\cup \cdots \cup U_r'$.
 \end{itemize}
\end{dfn}

All Nash manifolds form a category whose morphisms are Nash maps
(namely, locally analytic semialgebraic maps). Every Nash manifold
is clearly a semialgebraic $\ell$-space. Let $X$ be a Nash manifold.
Then the tangent bundle
\[
\oT(X)= \bigsqcup_{x\in X}\oT_x(X)
\]
and the cotangent bundle
\[
\oT^*(X)= \bigsqcup_{x\in X}\oT_x^*(X)
\]
are both naturally Nash manifolds. Therefore \be\label{top0}
   \wedge^\mathrm{top} \oT^*(X):=\bigsqcup_{x\in X} \wedge^{\dim \oT_x(X)} \oT_x^*(X)
\ee is also a Nash manifold. Consequently, for each $m\geq 1$, the
line bundle $(\wedge^\mathrm{top} \oT^*(X))^{\otimes m}$ is also a
Nash manifold.   By a Nash $m$-volume form on $X$, we mean a Nash
section of the bundle $(\wedge^\mathrm{top} \oT^*(X))^{\otimes m}$
over $X$. Fix a Haar measure $\mu_\rk$ on $\rk$. Attach to a Nash
$m$-volume form $\omega$ on $X$, we have a non-negative locally
finite measure $\abs{\omega}_\rk^{\frac{1}{m}}$ on $X$ as usual: in
local coordinate, if
\[
\omega=f(x_1, x_2, \cdots, x_n)(\od x_1\wedge \od x_2\wedge\cdots
\wedge \od x_n)^{\otimes m}, \]
 then
\[
  \abs{\omega}_\rk^{\frac{1}{m}}=\abs{f(x_1, x_2, \cdots, x_n)}_\rk^{\frac{1}{m}} \od\mu_\rk(x_1)\otimes \od \mu_\rk(x_2)\otimes\cdots
\otimes \od \mu_\rk(x_n).
\]
The following lemma is clear.
\begin{lem}\label{nashdf}
For each Nash $m$-volume form $\omega$ on $X$ ($m\geq 1$), the
measure  $\abs{\omega}_\rk^{1/m}$ is locally finite and definable of
order $\leq 0$.
\end{lem}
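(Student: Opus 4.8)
The plan is to read off both assertions chart by chart from a finite semialgebraic atlas of $X$. By the definition of a Nash manifold, fix a finite family of semialgebraic charts $\{(U_i,U_i',\phi_i)\}_{i=1}^{r}$ with each $U_i$ an open semialgebraic subset of some $\rk^{n_i}$, each $U_i'$ open in $X$, each $\phi_i\colon U_i\to U_i'$ a Nash diffeomorphism, and $X=U_1'\cup\cdots\cup U_r'$. Since $\omega$ is a Nash section of $(\wedge^{\mathrm{top}}\oT^*(X))^{\otimes m}$, over each $U_i'$ it is represented in the coordinates $x_1,\dots,x_{n_i}$ on $U_i$ by $f_i\cdot(\dif x_1\wedge\cdots\wedge\dif x_{n_i})^{\otimes m}$ for a uniquely determined Nash function $f_i$ on $U_i$; the $p$-adic change of variables formula then shows, as usual, that these local densities glue to $\abs{\omega}_\rk^{1/m}$, so that $\phi_i$ carries $\abs{\omega}_\rk^{1/m}|_{U_i'}$ to $\abs{f_i}_\rk^{1/m}\,\mu_{U_i}$, where $\mu_{U_i}$ is the restriction to $U_i$ of the $n_i$-fold product of $\mu_\rk$.

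For local finiteness I would argue purely locally: a compact subset $K\subset X$ meets only finitely many $U_i'$, and for each such $i$ the set $\phi_i^{-1}(K\cap U_i')$ is relatively compact in $U_i$; there the function $\abs{f_i}_\rk^{1/m}$ is continuous, hence bounded, and $\mu_{U_i}$ is finite on compacta, so $\abs{\omega}_\rk^{1/m}(K\cap U_i')<\infty$, whence $\abs{\omega}_\rk^{1/m}(K)<\infty$ by subadditivity.

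For definability of order $\le 0$ the one point needing attention is that $\omega$, being merely a section, may vanish. So for each $i$ I would set $Z_i:=\{x\in U_i\mid f_i(x)=0\}$, a semialgebraic subset of $U_i$, and consider the two semialgebraic subsets $\phi_i(U_i\setminus Z_i)$ and $\phi_i(Z_i)$ of $X$; as $i$ runs from $1$ to $r$ these cover $X$, and the restrictions of the $\phi_i$ are isomorphisms of semialgebraic spaces onto them from $U_i\setminus Z_i$ and $Z_i$, which are semialgebraic subsets of $\rk^{n_i}$. Over $\phi_i(U_i\setminus Z_i)$ the measure $\abs{\omega}_\rk^{1/m}$ pulls back via $\phi_i$ to $\abs{f_i}_\rk^{1/m}\,\mu_{U_i\setminus Z_i}$, and since $f_i$ restricts to a nowhere vanishing semialgebraic function on $U_i\setminus Z_i$, the density $\abs{f_i}_\rk^{1/m}$ is by definition a definable function of order $\le 0$; over $\phi_i(Z_i)$ the measure pulls back to $0=0\cdot\mu_{Z_i}$, and $0$ is trivially definable of order $\le 0$. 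These data are exactly what Definition \ref{def-measure} asks for, so $\abs{\omega}_\rk^{1/m}$ is definable of order $\le 0$.

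There is no serious obstacle here: the argument is a direct unwinding of the definitions, and the only genuine bookkeeping is splitting off the zero locus of $\omega$ on each chart so that the remaining density becomes the absolute value, to a complex power, of a nowhere-vanishing semialgebraic function, as required by the definition of a definable function of order $\le 0$.
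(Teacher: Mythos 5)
The paper gives no proof for this lemma (it is labelled ``clear''), so there is no official argument to compare with; your chart-by-chart unwinding of the definitions is exactly the intended route, and the definability half is handled correctly: splitting each chart domain $U_i$ into $U_i\setminus Z_i$ and $Z_i=\{f_i=0\}$ produces a finite cover of $X$ by semialgebraic subsets on which the density is $\abs{f_i}_\rk^{1/m}$ (with $f_i$ nowhere vanishing) or $0$, both definable of order $\le 0$, which is precisely what Definition~\ref{def-measure} asks for.

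There is one small but genuine slip in the local-finiteness step: the claim that $\phi_i^{-1}(K\cap U_i')$ is relatively compact in $U_i$ is false in general. The set $K\cap U_i'$ is relatively compact in $X$, but its closure need not be contained in the open set $U_i'$ (take $X=\rk$, $U_1'=\rk\setminus\{0\}$, $K=\rr$), so its preimage under the homeomorphism $\phi_i\colon U_i\to U_i'$ need not have compact closure inside $U_i$, and $\abs{f_i}_\rk^{1/m}$ need not be bounded there. The fix is standard and uses that $X$ is an $\ell$-space: cover the compact set $K$ by finitely many compact open sets $W_1,\dots,W_N$ with each $W_j$ contained in some $U_{i(j)}'$; then $\phi_{i(j)}^{-1}(W_j)$ is compact open in $U_{i(j)}$, where the continuous density $\abs{f_{i(j)}}_\rk^{1/m}$ is bounded and the Haar measure is finite, and subadditivity over the $W_j$ gives $\abs{\omega}_\rk^{1/m}(K)<\infty$. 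With this repair the proof is complete.
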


As usual, write $\CO_\mathsf Y$ for the structure sheaf of an
algebraic variety $\mathsf Y$ over $\rk$. Let $\mathsf X$ be a
smooth algebraic variety over $\rk$. Let $\Omega_{\mathsf
X}:=\Omega_{\mathsf X/\rk}$ denote the sheaf of algebraic
differential forms on $\mathsf X$. Similar to \eqref{top0}, we
define $\wedge^\mathrm{top} \Omega_{\mathsf X}$, which is a
locally free $\CO_\mathsf X$-module of rank $1$.  By an algebraic
$m$-volume form on $\mathsf X$, we mean a global section of the
sheaf $(\wedge^\mathrm{top} \Omega_{\mathsf X})^{\otimes m}$ over
$\mathsf X$.  The notion of  algebraic $1$-volume form exactly coincides with the usual notion of algebraic
volume form.

Given an algebraic $m$-volume form $\omega$ on
$\mathsf X$, a Nash $m$-volume form on the Nash manifold
$\mathsf X(\rk)$ is obviously associated to it. We define
$\abs{\omega}_\rk^{\frac{1}{m}}$ to be the non-negative locally
finite measure on $\mathsf X(\rk)$ attach to this Nash $m$-volume
form.

\subsection{Generalized invariant distributions on algebraic homogeneous spaces}
We continue with the notation of Section \ref{secch} and  Section
\ref{secch2}. Then $X=G/H$ is naturally a Nash manifold since it is
a semialgebraic open subset of $(\mathsf G/\mathsf H)(\rk)$. In
this subsection, we prove the following theorem.

\begin{thml}\label{defm}
Assume that $\mathsf G$ is connected. Let $\chi$ be a character on
$G$ which is trivial on $N$. Then every element of $\Hom_{G,k
}(\oS(X),\chi)$ ($k\in \BN$) is a measure on $X$ and is of the
form
\[
   P(\mathrm{val}\circ \alpha_1, \mathrm{val}\circ \alpha_2, \cdots, \mathrm{val}\circ \alpha_r) \cdot \abs{\beta_1}_\rk^{s_1} \cdot \abs{\beta_2}_\rk^{s_2}
   \cdot \cdots \cdot
   \abs{\beta_t}_\rk^{s_t} \cdot \chi_\mathrm f \cdot (\abs{\omega}_\rk^{1/m})|_X,
\]
where $r,t\in \BN$, $\alpha_1, \alpha_2, \cdots, \alpha_r$ and
$\beta_1, \beta_2, \cdots, \beta_t$ are algebraic characters on
$\mathsf G$ which are trivial on $\mathsf H$, $s_1,s_2, \cdots,
s_t\in \C$, $P$ is a polynomial of degree $\leq k$, $\chi_\mathrm f$
is a finite order character on $G$ which is trivial on $H$, $m\geq
1$, and $\omega$ is an algebraic $m$-volume form on $\mathsf
G/\mathsf H$ which is $\delta$-invariant for some algebraic
character $\delta$ of $\mathsf G$ defined over $\rk$ with the property that
\[
\chi= \abs{\beta_1}_\rk^{s_1} \cdot \abs{\beta_2}_\rk^{s_2}
   \cdot \cdots \cdot
   \abs{\beta_t}_\rk^{s_t} \cdot \chi_\mathrm f\cdot \abs{\delta}_\rk^{\frac{1}{m}}.
\]
\end{thml}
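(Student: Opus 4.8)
The plan is to realize $\mu$ as an explicit locally constant function times a semi-invariant measure attached to an algebraic volume form on $\mathsf G/\mathsf H$, and then read off the shape from Proposition \ref{H}.

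First the reductions. We may assume $\mu\neq 0$, so $\Hom_G(\oS(X),\chi)\neq 0$ by Lemma \ref{homg2}, and hence by the equivalence \eqref{ado} the character $\chi$ restricts on $H$ to the modulus character $\delta_{H\backslash G}$. Let $\theta$ be the algebraic character by which $\mathsf H$ acts on $\wedge^{\mathrm{top}}(\mathfrak g/\mathfrak h)^{*}$, equivalently the character classifying the canonical bundle $\wedge^{\mathrm{top}}\Omega_{\mathsf G/\mathsf H}$ as a $\mathsf G$-equivariant line bundle; then $\delta_{H\backslash G}=|\theta|_\rk^{-1}$ on $H$. The heart of the proof is the structural claim that there exist an integer $m\geq 1$ and an algebraic character $\delta$ of $\mathsf G$ with $\delta|_{\mathsf H}=\theta^{m}$. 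Granting it, the $\mathsf G$-equivariant line bundle $(\wedge^{\mathrm{top}}\Omega_{\mathsf G/\mathsf H})^{\otimes m}$, twisted by the trivial bundle on which $\mathsf G$ acts through $\delta^{-1}$, corresponds to the trivial character of $\mathsf H$, hence is equivariantly trivial and carries a $\delta$-semi-invariant global algebraic $m$-volume form $\omega$; it is nowhere vanishing because its divisor would be a $\mathsf G$-invariant effective divisor on the homogeneous space $\mathsf G/\mathsf H$, which is empty as $\mathsf G$ is connected.

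To prove the structural claim, note that since $\chi$ is an honest character of $G=\mathsf G(\rk)$ trivial on $N$, the positive character $|\chi|\colon G\to\R_{>0}$ is trivial on the subgroup $G^{\circ}$ generated by the compact subgroups, hence factors through the finitely generated free abelian group $\Lambda_G$; its restriction to the image of $\Lambda_H$ coincides, up to a fixed nonzero scalar, with $\mathrm{val}\circ\theta$, so $\mathrm{val}\circ\theta$ extends to a homomorphism $\Lambda_G\to\Q$ and, after clearing denominators, to a homomorphism $\psi\colon G\to\Z$ trivial on $N$ with $\psi|_H=m'(\mathrm{val}\circ\theta)$ for some $m'$. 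By the structure argument of Propositions \ref{dch} and \ref{dch2} (Levi decomposition and reduction to the maximal central split torus of a Levi), $\psi$ is a rational linear combination of the homomorphisms $\mathrm{val}\circ\alpha$ with $\alpha$ an algebraic character of $\mathsf G$; clearing denominators once more produces an algebraic character $\alpha$ of $\mathsf G$ with $\mathrm{val}\circ(\alpha|_{\mathsf H})=m(\mathrm{val}\circ\theta)$, so that $\theta^{m}\cdot(\alpha|_{\mathsf H})^{-1}$ is an algebraic character of $\mathsf H$ taking unit values on all of $\mathsf H(\rk)$; checking that this last character has finite order and absorbing it into a further power yields the required $\delta$. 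With $\delta$ in hand, the character $\chi\cdot|\delta|_\rk^{-1/m}$ of $G$ is trivial on $N$ and trivial on $H$, so Proposition \ref{dch} gives $\chi\cdot|\delta|_\rk^{-1/m}=|\beta_1|_\rk^{s_1}\cdots|\beta_t|_\rk^{s_t}\cdot\chi_{\mathrm f}$ with $\beta_1,\dots,\beta_t$ algebraic characters of $\mathsf G$ trivial on $\mathsf H$ and $\chi_{\mathrm f}$ a finite-order character of $G$ trivial on $H$, which is exactly the compatibility relation for $\chi$ in the statement.

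Finally, put $\eta_0:=(|\omega|_\rk^{1/m})|_X$. By Lemma \ref{nashdf} this is a locally finite definable measure of order $\leq 0$ on $X$; it has full support since $\omega$ is nowhere vanishing, and it is smooth, being $|\delta|_\rk^{1/m}$-semi-invariant with $|\delta|_\rk^{1/m}$ trivial on every sufficiently small open compact subgroup. By Lemma \ref{compfd}, $\mu=f\,\eta_0$ for a unique generalized function $f\in\con^{-\infty}(X)$; unwinding the $G$-action, $f\in\Hom_{G,k}(\oD_c^\infty(X),\chi\cdot|\delta|_\rk^{-1/m})$, and $\chi\cdot|\delta|_\rk^{-1/m}$ is trivial on $N$ because $\delta$ is algebraic and hence trivial on $\mathsf N$. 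Proposition \ref{H} then identifies $f$ with a smooth function of the form $P(\mathrm{val}\circ\alpha_1,\dots,\mathrm{val}\circ\alpha_r)\cdot|\beta_1|_\rk^{s_1}\cdots|\beta_t|_\rk^{s_t}\cdot\chi_{\mathrm f}$ with the $\alpha_i,\beta_j$ algebraic characters of $\mathsf G$ trivial on $\mathsf H$, $P$ of degree $\leq k$, and $\chi_{\mathrm f}$ finite order trivial on $H$, the $\beta_j$ and $\chi_{\mathrm f}$ being forced to match the decomposition above. Multiplying by $\eta_0$ delivers $\mu$ in the asserted form; as $f$ is locally constant and $\eta_0$ is a locally finite measure, $\mu=f\eta_0$ is in particular a measure, and definable of order $\leq k$. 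The one genuinely delicate point is the structural claim of the third paragraph, namely controlling the position of the modulus character $\delta_{H\backslash G}$ inside $X^*(\mathsf H)$ relative to the restrictions of $X^*(\mathsf G)$, in particular dealing with the anisotropic directions of the relevant tori; the remaining steps are formal consequences of Lemma \ref{compfd} and Proposition \ref{H}.
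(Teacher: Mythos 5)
Your argument is correct and follows essentially the same route as the paper. The paper packages the existence of the semi-invariant $m$-volume form into Lemma \ref{lalgm}, whose proof invokes the identity $\delta_{H\backslash G}=\abs{\Delta_{\mathsf G/\mathsf H}}_\rk$ (equation \eqref{modular}) and Lemma \ref{extac}; your "structural claim" is precisely Lemma \ref{extac} recast in terms of $\Lambda_G$ rather than the paper's identification of positive characters with $\Psi_{\mathsf G}\otimes_\Z\R$, and your construction of $\omega$ from the equivariantly trivialized line bundle is what the paper terse-cites as "the algebraic version of Frobenius reciprocity." The reduction via Lemma \ref{compfd} and the final appeal to Proposition \ref{H} are identical. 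Two small remarks: the sign conventions differ (you arrange $\delta|_{\mathsf H}=\theta^m$, the paper arranges $\delta|_{\mathsf H}=\Delta_{\mathsf G/\mathsf H}^m=\theta^{-m}$, so your $\delta$ is the paper's $\delta^{-1}$ — harmless, as the paper's notion of "$\delta$-invariant" involves $\delta(g^{-1})$); and your phrase "absorbing it into a further power" glosses over the disconnected-$\mathsf H$ case that the paper treats explicitly at the end of the proof of Lemma \ref{extac}, but the mechanism you allude to (pass to $\mathsf H_0$ and raise to a power killing $(\mathsf H/\mathsf H_0)(\bar\rk)$) is exactly the paper's.
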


Here the algebraic $m$-volume form $\omega$ on $\mathsf
G/\mathsf H$ is $\delta$-invariant means that
  \[
  g. \omega_{\bar \rk}=\delta(g^{-1}) \omega_{\bar \rk}, \quad \textrm{for all $g\in \mathsf
  G(\bar \rk)$},
\]
where $\omega_{\bar \rk}$ denotes the base extension to $\bar \rk$ of $\omega$.  We also say that an algebraic $m$-volume form $\omega$ is semi-invariant if it is  $\delta$-invariant for some algebraic character $\delta$ of $\mathsf G$ defined over $\rk$.

For the proof of Theorem \ref{defm}, in the rest of this subsection,
we assume that $\mathsf G$ is
connected. We start with the following lemma.
 \begin{lem}\label{extac}
Let $\delta$ be an algebraic character on $\mathsf H$ (defined
over $\rk$). If the character $\abs{\delta}_\rk:H\rightarrow
\C^\times$ extends to a character on $G$, then $\delta^m$ extends to
an algebraic character on $\mathsf G$ for some positive integer
$m$.
 \end{lem}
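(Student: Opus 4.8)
The plan is to translate the statement into the language of $\rk$-rational character lattices and then prove it by a base-change-to-$\R$ argument. Write $X^*(\mathsf G)$ and $X^*(\mathsf H)$ for the (finitely generated) groups of algebraic characters defined over $\rk$; thus $\delta\in X^*(\mathsf H)$, and letting $r\colon X^*(\mathsf G)\to X^*(\mathsf H)$ denote restriction, the conclusion ``$\delta^m$ extends to $\mathsf G$'' is exactly ``$\delta^m\in\mathrm{im}(r)$''. Since $X^*(\mathsf H)$ is finitely generated, it suffices to show that $\delta\otimes 1$ lies in the subspace $\mathrm{im}(r)\otimes_{\Z}\R$ of $X^*(\mathsf H)\otimes_\Z\R$: indeed then $\delta$ is torsion in $X^*(\mathsf H)/\mathrm{im}(r)$, so $m\delta\in\mathrm{im}(r)$ for some $m\geq 1$.

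The key input I would establish first is a non-degeneracy statement: \emph{if $\mu\in X^*(\mathsf H)$ satisfies $\abs{\mu(h)}_\rk=1$ for all $h\in H$, then $\mu$ is torsion.} To prove this, note that $\mu$ factors through the maximal quotient $\mathsf Q$ of $\mathsf H$ of multiplicative type, whose identity component $\mathsf Q^\circ$ is a torus; by Lemma \ref{fs} the image of $H\to\mathsf Q(\rk)$ has finite index, so $\mu$ is unit-valued on a finite-index subgroup of $\mathsf Q(\rk)$. Restricting to the maximal split subtorus $\mathsf Q_s\subseteq\mathsf Q^\circ$ this forces $\mu|_{\mathsf Q_s}=1$, because a finite-index subgroup of $\mathsf Q_s(\rk)\cong(\rk^\times)^a$ still surjects, under $\mathrm{val}$, onto a finite-index sublattice of $\Z^a$. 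Hence $\mu$ factors through $\mathsf Q/\mathsf Q_s$, whose identity component $\mathsf Q^\circ/\mathsf Q_s$ is anisotropic and whose component group is finite, so $\mathsf Q/\mathsf Q_s$ has only finitely many $\rk$-rational characters; thus $\mu$ is torsion. Consequently the map $T_\R\colon X^*(\mathsf H)\otimes_\Z\R\to\mathrm{Hom}_{\mathrm{cont}}(H,\R)$ sending $\mu\otimes s$ to the homomorphism $h\mapsto s\cdot\mathrm{val}(\mu(h))$ is injective, since its kernel on $X^*(\mathsf H)$ is exactly the torsion subgroup.

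To conclude, let $\tilde\chi\colon G\to\C^\times$ be a character extending $\abs{\delta}_\rk$. Then $g\mapsto\log\abs{\tilde\chi(g)}$ is a locally constant homomorphism $G\to\R$, so by Proposition \ref{dch2} (applied with the trivial subgroup, which forces no condition) it equals $\sum_j c_j\,(\mathrm{val}\circ\alpha_j)$ for some $c_j\in\R$ and $\alpha_j\in X^*(\mathsf G)$. Restricting this identity to $H$ and using $\tilde\chi|_H=\abs{\delta}_\rk$ gives $\mathrm{val}(\delta(h))=\sum_j c_j'\,\mathrm{val}(\alpha_j(h))$ for all $h\in H$ and suitable $c_j'\in\R$; in other words $T_\R(\delta\otimes 1)=T_\R\big((r\otimes_\Z\R)\xi\big)$ where $\xi:=\sum_j c_j'\,(\alpha_j\otimes 1)\in X^*(\mathsf G)\otimes_\Z\R$. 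By injectivity of $T_\R$ this yields $\delta\otimes 1=(r\otimes_\Z\R)\xi\in\mathrm{im}(r)\otimes_\Z\R$, completing the argument.

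The main obstacle is the non-degeneracy statement of the second paragraph: turning the weak condition ``$\abs{\mu}_\rk\equiv 1$ on rational points'' into ``$\mu$ is torsion''. This is precisely where the structure theory of tori over $\rk$ is needed — the splitting off of the maximal split subtorus and the absence of $\rk$-rational characters on an anisotropic torus — whereas the remaining steps are only the exactness of $-\otimes_\Z\R$ and an appeal to Proposition \ref{dch2}.
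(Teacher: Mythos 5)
Your approach is essentially the same as the paper's: both reduce the statement to showing $\delta\otimes 1\in\mathrm{im}(r)\otimes_\Z\R$ (equivalently, that $\delta$ is torsion in $X^*(\mathsf H)/\mathrm{im}(r)$) by identifying real powers of characters of the form $\abs{\beta}_\rk$ with elements of $\Psi\otimes_\Z\R$. The paper simply asserts this identification as an isomorphism for connected groups, applies it to both $\mathsf G$ and $\mathsf H_0$, and then handles the component group of $\mathsf H$ by a separate $d$-th power step; you instead treat arbitrary $\mathsf H$ uniformly by proving the relevant injectivity (the torsion-kernel statement) from scratch via the structure of the maximal multiplicative-type quotient, and you make the appeal to Proposition \ref{dch2} explicit where the paper leaves it implicit. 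The uniform treatment is a modest simplification and avoids the case split.

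There is, however, one step that does not hold as written: you assert that $T_\R$ is injective ``since its kernel on $X^*(\mathsf H)$ is exactly the torsion subgroup.'' For a $\Z$-linear map from a lattice into an $\R$-vector space, having torsion kernel does \emph{not} in general imply injectivity after $\otimes_\Z\R$ (consider $\Z^2\to\R$, $(a,b)\mapsto a+b\sqrt 2$, which is injective but whose $\R$-linear extension has a kernel). What saves your argument is that $T$ in fact factors through the finitely generated free abelian group $\Hom_{\mathrm{cont}}(H,\Z)\cong\Hom(H/H^\circ,\Z)$, so $T$ is a map of lattices followed by the canonical inclusion of that lattice into its real span; flatness of $\R$ over $\Z$ then gives $\ker(T_\R)=(\ker T)\otimes_\Z\R=0$. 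You should make this intermediate lattice explicit; once you do, the proof is complete.
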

\begin{proof}
We first assume that $\mathsf H$ is also connected.  Write
$\Psi_{\mathsf G}$ and $\Psi_{\mathsf H}$ for the groups of
algebraic characters of $\mathsf G$ and $\mathsf H$,
respectively. They are free abelian groups of finite rank. We
identify the group of positive characters on $G$ with
$\Psi_{\mathsf G}\otimes_\Z \R$ via the following isomorphism:
\[
  \Psi_{\mathsf G}\otimes_\Z \R\rightarrow \{\textrm{positive
  characters on $G$}\},\quad \delta\otimes a\mapsto
  \abs{\delta}_\rk^a.
\]
Likewise we identify the group of positive characters on $H$ with
$\Psi_{\mathsf H}\otimes_\Z \R$. Then we have a commutative
diagram
\[
  \begin{CD}
           \Psi_{\mathsf G}@>>>\Psi_{\mathsf G}\otimes_\Z \R\\
            @V\alpha_1 VV           @VV\alpha_2 V\\
          \Psi_{\mathsf H} @>>> \Psi_{\mathsf H}\otimes_\Z \R,
  \end{CD}
\]
where $\alpha_1$ denotes the map of restrictions of algebraic
characters, and $\alpha_2$  denotes the map of restrictions of
positive characters.

 Now let $\delta \in \Psi_{\mathsf H}$ and
assume that $\abs{\delta}_\rk$ extends to a character on $G$. Then
$\abs{\delta}_\rk$ extends to a positive character on $G$, that is,
it belongs to the image of $\alpha_2$. Then it is elementary
that
\[
  \abs{\delta}_\rk\in \alpha_2(\Psi_{\mathsf G}\otimes_\Z \BQ).
\]
Therefore $\abs{\delta^m}_\rk\in \alpha_2(\Psi_{\mathsf G})$ for
some positive integer $m$. Then $\delta^m\in
\alpha_1(\Psi_{\mathsf G})$ and the lemma is proved in the case
when $\mathsf H$ is connected.

Now we drop the assumption that $\mathsf H$ is connected. Let
$\delta \in \Psi_{\mathsf H}$ and assume that $\abs{\delta}_\rk$
extends to a character on $G$ as before. We have proved that there
exists an algebraic character $\delta'$ on $\mathsf G$ such that
\[
  \delta'|_{\mathsf H_0}=(\delta|_{\mathsf H_0})^m
\]
for some positive integer $m$, where $\mathsf H_0$ denotes the
identity connected component of $\mathsf H$. Then
\[
  {\delta'}^d|_{\mathsf H}=\delta^{md},
\]
where $d$ denotes the cardinality of the group $(\mathsf
H/\mathsf H_0)(\bar\rk)$, and $\bar \rk$ denotes an algebraic
closure of $\rk$. This finishes the proof of the Lemma.
\end{proof}

\begin{lem}\label{lalgm}
Assume that $X$ is $\chi$-admissible for some character $\chi$ on
$G$. Then there is an algebraic character $\delta$ of $\mathsf G$
and a positive integer $m$ such that there is a non-zero   algebraic
$m$-volume form $\omega$ on $\mathsf G/\mathsf H$ which is
$\delta$-invariant.
\end{lem}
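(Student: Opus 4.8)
The plan is to translate the $\chi$-admissibility hypothesis into a statement about the algebraic character by which $\mathsf H$ acts on the fiber at the base point of the canonical bundle of $\mathsf G/\mathsf H$, and then to combine Lemma \ref{extac} with the dictionary between $\mathsf G$-equivariant line bundles on $\mathsf G/\mathsf H$ and algebraic characters of $\mathsf H$.

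Write $\mathsf Y:=\mathsf G/\mathsf H$, a smooth quasi-projective variety over $\rk$ carrying an algebraic action of $\mathsf G$, and let $y_0\in\mathsf Y(\rk)$ be the base point $eH$. Then $\wedge^{\mathrm{top}}\Omega_{\mathsf Y}$ is a $\mathsf G$-equivariant line bundle on $\mathsf Y$ whose fiber at $y_0$ is the one-dimensional representation $\wedge^{\mathrm{top}}(\g/\h)^*$ of $\mathsf H$; let $\delta_0$ denote the algebraic character of $\mathsf H$ giving this action (equivalently, $\delta_0=(\det\Ad_{\g/\h})^{-1}$). By faithfully flat descent along $\mathsf G\to\mathsf Y$, taking the fiber at $y_0$ gives an equivalence between $\mathsf G$-equivariant line bundles on $\mathsf Y$ and algebraic characters of $\mathsf H$; under it $(\wedge^{\mathrm{top}}\Omega_{\mathsf Y})^{\otimes m}$ corresponds to $\delta_0^m$ for every $m\ge 1$, and twisting a $\mathsf G$-equivariant line bundle by an algebraic character $\eta$ of $\mathsf G$ changes the corresponding character of $\mathsf H$ by the factor $\eta|_{\mathsf H}$.

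Now I bring in the hypothesis. By \eqref{ado}, $X$ being $\chi$-admissible means $\chi|_H=\delta_{H\backslash G}$, and by the standard computation of the modular character of a $p$-adic homogeneous space one has $\delta_{H\backslash G}(h)=\abs{\delta_0(h)}_\rk^{\pm 1}$ for all $h\in H$, the sign depending only on normalization conventions. Hence $\abs{\delta_0}_\rk^{\pm 1}=\chi^{\pm 1}|_H$, so $\abs{\delta_0}_\rk$ extends to a character of $G$. Applying Lemma \ref{extac} to the algebraic character $\delta_0$ of $\mathsf H$, we obtain a positive integer $m$ and an algebraic character $\delta'$ of $\mathsf G$ such that $\delta'|_{\mathsf H}=\delta_0^m$.

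Finally, consider the $\mathsf G$-equivariant line bundle $(\wedge^{\mathrm{top}}\Omega_{\mathsf Y})^{\otimes m}\otimes\delta'^{-1}$. By the dictionary above it corresponds to the character $\delta_0^m\cdot(\delta'|_{\mathsf H})^{-1}=\delta_0^m\delta_0^{-m}=1$ of $\mathsf H$, hence it is $\mathsf G$-equivariantly isomorphic to $\CO_{\mathsf Y}$ with its trivial linearization, and in particular it has a nowhere-vanishing $\mathsf G$-invariant global section. Untwisting by $\delta'$, this produces a nowhere-vanishing global section $\omega$ of $(\wedge^{\mathrm{top}}\Omega_{\mathsf Y})^{\otimes m}$, that is, a nonzero algebraic $m$-volume form on $\mathsf G/\mathsf H$ which transforms under $\mathsf G(\bar\rk)$ by the algebraic character $\delta:=\delta'$ (up to inversion, according to the sign convention in the definition of ``$\delta$-invariant''); this is exactly the assertion of the lemma. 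I expect the only points requiring care to be the bookkeeping of the characters (which power and which sign of $\delta_0$ occurs) and the descent statement identifying $\mathsf G$-equivariant line bundles on $\mathsf G/\mathsf H$ with algebraic characters of $\mathsf H$ over the possibly non-closed field $\rk$; both are standard.
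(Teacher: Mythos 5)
Your proof is correct and follows essentially the same route as the paper's. Both proofs reduce the extension problem to extending the algebraic character of $\mathsf H$ attached to the canonical bundle of $\mathsf G/\mathsf H$ (your $\delta_0$ is the inverse of the paper's $\Delta_{\mathsf G/\mathsf H}$), both invoke \eqref{ado} together with Lemma \ref{extac} to get an algebraic extension of some power $\delta_0^m$ to $\mathsf G$, and both conclude by what the paper compresses into the phrase ``the algebraic version of Frobenius reciprocity'' --- you have merely unwound that final step into the explicit dictionary between $\mathsf G$-equivariant line bundles on $\mathsf G/\mathsf H$ and algebraic characters of $\mathsf H$, trivialized the twisted canonical bundle, and untwisted.
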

\begin{proof}
Let $\Delta_{\mathsf G}$ denote the algebraic modular character of
$\mathsf G$, namely the determinant of the adjoint representation
of $\mathsf G$ on the Lie algebra $\Lie(\mathsf G)$. Likewise
let $\Delta_{\mathsf H}$ denote the algebraic modular character of
$\mathsf H$.
Put $\Delta_{\mathsf G/\mathsf
H}:=\frac{\Delta_{\mathsf G}|_{\mathsf H}}{\Delta_{\mathsf
H}}$, which is an algebraic character on $\mathsf H$.
Recall the
character $\delta_{H\backslash G}$ on $H$ from Section \ref{sfro}.
Note that
\be
\label{modular}
  \delta_{H\backslash G}=\abs{\Delta_{\mathsf G/\mathsf
H}}_\rk  .
\ee

Assume that $X$ is $\chi$-admissible for some character $\chi$ on
$G$. By \eqref{ado}, the character $\delta_{H\backslash G}$
extends to a character on $G$. Then lemma \ref{extac} implies that
the algebraic character $\Delta_{\mathsf G/\mathsf H}^{m}$
extends to an algebraic character $\delta$ on $\mathsf G$, for
some positive integer $m$. Then the lemma follows by the algebraic
version of Frobenius reciprocity.

\end{proof}

Now we come to the proof of Theorem \ref{defm}.
 Let $\eta\in
\Hom_{G,k }(\oS(X),\chi)$. We assume that $\eta$ is
non-zero. Then $X$ is $\chi$-admissible. By Lemma \ref{lalgm}, there
is an algebraic character $\delta$ on $\mathsf G$, a positive
integer $m$, and a   non-zero   algebraic $m$-volume form $\omega$ on
$\mathsf G/\mathsf H$ which is $\delta$-invariant. By Lemma
\ref{compfd}, there is a unique generalized function $f\in
\con^{-\infty}(X)$ such that $\eta=f\abs{\omega}_\rk^{1/m}$. Then
\[
 f\in \Hom_{G,k}(\oD_c^\infty(X), \chi\abs{\delta}_\rk^{\frac{-1}{m}}).
\]
Therefore Theorem \ref{defm} follows by Proposition \ref{H}.

\subsection{Definability of generalized invariant distributions}

First we have the following elementary lemma.
\begin{lem}\label{semigl1} Every finite index subgroup of
$\rk^\times$ is semialgebraic. Consequently, every finite order
character  on $\rk^\times$ is definable of order $\leq 0$.
\end{lem}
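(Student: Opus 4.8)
The plan is to reduce everything to the single fact that the group of $n$-th powers $(\rk^\times)^n$ is a semialgebraic subgroup of $\rk^\times$ of finite index, for every positive integer $n$. That $(\rk^\times)^n$ is semialgebraic is immediate: taking the polynomial $f=\mathrm{id}_\rk$ and exponent $n$ in the very definition of semialgebraic subsets of $\rk$, the set $\{x\in\rk\mid x=y^n\text{ for some }y\in\rk^\times\}$ is precisely $(\rk^\times)^n$ (the value $0$ is excluded since $y\in\rk^\times$ forces $y^n\neq 0$), and it sits inside the semialgebraic space $\rk^\times$. For the finite index, I would use that $\rk$ has characteristic zero, so the $\rk$-analytic map $x\mapsto x^n$ has nonvanishing derivative $nx^{n-1}$ on $\rk^\times$ and is therefore open; hence $(\rk^\times)^n$ is an open subgroup and contains $1+\varpi^m\rr$ for some $m\ge 1$. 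Since $(\rk^\times)^n$ also contains $\varpi^n$, it contains the subgroup generated by $\varpi^n$ and $1+\varpi^m\rr$; and this subgroup has finite index in $\rk^\times$ because, under the standard decomposition via the valuation, $\rk^\times/(1+\varpi^m\rr)\cong\Z\times\bigl(\rr^\times/(1+\varpi^m\rr)\bigr)$ with $\rr^\times/(1+\varpi^m\rr)$ finite (an open subgroup of the compact group $\rr^\times$).

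Next, given an arbitrary finite index subgroup $\Gamma\le\rk^\times$, say of index $n$, I would note that $\rk^\times$ is abelian, so $x^n\in\Gamma$ for all $x\in\rk^\times$, i.e.\ $(\rk^\times)^n\subseteq\Gamma$. Combined with the finite index of $(\rk^\times)^n$, this makes $\Gamma$ a finite disjoint union of cosets $a_i(\rk^\times)^n$. Each such coset is semialgebraic: translation by $a_i^{-1}$ is the restriction of the polynomial map $x\mapsto a_i^{-1}x$, so $a_i(\rk^\times)^n=\{x\in\rk\mid a_i^{-1}x=y^n\text{ for some }y\in\rk^\times\}$ has exactly the defining form of a semialgebraic set, and a finite union of semialgebraic sets is semialgebraic by definition. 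This gives the first assertion.

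For the consequence, let $\chi\colon\rk^\times\to\C^\times$ have finite order. Its image is a finite set $\{\zeta_1,\dots,\zeta_\ell\}\subset\C^\times$ and $\ker\chi$ has finite index, so each fiber $\chi^{-1}(\zeta_j)$ is a coset of $\ker\chi$ and hence semialgebraic by the first assertion. Then $\chi=\sum_{j=1}^\ell\zeta_j\,1_{\chi^{-1}(\zeta_j)}$ writes $\chi$ as a finite $\C$-linear combination of characteristic functions of semialgebraic subsets of $\rk^\times$, which is by definition a definable function of order $\le 0$ on $\rk^\times$. The only substantive point in the whole argument is the finiteness of the index of $(\rk^\times)^n$, a standard fact about local fields; I do not anticipate any real obstacle, only the need to invoke the openness of $x\mapsto x^n$ and the decomposition $\rk^\times=\varpi^{\Z}\times\rr^\times$ correctly.
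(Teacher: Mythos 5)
Your proof is correct and takes essentially the same route as the paper's (the paper's proof of this lemma is a one-liner: every finite-index subgroup of $\rk^\times$ of index $m$ contains $(\rk^\times)^m$, which is semialgebraic of finite index, and "the lemma follows"). You have simply filled in the details the paper omits: the openness of $x\mapsto x^n$ to get finite index of $(\rk^\times)^n$, the coset decomposition showing each $a_i(\rk^\times)^n$ is of the required form, and the expression of a finite-order character as a $\C$-linear combination of indicators of cosets of its kernel.
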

\begin{proof}
Every  subgroup of $\rk^\times$ of finite index $m\geq 1$ contains
$(\rk^\times)^m$. Since $(\rk^\times)^m$ is a semialgebraic subgroup
of $\rk^\times$ of finite index, the lemma follows.
\end{proof}

Recall the following semi-algebraic selection theorem of \cite{VdD}.
See also \cite[Appendix]{DV}.

\begin{lem}\label{ssel0}
Every surjective semialgebraic map of semialgebraic spaces has a
semialgebraic section.
\end{lem}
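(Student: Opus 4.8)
The plan is to derive this from the $p$-adic cell decomposition theorem of Denef \cite{De}, by induction on the number of affine coordinates needed to present the domain. First I would perform three bookkeeping reductions. Write $f\colon X\to Y$ for the given surjective semialgebraic map. Using the finite semialgebraic atlas $\{(U_i,U_i',\phi_i)\}$ supplied by Definition \ref{dfnnash}, refine the cover $\{U_i'\}$ of $Y$ to a semialgebraic partition $Y=\bigsqcup_i V_i$ with $V_i\subseteq U_i'$ (legitimate, since by the compatibility clause the pullbacks $\phi_j^{-1}(U_i'\cap U_j')$ are semialgebraic, hence each $V_i$ is a semialgebraic subset of $Y$), transport each $V_i$ into $\rk^{m_i}$ via $\phi_i^{-1}$, and glue the images along one extra coordinate separating the pieces; this realizes $Y$ as semialgebraically isomorphic to a semialgebraic subset of some $\rk^m$, and likewise $X$ as a semialgebraic subset of some $\rk^N$. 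Next, replacing $X$ by the graph $Z:=\{(y,x)\in Y\times X\mid f(x)=y\}\subseteq\rk^{m}\times\rk^{N}$, which is semialgebraic and semialgebraically isomorphic to $X$ via the second projection, I reduce to the case where $f$ is the restriction to a semialgebraic set of a coordinate projection $\rk^{m}\times\rk^{n}\to\rk^{m}$ with $n=N$. Finally, peeling off the last coordinate one at a time and composing sections, it suffices to treat $n=1$: given a semialgebraic $X\subseteq\rk^{m}\times\rk$ surjecting onto $Y\subseteq\rk^m$ via the first projection, produce a semialgebraic $s\colon Y\to\rk$ with $(y,s(y))\in X$ for all $y\in Y$.

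For the $n=1$ case I would invoke Denef's cell decomposition \cite{De} with the last variable distinguished: $X=\bigsqcup_{j=1}^{r}C_j$, where each cell $C_j$ lies over a semialgebraic base $D_j\subseteq\rk^m$ and is either the graph of a semialgebraic function $c_j$ on $D_j$ (a $0$-cell), or a $1$-cell carved out over $D_j$ by a valuation condition $\mathrm{ord}\,a_j(y)\,\square\,\mathrm{ord}(t-c_j(y))\,\square\,\mathrm{ord}\,b_j(y)$ (with $\square\in\{<,\le\}$ or absent on either side), a congruence $\mathrm{ord}(t-c_j(y))\equiv e_j\ (\mathrm{mod}\ k_j)$, and an angular-component condition $\mathrm{ac}(t-c_j(y))=\xi_j$, for semialgebraic functions $a_j,b_j,c_j$ on $D_j$. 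By Macintyre's theorem (Lemma \ref{tarski}) the projections $\pi(C_j)\subseteq Y$ are semialgebraic and cover $Y$, so I may refine them to a semialgebraic partition $Y=\bigsqcup_\ell Y_\ell$ with each $Y_\ell$ contained in some $\pi(C_{j(\ell)})$. Over $Y_\ell$, define $s$ explicitly: if $C_{j(\ell)}$ is a $0$-cell, put $s(y)=c_{j(\ell)}(y)$; if it is a $1$-cell, put $s(y)=c_{j(\ell)}(y)+\beta_{j(\ell)}\,\varpi^{\rho_\ell(y)}$, where $\beta_{j(\ell)}\in\rk^\times$ is a fixed element with $\mathrm{ord}(\beta_{j(\ell)})=0$ and angular component $\xi_{j(\ell)}$, and $\rho_\ell(y)\in\Z$ is the least (or, when there is an upper but no lower valuation constraint, the greatest) exponent in the admissible window cut out over $y$ by the valuation and congruence conditions — a window nonempty precisely because the fibre of $C_{j(\ell)}$ over $y$ is, which guarantees $(y,s(y))\in C_{j(\ell)}\subseteq X$. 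The point is that the condition ``$z=\varpi^{\rho_\ell(y)}$'' on a pair $(y,z)$ — equivalently, $\mathrm{ac}(z)=1$ together with the minimality (resp.\ maximality) of $\mathrm{ord}(z)$ in that window — is first-order in the valued field $\rk$ in terms of the semialgebraic data $a_{j(\ell)},b_{j(\ell)},c_{j(\ell)}$, hence defines a semialgebraic subset of $Y\times\rk$ by Macintyre's theorem (Lemma \ref{tarski}), using also that the $\mathrm{ord}$-mod-$k$ and angular-component predicates are semialgebraic (\cite[Lemma~2.1]{De}). Thus $s$ has semialgebraic graph on each $Y_\ell$, the finite union of these graphs is a semialgebraic subset of $Y\times\rk$ defining a global semialgebraic section, and unwinding the three reductions produces a semialgebraic section of the original $f$.

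The hard part is the $n=1$ step, and within it the uniform, semialgebraic selection of a point in each cell's fibre: one must check that the ``extremal exponent in the admissible window'' recipe always lands in the cell (which reduces to nonemptiness of the window, i.e.\ to surjectivity of $f$) and that the resulting assignment $y\mapsto s(y)$ is semialgebraic, which rests on the standard semialgebraicity of the $\mathrm{ord}$-mod-$k$ and angular-component predicates in Macintyre's language. Everything else — the atlas bookkeeping, the passage to a graph to turn $f$ into a projection, the induction on $n$, and the patching over a finite semialgebraic partition of $Y$ — is routine. As an alternative to carrying this out in detail, one can simply appeal to the fact that the $p$-adic semialgebraic structure, in Macintyre's language enriched by angular-component maps, admits definable Skolem functions — a known consequence of cell decomposition — from which the selection statement is immediate; this is essentially the route taken in \cite{VdD}.
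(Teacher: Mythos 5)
The paper does not actually prove this lemma; it simply states it and cites it as the semialgebraic selection theorem of van den Dries \cite{VdD}, with a secondary reference to the appendix of \cite{DV}, where the result is obtained from the existence of definable Skolem functions in Macintyre's language. Your proposal instead sketches a direct proof from Denef's cell decomposition \cite{De}: after the atlas bookkeeping, the graph trick, and an induction on the fiber dimension reduce one to a one-variable selection problem, you pick the point at a definable extremal exponent in each cell's admissible valuation window. The approach is sound, and you correctly isolate the two places that need care — that the extremal-exponent recipe actually lands in the cell (non-emptiness of the window, guaranteed by surjectivity), and that the resulting assignment is semialgebraic (Macintyre's theorem plus semialgebraicity of the $\mathrm{ord}$-mod-$k$ and angular-component predicates). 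One small loose end: a $1$-cell's valuation window can be unbounded on both sides, in which case neither a least nor a greatest admissible exponent exists; there one should instead fix the canonical residue $e_{j(\ell)}$ itself (always admissible when both bounds are absent), which is constant in $y$ and hence trivially semialgebraic. You observe in your final paragraph that one may simply invoke definable Skolem functions from \cite{VdD} — that is exactly what the paper does, so your cell-decomposition argument is the unfolded, self-contained version of the citation the paper relies on, and it buys a from-scratch proof at the cost of carrying Denef's decomposition through in detail.
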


We continue with the notation of the last subsection, but drop the assumption that $\mathsf G$ is connected. Generalizing Lemma \ref{semigl1}, we have the following lemma.
\begin{lem}\label{semifc2}
Every finite order character $\chi_\mathrm f$ on $G$ is definable of
order $\leq 0$.
\end{lem}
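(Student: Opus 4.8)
The plan is to reduce the statement to the assertion that every fiber of $\chi_\mathrm f$ is a semialgebraic subset of $G$. Since $\chi_\mathrm f$ has finite order, its image $\Xi\subset\C^\times$ is a finite group, so once this is known we can write $\chi_\mathrm f=\sum_{\zeta\in\Xi}\zeta\cdot 1_{\chi_\mathrm f^{-1}(\zeta)}$, a finite $\C$-linear combination of characteristic functions of semialgebraic sets, which is definable of order $\leq 0$ by definition. Now every nonempty fiber $\chi_\mathrm f^{-1}(\zeta)$ is a coset of $K:=\ker\chi_\mathrm f$, and left translation by a fixed $g_0\in G$ is the restriction to $\rk$-points of an automorphism of the $\rk$-variety $\mathsf G$, hence a semialgebraic bijection with semialgebraic inverse; by Lemma \ref{tarski} it therefore carries semialgebraic sets to semialgebraic sets. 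So it is enough to produce a \emph{semialgebraic} subgroup $H\subseteq K$ of finite index in $G$: then $K$ is a finite union of $H$-cosets and hence semialgebraic, and so is every fiber of $\chi_\mathrm f$.

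First I would remove the connectedness hypothesis. Let $\mathsf G_0$ be the identity component of $\mathsf G$. It is Zariski open and closed in $\mathsf G$, so $\mathsf G_0(\rk)$ is a semialgebraic subgroup of $G$, and it has finite index since $G/\mathsf G_0(\rk)$ embeds into the finite group $(\mathsf G/\mathsf G_0)(\rk)$. A semialgebraic subgroup of $\mathsf G_0(\rk)$ which has finite index in $\mathsf G_0(\rk)$ and is contained in $\ker(\chi_\mathrm f|_{\mathsf G_0(\rk)})$ is then automatically semialgebraic, of finite index in $G$, and contained in $K$; so I may assume $\mathsf G$ connected.

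Assume then that $\mathsf G$ is connected, and let $m\geq1$ be the order of $\chi_\mathrm f$. The key step is to invoke the structure homomorphism appearing in the proof of Lemma \ref{raf}. With $\mathsf N$ the unipotent radical, $\mathsf L$ a Levi factor, $\mathsf A$ the maximal central split torus of $\mathsf L$, $\mathsf S$ the simply connected cover of the derived group of $\mathsf L$, and $\mathsf T$ the maximal anisotropic central torus of $\mathsf L$, that proof provides a group homomorphism $\varphi\colon(\mathsf S(\rk)\times\mathsf T(\rk)\times\mathsf A(\rk))\ltimes N\to G$ (the multiplication map, $N=\mathsf N(\rk)$) whose image has finite index in $G$. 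I would then set $H:=\varphi\bigl((\mathsf S(\rk)\times\mathsf T(\rk)^m\times\mathsf A(\rk)^m)\ltimes N\bigr)$. This is a subgroup of finite index in $G$, because $(\mathsf S(\rk)\times\mathsf T(\rk)^m\times\mathsf A(\rk)^m)\ltimes N$ has finite index in the source of $\varphi$ --- indeed $\mathsf T(\rk)$ is compact so $\mathsf T(\rk)/\mathsf T(\rk)^m$ is finite, and $\mathsf A(\rk)\cong(\rk^\times)^{\dim\mathsf A}$ so $\mathsf A(\rk)/\mathsf A(\rk)^m$ is finite. It is semialgebraic, because $\mathsf T(\rk)^m$ and $\mathsf A(\rk)^m$ are the images of $\mathsf T(\rk)$ and $\mathsf A(\rk)$ under the $m$-th power morphisms of varieties $\mathsf T\to\mathsf T$ and $\mathsf A\to\mathsf A$, hence are semialgebraic by Lemma \ref{tarski} (alternatively one may use Lemma \ref{semigl1} for $\mathsf A$), so $(\mathsf S(\rk)\times\mathsf T(\rk)^m\times\mathsf A(\rk)^m)\times N$ is semialgebraic and $H$ is its image under the polynomial map $\varphi$. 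Finally $\chi_\mathrm f$ is trivial on $H$: it is trivial on $N$ because $N$ is divisible (the $m$-th power map on the unipotent group $\mathsf N$ is an isomorphism of varieties in characteristic zero) and a divisible group admits no nontrivial finite quotient; it is trivial on the image of $\mathsf S(\rk)$ because $\mathsf S(\rk)$ is perfect (as recalled in the proof of Lemma \ref{raf}), hence so is its image; and it is trivial on the images of $\mathsf T(\rk)^m$ and $\mathsf A(\rk)^m$ because $\chi_\mathrm f(t^m)=\chi_\mathrm f(t)^m=1$. Since $H$ is generated by these four images, $\chi_\mathrm f|_H=1$, i.e.\ $H\subseteq K$, and the proof is complete.

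The step I expect to need the most care is the verification that $H$ is semialgebraic --- in particular that the group of $m$-th powers $\mathsf T(\rk)^m$ is semialgebraic, for which the appropriate input is Macintyre's theorem (Lemma \ref{tarski}) applied to the $m$-th power morphism --- together with the bookkeeping ensuring that $H$ has finite index in $G$ and lies in $\ker\chi_\mathrm f$. None of this is deep, but it requires combining the structure results of Section \ref{secch} with the semialgebraic category in the right way.
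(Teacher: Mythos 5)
Your argument is correct, and it arrives at the result by a route that, while built on the same structural decomposition $\varphi\colon(\mathsf S(\rk)\times\mathsf T(\rk)\times\mathsf A(\rk))\ltimes N\to G$ as the paper, uses a different mechanism at the crucial step. The paper invokes the semialgebraic selection theorem (Lemma \ref{ssel0}) to reduce to showing $\chi_\mathrm f\circ\varphi$ is definable of order $\leq 0$, and then checks each factor: triviality on $\mathsf S(\rk)$ and $N$, Bruhat--Schwartz on $\mathsf T(\rk)$, and Lemma \ref{semigl1} on $\mathsf A(\rk)$. You instead explicitly produce a semialgebraic subgroup $H\subseteq\ker\chi_\mathrm f$ of finite index in $G$ --- the $\varphi$-image of $(\mathsf S(\rk)\times\mathsf T(\rk)^m\times\mathsf A(\rk)^m)\ltimes N$ --- and then write $\chi_\mathrm f$ as a finite $\C$-linear combination of indicator functions of $H$-cosets. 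Your version is somewhat more self-contained: it avoids the selection theorem, makes the semialgebraicity of the relevant finite-index subgroup fully explicit (the paper's assertion that a Bruhat--Schwartz character on $\mathsf T(\rk)$ is definable of order $\leq 0$ tacitly requires the same $m$-th power argument you spell out), and also handles the disconnectedness of $\mathsf G$ explicitly by passing to $\mathsf G_0(\rk)$, which the paper leaves implicit. The one point where you need to be slightly careful is in asserting that $\varphi$ is a group homomorphism (so that $H$ is a subgroup and is generated by the images of the factors); this is true because $\mathsf T$ and $\mathsf A$ are central in $\mathsf L$ and $\mathsf S$ maps into the derived group, but it is worth saying explicitly since it underpins both the group structure of $H$ and the claim that $\chi_\mathrm f$ vanishes on all of $H$ once it vanishes on the generating images.
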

\begin{proof}
Recall the multiplication map
\[
 \varphi: (\mathsf S(\rk)\times \mathsf T(\rk)\times \mathsf A(\rk))\ltimes N\to G=\mathsf G(\rk)
\]
from \eqref{decomg2}. Since the image of $\varphi$ is a
semialgebraic subgroup of $G$ of finite index. By Lemma \ref{ssel0},
it suffices to show that the finite order character $\chi_\mathrm
f':=\chi_\mathrm f\circ \varphi$ is definable of order $\leq 0$.
This is true because $\chi_\mathrm f'$ has trivial restriction to
$\mathsf S(\rk)$ and $N$, $(\chi_\mathrm f')|_{\mathsf T(\rk)}$
is a Bruhat-Schwartz function, and by Lemma \ref{semigl1},
$(\chi_\mathrm f')|_{\mathsf A(\rk)}$ is definable of order $\leq
0$.
\end{proof}

\begin{prpl}\label{defm3}
Let $\chi$ be a character on $G$ which is trivial on $N$. Then every
element of $\Hom_{G,k }(\oS(X),\chi)$ ($k\in \BN$) is a
definable measure on $X$ of order $\leq k$.
\end{prpl}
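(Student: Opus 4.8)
The plan is to reduce to the case where $\mathsf G$ is connected, which is exactly Theorem \ref{defm}, and then check that each factor in the explicit formula provided by that theorem is a definable function of the appropriate order. Let $\mathsf G_0$ denote the identity connected component of $\mathsf G$ and set $G_0 := \mathsf G_0(\rk)$; this is an open normal subgroup of finite index in $G$. Since $[G:G_0]<\infty$, the homogeneous space $X = G/H$ decomposes as a finite disjoint union $X = O_1 \sqcup \cdots \sqcup O_n$ of $G_0$-orbits, each of which is open in $X$ (an orbit of the open subgroup $G_0$) and hence also closed; each $O_i$ is, moreover, a semialgebraic subset of $X$, by the algebraic description of these orbits (Lemma \ref{finitehom} applied to the action of $\mathsf G_0$ on $\mathsf G/\mathsf H$). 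Picking $g_i \in G$ with $O_i = G_0 g_i H/H$ and putting $\mathsf H_i := \mathsf G_0 \cap g_i \mathsf H g_i^{-1}$, a closed algebraic subgroup of $\mathsf G_0$ defined over $\rk$, one gets a $G_0$-equivariant homeomorphism $O_i \cong G_0/\mathsf H_i(\rk)$.

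Because $O_i$ is open and closed in $X$, one has $\oS(X) = \bigoplus_{i=1}^n \oS(O_i)$ as $G_0$-representations, and this splitting is $G_0$-equivariant, so restriction sends a given $\eta \in \Hom_{G,k}(\oS(X),\chi)$ to an element $\eta_i \in \Hom_{G_0,k}(\oS(O_i), \chi|_{G_0})$. The unipotent radical of $\mathsf G_0$ coincides with the unipotent radical $\mathsf N$ of $\mathsf G$, so $\chi|_{G_0}$ is still trivial on $N = \mathsf N(\rk)$. Applying Theorem \ref{defm} with $(\mathsf G_0, \mathsf H_i, \chi|_{G_0})$ in place of $(\mathsf G, \mathsf H, \chi)$ shows that $\eta_i$ is a measure on $O_i$ which, when nonzero, has the form
\[
   P_i(\mathrm{val}\circ\alpha_{i,1},\dots,\mathrm{val}\circ\alpha_{i,r_i})\cdot \abs{\beta_{i,1}}_\rk^{s_{i,1}}\cdots\abs{\beta_{i,t_i}}_\rk^{s_{i,t_i}}\cdot \chi_{\mathrm f,i}\cdot(\abs{\omega_i}_\rk^{1/m_i})|_{O_i},
\]
with $\deg P_i \le k$, the $\alpha_{i,j}$ and $\beta_{i,j}$ algebraic characters of $\mathsf G_0$ trivial on $\mathsf H_i$, $\chi_{\mathrm f,i}$ a finite order character on $G_0$, $m_i \ge 1$, and $\omega_i$ a semi-invariant algebraic $m_i$-volume form on $\mathsf G_0/\mathsf H_i$.

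It remains to observe that each ingredient of this formula is definable of controlled order on the semialgebraic $\ell$-space $O_i$. The measure $(\abs{\omega_i}_\rk^{1/m_i})|_{O_i}$ is locally finite and definable of order $\le 0$ by Lemma \ref{nashdf}. An algebraic character $\gamma$ of $\mathsf G_0$ trivial on $\mathsf H_i$ descends to an invertible regular function on $\mathsf G_0/\mathsf H_i$, hence restricts to a nowhere vanishing semialgebraic function on $O_i$; therefore $\abs{\gamma}_\rk^{s}$ is definable of order $\le 0$ and each factor $\mathrm{val}\circ\gamma$ raises the order by one, so that the polynomial $P_i(\mathrm{val}\circ\alpha_{i,1},\dots)$ of degree $\le k$ is definable of order $\le k$. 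Finally $\chi_{\mathrm f,i}$ is definable of order $\le 0$ by Lemma \ref{semifc2}. Since the definable functions of order $\le 0$ form an algebra and multiplying a definable measure of order $\le a$ by a definable function of order $\le b$ produces a definable measure of order $\le a+b$, the product $\eta_i$ is a definable measure on $O_i$ of order $\le k$ (and this holds trivially if $\eta_i = 0$). As $\{O_i\}_{i=1}^n$ is a finite cover of $X$ by semialgebraic subsets, concatenating the semialgebraic-chart data of Definition \ref{def-measure} for the $\eta_i$ exhibits $\eta$ as a definable measure on $X$ of order $\le k$; that $\eta$ is a measure (i.e. locally finite) follows from the corresponding statement for each $\eta_i$ because the $O_i$ are open and closed in $X$.

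The real content here is already in Theorem \ref{defm}; the two points needing care are the descent to the identity component — constructing the $\rk$-rational subgroups $\mathsf H_i$ and the finite $G_0$-orbit decomposition $X = \bigsqcup_i O_i$ together with their semialgebraic structure — and the bookkeeping of orders, i.e. verifying that a degree-$\le k$ polynomial in valuations of invertible regular functions is definable of order $\le k$ and that the product of all the displayed factors stays within order $k$. I expect the order bookkeeping, and in particular pinning down the convention under which definability of order $\le k$ is cumulative across the factors, to be the only slightly delicate part.
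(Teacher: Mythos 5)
Your proof is correct and follows the same route as the paper: reduce to the case of connected $\mathsf G$, invoke Theorem \ref{defm} to get the explicit formula, and then check order-by-order definability of the factors using Lemmas \ref{nashdf} and \ref{semifc2}. The paper's own proof is just the two sentences ``Without loss of generality, assume that $\mathsf G$ is connected. Then the proposition follows by Theorem \ref{defm} and Lemma \ref{semifc2},'' so what you have done is make explicit the two steps that the authors leave to the reader --- namely, realizing the $G_0$-orbit decomposition $X=\bigsqcup O_i$ with each $O_i\cong G_0/\mathsf H_i(\rk)$ for an $\rk$-subgroup $\mathsf H_i\subset \mathsf G_0$, and verifying that a degree-$\le k$ polynomial in valuations of nowhere-vanishing semialgebraic functions, multiplied by order-$\le 0$ factors, stays definable of order $\le k$.
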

\begin{proof}
Without loss of generality, assume that $\mathsf G$ is connected.
Then the proposition follows by Theorem \ref{defm} and Lemma
\ref{semifc2}.
\end{proof}

\subsection{Locally finiteness of some algebraic measures}
\label{secalg}

For each algebraic variety $\mathsf X$ over $\rk$, write
$\mathsf X_{\mathrm{sm}}$ for the smooth part of $\mathsf X$,
which is an open subvariety of $\mathsf X$. Recall that a strong
resolution of singularities of $\mathsf X$ is a smooth algebraic
variety $\tilde {\mathsf X}$ (over $\rk$) together with a proper
birational morphism $\pi:\tilde {\mathsf X}\to \mathsf X$ such
that  $\pi: \pi^{-1}(\mathsf X_{\mathrm{sm}})\to \mathsf
X_{\mathrm{sm}}$ is an isomorphism. The famous theorem of Hironaka
says that $\mathsf X$ always has a strong resolution of
singularities.

Recall the following definition.

\begin{dfnl}\label{defrs}
We say that an algebraic  variety $\mathsf X$ over $\rk$ has
rational singularities if it is normal, and there exists a strong
resolution of singularities $\pi:\tilde {\mathsf X}\to \mathsf X$
of $\mathsf X$ such that the higher derived direct images vanish,
that is, $\CR^i\pi_*(\CO_{\tilde{\mathsf X}})=0$ for all $i>0$.
Here  $\CR^i\pi_*$ denotes the $i$-th derived functor of the
push-forward functor of sheaves via $\pi$.
\end{dfnl}

We will use the following property of algebraic varieties with
rational singularities.
\begin{lem}\label{algext}
Let $\mathsf X$ be an algebraic variety over $\rk$ with rational
singularities. Let $\mathsf U$ be a smooth open subvariety of
$\mathsf X$ whose complement has codimension $\geq 2$. Let
$\pi:\tilde {\mathsf X}\to \mathsf X$ be a strong resolution of
singularities. Then for each algebraic volume form $\omega$ on
$\mathsf U$,  there is an algebraic volume form $\tilde
\omega$ on $\tilde {\mathsf X}$ so that its restriction to
$\pi^{-1}(\mathsf U)$ is identical to $\omega$ via the isomorphism
$\pi: \pi^{-1}(\mathsf U)\to \mathsf U$.
\end{lem}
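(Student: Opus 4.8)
The plan is to regard $\omega$ as a rational top-degree form on $\tilde{\mathsf X}$ and to show that the rational-singularities hypothesis forces it to be regular everywhere. Since $\mathsf X$ has rational singularities it is in particular normal, hence a finite disjoint union of its irreducible components, and a strong resolution decomposes correspondingly; so we may assume $\mathsf X$, and therefore $\tilde{\mathsf X}$, irreducible of dimension $n$. Because $\mathsf U$ is smooth we have $\mathsf U\subseteq\mathsf X_{\mathrm{sm}}$; moreover $\mathsf X\setminus\mathsf X_{\mathrm{sm}}$ has codimension $\geq 2$ by normality and $\mathsf X_{\mathrm{sm}}\setminus\mathsf U$ has codimension $\geq 2$ by hypothesis, while $\pi$ restricts to an isomorphism $\pi^{-1}(\mathsf X_{\mathrm{sm}})\xrightarrow{\ \sim\ }\mathsf X_{\mathrm{sm}}$. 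Since $\pi^{-1}(\mathsf U)$ is a dense open subset of the smooth variety $\tilde{\mathsf X}$, transporting $\omega$ along $\pi^{-1}(\mathsf U)\cong\mathsf U$ produces a rational section $\tilde\omega$ of the line bundle $\omega_{\tilde{\mathsf X}}:=\wedge^{\mathrm{top}}\Omega_{\tilde{\mathsf X}}$ which is regular on $\pi^{-1}(\mathsf U)$. As $\tilde{\mathsf X}$ is smooth, $\omega_{\tilde{\mathsf X}}$ is reflexive, so $\tilde\omega$ is a global algebraic volume form as soon as it has no pole along any prime divisor of $\tilde{\mathsf X}$; the only candidates are the $\pi$-exceptional prime divisors, each of which lies over $\mathsf X\setminus\mathsf X_{\mathrm{sm}}\subseteq\mathsf X\setminus\mathsf U$. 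Thus the entire problem reduces to ruling out poles of $\tilde\omega$ along the exceptional locus.

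For this I would pass through the Grothendieck dualizing sheaf $\omega_{\mathsf X}$ of $\mathsf X$, which agrees with $\wedge^{\mathrm{top}}\Omega$ on $\mathsf X_{\mathrm{sm}}$. First, by two codimension-$\geq 2$ extensions: viewing $\omega$ as a section of the line bundle $\omega_{\mathsf X}|_{\mathsf X_{\mathrm{sm}}}$ over $\mathsf U$, it extends uniquely over $\mathsf X_{\mathrm{sm}}$, and then, since varieties with rational singularities are Cohen--Macaulay and hence $\omega_{\mathsf X}$ is $S_2$, it extends uniquely over all of $\mathsf X$ to a section $\omega''\in\Gamma(\mathsf X,\omega_{\mathsf X})$. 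Second — and this is the heart of the matter — for a variety with rational singularities the natural injection $\pi_*\omega_{\tilde{\mathsf X}}\hookrightarrow\omega_{\mathsf X}$ (the one defined by the very extension process just used, via $\pi^{-1}(\mathsf X_{\mathrm{sm}})\cong\mathsf X_{\mathrm{sm}}$) is an isomorphism. Granting this, $\omega''$ is the image of a unique $\tilde\omega\in\Gamma(\tilde{\mathsf X},\omega_{\tilde{\mathsf X}})$, i.e.\ an algebraic volume form on $\tilde{\mathsf X}$; and over $\pi^{-1}(\mathsf U)$ — where $\pi$ is an isomorphism onto $\mathsf U\subseteq\mathsf X_{\mathrm{sm}}$ and every sheaf in sight is simply $\wedge^{\mathrm{top}}\Omega$ — all these identifications are compatible, so $\tilde\omega|_{\pi^{-1}(\mathsf U)}=\omega$, which is the desired conclusion.

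The only nonformal input is the identification $\pi_*\omega_{\tilde{\mathsf X}}\cong\omega_{\mathsf X}$, and that is where I expect the real work to be. I would derive it from Grothendieck duality for the proper morphism $\pi$: the defining vanishing $\CR^i\pi_*\CO_{\tilde{\mathsf X}}=0$ $(i>0)$ together with $\pi_*\CO_{\tilde{\mathsf X}}=\CO_{\mathsf X}$ (normality of $\mathsf X$) gives $\mathrm{R}\pi_*\CO_{\tilde{\mathsf X}}=\CO_{\mathsf X}$, whence duality yields $\mathrm{R}\pi_*\omega_{\tilde{\mathsf X}}[n]\cong\mathrm{R}\mathcal{H}om_{\CO_{\mathsf X}}(\CO_{\mathsf X},\omega_{\mathsf X}^{\bullet})=\omega_{\mathsf X}^{\bullet}$; combining this with the Grauert--Riemenschneider vanishing $\CR^i\pi_*\omega_{\tilde{\mathsf X}}=0$ $(i>0)$ forces $\omega_{\mathsf X}^{\bullet}$ to be concentrated in degree $-n$ with cohomology $\pi_*\omega_{\tilde{\mathsf X}}$, i.e.\ $\mathsf X$ is Cohen--Macaulay and $\pi_*\omega_{\tilde{\mathsf X}}\cong\omega_{\mathsf X}$ (this also supplies, in the right logical order, the Cohen--Macaulayness invoked above, so the argument is not circular). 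Everything else — the two Hartogs-type extensions and the bookkeeping of identifications over $\pi^{-1}(\mathsf U)$ — is routine. A more elementary but messier alternative would avoid global duality and instead bound $\mathrm{ord}_{E_i}\tilde\omega$ locally by pairing against local generators and using the vanishing of the relevant higher direct image, but the duality route is the cleanest.
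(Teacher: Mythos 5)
The paper does not give a proof of Lemma~\ref{algext}: its ``proof'' consists solely of the citation \cite[P.50, Proposition]{KKMS} (Kempf's result) or \cite[Proposition 1.4]{ADK}. Your argument is a correct, self-contained reconstruction of exactly the content of that reference, so you have filled in rather than diverged from the paper. The central step, $\pi_*\omega_{\tilde{\mathsf X}}\cong\omega_{\mathsf X}$ for a variety with rational singularities, derived from $\mathrm{R}\pi_*\CO_{\tilde{\mathsf X}}\cong\CO_{\mathsf X}$ via Grothendieck duality together with Grauert--Riemenschneider vanishing, is precisely Kempf's criterion; your remark that Cohen--Macaulayness (and hence $S_2$ of $\omega_{\mathsf X}$) falls out of the same duality computation, so no circularity arises, is correct. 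The surrounding bookkeeping is also fine: reduction to $\mathsf X$ irreducible via normality, the two Hartogs-type extensions over codimension-$\geq 2$ loci ($\mathsf X_{\mathrm{sm}}\setminus\mathsf U$ by hypothesis and $\mathsf X\setminus\mathsf X_{\mathrm{sm}}$ by normality), and the observation that all identifications restrict to the identity over $\pi^{-1}(\mathsf U)$, where $\pi$ is an isomorphism and every sheaf is $\wedge^{\mathrm{top}}\Omega$. One small streamlining worth noting: once you know $\pi_*\omega_{\tilde{\mathsf X}}\cong\omega_{\mathsf X}$, the ``pole along exceptional divisors'' framing of the first paragraph is dispensable, since a global section of $\omega_{\mathsf X}$ directly produces a global section of $\omega_{\tilde{\mathsf X}}$; but it does no harm as motivation.
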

\begin{proof}
See \cite[P.50, Proposition]{KKMS} or \cite[Proposition 1.4]{ADK}.
\end{proof}

Given a measurable space $X$ with a measurable subset $Y$ of it, for
each measure $\mu$ on $Y$, we write $\mu|^X$ for the measure on $X$
which is obtained from $\mu$ by the extension by zero.

\begin{prpl}\label{lf0}(\cf \cite[Lemma 3.4.1]{AA})
Let  $\mathsf X$ be an algebraic variety over $\rk$ with rational
singularities. Let $\mathsf U$ be a smooth open subvariety of
$\mathsf X$ whose complement has codimension $\geq 2$. Then the
measure  ${(\abs{\omega}_\rk)}|^{\mathsf X(\rk)}$ is locally
finite for all algebraic volume form $\omega$ on $\mathsf U$.
\end{prpl}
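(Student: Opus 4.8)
The plan is to pull the measure up to a resolution of singularities, where local finiteness is immediate from Lemma \ref{nashdf}, and then push it back down along the proper resolution map.

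By Hironaka's theorem, fix a strong resolution of singularities $\pi\colon \tilde{\mathsf X}\to \mathsf X$. Since $\mathsf X$ has rational singularities and $\mathsf U\subset \mathsf X$ is a smooth open subvariety whose complement has codimension $\geq 2$, Lemma \ref{algext} provides an algebraic volume form $\tilde\omega$ on $\tilde{\mathsf X}$ whose restriction to $\pi^{-1}(\mathsf U)$ coincides with $\omega$ under the isomorphism $\pi\colon \pi^{-1}(\mathsf U)\xrightarrow{\ \sim\ }\mathsf U$ (note $\mathsf U\subseteq \mathsf X_{\mathrm{sm}}$, so $\pi$ is indeed an isomorphism over $\mathsf U$). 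Because $\tilde{\mathsf X}$ is smooth, $\tilde{\mathsf X}(\rk)$ is a Nash manifold and $\tilde\omega$ induces a Nash volume form on it, so by Lemma \ref{nashdf} the measure $\abs{\tilde\omega}_\rk$ on $\tilde{\mathsf X}(\rk)$ is locally finite.

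Next I would invoke the standard fact that a proper morphism of algebraic varieties over $\rk$ induces a topologically proper continuous map on $\rk$-points, i.e. the induced map $\pi_\rk\colon \tilde{\mathsf X}(\rk)\to \mathsf X(\rk)$ satisfies: $\pi_\rk^{-1}(K)$ is compact for every compact $K\subseteq \mathsf X(\rk)$. (This follows by factoring $\pi$ through a closed immersion into $\BP^n_{\mathsf X}$ over $\mathsf X$ and using that $\BP^n(\rk)$ is compact.) Given such a $K$, the restriction of $\pi_\rk$ to $\pi^{-1}(\mathsf U)(\rk)$ is a homeomorphism onto $\mathsf U(\rk)$ carrying $\abs{\tilde\omega}_\rk$ to $\abs{\omega}_\rk$, so
\[
 {(\abs{\omega}_\rk)}\big|^{\mathsf X(\rk)}(K)
  = \abs{\omega}_\rk\big(K\cap \mathsf U(\rk)\big)
  = \abs{\tilde\omega}_\rk\big(\pi_\rk^{-1}(K)\cap \pi^{-1}(\mathsf U)(\rk)\big)
  \leq \abs{\tilde\omega}_\rk\big(\pi_\rk^{-1}(K)\big) < \infty ,
\]
the final step using that $\pi_\rk^{-1}(K)$ is compact and $\abs{\tilde\omega}_\rk$ is locally finite. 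Hence ${(\abs{\omega}_\rk)}|^{\mathsf X(\rk)}$ is locally finite; in fact the computation shows it is dominated by the push-forward of $\abs{\tilde\omega}_\rk$ along $\pi_\rk$.

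The only point requiring real care is the topological properness of $\pi_\rk$; everything else is formal once Lemma \ref{algext} is granted. If one prefers to avoid citing that fact, one can argue by hand: cover $\mathsf X$ by affine opens embedded in affine spaces, note that any compact $K$ lies in a product of balls, and check that its $\pi_\rk$-preimage is a closed subset of the compact set cut out by the corresponding bounds inside a projective compactification of $\tilde{\mathsf X}$ over $\mathsf X$ — which is precisely the proof of the cited fact spelled out.
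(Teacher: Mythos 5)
Your proof is correct and follows essentially the same route as the paper's: a strong resolution of singularities, Lemma \ref{algext} to extend $\omega$ to a volume form $\tilde\omega$ upstairs, local finiteness of $\abs{\tilde\omega}_\rk$, and topological properness of $\pi$ on $\rk$-points to transfer local finiteness down. The paper phrases the last step as ``$\pi(\abs{\tilde\omega}_\rk) - (\abs{\omega}_\rk)|^{\mathsf X(\rk)}$ is a non-negative measure,'' which is the same domination you spell out explicitly with the chain of equalities and the inequality.
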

\begin{proof}
Let $\pi:\tilde {\mathsf X}\to \mathsf X$ be a strong resolution
of singularities. Let  $\tilde \omega$ be as in Lemma \ref{algext}.
Write $\pi(\abs{\tilde \omega})$ for the push-forward of
$\abs{\tilde \omega}_\rk$ through the map \be\label{ppi}
  \pi: \tilde {\mathsf X}(\rk)\to \mathsf X(\rk).
\ee Then the measure $\pi(\abs{\tilde \omega})$ is locally finite
since \eqref{ppi} is a proper continuous map of topological spaces.
The proposition then follows by noting that $\pi(\abs{\tilde
\omega})-{(\abs{\omega}_\rk)}|^{\mathsf X(\rk)}$ is a non-negative
measure.
\end{proof}

\begin{dfnl}\label{defrs}
We say that an algebraic  variety $\mathsf X$ over $\rk$ has
Gorenstein rational singularities if it has rational singularities,
and the push forward $\mathcal{K}_{\mathsf X}$ of $\wedge^\mathrm{top} \Omega_{\mathsf
X_{\mathrm{sm}}}$ through the inclusion map $\mathsf
X_{\mathrm{sm}}\hookrightarrow \mathsf X$ is a locally free
$\CO_{\mathsf X}$-module.
\end{dfnl}
The sheaf $\mathcal{K}_{\mathsf X}$ is called the dualizing sheaf of $\mathsf X$.   If $\mathsf X$ is smooth, then $\mathcal{K}_{\mathsf X}\simeq \wedge^{\mathrm{top}}\Omega_{\mathsf{X}}$.  We have the following examples of Gorentain rational singularities:

\begin{enumerate}
\item If $\mathsf X$ has symplectic singularities, then $\mathsf X$ has rational Gorenstein singularities, see \cite[Proposition 1.3]{Bea}.
\item The normalization
of nilpotent varieties
in semisimple Lie algebras have  Gorenstein rational singularities,  see \cite{Hin}.
\end{enumerate}

Recall the following standard fact in algebraic geometry.
\begin{lem}
\label{Hartog}
Let $\mathsf X$ be a normal variety and suppose that $\CF$ is a locally free sheaf on
$\mathsf X$.
Let $\mathsf U$ be an open subvariety of $\mathsf X$ such that the complement $\mathsf X\setminus
\mathsf U$ is of codimension $\geq 2$. Then the restriction map $\Gamma(\mathsf X,\CF)\to
\Gamma(\mathsf U,\CF)$ is an isomorphism.
\end{lem}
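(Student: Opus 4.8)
The plan is to reduce the statement, which is local on $\mathsf X$ once uniqueness of extensions is known, to the algebraic Hartogs property of normal rings. I would first dispose of injectivity: if $s\in\Gamma(\mathsf X,\CF)$ vanishes on $\mathsf U$, then its zero locus is open (as $\CF$ is locally free and $\mathsf X$ is reduced); since $Z:=\mathsf X\setminus\mathsf U$ has codimension $\geq 2$, the set $\mathsf U$ is dense in every irreducible component of $\mathsf X$ --- note that normality forces these components to be pairwise disjoint, each a normal integral variety --- so $s$ vanishes on a dense open, hence $s=0$.

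For surjectivity, let $t\in\Gamma(\mathsf U,\CF)$ and choose a finite affine open cover $\mathsf X=\bigcup_i\mathsf X_i$, $\mathsf X_i=\Spec A_i$, trivializing $\CF$, so that $t|_{\mathsf U\cap\mathsf X_i}$ becomes a tuple of regular functions on the open set $\mathsf X_i\setminus(Z\cap\mathsf X_i)$. Here each $A_i$ is a normal ring (a finite product of normal domains) and $Z\cap\mathsf X_i$ has codimension $\geq 2$, so $\mathsf X_i\setminus Z$ contains every point of $\mathsf X_i$ of codimension $\leq 1$. By Serre's criterion one has $A_i=\bigcap_{\operatorname{ht}\mathfrak p=1}(A_i)_{\mathfrak p}$, and any regular function on $\mathsf X_i\setminus Z$ --- being an element of the total ring of fractions of $A_i$ lying in every $(A_i)_{\mathfrak p}$ with $\operatorname{ht}\mathfrak p=1$ --- therefore lies in $A_i$. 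Hence $t|_{\mathsf U\cap\mathsf X_i}$ extends to some $t_i\in\Gamma(\mathsf X_i,\CF)$, and by the uniqueness just proved the $t_i$ agree on the overlaps $\mathsf X_i\cap\mathsf X_j$, so they glue to a global section of $\CF$ restricting to $t$.

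Equivalently, one may run the argument through the local cohomology sequence $0\to H^0_Z(\mathsf X,\CF)\to\Gamma(\mathsf X,\CF)\to\Gamma(\mathsf U,\CF)\to H^1_Z(\mathsf X,\CF)$ and invoke the vanishing $H^i_Z(\mathsf X,\CF)=0$ for $i=0,1$, which follows from $\operatorname{depth}_{\CO_{\mathsf X,x}}\CF_x=\operatorname{depth}\CO_{\mathsf X,x}\geq 2$ for $x\in Z$ (Serre's condition $(S_2)$ for the normal $\mathsf X$, together with local freeness of $\CF$) and Grothendieck's theorem relating depth and local cohomology. The only non-formal ingredient either way is the Hartogs property of normal rings, i.e. Serre's normality criterion $(R_1)+(S_2)$; the sole technical point to watch is the reduction of the locally free, possibly reducible situation to that scalar statement, and there is no real difficulty, consistent with the lemma being a standard fact.
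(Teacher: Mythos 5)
Your argument is correct and is the standard proof of the algebraic Hartogs phenomenon; there is in fact nothing in the paper to compare it with, since the paper gives no proof and simply cites \cite[Proposition 1.11, Theorem 1.9]{Ha2}. The two routes you sketch --- the affine reduction to $A_i=\bigcap_{\operatorname{ht}\mathfrak p=1}(A_i)_{\mathfrak p}$ for a normal ring, and the local-cohomology bound from $\operatorname{depth}_Z\CF\geq 2$ (Serre's $(S_2)$ plus local freeness, then Grothendieck's depth--local-cohomology theorem) --- are the two usual and interchangeable packagings of exactly this statement, and both are complete as you have written them. One small wording issue in the injectivity step: the clause ``its zero locus is open'' is ambiguous (the set where the germ $s_x$ vanishes is open, while the fiberwise vanishing locus is closed) and, as written, is not what carries the argument. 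What actually does the work, and what you do invoke in the end, is that a section of a locally free sheaf on a reduced scheme which vanishes on a dense open set is zero, combined with the observation that normality makes the irreducible components disjoint integral varieties so that a codimension-$\geq 2$ complement forces $\mathsf U$ to be dense in each component; you may as well drop the sentence about the zero locus and keep only this.
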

\begin{proof}
See \cite[Proposition 1.11, Theorem 1.9]{Ha2}.
\end{proof}

We say that a quasi-coherent sheaf $\mathcal{F}$ on an algebraic variety $\mathsf X$ is torsion-free if for every $x\in \mathsf X$, the stalk $\mathcal{F}_x$ is torsion-free as a module of the local ring $\CO_{\mathsf X, x}$.
The following fact is standard. We omit its easy proof.
\begin{lem}
\label{Unique_Extension}
Let $\CF$ be a torison-free quasi-coherent sheaf on an algebraic variety $\mathsf X$. Let $\mathsf U$  be an open
subset of $\mathsf X$ whose complement has codimension $\geq 1$.  Then the restriction map $\CF(\mathsf X)\to \CF(\mathsf U)$ is injective.
\end{lem}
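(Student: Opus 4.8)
The plan is to reduce to the affine case and then exploit that in a reduced Noetherian ring an element is a non-zerodivisor precisely when it avoids every minimal prime. First I would reduce to $\mathsf X$ affine: injectivity of the restriction map amounts to showing that any section $s\in \CF(\mathsf X)$ with $s|_{\mathsf U}=0$ vanishes, and this can be tested after restricting to the members of an affine open cover $\{\mathsf V\}$ of $\mathsf X$. For such a $\mathsf V$, the sheaf $\CF|_{\mathsf V}$ is again torsion-free (stalks are unchanged), and the complement of $\mathsf V\cap \mathsf U$ in $\mathsf V$ is $\mathsf V\cap(\mathsf X\setminus \mathsf U)$, which still has codimension $\geq 1$ in $\mathsf V$ since $\dim_x\bigl(\mathsf V\cap(\mathsf X\setminus\mathsf U)\bigr)\leq \dim_x(\mathsf X\setminus\mathsf U)<\dim_x\mathsf X=\dim_x\mathsf V$ for every $x$ in the intersection. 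So we may assume $\mathsf X=\Spec A$ with $A$ a reduced Noetherian $\rk$-algebra, $\CF=\widetilde M$ for an $A$-module $M$, $s$ corresponds to $m\in M$, and $\mathsf U=\mathsf X\setminus V(I)$ for an ideal $I\subseteq A$.

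Next I would produce a non-zerodivisor inside $I$. The hypothesis that $V(I)$ has codimension $\geq 1$ says exactly that $V(I)$ contains no generic point of $\mathsf X$, i.e.\ $I\not\subseteq \p$ for every minimal prime $\p$ of $A$; since $A$ is Noetherian there are finitely many such $\p$, so by prime avoidance there is an element $a\in I$ lying in no minimal prime. Because $A$ is reduced one has an embedding $A\hookrightarrow\prod_{\p\ \mathrm{min}}A_\p$ with each $A_\p$ a field, from which it follows that the zerodivisors of $A$ are exactly the elements contained in some minimal prime; hence $a$ is a non-zerodivisor of $A$. Note also that for any prime $\q$ of $A$ the image $a/1\in A_\q$ remains a non-zerodivisor: if $tab=0$ in $A$ with $t\notin\q$, then $a(tb)=0$ forces $tb=0$, so $b/s=0$ in $A_\q$.

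Finally I would conclude. Since $V(a)\supseteq V(I)$ we have $D(a)\subseteq \mathsf U$, so $s|_{D(a)}=0$, which means $m=0$ in $M_a$, i.e.\ $a^n m=0$ for some $n\geq 0$. It remains to see that $a$ (hence $a^n$) acts injectively on $M$. If $am=0$ with $m\neq 0$, choose a point $x\in\mathsf X$ with $m_x\neq 0$ in $\CF_x=M_{\CO_{\mathsf X,x}}$ (possible since $\mathrm{Ann}_A(m)$ is a proper ideal). Then $a$ maps to a non-zerodivisor of $\CO_{\mathsf X,x}$ annihilating the nonzero element $m_x$, contradicting torsion-freeness of $\CF_x$. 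Therefore $m=0$, proving injectivity. I do not expect any genuine obstacle here; the only points requiring a little care are that the reduction to the affine case preserves the codimension hypothesis, and the elementary but slightly fiddly passage between "non-zerodivisor of $A$", "non-zerodivisor of $A_\q$", and "injective on $M$", which is where reducedness and Noetherianity of $A$ are used.
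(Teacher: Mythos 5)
The paper does not give a proof of this lemma, stating only ``The following fact is standard. We omit its easy proof,'' so there is nothing to compare against; your argument is the natural one and it is correct. The reduction to the affine case is handled cleanly (restriction preserves torsion-freeness and the codimension hypothesis), the codimension-$\geq 1$ condition is correctly translated to say that $I$ is not contained in any minimal prime of the reduced Noetherian ring $A$ (so prime avoidance produces a non-zerodivisor $a\in I$), and the final step rightly passes through the stalk at a maximal ideal containing $\mathrm{Ann}_A(m)$ and uses that $a$ remains a non-zerodivisor in $A_\m$ together with the stalkwise torsion-freeness hypothesis. No gaps.
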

Similar to Lemma \ref{algext}, we have the following proposition.
\begin{prpl}\label{algext2}
Let $\mathsf X$ be an algebraic variety over $\rk$ with Gorenstein
rational singularities. Let $\mathsf U$ be a smooth open subvariety
of $\mathsf X$ whose complement has codimension $\geq 2$.  Let
$\pi:\tilde {\mathsf X}\to \mathsf X$ be a strong resolution of
singularities. Then for every positive integer $m$ and every algebraic
$m$-volume form $\omega$ on $\mathsf U$,  there is an algebraic
$m$-volume form $\tilde \omega$ on $\tilde {\mathsf X}$ so that
its restriction to $\pi^{-1}(\mathsf U)$ is identical to $\omega$
via the isomorphism $\pi: \pi^{-1}(\mathsf U)\simeq \mathsf U$.
\end{prpl}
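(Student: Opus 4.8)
The argument parallels the proof of Lemma~\ref{algext}; the plan is to extend $\omega$ first across the singular locus of $\mathsf X$ by means of the (invertible, since $\mathsf X$ is Gorenstein) dualizing sheaf, and then across the exceptional locus of $\pi$, the second step being where the rationality of the singularities enters, through the positivity of discrepancies.

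\emph{Extension to $\mathsf X$.} Since $\mathsf X$ has Gorenstein rational singularities, $\mathcal{K}_{\mathsf X}$ is an invertible $\CO_{\mathsf X}$-module, hence so is $\mathcal{K}_{\mathsf X}^{\otimes m}$, and by its construction $\mathcal{K}_{\mathsf X}^{\otimes m}$ restricts to $(\wedge^\mathrm{top}\Omega_{\mathsf X_\mathrm{sm}})^{\otimes m}$ on the smooth locus. As $\mathsf U$ is open and smooth it is contained in $\mathsf X_\mathrm{sm}$, so $\omega$ is a section of $\mathcal{K}_{\mathsf X}^{\otimes m}$ over $\mathsf U$; since $\mathsf X$ is normal and $\mathsf X\setminus\mathsf U$ has codimension $\ge 2$, Lemma~\ref{Hartog} applied to the locally free sheaf $\mathcal{K}_{\mathsf X}^{\otimes m}$ extends $\omega$ to a (unique) $\bar\omega\in\Gamma(\mathsf X,\mathcal{K}_{\mathsf X}^{\otimes m})$.

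\emph{Comparison of line bundles on $\tilde{\mathsf X}$.} On the smooth variety $\tilde{\mathsf X}$ both $\pi^*\mathcal{K}_{\mathsf X}$ and $\mathcal{K}_{\tilde{\mathsf X}}=\wedge^\mathrm{top}\Omega_{\tilde{\mathsf X}}$ are invertible, and over $\pi^{-1}(\mathsf X_\mathrm{sm})$ — on which $\pi$ restricts to an isomorphism onto $\mathsf X_\mathrm{sm}$ — pull-back of forms gives a canonical identification of the two. Letting $E_1,\dots,E_r$ be the codimension-one irreducible components of the exceptional locus of $\pi$, we therefore get an isomorphism $\mathcal{K}_{\tilde{\mathsf X}}\cong\pi^*\mathcal{K}_{\mathsf X}\otimes\CO_{\tilde{\mathsf X}}(\textstyle\sum_i a_i E_i)$ for uniquely determined discrepancies $a_i\in\Z$. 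Suppose for the moment that $a_i\ge 0$ for all $i$; then multiplication by the tautological section of $\CO_{\tilde{\mathsf X}}(m\sum_i a_i E_i)$ gives an inclusion of sheaves $\pi^*\mathcal{K}_{\mathsf X}^{\otimes m}\hookrightarrow\mathcal{K}_{\tilde{\mathsf X}}^{\otimes m}$ which restricts to the above identification over $\pi^{-1}(\mathsf X_\mathrm{sm})$. The image of $\pi^*\bar\omega$ under this inclusion is an algebraic $m$-volume form $\tilde\omega\in\Gamma(\tilde{\mathsf X},(\wedge^\mathrm{top}\Omega_{\tilde{\mathsf X}})^{\otimes m})$; over $\pi^{-1}(\mathsf X_\mathrm{sm})$ it coincides with $\pi^*(\bar\omega|_{\mathsf X_\mathrm{sm}})$, so its restriction to $\pi^{-1}(\mathsf U)$ is carried to $\omega$ by the isomorphism $\pi\colon\pi^{-1}(\mathsf U)\simeq\mathsf U$, as required. (Uniqueness of such a $\tilde\omega$ follows from Lemma~\ref{Unique_Extension}, since $\pi^{-1}(\mathsf U)$ is dense in $\tilde{\mathsf X}$.)

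\emph{Positivity of discrepancies — the main point.} It remains to see that $a_i\ge 0$ for all $i$, i.e.\ that $\mathsf X$ has canonical singularities; this is the one place the Gorenstein hypothesis is essential. That a Gorenstein variety in characteristic zero has canonical singularities once it has rational singularities is standard, but it also follows from the $m=1$ case already available: if some $a_{i_0}<0$, pick $x\in\pi(E_{i_0})$ — necessarily a singular point of $\mathsf X$ — and an open neighbourhood $V$ of $x$ in $\mathsf X$ on which $\mathcal{K}_{\mathsf X}$ is trivialized by a section $\eta_0$, with $\pi^{-1}(V)\cap E_{i_0}\neq\emptyset$. Then $V$ has rational singularities, $\pi^{-1}(V)\to V$ is again a strong resolution, $V\cap\mathsf X_\mathrm{sm}$ is a smooth open subvariety of $V$ whose complement has codimension $\ge 2$, and $\eta_0|_{V\cap\mathsf X_\mathrm{sm}}$ is an algebraic volume form there; by Lemma~\ref{algext} it extends to a regular volume form $\tilde\eta_0$ on $\pi^{-1}(V)$. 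Since $\tilde\eta_0$ and $\pi^*\eta_0$ agree on the dense open $\pi^{-1}(V\cap\mathsf X_\mathrm{sm})$, they coincide as rational sections of $\mathcal{K}_{\tilde{\mathsf X}}$ over $\pi^{-1}(V)$ (cf.\ Lemma~\ref{Unique_Extension}); but $\eta_0$ trivializes $\mathcal{K}_{\mathsf X}|_V$, so through the identification above the rational section $\pi^*\eta_0$ of $\mathcal{K}_{\tilde{\mathsf X}}$ has divisor $\sum_i a_i E_i$ along the exceptional locus, hence a pole of order $-a_{i_0}>0$ along $E_{i_0}$ — contradicting the regularity of $\tilde\eta_0$. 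Thus all $a_i\ge 0$, which completes the argument. I expect the only genuinely delicate part to be the bookkeeping in this last step: keeping the various sheaves of top forms invertible (which is exactly the role of the Gorenstein hypothesis) so that Lemmas~\ref{Hartog} and~\ref{Unique_Extension} apply, and checking that restricting a strong resolution to an open subvariety yields a strong resolution.
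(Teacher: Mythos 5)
Your proof is correct, but it takes a genuinely different route from the paper's, so let me compare.

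The paper's proof is entirely local and elementary in nature: it chooses an affine open cover $\{\mathsf V_\alpha\}$ of $\mathsf X$, uses the affineness to identify $\Gamma(\mathsf V_\alpha,\mathcal K_{\mathsf V_\alpha}^{\otimes m})$ with $\Gamma(\mathsf V_\alpha,\mathcal K_{\mathsf V_\alpha})^{\otimes m}$ and Lemma~\ref{Hartog} to express $\omega_\alpha$ as a finite sum of elementary tensors $\omega_{\alpha,i,1}\otimes\cdots\otimes\omega_{\alpha,i,m}$ of $1$-volume forms on $\mathsf U\cap\mathsf V_\alpha$, extends each factor individually across the resolution by Lemma~\ref{algext}, recombines, and glues using Lemma~\ref{Unique_Extension}. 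Your proof is instead global and geometric: you first extend $\omega$ across the singular locus as a section $\bar\omega$ of the invertible sheaf $\mathcal K_{\mathsf X}^{\otimes m}$ (here the Gorenstein hypothesis enters exactly to make $\mathcal K_{\mathsf X}$ a line bundle, so Lemma~\ref{Hartog} applies), then compare $\pi^*\mathcal K_{\mathsf X}$ with $\mathcal K_{\tilde{\mathsf X}}$ via the discrepancy divisor $\sum_i a_iE_i$, and reduce everything to the statement that $\mathsf X$ has canonical singularities ($a_i\geq 0$). This is exactly the standard fact that Gorenstein $+$ rational $\Rightarrow$ canonical in characteristic zero, and your derivation of it from the $m=1$ case (Lemma~\ref{algext}, applied to a local trivializing section $\eta_0$ of $\mathcal K_{\mathsf X}$: if some $a_{i_0}<0$ then $\pi^*\eta_0$, which equals $\tilde\eta_0$ as a rational section of $\mathcal K_{\tilde{\mathsf X}}$, would have a pole of order $-a_{i_0}$ along $E_{i_0}$, contradicting regularity) is sound. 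What the paper's approach buys is freedom from the discrepancy formalism and from having to argue that $\mathcal K_{\mathsf X}$ is a line bundle (which does hold here, being locally free of generic rank one); what your approach buys is a single global extension step and a clear conceptual identification of where the Gorenstein hypothesis is used, namely to make $\mathcal K_{\tilde{\mathsf X}}\otimes(\pi^*\mathcal K_{\mathsf X})^{-1}\cong\CO(\textstyle\sum_i a_iE_i)$ available, after which the statement for all $m$ follows at once by taking tensor powers of the inclusion $\pi^*\mathcal K_{\mathsf X}\hookrightarrow\mathcal K_{\tilde{\mathsf X}}$. A small bookkeeping point, which you flag yourself: when $\mathsf X$ is disconnected (hence a disjoint union of irreducible normal components) the divisor and discrepancy language should be read component by component; this causes no real difficulty.
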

\begin{proof}

Let $\omega$ be an algebraic $m$-volume form on $\mathsf U$, that is, $\omega\in \Gamma(\mathsf U, (\wedge^{\mathrm{top}}\Omega_{\mathsf{U}})^{\otimes
m} )$.
We  choose an affine open covering $\{\mathsf V_\alpha\}_{\alpha\in I}$  of $\mathsf X$.  Let $ \omega_\alpha$
be the restriction of $\omega$ to $\mathsf U  \cap \mathsf V_\alpha$ for each $\alpha$.
Since $\mathsf V_\alpha$ is affine, by \cite[Proposition 5.2(b), Chapter II]{Ha}  we have
$\Gamma(\mathsf V_\alpha,(\mathcal{K}_{\mathsf V_\alpha})^{\otimes m})\simeq \Gamma(\mathsf V_\alpha,\mathcal{K}_{\mathsf V_\alpha})^{\otimes
m}.$
For each $\alpha$ we have the following isomorphisms:
\begin{eqnarray*}
\Gamma(\mathsf U  \cap \mathsf V_\alpha,\wedge^\mathrm{top}
\Omega_{\mathsf U  \cap \mathsf V_\alpha})^{\otimes m} &\simeq& \Gamma({\mathsf V}_\alpha,\mathcal{K}_{{\mathsf V}_\alpha})^{\otimes
m} \\
&\simeq &
\Gamma(\mathsf V_\alpha,(\mathcal{K}_{{\mathsf V}_\alpha})^{\otimes m}) \\
  &\simeq&  \Gamma(\mathsf U  \cap \mathsf V_\alpha,(\wedge^\mathrm{top}
\Omega_{
\mathsf U  \cap \mathsf V_\alpha})^{\otimes
m}),
\end{eqnarray*}
where the the first and the last isomorphisms follow from Lemma \ref{Hartog}.

Therefore $\omega_\alpha$ can be expressed as a finite sum
$$\omega _\alpha =\sum_{i=1}^{n_\alpha} \omega_{\alpha,i,1}\otimes \omega_{\alpha,i,2}\otimes \cdots \otimes
 \omega_{\alpha,i,m} \qquad(n_\alpha\geq 0),$$
where each $\omega_{\alpha,i,k}$ is an algebraic volume  form
on $\mathsf U  \cap \mathsf V_\alpha$.

  By Lemma \ref{algext} and Lemma \ref{Unique_Extension}, the pull-back $\pi^*\omega_{\alpha,i, k}$
is uniquely extended to an algebraic volume form   $\tilde \omega_{\alpha,i,k}$ on $\pi^{-1}(\mathsf V_\alpha)$. Put
 $$
 \tilde \omega_\alpha:=\theta_\alpha\left(\sum_{i=1}^{n_\alpha}
\tilde \omega_{\alpha,i,1}\otimes         \tilde \omega_{\alpha,i,2}\otimes
\cdots \otimes  \tilde \omega_{\alpha,i,m}\right), $$
where $\theta_\alpha$ is the natural map
$$\theta_\alpha: \Gamma(\pi^{-1}(\mathsf V_\alpha),\wedge^\mathrm{top}
\Omega_{\pi^{-1}(\mathsf V_\alpha)})^{\otimes m}\to
\Gamma(\pi^{-1}(\mathsf V_\alpha), (\wedge^\mathrm{top}
\Omega_{\pi^{-1}(\mathsf V_\alpha)})^{\otimes m}).$$
It is clear that $\tilde \omega_\alpha$ is an extension of $\pi^*(\omega_{\alpha})$
from $\pi^{-1}(\mathsf U  \cap \mathsf V_\alpha)$ to $\pi^{-1}(\mathsf V_\alpha)$.

By Lemma \ref{Unique_Extension}, for all $\alpha,\beta\in I$, the two algebraic $m$-volume forms  $\tilde \omega_{\alpha}$ and $\tilde \omega_{\beta}$ coincide on
$\pi^{-1}(\mathsf V_\alpha\cap \mathsf V_\beta)$, since they coincide on $\pi^{-1}(\mathsf V_\alpha\cap \mathsf V_\beta\cap U)$. Hence $\{\tilde \omega_\alpha\}$ can be glued
to be an algebraic  $m$-volume  form $\tilde \omega$ on $\tilde {\mathsf X}$, and clearly the restriction of $\tilde \omega$ to $\pi^{-1}(\mathsf U)$ is identical to $\omega$
via the isomorphism $\pi: \pi^{-1}(\mathsf U)\simeq \mathsf U$.

\end{proof}

Similar to Proposition \ref{lf0}, we have the following proposition.
\begin{prpl}\label{convg_m_volume}
Let  $\mathsf X$ be an algebraic variety over $\rk$ with Gorenstein
rational singularities. Let $\mathsf U$ be a smooth open subvariety
of $\mathsf X$ whose complement has codimension $\geq 2$. Then the
measure ${(\abs{\omega}_\rk^{1/m})}|^{\mathsf X(\rk)}$ is locally
finite for all algebraic $m$-volume form $\omega$ on $\mathsf U$
($m\geq 1$).
\end{prpl}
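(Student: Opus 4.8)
The plan is to imitate the proof of Proposition \ref{lf0} verbatim, the only change being that the input Lemma \ref{algext} is replaced by its $m$-volume counterpart Proposition \ref{algext2} (which is exactly where the Gorenstein rational singularity hypothesis enters). First I would fix a strong resolution of singularities $\pi:\tilde {\mathsf X}\to \mathsf X$, which exists by Hironaka's theorem. Applying Proposition \ref{algext2} to the algebraic $m$-volume form $\omega$ on $\mathsf U$ (recall $\mathsf U$ is smooth with complement of codimension $\geq 2$), I obtain an algebraic $m$-volume form $\tilde \omega$ on the smooth variety $\tilde {\mathsf X}$ whose restriction to $\pi^{-1}(\mathsf U)$ agrees with $\omega$ under the isomorphism $\pi: \pi^{-1}(\mathsf U)\simeq \mathsf U$.

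Since $\tilde {\mathsf X}$ is smooth, $\tilde \omega$ defines a Nash $m$-volume form on the Nash manifold $\tilde {\mathsf X}(\rk)$, and by Lemma \ref{nashdf} the associated measure $\abs{\tilde \omega}_\rk^{1/m}$ is locally finite on $\tilde {\mathsf X}(\rk)$. As $\pi$ is a proper morphism, the induced continuous map $\pi:\tilde {\mathsf X}(\rk)\to \mathsf X(\rk)$ is proper as a map of topological spaces, so the push-forward $\pi(\abs{\tilde \omega}_\rk^{1/m})$ of $\abs{\tilde \omega}_\rk^{1/m}$ through this map is again a locally finite measure on $\mathsf X(\rk)$: the preimage of any compact set is compact and hence has finite mass.

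It then remains to compare the two measures on $\mathsf X(\rk)$. Over the open subset $\mathsf U(\rk)$ the map $\pi$ restricts to a homeomorphism $\pi^{-1}(\mathsf U)(\rk)\to \mathsf U(\rk)$ carrying $\abs{\tilde \omega}_\rk^{1/m}$ to $\abs{\omega}_\rk^{1/m}$, so the restriction of $\pi(\abs{\tilde \omega}_\rk^{1/m})$ to $\mathsf U(\rk)$ equals $\abs{\omega}_\rk^{1/m}$; on the complement $\mathsf X(\rk)\setminus \mathsf U(\rk)$ the measure ${(\abs{\omega}_\rk^{1/m})}|^{\mathsf X(\rk)}$ is zero by the extension-by-zero convention, while $\pi(\abs{\tilde \omega}_\rk^{1/m})$ is non-negative. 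Hence $\pi(\abs{\tilde \omega}_\rk^{1/m})-{(\abs{\omega}_\rk^{1/m})}|^{\mathsf X(\rk)}$ is a non-negative measure, so ${(\abs{\omega}_\rk^{1/m})}|^{\mathsf X(\rk)}$ is dominated by a locally finite measure and is therefore itself locally finite.

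Given Proposition \ref{algext2}, the argument is essentially formal; the only point needing a little care is the final comparison, namely checking that the push-forward restricts to exactly $\abs{\omega}_\rk^{1/m}$ on $\mathsf U(\rk)$ and that the boundary $\mathsf X(\rk)\setminus\mathsf U(\rk)$ carries no mass of ${(\abs{\omega}_\rk^{1/m})}|^{\mathsf X(\rk)}$, so that the difference is genuinely non-negative. Since all the real content has been shifted into Proposition \ref{algext2}, I do not expect any serious obstacle in this proof itself.
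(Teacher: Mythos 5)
Your proposal is correct and is exactly what the paper intends: the paper's proof of Proposition \ref{convg_m_volume} just says ``The proof is similar to that of Proposition \ref{lf0},'' and you have carried out that similarity precisely, substituting Proposition \ref{algext2} for Lemma \ref{algext} as the source of the extended $m$-volume form $\tilde\omega$ on the resolution, and then repeating the properness-and-domination argument verbatim.
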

\begin{proof}
The proof is similar to that of Proposition \ref{lf0}.
\end{proof}

\subsection{Locally finiteness of generalized invariant measures}
As before, let $\mathsf G$ be a linear algebraic group over $\rk$,
with unipotent radical $\mathsf N$. Put $G:=\mathsf G(\rk)$ and
$N:=\mathsf N(\rk)$. Let $\chi$ be a character on $G$ which is
trivial on $N$. In this subsection, we prove the following theorem.
\begin{thml}\label{convg0}
Let  $\mathsf X$ be an algebraic variety over $\rk$ of Gorenstein
rational singularity. Let $\mathsf U$ be a smooth open subvariety
of $\mathsf X$ whose complement has codimension $\geq 2$. Assume
that $\mathsf U$ is a homogeneous space of $\mathsf G$. Let
$\eta$ be a $\chi$-generalized invariant distribution on $\mathsf
U(\rk)$. Then $\eta$ is a measure, and $\eta|^{\mathsf X(\rk)}$ is
locally finite.
\end{thml}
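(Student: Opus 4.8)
The plan is to reduce to a single connected homogeneous space, invoke the explicit description of generalized semi-invariant distributions from Theorem~\ref{defm}, and then deduce local finiteness across the boundary from Proposition~\ref{convg_m_volume}; the crux is a Hartogs-type argument showing that the density factors produced by Theorem~\ref{defm} stay tame near $\mathsf X\setminus\mathsf U$.

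First I would reduce to $\mathsf G$ connected. A variety with Gorenstein rational singularities is normal, so the irreducible components of $\mathsf X$ are pairwise disjoint, hence open and closed, and $\mathsf X(\rk)$ is the disjoint union of their $\rk$-points. Since $\mathsf X\setminus\mathsf U$ has codimension $\ge2$, $\mathsf U$ is dense in $\mathsf X$ and meets each component in a dense open subset; moreover each $\mathsf G^\circ$-orbit in $\mathsf U$ is open of full dimension, hence a connected component of $\mathsf U$, so each irreducible component $\mathsf X_i$ of $\mathsf X$ contains exactly one $\mathsf G^\circ$-orbit $\mathsf U_i$ of $\mathsf U$, again with $\mathsf X_i$ Gorenstein rational, $\mathsf U_i$ smooth, and $\mathrm{codim}_{\mathsf X_i}(\mathsf X_i\setminus\mathsf U_i)\ge2$. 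As $\mathsf G^\circ(\rk)$ preserves each (open and closed) $\mathsf U_i(\rk)$, $\eta$ splits as $\bigoplus_i\eta_i$ with $\eta_i$ a generalized $\chi|_{\mathsf G^\circ(\rk)}$-invariant distribution on $\mathsf U_i(\rk)$, and $\chi|_{\mathsf G^\circ(\rk)}$ is still trivial on $N$ (the unipotent radical of $\mathsf G^\circ$ is $\mathsf N$). It therefore suffices to treat each $(\mathsf G^\circ,\mathsf X_i,\mathsf U_i,\eta_i)$; from now on assume $\mathsf G$ connected and $\eta\ne0$.

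By Lemma~\ref{finitehom}, $\mathsf U(\rk)=\bigsqcup_{k=1}^r O_k$ is a finite disjoint union of open $G$-orbits. For each $k$ with $\eta|_{O_k}\ne0$, fix $x_k\in O_k$ and put $\mathsf H_k:=\mathsf G_{x_k}$, so $\mathsf G/\mathsf H_k\cong\mathsf U$; Theorem~\ref{defm} applied to $\eta|_{O_k}\in\Hom_{G,\kappa}(\oS(O_k),\chi)$ (where $\kappa$ is the order of $\eta$) provides a positive integer $m_k$, a nonzero semi-invariant algebraic $m_k$-volume form $\omega_k$ on $\mathsf U$, finitely many algebraic characters $\alpha^k_\bullet,\beta^k_\bullet$ of $\mathsf G$ trivial on $\mathsf H_k$ (hence units of $\CO_{\mathsf U}(\mathsf U)$ via $\mathsf U=\mathsf G/\mathsf H_k$), scalars $s^k_\bullet\in\C$, a polynomial $P_k$ of degree $\le\kappa$, and a finite-order character $\chi_{\mathrm f,k}$ of $G$, with
\[
\eta|_{O_k}=P_k(\mathrm{val}\circ\alpha^k_1,\dots)\cdot\textstyle\prod_j\abs{\beta^k_j}_\rk^{s^k_j}\cdot\chi_{\mathrm f,k}\cdot(\abs{\omega_k}_\rk^{1/m_k})|_{O_k}.
\]
In particular each $\eta|_{O_k}$, hence $\eta$, is a locally finite measure on $\mathsf U(\rk)$, which is the first assertion. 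Now pick $k_0$ with $\eta|_{O_{k_0}}\ne0$, set $M:=\mathrm{lcm}_k m_k$ and $\omega:=\omega_{k_0}^{M/m_{k_0}}$, a nonzero semi-invariant algebraic $M$-volume form on $\mathsf U$; being semi-invariant and nonzero on a homogeneous space, $\omega$ and every $\omega_k^{M/m_k}$ are nowhere vanishing, so $\gamma_k:=\omega_k^{M/m_k}/\omega$ is a unit of $\CO_{\mathsf U}(\mathsf U)$ and $(\abs{\omega_k}_\rk^{1/m_k})|_{O_k}=\abs{\gamma_k}_\rk^{1/M}(\abs{\omega}_\rk^{1/M})|_{O_k}$. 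Hence $\eta|_{O_k}=F_k\cdot(\abs{\omega}_\rk^{1/M})|_{O_k}$ with $F_k:=P_k(\mathrm{val}\circ\alpha^k_\bullet)\cdot\prod_j\abs{\beta^k_j}_\rk^{s^k_j}\cdot\abs{\gamma_k}_\rk^{1/M}\cdot\chi_{\mathrm f,k}$.

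The decisive point is that each $F_k$ is locally bounded on all of $\mathsf X(\rk)$. The finitely many functions $\alpha^k_\bullet,\beta^k_\bullet,\gamma_k$ are units of $\CO_{\mathsf U}(\mathsf U)$; since $\mathsf X$ is normal and $\mathsf X\setminus\mathsf U$ has codimension $\ge2$, the restriction $\CO_{\mathsf X}(\mathsf X)\to\CO_{\mathsf U}(\mathsf U)$ is an isomorphism (Lemma~\ref{Hartog} with $\CF=\CO_{\mathsf X}$), so a unit of $\CO_{\mathsf U}(\mathsf U)$ extends to a unit of $\CO_{\mathsf X}(\mathsf X)$, i.e. to a continuous nowhere-vanishing $\rk$-valued function on $\mathsf X(\rk)$. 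Thus $\mathrm{val}\circ\alpha^k_\bullet$ extends to a locally constant function on $\mathsf X(\rk)$ (taking finitely many values on each compact set), while $\abs{\beta^k_j}_\rk$ and $\abs{\gamma_k}_\rk$ extend to continuous functions bounded above and bounded away from $0$ on compact sets; together with $\abs{\chi_{\mathrm f,k}}\equiv1$ and finiteness of $\{k\}$, this yields, for every compact open $W\subset\mathsf X(\rk)$, a constant $C_W<\infty$ with $\abs{F_k}\le C_W$ on $W\cap O_k$ for all $k$. Consequently $\abs{\eta}(W\cap\mathsf U(\rk))=\sum_k\abs{F_k}\,(\abs{\omega}_\rk^{1/M})(W\cap O_k)\le C_W\,(\abs{\omega}_\rk^{1/M})(W\cap\mathsf U(\rk))$, and by Proposition~\ref{convg_m_volume} the measure $(\abs{\omega}_\rk^{1/M})|^{\mathsf X(\rk)}$ is locally finite, so for $W$ small enough $\abs{\eta|^{\mathsf X(\rk)}}(W)=\abs{\eta}(W\cap\mathsf U(\rk))<\infty$, proving local finiteness. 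I expect the Hartogs step to be the main obstacle: the density factors from Theorem~\ref{defm} live a priori only on $\mathsf U$ and could degenerate along $\mathsf X\setminus\mathsf U$, and it is precisely the fact that they are \emph{invertible} regular functions on $\mathsf U$ — so that the codimension-$\ge2$ hypothesis plus normality of $\mathsf X$ force them to extend to invertible regular functions on $\mathsf X$ — that makes them tame at the boundary; the remaining steps (reduction to the connected homogeneous case, the explicit form from Theorem~\ref{defm}, and Proposition~\ref{convg_m_volume}) are comparatively routine.
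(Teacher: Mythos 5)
Your proof is correct and follows essentially the same route as the paper's: reduce to connected $\mathsf G$ via the (open, disjoint) irreducible components of the normal $\mathsf X$ and Lemma~\ref{g0homo}, invoke Theorem~\ref{defm} for the explicit shape of the density, extend the algebraic characters across the boundary by Lemma~\ref{Hartog}, and conclude via Proposition~\ref{convg_m_volume}. The one place you are more careful than the paper is the observation that units of $\CO_{\mathsf U}(\mathsf U)$ extend to \emph{units} of $\CO_{\mathsf X}(\mathsf X)$ (so $\mathrm{val}\circ\alpha_i$ and $|\beta_j|_\rk^{s_j}$ genuinely extend continuously) — the paper states the conclusion without spelling this out — and your lcm-of-$m_k$ bookkeeping replaces the paper's reduction to a single open orbit, both being minor organizational variants.
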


\begin{lem}\label{lfinv}
Theorem \ref{convg0} holds when $\mathsf G$ is connected.
\end{lem}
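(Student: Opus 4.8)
The plan is to reduce to a single $G$-orbit, describe $\eta$ on that orbit by Theorem~\ref{defm}, and control the resulting density near the boundary $\mathsf X\setminus\mathsf U$ by combining Proposition~\ref{convg_m_volume} with a Hartogs-type extension of the characters that occur.

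First I would use that $\mathsf U$ is $\mathsf G$-homogeneous: by Lemma~\ref{finitehom}, $\mathsf U(\rk)=O_1\sqcup\cdots\sqcup O_n$ ($n\in\BN$) is a finite disjoint union of $G$-orbits, each open --- hence also closed --- in $\mathsf U(\rk)$. For a compact set $K\subset\mathsf X(\rk)$ one then has $\abs{\eta|^{\mathsf X(\rk)}}(K)=\abs{\eta}(K\cap\mathsf U(\rk))=\sum_{j=1}^{n}\abs{\eta|_{O_j}}(K\cap O_j)$, and $\eta$ is a measure as soon as each $\eta|_{O_j}$ is (glue over the clopen partition); so everything reduces to a single orbit $O=O_j$. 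Picking $x\in O(\rk)$ and setting $\mathsf H:=\mathsf G_x$ (smooth since $\mathrm{char}\,\rk=0$) and $H:=\mathsf H(\rk)=\Stab_G(x)$, the orbit morphism identifies $\mathsf U$ with the quotient $\mathsf G/\mathsf H$ and $O$ with $G/H$ as $\ell$-spaces, and $\eta|_O\in\Hom_{G,k}(\oS(O),\chi)$ for some $k\in\BN$.

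Next I would invoke Theorem~\ref{defm} (which applies since $\mathsf G$ is connected and $\chi$ is trivial on $N$): $\eta|_O$ is a measure of the form $\eta|_O=\psi\cdot(\abs{\omega}_\rk^{1/m})|_O$, where $\omega$ is an algebraic $m$-volume form on $\mathsf G/\mathsf H=\mathsf U$ ($m\geq 1$) and
\[
\psi=P(\mathrm{val}\circ\alpha_1,\dots,\mathrm{val}\circ\alpha_r)\cdot\abs{\beta_1}_\rk^{s_1}\cdots\abs{\beta_t}_\rk^{s_t}\cdot\chi_\mathrm f,
\]
with $\alpha_i,\beta_i$ algebraic characters of $\mathsf G$ trivial on $\mathsf H$, $P$ a polynomial, and $\chi_\mathrm f$ of finite order. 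The key observation is that each $\alpha_i$ (resp.\ $\beta_i$), being trivial on $\mathsf H$, descends to an invertible regular function $\bar\alpha_i$ (resp.\ $\bar\beta_i$) on $\mathsf U=\mathsf G/\mathsf H$; since $\mathsf X$ is normal (Gorenstein rational singularities) and $\mathsf X\setminus\mathsf U$ has codimension $\geq 2$, Lemma~\ref{Hartog} applied to $\CO_{\mathsf X}$ gives $\Gamma(\mathsf X,\CO_{\mathsf X})\xrightarrow{\sim}\Gamma(\mathsf U,\CO_{\mathsf U})$, so $\bar\alpha_i$ and $\bar\alpha_i^{-1}$, and hence $\bar\alpha_i$ itself, extend to invertible regular functions on all of $\mathsf X$, and likewise for $\bar\beta_i$. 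Consequently $\mathrm{val}\circ\bar\alpha_i$ is locally constant on $\mathsf X(\rk)$ and $\abs{\bar\beta_i}_\rk$ is continuous and bounded away from $0$ and $\infty$ on $K$; since on $O$ the factors of $\psi$ are the restrictions of these globally defined functions and $\abs{\chi_\mathrm f}\le 1$, we obtain a finite bound $\abs{\psi}\le C_K$ on $K\cap O$ with $C_K$ depending only on $K$.

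Finally, $\abs{\eta|_O}(K\cap O)=\int_{K\cap O}\abs{\psi}\,(\abs{\omega}_\rk^{1/m})\le C_K\cdot(\abs{\omega}_\rk^{1/m})|^{\mathsf X(\rk)}(K)<\infty$ by Proposition~\ref{convg_m_volume}, applied to the algebraic $m$-volume form $\omega$ on the smooth open subvariety $\mathsf U$ of $\mathsf X$ (complement of codimension $\geq 2$, $\mathsf X$ with Gorenstein rational singularities). Summing over the finitely many orbits gives local finiteness of $\eta|^{\mathsf X(\rk)}$, and the measure assertion follows from Theorem~\ref{defm} on each orbit together with the gluing noted above. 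I expect the main obstacle to be exactly this control of the non-volume-form factors near $\mathsf X\setminus\mathsf U$: it is the codimension-$\ge 2$ hypothesis, through normality and Hartogs, that forces the invertible characters to extend across the boundary, so that the genuine analytic difficulty is entirely packaged into Proposition~\ref{convg_m_volume}, after which only bookkeeping with total variations and the orbit decomposition remains.
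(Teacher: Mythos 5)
Your proposal is correct and follows essentially the same route as the paper: reduce to a single open $G$-orbit in $\mathsf U(\rk)$, write the restricted distribution explicitly via Theorem~\ref{defm}, extend the characters across the codimension-$\ge 2$ boundary by Lemma~\ref{Hartog}, and finish with Proposition~\ref{convg_m_volume}. The one place where you are more careful than the paper is worth noting: the paper's proof says only that the $\alpha_i,\beta_j$ ``extend to elements of $\CO(\mathsf X)$'' and then asserts that the non-volume-form factor extends to a \emph{continuous} function on $\mathsf X(\rk)$, which in fact requires that these extensions be nowhere vanishing (otherwise $\mathrm{val}\circ\alpha_i$ and $|\beta_j|_\rk^{s_j}$ blow up). You supply the missing observation --- both $\bar\alpha_i$ and $\bar\alpha_i^{-1}$ extend by Hartogs and their product extends to $1$, so the extension is invertible --- which closes this small gap and is exactly what the authors intend.
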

\begin{proof}
We are in the setting of Theorem \ref{convg0} and assume that
$\mathsf G$ is connected. Theorem \ref{convg0} is trivial when
$\mathsf U(\rk)$ is empty. So assume that $\mathsf U(\rk)$ is
non-empty. Then we may (and do) assume that $\mathsf U=\mathsf
G/\mathsf H$ for some algebraic subgroup $\mathsf H$ of
$\mathsf G$. Since $(\mathsf G/\mathsf H)(\rk)$ is the disjoint
union of finitely many $G$-open orbits, we assume without loss of
generality that the distribution $\eta$ is supported on $G/H$. Write
\[
  \eta|_{G/H}= P(\mathrm{val}\circ \alpha_1, \mathrm{val}\circ \alpha_2, \cdots, \mathrm{val}\circ \alpha_r) \cdot \abs{\beta_1}_\rk^{s_1} \cdot \abs{\beta_2}_\rk^{s_2}
   \cdot \cdots \cdot
   \abs{\beta_t}_\rk^{s_t} \cdot \chi_\mathrm f \cdot (\abs{\omega}_\rk^{1/m})|_{G/H},
\]
as in Theorem \ref{defm}. By Lemma \ref{Hartog}, $\alpha_1,
\alpha_2, \cdots, \alpha_r$, $\beta_1, \beta_2,\cdots, \beta_t$
extend to elements of $\CO(\mathsf X)$. Therefore
\[
  P(\mathrm{val}\circ \alpha_1, \mathrm{val}\circ \alpha_2, \cdots, \mathrm{val}\circ \alpha_r) \cdot \abs{\beta_1}_\rk^{s_1} \cdot \abs{\beta_2}_\rk^{s_2}
   \cdot \cdots \cdot
   \abs{\beta_t}_\rk^{s_t}
\]
extends to a continuous function on $\mathsf X(\rk)$. By
Proposition \ref{convg_m_volume}, ${(\abs{\omega}_\rk^{1/m})}|^{\mathsf
X(\rk)}$ is locally finite. Therefore the lemma follows.
\end{proof}

Denote by $\mathsf G_0$ the identity connected component of
$\mathsf G$.
\begin{lem}\label{g0homo}
Let $\mathsf U$ be a homogeneous space of $\mathsf G$. Then
every connected component of $\mathsf U$ containing an $\rk$-point
is $\mathsf G_0$-stable and homogeneous.
\end{lem}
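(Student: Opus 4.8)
The plan is to prove Lemma \ref{g0homo} by a combination of the following two basic facts about algebraic groups over a field: first, a connected algebraic group is geometrically irreducible, hence geometrically connected; second, the set of connected components of an algebraic variety over $\rk$ is permuted transitively by $\mathsf G(\bar\rk)$ when the variety is $\mathsf G$-homogeneous. I will work first over $\bar\rk$ and then descend.

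First I would fix a connected component $\mathsf U_0$ of $\mathsf U$ containing an $\rk$-point $x_0$. Passing to $\bar\rk$, the variety $\mathsf U_{\bar\rk}$ decomposes into its irreducible (equivalently connected, since $\mathsf U$ is smooth being a homogeneous space) components; because $\mathsf G(\bar\rk)$ acts transitively on $\mathsf U(\bar\rk)$, it permutes these components transitively, and the stabilizer of any one of them is an open subgroup of $\mathsf G_{\bar\rk}$, hence contains the identity component $(\mathsf G_0)_{\bar\rk}$ (an open subgroup of an algebraic group contains the identity component). Therefore every irreducible component of $\mathsf U_{\bar\rk}$ is $(\mathsf G_0)(\bar\rk)$-stable. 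Since $\mathsf G_0$ is geometrically connected, the component $\mathsf U_0$ of $\mathsf U$, being defined over $\rk$ and containing the $\rk$-point $x_0$, is geometrically connected as well — here I would invoke that a connected component of a $\rk$-variety with an $\rk$-point is geometrically connected — so $(\mathsf U_0)_{\bar\rk}$ is a single irreducible component of $\mathsf U_{\bar\rk}$, and by the above it is $(\mathsf G_0)(\bar\rk)$-stable. Stability of $(\mathsf U_0)_{\bar\rk}$ under $(\mathsf G_0)(\bar\rk)$ translates, by a standard descent/density argument, into $\mathsf G_0$-stability of $\mathsf U_0$ as an $\rk$-subvariety (the action morphism $\mathsf G_0\times_\rk\mathsf U_0\to\mathsf U$ factors through the open and closed subscheme $\mathsf U_0$ because this can be checked on $\bar\rk$-points).

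Next I would establish homogeneity. For $\mathsf G_0$-homogeneity of $\mathsf U_0$ I must show $(\mathsf G_0)(\bar\rk)$ acts transitively on $(\mathsf U_0)(\bar\rk)=\mathsf U_0(\bar\rk)$. But $\mathsf G(\bar\rk)$ acts transitively on $\mathsf U(\bar\rk)$ with $\mathsf G_0$ of finite index, and $\mathsf G_0(\bar\rk)$ preserves the finitely many irreducible components of $\mathsf U_{\bar\rk}$; since those components are exactly the orbits of $\mathsf G_0(\bar\rk)$ (the $\mathsf G_0(\bar\rk)$-orbits are open, being images of the open orbit map from a connected group, and their closures are unions of lower-dimensional orbits, but by counting/dimension they must be the whole irreducible component), the component $(\mathsf U_0)_{\bar\rk}$ is a single $\mathsf G_0(\bar\rk)$-orbit. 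Hence $\mathsf U_0$ is $\mathsf G_0$-homogeneous in the sense of Remark (a) of the excerpt.

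The main obstacle, and the step I would be most careful with, is the bookkeeping of the orbit structure of $\mathsf G_0(\bar\rk)$ on $\mathsf U_{\bar\rk}$: one needs that each $\mathsf G_0(\bar\rk)$-orbit is open \emph{and} closed in $\mathsf U_{\bar\rk}$ so that it coincides with a connected component, rather than merely being open with complicated boundary. This follows because $\mathsf G_0(\bar\rk)$ has finite index in $\mathsf G(\bar\rk)$, so $\mathsf U_{\bar\rk}$ is a finite union of $\mathsf G_0(\bar\rk)$-orbits, each open, whence each is also closed (its complement being a finite union of open orbits); being open and closed and contained in an irreducible component, it equals that component. Everything else is standard descent of geometric statements to $\rk$, for which I would cite the fact that an open-and-closed $\bar\rk/\rk$-descended subscheme can be detected on $\bar\rk$-points since $\mathsf U$ and its components are of finite type.
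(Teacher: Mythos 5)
Your proof is correct but takes a genuinely different route from the paper's. The paper's argument is purely group-theoretic and never leaves $\rk$: writing $\mathsf U=\mathsf G/\mathsf H$ with $\mathsf H$ the stabilizer of the $\rk$-point $x_0$, it notes that $\mathsf G_0\cdot\mathsf H$ is an open (hence also closed) algebraic subgroup of $\mathsf G$ because $\mathsf G_0$ is normal and open, so $(\mathsf G_0\cdot\mathsf H)/\mathsf H=\mathsf G_0/(\mathsf G_0\cap\mathsf H)$ is at once open-closed, connected (as a quotient of the connected $\mathsf G_0$), $\mathsf G_0$-stable and $\mathsf G_0$-homogeneous; it is therefore the connected component through $x_0$, and the lemma follows from the decomposition $\mathsf G/\mathsf H=(\mathsf G_0\cdot\mathsf H)/\mathsf H\sqcup(\mathsf G\setminus\mathsf G_0\cdot\mathsf H)/\mathsf H$. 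You instead base-change to $\bar\rk$, show that the $\mathsf G_0(\bar\rk)$-orbits are open and (since finitely many) closed, identify them with the irreducible components of $\mathsf U_{\bar\rk}$, invoke that a connected pointed $\rk$-scheme is geometrically connected to match $(\mathsf U_0)_{\bar\rk}$ with one such component, and descend. Both are valid; the paper's is tighter because the quotient presentation exhibits the component of the base point explicitly, whereas your version pays for the base-change bookkeeping but cleanly separates out the two geometric inputs (geometric connectedness of a pointed component and the open-closed orbit count). One small blemish in your write-up: the parenthetical "their closures are unions of lower-dimensional orbits" is misleading in this setting, since, as your final paragraph correctly observes, finitely many open orbits are automatically closed, so there are no lower-dimensional boundary orbits to speak of; the self-correction resolves the point.
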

\begin{proof}
Let $x_0\in \mathsf U(\rk)$. Let $\mathsf H$ denote the
stabilizer of $x_0$ in $\mathsf G$. Then $\mathsf U=\mathsf
G/\mathsf H$. Note that $\mathsf G_0\cdot \mathsf H$ is an
open algebraic subgroup of $\mathsf G$, and $(\mathsf G_0\cdot
\mathsf H)/\mathsf H=\mathsf G_0/(\mathsf G_0\cap \mathsf
H)$ is a connected homogeneous space of $\mathsf G_0$. Write
\[
  \mathsf
G/\mathsf H= (\mathsf G_0\cdot \mathsf H)/\mathsf H
\,\sqcup\, (\mathsf G\setminus (\mathsf G_0\cdot \mathsf
H))/\mathsf H,
\]
and the lemma follows.
\end{proof}

Now we come to the proof of Theorem \ref{convg0} in general.
 Since $\mathsf X$ is normal, it is the disjoint
union of its irreducible components, and all the irreducible
components are open in $\mathsf X$. We only need to show that for
every irreducible component $\mathsf X'$ of $\mathsf X$,
\be\label{rup}
 \textrm{ $\eta':=\eta|_{\mathsf U'(\rk)}$ is a
measure, and $\eta'|^{\mathsf X'(\rk)}$ is locally finite,} \ee
where $\mathsf U':=\mathsf X'\cap \mathsf U$. This is
trivially true if $\mathsf U'(\rk)$ is empty. So assume that
$\mathsf U'(\rk)$ is non-empty. Then $\mathsf U'$ is an
irreducible component of $\mathsf U$, and hence it is also a
connected component of $\mathsf U$ since $\mathsf U$ is normal.
By Lemma \ref{g0homo}, $\mathsf U'$ is $\mathsf G_0$-stable and
homogeneous. Therefore \eqref{rup} holds by Lemma \ref{lfinv}.

\subsection{Proof of Theorem \ref{main2}}
Now we are in the setting of Theorem \ref{main2}. Let $\eta$ be a
$\chi$-generalized invariant distribution on $\mathsf U(\rk)$. By
Theorem \ref{convg0}, $\eta$ is a measure and the measure
$\eta|^{\mathsf X_f(\rk)}$ is locally finite. Proposition
\ref{defm3} implies that the measure $\eta|^{\mathsf X_f(\rk)}$ is
definable of order $\leq k$ for some $k\in \BN$. By Corollary
\ref{corextm}, $\eta|^{\mathsf X_f(\rk)}$ extends to a
$\chi$-generalized invariant distribution on $\mathsf X(\rk)$.
This finishes the proof of Theorem \ref{main2}.

\subsection{A variant of Theorem \ref{main2} }
We also have the following theorem.
\begin{thml}
\label{variant2}
Let $\mathsf G$ be a linear algebraic group over $\rk$. Let
$\mathsf X$ be  an algebraic variety over $\rk$  so that
$\mathsf G$ acts algebraically  on it with an open orbit
$\mathsf U\subset \mathsf X$. Assume that there is a non-zero semi-invariant algebraic volume form on $\mathsf U$, and  there is a  semi-invariant
regular function $f$ on  $\mathsf X$   with the
following properties:
\begin{itemize}
  \item $f$ does not vanish on $\mathsf U$, and $\mathsf X_f\setminus \mathsf U$ has codimension $\geq 2$ in  $\mathsf X_f$, where $\mathsf X_f$ denotes the complement  in $\mathsf X$ of the zero locus of $f$;
  \item the variety $\mathsf X_f$ has  rational singularities.
\end{itemize}
 Let $\chi$ be a character of $\mathsf G(\rk)$ which is trivial on $\mathsf N(\rk)$, where $\mathsf N$ denotes the unipotent radical of $\mathsf G$.
Then every generalized $\chi$-invariant distribution on $\mathsf
U(\rk)$ extends to a  generalized $\chi$-invariant distribution on
$\mathsf X(\rk)$.
\end{thml}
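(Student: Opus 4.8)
The plan is to follow the proof of Theorem \ref{main2} step by step, and to replace its single appeal to Theorem \ref{convg0} (which is where Gorenstein-ness enters, through Proposition \ref{convg_m_volume}) by a version needing only rational singularities, at the price of the extra hypothesis that $\mathsf U$ carries a non-zero semi-invariant algebraic volume form. Thus the heart of the argument is to prove the following variant of Theorem \ref{convg0}: \emph{if $\mathsf X_f$ has rational singularities, $\mathsf U\subset\mathsf X_f$ is a smooth open orbit whose complement has codimension $\geq 2$, $\mathsf U$ admits a non-zero semi-invariant algebraic $1$-volume form $\omega_0$, and $\chi$ is trivial on $N$, then every generalized $\chi$-invariant distribution $\eta$ on $\mathsf U(\rk)$ is a measure and $\eta|^{\mathsf X_f(\rk)}$ is locally finite.} Granting this, the rest is verbatim the proof of Theorem \ref{main2}: $\eta|^{\mathsf X_f(\rk)}$ is then definable of order $\leq k$ for some $k$ by Proposition \ref{defm3}, and Corollary \ref{corextm} extends it to a generalized $\chi$-invariant distribution on $\mathsf X(\rk)$.

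To prove the variant, I would first reduce to $\mathsf G$ connected exactly as at the end of the proof of Theorem \ref{convg0}: $\mathsf X_f$ is normal, hence a disjoint union of its open irreducible components, each meeting $\mathsf U$ in a connected component of $\mathsf U$ which by Lemma \ref{g0homo} is $\mathsf G_0$-stable and $\mathsf G_0$-homogeneous and still carries the restriction of $\omega_0$. Assuming then $\mathsf G$ connected and $\mathsf U(\rk)\neq\emptyset$, I would fix $x_0\in\mathsf U(\rk)$, put $\mathsf H=\mathsf G_{x_0}$ so that $\mathsf U=\mathsf G/\mathsf H$, and, using Lemma \ref{finitehom}, reduce further to the case where $\eta$ is supported on a single open orbit $G/H\subset\mathsf U(\rk)$. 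By Theorem \ref{defm}, on $G/H$ one has
\[
\eta = P(\mathrm{val}\circ\alpha_1,\ldots,\mathrm{val}\circ\alpha_r)\cdot\abs{\beta_1}_\rk^{s_1}\cdots\abs{\beta_t}_\rk^{s_t}\cdot\chi_{\mathrm f}\cdot(\abs{\omega'}_\rk^{1/m})|_{G/H},
\]
where $\alpha_i,\beta_j$ are algebraic characters of $\mathsf G$ trivial on $\mathsf H$, $\chi_{\mathrm f}$ has finite order, and $\omega'$ is a semi-invariant algebraic $m$-volume form on $\mathsf U$.

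The key new step is to trade the $m$-volume form $\omega'$ for the given $1$-volume form $\omega_0$. Since $\mathsf U$ is smooth and homogeneous, any non-zero semi-invariant form on it is nowhere vanishing (its zero locus would be a proper $\mathsf G(\bar\rk)$-stable closed subset), so $\omega_0^{\otimes m}$ trivializes $(\wedge^{\mathrm{top}}\Omega_{\mathsf U})^{\otimes m}$ and $\omega'=h\,\omega_0^{\otimes m}$ for a nowhere vanishing semi-invariant regular function $h$ on $\mathsf U$; hence $\abs{\omega'}_\rk^{1/m}=\abs{h}_\rk^{1/m}\,\abs{\omega_0}_\rk$ on $\mathsf U(\rk)$. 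Now $\mathsf X_f$ is normal with $\mathsf X_f\setminus\mathsf U$ of codimension $\geq 2$, so Lemma \ref{Hartog} (applied with $\CF=\CO_{\mathsf X_f}$) extends $h$ and each of $\alpha_1,\ldots,\alpha_r,\beta_1,\ldots,\beta_t$ uniquely to elements of $\CO(\mathsf X_f)$; the zero locus of each such extension, being of pure codimension one in $\mathsf X_f$ where nonempty, cannot lie inside the codimension $\geq 2$ set $\mathsf X_f\setminus\mathsf U$, so it is empty, i.e. the extension is nowhere vanishing on $\mathsf X_f$. Therefore $\mathrm{val}\circ\alpha_i$, $\abs{\beta_j}_\rk^{s_j}$ and $\abs{h}_\rk^{1/m}$ all extend to continuous functions on $\mathsf X_f(\rk)$, while $\abs{\chi_{\mathrm f}}\equiv 1$, so on every compact subset of $\mathsf X_f(\rk)$ the total variation $\abs{\eta}$ is dominated by a constant multiple of $(\abs{\omega_0}_\rk)|^{\mathsf X_f(\rk)}$; by Proposition \ref{lf0} applied to $\mathsf X_f$, $\mathsf U$ and $\omega_0$, this last measure is locally finite, hence so is $\eta|^{\mathsf X_f(\rk)}$, and in particular $\eta$ is a measure.

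I expect the only genuine obstacle, and the reason for the extra hypothesis, to be precisely this passage from an $m$-volume form to a $1$-volume form: without a global semi-invariant $1$-volume form on $\mathsf U$ one must control $(\abs{\omega'}_\rk^{1/m})|^{\mathsf X_f(\rk)}$ directly, which is exactly what forces the Gorenstein hypothesis and Proposition \ref{convg_m_volume} in Theorem \ref{main2}. The remaining ingredients, namely the reductions to connected $\mathsf G$ and to a single orbit, the identification $\mathsf U\cong\mathsf G/\mathsf H$, and the Hartogs-type extension arguments, follow the proofs of Lemma \ref{lfinv}, Lemma \ref{g0homo} and Theorem \ref{convg0} essentially verbatim; one should also verify, as in the proof of Theorem \ref{main2}, that the semialgebraic boundedness hypotheses required to invoke Corollary \ref{corextm} and Theorem \ref{igusa} are in force.
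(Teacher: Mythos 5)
Your proof is correct and follows essentially the same route as the paper: the paper proves the result by re-running the proof of Theorem \ref{main2} with Theorem \ref{convg0} replaced by a variant (Theorem \ref{r-measure}, stated but not fully spelled out), which in turn re-runs Theorem \ref{convg0} with Proposition \ref{convg_m_volume} replaced by Proposition \ref{lf0}. Your one deviation is the explicit trade $\omega'=h\,\omega_0^{\otimes m}$ to convert the $m$-volume form supplied by Theorem \ref{defm} into the given $1$-volume form $\omega_0$ (so that Proposition \ref{lf0} applies); the paper instead implicitly re-runs Lemma \ref{compfd} and the proof of Theorem \ref{defm} with $m=1$ using $\omega_0$ directly, which is logically equivalent, so your argument usefully makes explicit a small step the paper glosses over.
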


The proof of Theorem \ref{variant2} is the same as that of Theorem \ref{main2}, except that we should replace Theorem \ref{convg0} by the following theorem.
\begin{thml}
\label{r-measure}
Let  $\mathsf X$ be an algebraic variety over $\rk$ of
rational singularities. Let $\mathsf U$ be a smooth open subvariety
of $\mathsf X$ whose complement has codimension $\geq 2$. Assume
that $\mathsf U$ is a homogeneous space of $\mathsf G$ and there exists a non-zero semi-invariant algebraic volume form on $\mathsf U$. Let
$\eta$ be a $\chi$-generalized invariant distribution on $\mathsf
U(\rk)$, where $\chi$ is as in Theorem \ref{variant2}. Then $\eta$ is a measure, and $\eta|^{\mathsf X(\rk)}$ is
locally finite.
\end{thml}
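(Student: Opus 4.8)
The plan is to adapt the proof of Theorem \ref{convg0} essentially verbatim, replacing only the one step where the Gorenstein hypothesis enters — the appeal to Proposition \ref{convg_m_volume} on $m$-volume forms — by a direct use of the given semi-invariant ($1$-)volume form, so that the singularity input is reduced to Proposition \ref{lf0}. The reduction to $\mathsf G$ connected is exactly as in the proof of Theorem \ref{convg0}: since $\mathsf X$ is normal it is the disjoint union of its irreducible (hence connected) components, so it suffices to treat one, say $\mathsf X'$; if $\mathsf U':=\mathsf X'\cap\mathsf U$ has an $\rk$-point, then by Lemma \ref{g0homo} it is a $\mathsf G_0$-stable, $\mathsf G_0$-homogeneous connected component of $\mathsf U$, smooth, with complement of codimension $\geq 2$ in the (still rational-singularity) variety $\mathsf X'$, and the restriction to $\mathsf U'$ of a non-zero $\mathsf G$-semi-invariant algebraic volume form on $\mathsf U$ is a non-zero $\mathsf G_0$-semi-invariant algebraic volume form on $\mathsf U'$. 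So I may assume $\mathsf G$ connected; the statement is trivial if $\mathsf U(\rk)=\emptyset$, so otherwise $\mathsf U=\mathsf G/\mathsf H$ with $\mathsf H$ the stabilizer of some $x_0\in\mathsf U(\rk)$, and since $(\mathsf G/\mathsf H)(\rk)$ is a finite disjoint union of open $G$-orbits (Lemma \ref{finitehom}), I reduce to $\eta$ supported on one orbit $G/H$.

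Fix a non-zero $\delta_0$-invariant algebraic volume form $\omega_0$ on $\mathsf U$, with $\delta_0$ an algebraic character of $\mathsf G$. Being semi-invariant and non-zero on the homogeneous $\mathsf U$, $\omega_0$ is nowhere vanishing, so $\mu_0:=(\abs{\omega_0}_\rk)|_{G/H}$ is a smooth measure on $G/H$, non-zero on every non-empty open subset; it is $\abs{\delta_0}_\rk$-semi-invariant, and $\abs{\delta_0}_\rk$ is trivial on $N$ because an algebraic character of $\mathsf G$ kills the unipotent radical $\mathsf N$. Then, exactly as in the proof of Theorem \ref{defm} but with $\omega_0$ (and $m=1$) in place of the form there produced by Lemma \ref{lalgm}, Lemma \ref{compfd} yields $\eta|_{G/H}=f_0\mu_0$ with $f_0\in\Hom_{G,k}(\oD_c^\infty(G/H),\chi\abs{\delta_0}_\rk^{-1})$, where $\chi\abs{\delta_0}_\rk^{-1}$ is, like $\chi$, trivial on $N$. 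If $f_0=0$ we are done; otherwise Proposition \ref{H} shows $f_0$ is a smooth function
\[
 f_0=P(\mathrm{val}\circ\alpha_1,\dots,\mathrm{val}\circ\alpha_r)\cdot\abs{\beta_1}_\rk^{s_1}\cdots\abs{\beta_t}_\rk^{s_t}\cdot\chi_\mathrm f,
\]
where $\alpha_1,\dots,\alpha_r,\beta_1,\dots,\beta_t$ are algebraic characters of $\mathsf G$ trivial on $\mathsf H$, $P$ is a polynomial of degree $\leq k$, and $\chi_\mathrm f$ has finite order. In particular $\eta|_{G/H}=f_0\mu_0$ is a measure.

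Finally I prove local finiteness of $\eta|^{\mathsf X(\rk)}$. Since $\abs{\chi_\mathrm f}\equiv 1$, we have $\abs{\eta|_{G/H}}=\abs{f_0}\,\mu_0$ with $\abs{f_0}=\abs{P(\mathrm{val}\circ\alpha_1,\dots,\mathrm{val}\circ\alpha_r)}\cdot\prod_j\abs{\beta_j}_\rk^{\Re s_j}$. Each $\alpha_i$ and each $\beta_j$, being an algebraic character of $\mathsf G$ trivial on $\mathsf H$, is a nowhere-vanishing regular function on $\mathsf U=\mathsf G/\mathsf H$ whose reciprocal is also regular on $\mathsf U$; by Lemma \ref{Hartog} (using that $\mathsf X$ is normal and $\mathsf X\setminus\mathsf U$ has codimension $\geq 2$), both extend to regular functions on $\mathsf X$, whose product equals $1$ on the dense open $\mathsf U$ and therefore on $\mathsf X$; hence $\alpha_i$ and $\beta_j$ extend to nowhere-vanishing regular functions on $\mathsf X$. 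Consequently $\mathrm{val}\circ\alpha_i$ extends to a locally constant function on $\mathsf X(\rk)$ and each $\abs{\beta_j}_\rk^{\Re s_j}$ to a continuous function on $\mathsf X(\rk)$, so $\abs{f_0}$ extends to a function on $\mathsf X(\rk)$ that is bounded on every compact set. On the other hand $\mu_0|^{\mathsf X(\rk)}=(\abs{\omega_0}_\rk)|^{\mathsf X(\rk)}$ is locally finite by Proposition \ref{lf0}, applied to the smooth open subvariety $\mathsf U\subset\mathsf X$ with complement of codimension $\geq 2$ and $\mathsf X$ of rational singularities. Therefore $\abs{\eta}(K)\leq\bigl(\sup_K\abs{f_0}\bigr)\cdot\mu_0(K\cap G/H)<\infty$ for every compact $K\subset\mathsf X(\rk)$, and, summing over the finitely many orbits of $\mathsf U(\rk)$, the measure $\eta|^{\mathsf X(\rk)}$ is locally finite. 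The only substantive departure from the proof of Theorem \ref{convg0}, and the main obstacle to a mechanical transcription, is this: Theorem \ref{defm} in general produces $m$-volume forms with $m>1$ (because the modular character of Lemma \ref{lalgm} need only become algebraic after raising to a power), and these are controlled only by the Gorenstein input Proposition \ref{convg_m_volume}; feeding in the hypothesized $1$-volume form $\omega_0$ as the reference measure is exactly what confines the singularity theory to Proposition \ref{lf0}.
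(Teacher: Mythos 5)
Your proof is correct and follows the same route the paper intends; the paper's own ``proof'' of Theorem~\ref{r-measure} is only a one-line pointer (``replace Proposition~\ref{convg_m_volume} by Proposition~\ref{lf0}''), and you have correctly filled in the step that pointer glosses over. The substantive observation you isolate at the end --- that Theorem~\ref{defm} as stated can produce $m$-volume forms with $m>1$, which Proposition~\ref{lf0} alone cannot control, so one must feed the hypothesized semi-invariant $1$-volume form into Lemma~\ref{compfd} and Proposition~\ref{H} directly (or equivalently, observe that by uniqueness of semi-invariant sections on a connected homogeneous space, $\abs{\omega}_\rk^{1/m}$ is a scalar multiple of $\abs{\omega_0}_\rk$) --- is exactly what makes the ``replacement'' work, and the paper leaves this to the reader. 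Your extra care in showing that the Hartogs extensions of the $\alpha_i,\beta_j$ are nowhere vanishing (by extending the reciprocals as well and using irreducibility of $\mathsf X'$) is also a genuine improvement on the terse phrasing in the proof of Lemma~\ref{lfinv}, which silently uses the same fact.
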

The proof of Theorem \ref{r-measure} is also similar to that of Theorem \ref{convg0}, except that we replace Proposition \ref{convg_m_volume} by Proposition \ref{lf0}.

\section{Generalized semi-invariant distributions on matrix spaces}
\label{example}

We consider the following action of $G:={\rm GL}_m(\rk)\times {\rm GL}_n(\rk)$ ($m,n\geq 1$) on the  space ${\rm M}_{m,n}:={\rm M}_{m,n}(\rk)$ of $m\times n$-matrices with coefficients in $\rk$:
$$(g_1,g_2)\cdot x:=g_1 x g_2^{-1}, \qquad \text{ } g_1\in {\rm GL}_m(\rk),\, g_2\in {\rm GL}_n(\rk), \,x\in {\rm M}_{m,n}. $$
 For $r=0,1,\cdots, \min\{m,n\}$, let $\rm O_r$ denote the set of rank $r$ matrices in ${\rm M}_{m,n}$, which is a $G$-orbit.     Put $\bar{\rm O}_r:=\bigsqcup_{i=0}^r \rm O_i$.   Then ${\rm O}_r$ is open and dense in $\bar{\rm O}_r$.
Every character  of $G$ is given by
$$(g_1,g_2)\mapsto  \chi_1({\rm det}(g_1))\chi_2({\rm det}(g_2)), $$
for some characters $\chi_1,\chi_2$ of $\rk^\times$.  We denote this character of $G$ by the pair $(\chi_1,\chi_2)$.       Let ${\rm I}_r=(a_{ij})$ be the matrix in ${\rm O}_r$ such that
$$a_{ij}=\begin{cases}
1   \qquad \text{ if }  1\leq i=j\leq  r, \\
0  \qquad \text{ otherwise }.
\end{cases}  $$
The stabilizer group $G_r$ of ${\rm I}_r$ in $G$ consists of elements of the form
$$\left (\begin{bmatrix}  x   &   y   \\   0 &  w_1  \end{bmatrix},   \begin{bmatrix}  x   &   0   \\   z &  w_2  \end{bmatrix}     \right ),$$
where    $ x\in {\rm GL}_r(\rk), y\in {\rm M}_{r,m-r}(\rk), z\in {\rm M}_{n-r,r}(\rk), w_1\in {\rm GL}_{m-r}(\rk), w_2\in {\rm GL}_{n-r}(\rk)  $.
The algebraic modular character $\Delta_{G_r}$ of $G_r$ is given by
$$\left (\begin{bmatrix}  x   &   y   \\   0 &  w_1  \end{bmatrix},   \begin{bmatrix}  x   &   0   \\   z &  w_2  \end{bmatrix}     \right )  \mapsto  {\rm det}(x) ^{m-n}  {\rm det}(w_1)^{-r}  {\rm det} (w_2)^{r}. $$
By (\ref{ado}) and (\ref{modular}), for each character $\und \chi$ of $G$, the orbit ${\rm O}_r$ is $\und \chi$-admissible  if and only if
\[
\und \chi|_{G_r}=|\Delta_G|_\rk\cdot |\Delta_{G_r}|_\rk^{-1}=|\Delta_{G_r}|_\rk^{-1}.
\]
Since the stabilizer $G_r$ is connected when viewed as an algebraic group, the orbit ${\rm O}_r$ is $\und \chi$-admissible if and only if it is weakly $\und \chi$-admissible.

\begin{proposition}
\label{case1}
Fix a character  $\und{\chi}=(\chi_1,\chi_2)$  of $G$. Assume that $m\not=n$, then the following holds.

(a) If $\und{\chi}=(1,1)$, then $\oD({\rm M}_{m,n})^{\und{\chi}, \infty}=\oD({\rm M}_{m,n})^{\und{\chi}}=\C\cdot \delta_0$, where $\delta_0$ is the delta distribution supported at $0$.

 (b) If $\und{\chi}=(|\cdot|_\rk^n, |\cdot|_\rk^{-m})$,  then $\oD({\rm M}_{m,n})^{\und{\chi}, \infty}=D(M_{m,n})^{\und{\chi}}=\C\cdot \mu_{{\rm M}_{m,n}}$, where $\mu_{{\rm M}_{m,n}}$ is a Haar measure on ${\rm M}_{m,n}$.

(c) If $\und{\chi}\not= (1,1),\, (|\cdot|_\rk^n, |\cdot|_\rk^{-m})$, then $\oD({\rm M}_{m,n})^{\und{\chi}, \infty}=0$.

\end{proposition}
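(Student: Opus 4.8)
The plan is to determine, for each character $\und\chi=(\chi_1,\chi_2)$ of $G$, exactly which of the orbits ${\rm O}_r$ is (weakly) $\und\chi$-admissible, and then to feed this into the automatic extension machinery of Section \ref{secaut}.

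First I would do the admissibility bookkeeping. By \eqref{ado} and \eqref{modular}, ${\rm O}_r$ is $\und\chi$-admissible exactly when the two characters $\und\chi|_{G_r}$ and $|\Delta_{G_r}|_\rk^{-1}$ of $G_r$ coincide; both are pulled back along the homomorphism sending an element of $G_r$ to the triple $(\det x,\det w_1,\det w_2)$ of the invertible blocks that are present, so using the given formula for $\Delta_{G_r}$ this becomes the conditions $\chi_1\chi_2=|\cdot|_\rk^{n-m}$ (present iff $r\geq 1$), $\chi_1=|\cdot|_\rk^{\,r}$ (present iff $r<m$), and $\chi_2=|\cdot|_\rk^{-r}$ (present iff $r<n$). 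For $0<r<\min\{m,n\}$ all three hold, forcing $|\cdot|_\rk^{n-m}=\chi_1\chi_2=1$, impossible since $m\neq n$; so no such orbit is admissible. For $r=0$ one gets $\und\chi=(1,1)$, and for $r=\min\{m,n\}$ (where exactly one of $w_1,w_2$ is absent) one gets $\und\chi=(|\cdot|_\rk^n,|\cdot|_\rk^{-m})$, and these two characters are distinct because $n\geq 1$. Since every $G_r$ is connected as an algebraic group, "weakly $\und\chi$-admissible" and "$\und\chi$-admissible" agree throughout.

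Given this, part (c) is immediate: when $\und\chi\neq(1,1),(|\cdot|_\rk^n,|\cdot|_\rk^{-m})$, no $G$-orbit in ${\rm M}_{m,n}$ is weakly $\und\chi$-admissible, so Theorem \ref{vextz} applied to $\mathsf Z={\rm M}_{m,n}$ gives $\oD({\rm M}_{m,n})^{\und\chi,\infty}=\Hom_{G,\infty}(\oS({\rm M}_{m,n}),\und\chi)=0$. For part (a), set $\mathsf U:={\rm M}_{m,n}\setminus\{0\}$; its $G$-orbits are the ${\rm O}_r$ with $1\leq r\leq\min\{m,n\}$, none weakly $(1,1)$-admissible, so Theorem \ref{vextz} gives $\Hom_{G,\infty}(\oS(\mathsf U(\rk)),(1,1))=0$. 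Hence, via the $G$-equivariant restriction map $\Hom_\C(\oS({\rm M}_{m,n}),\C)\to\Hom_\C(\oS(\mathsf U(\rk)),\C)$, every generalized $(1,1)$-invariant distribution on ${\rm M}_{m,n}$ restricts to $0$ on $\mathsf U$, i.e. is supported at the point $\{0\}$; the distributions supported at a point of an $\ell$-space form the one-dimensional space $\C\delta_0$, and $\delta_0$ is genuinely $(1,1)$-invariant, so $\oD({\rm M}_{m,n})^{(1,1),\infty}=\oD({\rm M}_{m,n})^{(1,1)}=\C\delta_0$. (Equivalently one may run the long exact sequence of $\Ext_{G,k}(-,(1,1))$ on $0\to\oS(\mathsf U(\rk))\to\oS({\rm M}_{m,n})\to\oS(\{0\})\to 0$ and use $\Hom_{G,k}(\oS(\{0\}),(1,1))=\C$.) For part (b), with $\und\chi=(|\cdot|_\rk^n,|\cdot|_\rk^{-m})$, the only admissible orbit is the open dense orbit $\mathsf U:={\rm O}_{\min\{m,n\}}$, whose complement $\bar{\rm O}_{\min\{m,n\}-1}$ contains no weakly $\und\chi$-admissible orbit; so Theorem \ref{autoext} gives a restriction isomorphism $\oD({\rm M}_{m,n})^{\und\chi,\infty}\cong\oD({\rm O}_{\min\{m,n\}})^{\und\chi,\infty}$. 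Writing $H=G_{\min\{m,n\}}$ so that ${\rm O}_{\min\{m,n\}}\cong G/H$, I would then show this space is one-dimensional: since $\mathsf G=\GL_m\times\GL_n$ is connected reductive and (a direct check) the only algebraic character of $\mathsf G$ trivial on $\mathsf H$ is trivial, Theorem \ref{defm} forces every element of $\Hom_{G,k}(\oS(G/H),\und\chi)$ to be a scalar multiple of a single genuine $\und\chi$-invariant measure, so $\oD({\rm O}_{\min\{m,n\}})^{\und\chi,\infty}=\oD({\rm O}_{\min\{m,n\}})^{\und\chi}$ is one-dimensional. (Alternatively, Proposition \ref{gshap2} identifies $\Hom_{G,k}(\oS(G/H),\und\chi)$ with $\Hom_H(\oJ_{G,k},\C)$, and this equals $\C$ because $H$ surjects onto $\Lambda_G=G/G^\circ\cong\Z^2$ via $(g_1,g_2)\mapsto(\mathrm{val}\det g_1,\mathrm{val}\det g_2)$ while $\oJ_{G,k}$ is cyclic over $\C[\Lambda_G]$ with $\oJ_{G,k}/\RI_G\oJ_{G,k}=\C$.) Finally, the Haar measure $\mu_{{\rm M}_{m,n}}$ is $\und\chi$-invariant by a change-of-variables computation and has nonzero restriction to the open set ${\rm O}_{\min\{m,n\}}$, so it spans; hence $\oD({\rm M}_{m,n})^{\und\chi,\infty}=\oD({\rm M}_{m,n})^{\und\chi}=\C\mu_{{\rm M}_{m,n}}$.

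The main obstacle I anticipate is the one-dimensionality in part (b). A priori the generalized $\und\chi$-invariant distributions on an open orbit can form an infinite-dimensional space — as in Tate's thesis, where the stabilizer is trivial and the Laurent expansion of the zeta integral produces arbitrarily high-order distributions — and what collapses it here is precisely that the stabilizer $H=G_{\min\{m,n\}}$ is large enough that its image in $\Lambda_G$ is everything (equivalently, $\mathsf G$ carries no nontrivial algebraic character trivial on $\mathsf H$). Pinning down this surjectivity, and thereby ruling out the higher-order distributions, is the crux; the remaining steps are the admissibility bookkeeping and direct citations of the extension theorems.
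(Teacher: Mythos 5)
Your proof is correct and follows essentially the same route as the paper: the same admissibility bookkeeping via \eqref{ado} and \eqref{modular}, the same invocation of the automatic extension theorems (Theorem \ref{vextz}/\ref{autoext}, i.e.\ Theorem \ref{main1}) to reduce to the unique admissible orbit, and the same key collapse of ``generalized'' to ``genuine'' invariants coming from the observation that $\mathsf G=\GL_m\times\GL_n$ carries no nontrivial algebraic character trivial on the stabilizer of a point of ${\rm O}_r$ (equivalently, no non-constant semi-invariant regular function on the orbit), which via Theorem \ref{defm} kills all higher-order terms. Your parenthetical alternative via Proposition \ref{gshap2} and the coinvariants of $\oJ_{G,k}$ is an unpacked, more self-contained version of the same phenomenon, but it does not constitute a genuinely different method.
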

\begin{proof}
Note that there is no non-constant semi-invariant regular function on each orbit ${\rm O}_r$. By Theorem \ref{defm}, all generalized $\und \chi$-invariant distributions on ${\rm O}_r$ are $\und \chi$-invariant.

If $\und{\chi}= (1,1)$, then $\rm O_0$ is the only $\und \chi$-admissible orbit, and therefore
\[
\oD({\rm M}_{m,n})^{\und{\chi}, \infty}=\oD({\rm M}_{m,n})^{\und{\chi}}=\C\cdot \delta_0.
\]

If $\und{\chi}=(|\cdot|_\rk^n, |\cdot|_\rk^{-m})$,  then ${\rm O}_{\min\{m,n\}}$ is the only $\und \chi$-admissible orbit. Then Theorem \ref{main1} implies that
\begin{eqnarray*}
  &&   \C \cdot  \mu_{{\rm M}_{m,n}}\subset \oD({\rm M}_{m,n})^{\und{\chi}}\subset \oD({\rm M}_{m,n})^{\und{\chi}, \infty}\\
  &=&\oD({\rm O}_{\min\{m,n\}})^{\und{\chi},\infty}=\oD({{\rm O}}_{\min\{m,n\}})^{\und{\chi}}=\C\cdot  \mu_{{\rm M}_{m,n}}.
\end{eqnarray*}

If $\und{\chi}\not= (1,1), \,(|\cdot|_\rk^n, |\cdot|_\rk^{-m})$, then each orbit ${\rm O}_r$ is not $\und \chi$-admissible. Therefore $\oD({\rm M}_{m,n})^{\und{\chi}, \infty}=0$.

\end{proof}

We now assume that $m=n$,  and we  denote by ${\rm M}_n$ the space ${\rm M}_{n,n}$.
Consider the following  zeta integral
$$\oZ_{\chi}(\phi,s)=\int_{{\rm M}_n}  \phi(x) \chi({\rm det}(x))|{\rm det}(x)|_\rk^{s}\frac{\od\!x}{|{\rm det}(x)|_\rk^n},$$
where $\rm det$ is the determinant function on ${\rm M}_{n}$, $\od\!x$ is the Haar measure on ${\rm M}_{n}$ so that the space ${\rm M}_n(\rr)$ of integral matrices in $\rm M_n$ has volume $1$, $\chi$ is a   character of $\rk^\times$ and $\phi\in \oS({\rm M}_n)$.  By Theorem \ref{igusa},  it is a rational function of $1-q_\rk^{-s}$.  Let  $\oZ_{\chi, i}$ be the $i$-th coefficient of the Laurent expansion of $\oZ_{\chi}$ (as a rational function of $1-q_\rk^{-s}$).  By Proposition \ref{Zeta_invariant},  $\oZ_{\chi,i}$ is a generalized $(\chi, \chi^{-1})$-invariant distribution. It is easy to check that
\be\label{actionzg}
  (1-g)\cdot \oZ_{\chi,i}=\oZ_{\chi,i-1},
\ee
for all $g=(g_1, g_2)\in G$ such that $\det (g_1^{-1} g_2)$ is a uniformizer of $\rr$.   Here the action of $G$ on $\oD({\rm M}_{n})^{(\chi,\chi^{-1}), \infty}\subset \Hom_\C(\oS(\rm M_n), (\chi, \chi^{-1}))$ is as in  the equation \eqref{actint} of Section \ref{secgh}.
\begin{proposition}
\label{case2}
(a) If  $\chi= |\cdot|_\rk^r$ for some $r=0, 1, \cdots, n-1$, then $\oZ_{\chi, i}=0$ for all $i<-1$, and $\{\oZ_{\chi, i}\}_{i\geq -1}$ is a basis of $\oD({\rm M}_{n})^{(\chi,\chi^{-1}), \infty}$.

(b) If  $\chi\neq  |\cdot|_\rk^r$ for all $r=0, 1, \cdots, n-1$, then $\oZ_{\chi, i}=0$ for all $i<0$, and $\{\oZ_{\chi, i}\}_{i\geq 0}$ is a basis of $\oD({\rm M}_{n})^{(\chi,\chi^{-1}), \infty}$.

(c) For every character $(\chi_1, \chi_2)$ of $G$, the space $\oD({\rm M}_{n})^{(\chi_1,\chi_2), \infty}=0$ if $\chi_1\chi_2\not=1$.

\end{proposition}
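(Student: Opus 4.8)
The plan is to combine an orbit-by-orbit analysis of admissibility, the support filtration of $\oD({\rm M}_n)^{\und{\chi},\infty}$ attached to the stratification ${\rm M}_n=\bigsqcup_r {\rm O}_r$, the description of generalized invariant distributions on a homogeneous space given by Theorem \ref{defm}, and the zeta distributions $\oZ_{\chi,i}$. I would begin with admissibility: testing the equality $\und{\chi}|_{G_r}=|\Delta_{G_r}|_\rk^{-1}$ on the diagonal torus of $G_r$, using \eqref{ado}, \eqref{modular} and the displayed formula for $\Delta_{G_r}$ (with $m=n$), one finds that for a general character $(\chi_1,\chi_2)$ the orbit ${\rm O}_n$ is admissible iff $\chi_1\chi_2=1$, the orbit ${\rm O}_r$ with $1\le r\le n-1$ is admissible iff $\chi_1\chi_2=1$ and $\chi_1=|\cdot|_\rk^r$, and ${\rm O}_0$ is admissible iff $\chi_1=\chi_2=1$. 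Thus if $\chi_1\chi_2\ne1$ no orbit is admissible, and the localization principle together with Lemma \ref{homg2} yields $\oD({\rm M}_n)^{(\chi_1,\chi_2),\infty}=0$, proving (c). From now on $\und{\chi}=(\chi,\chi^{-1})$, so ${\rm O}_n$ is admissible and at most one lower orbit, namely ${\rm O}_r$ for the unique $r\in\{0,\dots,n-1\}$ (if any) with $\chi=|\cdot|_\rk^r$, is admissible.

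Next I would filter $\oD({\rm M}_n)^{\und{\chi},\infty}$ by support. Applying $(-)^{G,\infty}$ to the exact sequences $0\to\oS({\rm O}_j)\to\oS(\bar{\rm O}_j)\to\oS(\bar{\rm O}_{j-1})\to0$ gives $0=D_{-1}\subset D_0\subset\dots\subset D_n=\oD({\rm M}_n)^{\und{\chi},\infty}$, where $D_j$ is the subspace of distributions supported on $\bar{\rm O}_j$, together with embeddings $D_j/D_{j-1}\hookrightarrow\oD({\rm O}_j)^{\und{\chi},\infty}$. Since each $G_j$ is connected, weak and ordinary admissibility agree; and by Theorem \ref{defm} (with $\mathsf N$ trivial) and the fact, recalled in the proof of Proposition \ref{case1}, that ${\rm O}_j$ carries no non-constant semi-invariant regular function for $j<n$, the space $\oD({\rm O}_j)^{\und{\chi},\infty}$ with $j<n$ is one-dimensional and consists of genuine $\und{\chi}$-invariants when ${\rm O}_j$ is admissible, and is $0$ otherwise. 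For the open orbit, identifying ${\rm O}_n$ with ${\rm GL}_n(\rk)$, Theorem \ref{defm} gives $\oD({\rm O}_n)^{\und{\chi},\infty}=\{\,P(\mathrm{val}(\det x))\,\chi(\det x)\,d^\times x:P\in\C[t]\,\}$, and a short computation with \eqref{actint} shows that any $g=(g_1,g_2)$ with $\det(g_1^{-1}g_2)$ a uniformizer acts by $P\mapsto P(\,\cdot\,)-P(\,\cdot\,+1)$; hence $1-g$ strictly lowers $\deg P$ and its kernel is the line of genuine invariants.

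Finally I would feed in the zeta integrals. By Theorem \ref{igusa} (applied to the order-$0$ measure $\chi(\det x)\,d^\times x$ on ${\rm O}_n$ and $f=\det$) $\oZ_\chi(\phi,s)$ is rational in $q_\rk^{-s}$; by Proposition \ref{Zeta_invariant} its Laurent coefficients $\oZ_{\chi,i}$ at $q_\rk^{-s}=1$ are generalized $\und{\chi}$-invariant distributions on ${\rm M}_n$, and $(1-g)\cdot\oZ_{\chi,i}=\oZ_{\chi,i-1}$ by \eqref{actionzg}. For $\phi\in\oS({\rm O}_n)$ the integral is a Laurent polynomial in $q_\rk^{-s}$, so $\oZ_{\chi,i}|_{{\rm O}_n}=0$ for $i<0$, whereas $\oZ_{\chi,0}|_{{\rm O}_n}$ is the nonzero measure $\chi(\det x)\,d^\times x|_{{\rm O}_n}$; combined with the action of $1-g$ this forces $\oZ_{\chi,i}|_{{\rm O}_n}$ to have degree exactly $i$ for $i\ge0$, so $\{\oZ_{\chi,i}|_{{\rm O}_n}\}_{i\ge0}$ is a basis of $\oD({\rm O}_n)^{\und{\chi},\infty}$. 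In case (b) no orbit below ${\rm O}_n$ is admissible, hence $D_{n-1}=0$ and restriction identifies $\oD({\rm M}_n)^{\und{\chi},\infty}$ with $\oD({\rm O}_n)^{\und{\chi},\infty}$; as the $\oZ_{\chi,i}$ are global and restrict to the above basis, (b) follows, with $\oZ_{\chi,i}=0$ for $i<0$. In case (a) one has $D_{n-1}=D_r$ and $D_{r-1}=0$; since every element of $D_r$ restricts on ${\rm O}_r$ to a genuine invariant, and $D_{r-1}=0$, the subspace $D_r$ is annihilated by $1-g$, so from $(1-g)\cdot\oZ_{\chi,i}=\oZ_{\chi,i-1}$ and $\oZ_{\chi,-1}\in D_{n-1}$ we get $\oZ_{\chi,i}=0$ for $i\le-2$; evaluating $\oZ_\chi(\cdot,s)$ at $\mathbf{1}_{{\rm M}_n(\rr)}$ gives the Igusa zeta function $\int_{{\rm M}_n(\rr)}|\det x|_\rk^{\,s+r-n}\,dx$, which has a simple pole at $s=0$ with nonzero residue, so $\oZ_{\chi,-1}\ne0$ and $D_{n-1}=\C\cdot\oZ_{\chi,-1}$. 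Writing any $\mu\in\oD({\rm M}_n)^{\und{\chi},\infty}$ as $\mu|_{{\rm O}_n}=\sum_{i\ge0}c_i\,\oZ_{\chi,i}|_{{\rm O}_n}$ then shows $\mu-\sum_{i\ge0}c_i\oZ_{\chi,i}\in D_{n-1}=\C\cdot\oZ_{\chi,-1}$, and linear independence follows by restricting to ${\rm O}_n$; this gives (a).

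The step I expect to be the main obstacle is the accurate determination of $\oD({\rm O}_n)^{\und{\chi},\infty}$ and of the action of $1-g$ on it, since the vanishing $\oZ_{\chi,i}=0$ for small $i$, the exact-degree statement, and the spanning/independence of the $\oZ_{\chi,i}$ are all extracted from that structure; in particular, the bound on the order of the pole of $\oZ_\chi(\phi,s)$ at $s=0$ for an arbitrary test function $\phi$ is obtained not from a refined Igusa estimate but from the fact that $1-g$ annihilates $D_{n-1}$, and reconciling this abstract bookkeeping with the analytic zeta integral must be carried out with care.
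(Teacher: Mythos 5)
Your proof is correct and follows the paper's approach essentially verbatim: filter by support (the paper states just the single short exact sequence at the top stratum, equation \eqref{exactod}), identify the admissible lower orbit via the computation of $\Delta_{G_r}$ and use the localization principle together with Theorem~\ref{main1} to collapse $\oD(\bar{\rm O}_{n-1})^{\und\chi,\infty}$ to either $0$ or a line of genuine invariants, and appeal to the Igusa product formula to get $\oZ_{\chi,-1}\neq 0$ in case~(a). The only place you add substance is in justifying the paper's assertion that ``it is easy to see $\{\oZ_{\chi,i}|_{{\rm O}_n}\}_{i\geq 0}$ is a basis of $\oD({\rm O}_n)^{\und\chi,\infty}$'' --- your description of $\oD({\rm O}_n)^{\und\chi,\infty}$ via Theorem~\ref{defm} as $\{P(\mathrm{val}\circ\det)\,\chi(\det)\,d^\times x\}$, the observation that $1-g$ realizes the difference operator $P\mapsto P-P(\cdot+1)$, and the resulting degree induction are exactly the intended (unwritten) argument.
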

\begin{proof}

Note that for each $i<0$, $\oZ_{\chi,i}$ is supported in $\bar{\rm O}_{n-1}$, in other words,
\[
  \oZ_{\chi,i}\in \oD(\bar{\rm O}_{n-1})^{(\chi,\chi^{-1}), \infty}.
\]
It is  easy to see $\{\oZ_{\chi,i}|_{{\rm O}_n}\}_{i\geq 0}$ is a basis of  $\oD({\rm O}_n)^{(\chi,\chi^{-1}), \infty}$. In particular, we have an exact sequence
\be\label{exactod}
  0\rightarrow \oD(\bar {\rm O}_{n-1})^{(\chi,\chi^{-1}), \infty}\rightarrow \oD({\rm M}_n)^{(\chi,\chi^{-1}), \infty}\rightarrow \oD({\rm O}_n)^{(\chi,\chi^{-1}), \infty}\rightarrow 0.
\ee
If  $\chi\neq  |\cdot|_\rk^r$ for all $r=0, 1, \cdots, n-1$, then any orbit in $\bar{\rm O}_{n-1}$ is not $\chi$-admissible, i.e. not weakly $\chi$-admissible. By
 Bernstein-Zelevinsky localization principle,
\[
\oD(\bar{\rm O}_{n-1})^{(\chi,\chi^{-1}), \infty}=0.
\]
Therefore part (b) of the proposition follows.

Now assume that $\chi= |\cdot|_\rk^r$ ($r=0, 1, \cdots, n-1$). Then
 \be\label{odbar}
 \oD(\bar{\rm O}_{n-1})^{(\chi,\chi^{-1}), \infty}=\oD(\bar{\rm O}_r)^{(\chi,\chi^{-1}), \infty}=\oD({\rm O}_r)^{(\chi,\chi^{-1}), \infty}=\oD({\rm O}_r)^{(\chi,\chi^{-1})}.
 \ee
Here the first equality follows from the localization principle of Bernstein-Zelevinsky, the second one is implied by Theorem \ref{main1}, and the last one follows as in the proof of Proposition \ref{case1}. In particular, $\oZ_{\chi,i}$ is $(\chi,\chi^{-1})$-invariant for all $i<0$. Then \eqref{actionzg}  implies that that $\oZ_{\chi,i}=0$ for all $i< -1$. On the other hand, the computation (\cite[Chapter 10.1]{Ig})
$$\int_{{\rm M}_n(\rr)}  |{\rm det}(x)|^s \od\!x=\prod_{i=1}^n  \frac{1-q_F^{-i}}{1-q_F^{-i-s}} $$
implies that $\oZ_{\chi,-1}\neq 0$. Therefore $\oZ_{\chi,-1}$ is a generator of the one-dimensional space \eqref{odbar}. Now part (a) of the proposition follows by the exact sequence \eqref{exactod}.

Part (c) of the proposition is an easy consequence of Bernstein-Zelevinsky localization principle, since under which condition every orbit in ${\rm M}_n$ is not  $\chi$-admissible.

\end{proof}

In view of \eqref{actionzg}, Proposition \ref{case2} implies that
\[
  \dim \oD({\rm M}_{n})^{(\chi,\chi^{-1})}=1
\]
for all   character $\chi$ of $\rk^\times$. This generalizes the equality \eqref{dimtate} of Tate's thesis, and is a (well-known) particular case of local theta correspondence.

\end{document}